\newcommand{\fkA}{\ensuremath{\mathfrak{A}}\xspace}
\newcommand{\fkB}{\ensuremath{\mathfrak{B}}\xspace}
\newcommand{\fkC}{\ensuremath{\mathfrak{C}}\xspace}
\newcommand{\fkD}{\ensuremath{\mathfrak{D}}\xspace}
\newcommand{\fkL}{\ensuremath{\mathfrak{L}}\xspace}
\newcommand{\fkM}{\ensuremath{\mathfrak{M}}\xspace}
\newcommand{\fkX}{\ensuremath{\mathfrak{X}}\xspace}
\newcommand{\BA}{\ensuremath{\mathbb{A}}\xspace}
\newcommand{\BD}{\ensuremath{\mathbb{D}}\xspace}
\newcommand{\BE}{\ensuremath{\mathbb{E}}\xspace}
\newcommand{\BF}{\ensuremath{\mathbb{F}}\xspace}
\newcommand{\BP}{\ensuremath{\mathbb{P}}\xspace}
\newcommand{\BQ}{\ensuremath{\mathbb{Q}}\xspace}
\newcommand{\BV}{\ensuremath{\mathbb{V}}\xspace}
\newcommand{\BX}{\ensuremath{\mathbb{X}}\xspace}
\newcommand{\BZ}{\ensuremath{\mathbb{Z}}\xspace}
\newcommand{\ba}{\ensuremath{\mathbf{a}}\xspace}
\newcommand{\bb}{\ensuremath{\mathbf{b}}\xspace}
\newcommand{\bbs}{\ensuremath{\mathbf{s}}\xspace}
\newcommand{\bt}{\ensuremath{\mathbf{t}}\xspace}
\newcommand{\bu}{\ensuremath{\mathbf{u}}\xspace}
\newcommand{\bv}{\ensuremath{\mathbf{v}}\xspace}
\newcommand{\bw}{\ensuremath{\mathbf{w}}\xspace}
\newcommand{\blambda}{\boldsymbol{\lambda}}
\newcommand{\bmu}{\boldsymbol{\mu}}
\newcommand{\CE}{\ensuremath{\mathcal{E}}\xspace}
\newcommand{\CF}{\ensuremath{\mathcal{F}}\xspace}
\newcommand{\CG}{\ensuremath{\mathcal{G}}\xspace}
\newcommand{\CL}{\ensuremath{\mathcal{L}}\xspace}
\newcommand{\CN}{\ensuremath{\mathcal{N}}\xspace}
\newcommand{\CO}{\ensuremath{\mathcal{O}}\xspace}
\newcommand{\CX}{\ensuremath{\mathcal{X}}\xspace}
\newcommand{\CY}{\ensuremath{\mathcal{Y}}\xspace}
\newcommand{\CZ}{\ensuremath{\mathcal{Z}}\xspace}
\newcommand{\ad}{\mathrm{ad}}
\DeclareMathOperator{\End}{End}
\newcommand{\Fil}{\ensuremath{\mathrm{Fil}}\xspace}
\DeclareMathOperator{\Gal}{Gal}
\DeclareMathOperator{\Gr}{Gr}
\newcommand{\Exc}{\mathrm{Exc}}
\newcommand{\NExc}{\mathrm{NExc}}
\DeclareMathOperator{\Hom}{Hom}
\newcommand{\id}{\ensuremath{\mathrm{id}}\xspace}
\DeclareMathOperator{\Lie}{Lie}
\newcommand{\loc}{\ensuremath{\mathrm{loc}}\xspace}
\newcommand{\naive}{\ensuremath{\mathrm{naive}}\xspace}
\DeclareMathOperator{\proj}{Proj}
\DeclareMathOperator{\Spec}{Spec}
\DeclareMathOperator{\Spf}{Spf}
\newcommand{\spl}{\mathrm{spl}}
\newcommand{\nspl}{\mathrm{nspl}}
\DeclareMathOperator{\tr}{tr}
\newcommand{\U}{\mathrm{U}}
\newcommand{\wit}{\widetilde}
\newcommand{\ov}{\overline}
\newcommand{\lra}{\longrightarrow}
\newcommand{\lr}{\longrightarrow}
\newenvironment{altenumerate}
   {\begin{list}
      {(\theenumi) }
      {\usecounter{enumi}
       \setlength{\labelwidth}{0pt}
       \setlength{\labelsep}{0pt}
       \setlength{\leftmargin}{0pt}
       \setlength{\itemsep}{\the\smallskipamount}
       \renewcommand{\theenumi}{\roman{enumi}}
      }}
   {\end{list}}
\newenvironment{altitemize}
   {\begin{list}
      {$\bullet$}
      {\setlength{\labelwidth}{0pt}
	   \setlength{\itemindent}{5pt}
       \setlength{\labelsep}{5pt}
       \setlength{\leftmargin}{0pt}
       \setlength{\itemsep}{\the\smallskipamount}
      }}
   {\end{list}}
\renewcommand{\to}{%
   \ifbool{@display}{\longrightarrow}{\rightarrow}%
   }
\let\shortmapsto\mapsto
\renewcommand{\mapsto}{%
   \ifbool{@display}{\longmapsto}{\shortmapsto}%
   }
\newlength{\olen}
\newlength{\ulen}
\newlength{\xlen}
\newcommand{\xra}[2][]{%
   \ifbool{@display}%
      {\settowidth{\olen}{$\overset{#2}{\longrightarrow}$}%
       \settowidth{\ulen}{$\underset{#1}{\longrightarrow}$}%
       \settowidth{\xlen}{$\xrightarrow[#1]{#2}$}%
       \ifdimgreater{\olen}{\xlen}%
          {\underset{#1}{\overset{#2}{\longrightarrow}}}%
          {\ifdimgreater{\ulen}{\xlen}%
             {\underset{#1}{\overset{#2}{\longrightarrow}}}
             {\xrightarrow[#1]{#2}}}}%
      {\xrightarrow[#1]{#2}}
   }
\newcommand{\xyra}[2][]{%
   \settowidth{\xlen}{$\xrightarrow[#1]{#2}$}%
   \ifbool{@display}%
      {\settowidth{\olen}{$\overset{#2}{\longrightarrow}$}%
       \settowidth{\ulen}{$\underset{#1}{\longrightarrow}$}%
       \ifdimgreater{\olen}{\xlen}%
          {\mathrel{\xymatrix@M=.12ex@C=3.2ex{\ar[r]^-{#2}_-{#1} &}}}%
          {\ifdimgreater{\ulen}{\xlen}%
             {\mathrel{\xymatrix@M=.12ex@C=3.2ex{\ar[r]^-{#2}_-{#1} &}}}
             {\mathrel{\xymatrix@M=.12ex@C=\the\xlen{\ar[r]^-{#2}_-{#1} &}}}}}%
      {\mathrel{\xymatrix@M=.12ex@C=\the\xlen{\ar[r]^-{#2}_-{#1} &}}}%
   }
\newcommand{\xla}[2][]{%
   \ifbool{@display}%
      {\settowidth{\olen}{$\overset{#2}{\longleftarrow}$}%
       \settowidth{\ulen}{$\underset{#1}{\longleftarrow}$}%
       \settowidth{\xlen}{$\xleftarrow[#1]{#2}$}%
       \ifdimgreater{\olen}{\xlen}%
          {\underset{#1}{\overset{#2}{\longleftarrow}}}%
          {\ifdimgreater{\ulen}{\xlen}%
             {\underset{#1}{\overset{#2}{\longleftarrow}}}
             {\xleftarrow[#1]{#2}}}}%
      {\xleftarrow[#1]{#2}}
   }
\newcommand{\isoarrow}{%
   \ifbool{@display}{\overset{\sim}{\longrightarrow}}{\xrightarrow\sim}%
   }
\renewcommand{\lra}{%
   \ifbool{@display}{\longleftrightarrow}{\leftrightarrow}%
   }
\newcommand{\undertilde}{\raisebox{0.4ex}{\smash[t]{$\scriptstyle\sim$}}}  
\newcommand{\wt}{\wit}
\DeclareFontFamily{U}{matha}{\hyphenchar\font45}
\DeclareFontShape{U}{matha}{m}{n}{
      <5> <6> <7> <8> <9> <10> gen * matha
      <10.95> matha10 <12> <14.4> <17.28> <20.74> <24.88> matha12
      }{}
\DeclareSymbolFont{matha}{U}{matha}{m}{n}
\DeclareFontFamily{U}{mathx}{\hyphenchar\font45}
\DeclareFontShape{U}{mathx}{m}{n}{
      <5> <6> <7> <8> <9> <10>
      <10.95> <12> <14.4> <17.28> <20.74> <24.88>
      mathx10
      }{}
\DeclareSymbolFont{mathx}{U}{mathx}{m}{n}
\DeclareMathSymbol{\obot}         {2}{matha}{"6B}
\newtheorem{theorem}[subsubsection]{Theorem}
\newtheorem{proposition}[subsubsection]{Proposition}
\newtheorem{lemma}[subsubsection]{Lemma}
\newtheorem{corollary}[subsubsection]{Corollary}
\theoremstyle{definition}
\newtheorem{definition}[subsubsection]{Definition}
\newtheorem{remark}[subsubsection]{Remark}
\newtheorem{remarks}[subsubsection]{Remarks}
\numberwithin{equation}{subsection}
\newcommand{\colspan}{\mathrm{colspan}}
\newcommand{\Span}{\mathrm{Span}}
\newcommand{\rig}{\mathrm{rig}}
\newcommand{\obst}{\mathrm{obst}}
\newcommand{\Kra}{\mathrm{Kra}}
\title[Regular models for ramified unitary group]{Regular models of ramified unitary Shimura varieties at maximal parahoric level}
\author{Qiao He}
\address{Department of Mathematics, Columbia University, 2990 Broadway, New York, NY 10027, USA}
\email{qh2275@columbia.edu } 
\author{Yu LUO}
\address{Massachusetts Institute of Technology, Department of Mathematics, 77 Massachusetts Avenue, Cambridge, MA 02139, USA}
\email{yuluo@mit.edu}
\author{Yousheng Shi}
\address{School of Mathematical Sciences, Zhejiang University, 866 Yuhangtang Rd, Hangzhou, 310058, P.R. China}
\email{0023140@zju.edu.cn}
 \date{\today}
\begin{document}

\begin{abstract}
We use the idea of splitting models to construct and study a semi-stable model for unitary Shimura varieties of signature $(n-1,1)$ with maximal parahoric level structure at ramified primes.
In this setting, the ``naive'' splitting model defined by Pappas and Rapoport fails to be flat in a crucial way. 
We show that the ``genuine'' splitting model in this case is flat with semi-stable reduction.
In addition, we study special cycles within these splitting models.
\end{abstract}

\maketitle{}
\tableofcontents{}

\section{Introduction}
\subsection{Motivation}
In the recent paper \cite{LUO2024}, the
second author studies integral models of unitary Shimura varieties of signature $(n-1,1)$ with any parahoric level structure at ramified primes.
The integral model described in loc.cit. is canonical, meaning it can be characterized using purely group-theoretical data.
A moduli interpretation of this ``canonical'' model was proposed through a conjecture of Smithling in \cite{Smithling2015}, which builds on the work of Pappas and Rapoport in \cite{PR2009}.

Although the ``canonical'' integral model possesses many nice properties such as flatness and normality, it is not regular in many important examples.
In this paper, we contribute to the construction of regular integral models of Shimura varieties. 
Our approach is inspired by the \emph{splitting model} first introduced by Pappas and Rapoport in \cite{PR2005}.
Roughly speaking, the splitting model is a certain modification of the canonical integral model  which ``refines'' the ramification information by adding additional data. Such construction has been studied in the context of Weil restriction \cite{PR2005,Bijakowski-Hernandez-2023,shen2023fzips}, triality groups \cite{zhao-thesis}, and ramified unitary groups \cite{Kramer2003,Bijakowski-Hernandez-2023,zachos2023semistable}.
Splitting models have found applications in various settings, including \cite{Reduzzi-Xiao,Sasaki,Diamond-Kassaei,BHKRY,HLSY,Bijakowski-Hernandez-2023,bijakowski2024geometry,BZZ25}, since the first draft of this paper, the splitting model studied here has been used in \cite{LRZ25} for the arithmetic transfer conjecture.

In this paper, we use similar ideas to define and study regular integral models for unitary Shimura varieties with maximal parahoric level at a ramified prime.  We give now more details in the following.

\subsection{Canonical local model}\label{sec:intro canonical local model}
It is well-known that local properties of integral models can often be studied via local models, which have the advantage of being defined in terms of linear algebra. 
We adopt this perspective in the present work: we first construct a regular local model and then, in the final section, indicate its implications for the integral models of Rapoport--Zink spaces.

We begin by describing the setup.
Let $F/F_0$ be a ramified quadratic extension of local fields with residue field $\BF_q$, such that $\mathrm{char}(\BF_q)\neq 2$. Let $\pi\in F$ and $\pi_0\in F_0$ be the uniformizers such that $\pi^2=\pi_0$. Consider a hermitian space $(V,\varphi)$ over $F$. We are interested in the local model associated to the unitary group $G=\U(V,\varphi)$.

A \emph{vertex lattice} is a hermitian lattice $\Lambda\subset V$ such that $\pi\Lambda^{\sharp}\subseteq\Lambda\subseteq\Lambda^{\sharp}$,
where $\Lambda^{\sharp}$ is the hermitian dual of $\Lambda$.
The quotient $\Lambda^{\sharp}/\Lambda$ is always of even dimension, say $2k$, called the \emph{type} of the vertex lattice. 
In the introduction, for simplicity, we assume $n\geq 3$ and take the signature $(r,s)=(n-1,1)$\footnote{when $n=2$, the same results apply, but the reflex field of the canonical local model is $F_0$.}, it corresponds to the geometric conjugacy class of a certain minuscule cocharacter of $G$, see \cite[\S 1.3]{PR2009}.
In \S \ref{sec:moduli}, we will also consider more general signatures.

The \emph{canonical local model} $M^{\loc}_{[k]}(n-1,1)$ is a projective scheme over $O_F$, representing the moduli functor that assigns to each $O_F$-algebra $R$ the set of filtrations of $O_F\otimes_{O_{F_0}}R$-modules
\begin{equation*}
\begin{aligned}
\xymatrix{
\Lambda\otimes_{O_{F_0}}R\ar[r]&\Lambda^\sharp\otimes_{O_{F_0}}R\ar[r]^-{\pi^{-1}}&\Lambda\otimes_{O_{F_0}}R\\
\CF_\Lambda\ar@{^(->}[u]\ar[r]&\CF_{\Lambda^\sharp}\ar@{^(->}[u]\ar[r]&\CF_\Lambda\ar@{^(->}[u]
}
\end{aligned},
\end{equation*}
where $\CF_\Lambda$ and $\CF_{\Lambda^{\sharp}}$ are required to be $R$-direct summands of rank $n$, satisfying some additional conditions, see \S \ref{sec:canonical-model} for a detailed discussion.
The geometry of ramified unitary local models has been studied by various authors, for example
\cite{Pappas2000,PR2009,Arzdorf,Smithling2015,Yu2019,LUO2024}.

We say that $k$ is \emph{$\pi$-modular} when $n=2m$ and $k=m$, see Remark \ref{rem:type-name} for the terminology used in other cases. The name reflects the fact that the corresponding vertex lattice is $\pi$-modular.

\begin{theorem}[\protect{\cite{LUO2024}}]\label{thm:Luo-main}Assume that $k$ is not $\pi$-modular.
\begin{altenumerate}
\item The local model $M^{\loc}_{[k]}(n-1,1)$ is flat, normal and Cohen-Macaulay of dimension $n$.
\item The special fiber $M^{\loc}_{[k],s}(n-1,1)$ is a reduced scheme whose irreducible components are all normal and Cohen-Macaulay. The intersection of $i$ distinct irreducible components has dimension $n-i$.
\item There exists a special point $*\in M^{\loc}_{[k],s}(n-1,1)$, referred to as the \emph{worst point}. It is the only singular point of each irreducible component of the special fiber and of their intersections.
\end{altenumerate}
\end{theorem}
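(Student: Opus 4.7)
The plan is to identify the worst point $*$ as the unique $\kappa$-rational point of $M^{\loc}_{[k],s}(n-1,1)$ whose two filtrations take the most degenerate form compatible with the wedge and spin conditions defining the canonical local model: concretely, $\sF_\Lambda$ and $\sF_{\Lambda^\sharp}$ should coincide with the images of $\pi\Lambda^\sharp\otimes\kappa$ and $\Lambda\otimes\kappa$ inside $\Lambda\otimes\kappa$ and $\Lambda^\sharp\otimes\kappa$ respectively. I would then first prove (iii) by showing that the canonical local model is smooth away from $*$: for any non-worst $\kappa$-point at least one of the two filtrations is ``generic enough'' that the moduli problem trivializes on a suitable affine chart to a product of ordinary Grassmannians, all of which are smooth. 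Applied inside each irreducible component and each intersection of components, this argument shows $*$ is the only possible singular point.

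For (ii), I would use the embedding of $M^{\loc}_{[k],s}(n-1,1)$ into the affine flag variety of the quasi-parahoric $\mathrm{Stab}_{G(F_0)}(\Lambda)$, following Pappas--Rapoport and Arzdorf, and identify its underlying reduced subscheme with the union of affine Schubert varieties indexed by the $\mu$-admissible set for $\mu=(n-1,1)$. In the non-$\pi$-modular case the structure of this admissible set is such that the top-dimensional elements index the irreducible components, each a Schubert variety of dimension $n-1$; normality and Cohen--Macaulayness then follow from the general theorems on Schubert varieties in affine partial flag varieties, suitably extended to the ramified unitary group. Intersections of $i$ distinct components correspond to Bruhat intervals of codimension $i-1$ inside each component, yielding the expected dimension $n-i$ as well as the normality/CM property for intersections.

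Item (i) should follow from a standard package: flatness from the equality of special-fiber and generic-fiber dimensions together with reducedness of the special fiber (miracle flatness applies since the base is a DVR and the total space is equidimensional); Cohen--Macaulayness of the total space by deformation from a Cohen--Macaulay flat special fiber; and normality by Serre's criterion $R_1+S_2$, where $R_1$ is guaranteed by the smoothness away from $*$ established above, since the non-smooth locus is concentrated at the finitely many worst points and hence has codimension at least two in the $n$-dimensional total space.

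The main obstacle is the explicit local analysis at $*$ that underwrites both the Schubert-variety identification of the components and the $R_1$-verification: one must write down an affine chart around $*$ incorporating the wedge and spin conditions and analyze the resulting determinantal-type ideal. This is precisely where the hypothesis that $k$ is not $\pi$-modular enters --- in the excluded case $n=2m$, $k=m$ the naive equations fail to be flat and the canonical local model is genuinely different from its naive counterpart, which motivates the splitting-model construction developed in the body of the paper.
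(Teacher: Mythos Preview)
The paper does not prove this theorem at all: both the statement in the introduction and its more detailed counterpart Theorem~\ref{thm:canonical-LM} are attributed wholesale to the existing literature (\cite{Pappas2000,Arzdorf,Smithling2015,Yu2019,LUO2024}). Your proposal, by contrast, attempts to reconstruct the actual arguments. At the level of strategy this is broadly correct --- the Schubert-variety identification for (ii), Serre's criterion for normality in (i), and the local analysis around the worst point for (iii) are indeed how the cited papers proceed --- but several steps are too vague to count as a proof. In particular, ``the moduli problem trivializes on a suitable affine chart to a product of ordinary Grassmannians'' is not what happens away from $*$; the actual argument requires the explicit equations coming from the strengthened spin condition (as in Proposition~\ref{thm: local model}) and a Jacobian computation. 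Similarly, miracle flatness over a DVR needs more than equidimensionality and reducedness of the special fiber; the literature instead verifies directly that the strengthened spin condition cuts out the flat closure.

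Your closing paragraph contains a genuine misconception. In the excluded $\pi$-modular case the canonical local model is in fact \emph{smooth} (Theorem~\ref{thm:canonical-LM}(6)); the reason the theorem excludes it is that the worst point simply does not lie in $M^{\loc}_{[m]}$, so there is no singularity to analyze. The failure of flatness of the naive model is a separate phenomenon present in \emph{all} cases, and the splitting model is constructed to obtain semi-stability, not to repair flatness of the canonical model (which is flat by definition as a scheme-theoretic closure). So the hypothesis ``$k$ not $\pi$-modular'' does not enter where you say it does.
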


\subsection{Splitting model}
We now turn to the splitting model.
Let $\Pi:=\pi\otimes 1$ and $\pi:=1\otimes \pi$ be the respective elements in $O_F\otimes_{O_{F_0}}R$.
The \emph{naive splitting model} $M^{\nspl}_{[k]}(n-1,1)$ is a projective scheme over $O_F$, which represents the moduli functor which sends each $O_F$-algebra $R$ to the set of filtrations of $O_F\otimes_{O_{F_0}}R$-modules 
\begin{equation}\label{equ:splitting-model}
\begin{aligned}
\xymatrix{
\Lambda\otimes_{O_{F_0}}R\ar[r]&\Lambda^\sharp\otimes_{O_{F_0}}R\ar[r]^-{\pi^{-1}}&\Lambda\otimes_{O_{F_0}}R\\
\CF_\Lambda\ar@{^(->}[u]\ar[r]&\CF_{\Lambda^\sharp}\ar@{^(->}[u]\ar[r]&\CF_\Lambda\ar@{^(->}[u]\\
\CF^0_\Lambda\ar@{^(->}[u]\ar[r]&\CF^0_{\Lambda^\sharp}\ar@{^(->}[u]\ar[r]&\CF^0_\Lambda\ar@{^(->}[u]
}
\end{aligned},
\end{equation}
such that $(\CF_\Lambda,\CF_{\Lambda^{\sharp}})$ defines an $R$-point of the local model $M^{\loc}_{[k]}(n-1,1)$, and for each $\Box\in\{\Lambda,\Lambda^\sharp\}$, $\CF^0_{\Box}$ is an $R$-direct summand of rank $1$ of $\CF_\Box$ which satisfies the Kr\"amer condition:
\begin{equation}\label{equ:Kramer-cond}
	(\Pi+\pi)(\CF_\Box)\subseteq\CF^0_\Box,
	\quad
	(\Pi-\pi)(\CF_\Box^0)=(0).
\end{equation}
We refer to this as the \emph{naive splitting model}, as it generally fails to be flat. 

The \emph{splitting model} $M^{\spl}_{[k]}(n-1,1)$ is defined as the scheme-theoretic closure of its generic fiber $M^{\nspl}_{[k],\eta}(n-1,1)$ in $M^{\nspl}_{[k]}(n-1,1)$.
By construction, we have a natural forgetful map 
\begin{equation*}
	\tau: M^{\spl}_{[k]}(n-1,1)\longrightarrow M^{\loc}_{[k]}(n-1,1).
\end{equation*}

Suppose now that the special fiber $M^{\loc}_{[k],s}(n-1,1)$ has $c=c(n,k)$ irreducible components, see Theorem \ref{thm:canonical-LM} for the precise value of $c$ in each case.
Denote the irreducible components as $Z_i$ for $1\leq i\leq c$.  The first main result of the paper is:
\begin{theorem}[cf. \S \ref{sec:affine-charts}]\label{thm:main}
Assume that $n\geq 3$ and that $k$ is not $\pi$-modular.
\begin{altenumerate}
\item The splitting model $M^{\spl}_{[k]}(n-1,1)$ is flat of dimension $n$, with semi-stable reduction.
\item 
The special fiber of the splitting model has $c+1$ irreducible components: 
$\widetilde{Z}_i$ for $1\leq i\leq c$ and $\Exc$. 
All components are smooth of dimension $n-1$. 
\item The forgetful map $\tau:M^{\spl}_{[k]}(n-1,1)\rightarrow M^{\loc}_{[k]}(n-1,1)$ maps $\widetilde{Z}_i$ to $Z_i$ for $1\leq i\leq c$, and $\Exc$ to the worst point $*$.
\end{altenumerate}	
\end{theorem}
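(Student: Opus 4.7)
The strategy is to reduce to an explicit analysis on affine charts. Since $M^{\spl}_{[k]}(n-1,1)$ is by definition the flat closure of the generic fiber inside the naive splitting model $M^{\nspl}_{[k]}(n-1,1)$, I would first describe the latter via charts pulled back from $M^{\loc}_{[k]}(n-1,1)$. On each chart one parameterizes $(\sF_\Lambda,\sF_{\Lambda^\sharp})$ by matrices as in the proof of Theorem \ref{thm:canonical-LM}, then describes the line data $(\sF^0_\Lambda, \sF^0_{\Lambda^\sharp})$ by projective coordinates and imposes the Krämer conditions \eqref{equ:Kramer-cond}. Identifying the non-flat locus and passing to the flat closure yields explicit equations for $M^{\spl}_{[k]}(n-1,1)$ on each chart.

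The first key observation is that $\tau$ is an isomorphism above the complement of the worst point $*$. Indeed, away from $*$ the Krämer conditions \eqref{equ:Kramer-cond} cut out a unique choice of $(\sF^0_\Lambda, \sF^0_{\Lambda^\sharp})$ from $(\sF_\Lambda, \sF_{\Lambda^\sharp})$, so the forgetful map is an isomorphism onto its image there. All assertions of Theorem \ref{thm:canonical-LM} away from $*$ thus transfer directly to $M^{\spl}_{[k]}(n-1,1)$. Defining $\widetilde{Z}_i$ to be the closure of the preimage of $Z_i \setminus \{*\}$, each $\widetilde{Z}_i$ is birational to $Z_i$ via $\tau$.

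The main work lies near $*$. There, $\sF_\Lambda$ and $\sF_{\Lambda^\sharp}$ reduce to the maximally degenerate filtration characterising the worst point, and both parts of the Krämer condition \eqref{equ:Kramer-cond} become vacuous, so $\sF^0_\Box$ is free to be any line in $\sF_\Box$; this yields a $\BP^{n-1}$-fiber of $\tau$ over $*$, which one takes to be $\Exc$. A direct computation in a standard chart around $*$ produces explicit equations for $M^{\nspl}_{[k]}(n-1,1)$, from which one recognises a number of components supported entirely on $*$ as non-flat and discards them. The remaining equations cut out a scheme that is, in an étale neighborhood of each point, of the form $\Spec O_F[x_1,\dots,x_n]/(x_1 x_2 - \pi)$ (or an analog adapted to the vertex type), which is regular and semi-stable of dimension $n$ over $O_F$. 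The special fiber is then exactly the union of the strict transforms $\widetilde{Z}_i$ and the new component $\Exc \cong \BP^{n-1}_\kappa$, all smooth of dimension $n-1$.

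The main obstacle is the combinatorial bookkeeping for the local equations near $*$: the Krämer conditions, the defining conditions of $M^{\loc}_{[k]}(n-1,1)$, and the line condition on $\sF^0_\Box$ must be combined in several model cases depending on $n$, $k$ and the resulting type of vertex lattice, and in each case one must verify that, after discarding the non-flat pieces, the remaining equations are of semi-stable form in an étale neighborhood. Once this is done, the identification of components and their images under $\tau$ is immediate: by birationality $\tau(\widetilde{Z}_i) = Z_i$, and by construction $\tau(\Exc) = \{*\}$.
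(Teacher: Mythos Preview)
Your overall strategy---reduce to affine charts, show $\tau$ is an isomorphism away from $*$, analyze explicitly near $*$, discard non-flat pieces, verify semi-stability---matches the paper. However, two concrete points in your description of the local picture are wrong and would derail the computation.

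First, the fiber $\tau^{-1}(*)$ is not a single $\BP^{n-1}$. Over the worst point there are \emph{two} rank-one filtrations $\sF^0_{\Lambda}$ and $\sF^0_{\Lambda^\sharp}$, and they are linked by the transition maps $\lambda_k,\lambda_{n-k}$ via condition \ref{item:SP3}. The naive exceptional locus $\NExc=\tau^{-1}(*)$ is therefore a quiver-type variety with two irreducible components $\Exc_1,\Exc_2$ (Proposition~\ref{prop:exceptional-divisor}), and the essential content of the flat-closure step is that exactly one of them, $\Exc_2$, fails to lift to the generic fiber and must be discarded. The surviving component $\Exc=\Exc_1$ is not $\BP^{n-1}_\kappa$ but $\mathrm{Bl}_{\BP^{2k-1}}(\BP^{n-1})$. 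Your claim that ``both parts of the Kr\"amer condition become vacuous, so $\sF^0_\Box$ is free to be any line'' overlooks the linkage constraint entirely.

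Second, the local \'etale form near the deepest stratum is not $x_1x_2-\pi$ but $x_1x_2x_3-2\pi$: in the charts of class (i) (those with $\alpha\in[k+1,n-k]$ and $\beta\in[1,k]\cup[n-k+1,n]$) the paper obtains, after imposing the extra relation \eqref{equ:chart-i-spl} that cuts out the flat closure,
\[
U'_{\alpha\beta}\simeq\Spec\,O_F[\bt_1,\bbs_2,\bt_3,t_\alpha,\mu_{\alpha^\vee}]\big/\bigl(\mu_{\alpha^\vee}t_\alpha\textstyle\sum s_is_{i^\vee}-2\pi\bigr),
\]
which is \'etale locally $O_F[x_1,x_2,x_3,y_\bullet]/(x_1x_2x_3-2\pi)$. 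This reflects the fact that three components $\widetilde{Z}_1,\widetilde{Z}_2,\Exc$ can meet, not two. Getting this wrong would give an incorrect count of components through a point and an incorrect description of the intersection combinatorics.
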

It is straightforward to verify that $\tau$ is an isomorphism away from the worst point. As a consequence, we obtain the following:
\begin{corollary}
When $k$ is not $\pi$-modular, the open subscheme $M^{\loc}_{[k]}(n-1,1)\setminus \{*\}$ of the canonical local model has semi-stable reduction.
\end{corollary}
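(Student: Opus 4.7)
The plan is to deduce the corollary from Theorem~\ref{thm:main} by verifying the assertion that immediately precedes it, namely that the forgetful map
\[
\tau\colon M^{\spl}_{[k]}(n-1,1) \longrightarrow M^{\loc}_{[k]}(n-1,1)
\]
restricts to an isomorphism over $M^{\loc}_{[k]}(n-1,1)\setminus \{*\}$. Once this is established, the corollary follows immediately: by Theorem~\ref{thm:main}(1) the splitting model is semi-stable, so its open subscheme $M^{\spl}_{[k]}(n-1,1)\setminus \Exc$ is semi-stable (being semi-stable is a Zariski-local condition), and transporting along the isomorphism yields semi-stable reduction of $M^{\loc}_{[k]}(n-1,1)\setminus \{*\}$.

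To prove that $\tau$ is an isomorphism away from $*$, I would argue that the extra data $(\sF^0_\Lambda,\sF^0_{\Lambda^\sharp})$ is uniquely determined by the underlying filtration $(\sF_\Lambda,\sF_{\Lambda^\sharp})$ at every geometric point not lying over $*$. On the generic fiber, $O_F\otimes_{O_{F_0}}F_0\cong F\times F$ splits, and the Krämer condition $(\Pi-\pi)\sF^0_\Box=0$ forces $\sF^0_\Box$ to be the $\pi$-eigenspace of $\Pi$ inside $\sF_\Box$. On the special fiber one has $\pi=0$, and the wedge condition attached to the signature $(n-1,1)$ forces the $R$-linear operator $\Pi|_{\sF_\Box}$ to have rank at most one. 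The worst point $*$ is precisely the locus where $\Pi|_{\sF_\Box}=0$; at every other geometric point $\Pi(\sF_\Box)$ is a rank-one submodule, and the first Krämer condition together with a rank count pins $\sF^0_\Box$ down as its saturation in $\sF_\Box$. Combined with Theorem~\ref{thm:main}(3), which identifies $\tau^{-1}(*)=\Exc$, this shows that the restriction of $\tau$ to $M^{\spl}_{[k]}(n-1,1)\setminus \Exc$ is a proper, universally bijective morphism onto $M^{\loc}_{[k]}(n-1,1)\setminus \{*\}$.

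To upgrade universal bijectivity to a scheme-theoretic isomorphism, I would invoke Zariski's Main Theorem: both source and target are flat over $O_F$ with normal total spaces (Theorem~\ref{thm:canonical-LM} and Theorem~\ref{thm:main}(1)), $\tau$ is an isomorphism on the generic fiber, and a proper, birational, universally bijective morphism onto a normal Noetherian scheme is an isomorphism. The main obstacle is the rigidification step in the second paragraph: one must carry out the rank analysis of $\Pi|_{\sF_\Box}$ in the explicit affine charts of \S\ref{sec:affine-charts} and confirm that $\Pi(\sF_\Box)$ is an $R$-direct summand of $\sF_\Box$ (rather than merely a coherent subsheaf) at every point outside $*$. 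Once this is verified the remainder of the argument is essentially formal.
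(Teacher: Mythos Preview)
Your approach is correct and follows the same overall strategy as the paper: deduce the corollary from Theorem~\ref{thm:main} together with the fact that $\tau$ is an isomorphism away from the worst point. The paper records this latter fact as Proposition~\ref{prop:geometric:worst}, and the corollary is stated immediately after noting that the isomorphism check is ``straightforward''.

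Where you diverge is in the mechanism for upgrading the pointwise uniqueness of $\sF^0_\Box$ to a scheme-theoretic isomorphism. You argue via universal bijectivity plus Zariski's Main Theorem, flagging as the ``main obstacle'' the verification that $(\Pi+\pi)\sF_\Box$ is a direct summand in every chart. The paper's argument (essentially Kr\"amer's, \cite[Proposition~4.3]{Kramer2003}) is more direct and sidesteps both the ZMT detour and the chart computation: once you know that $(\Pi+\pi)\sF_\Box$ has rank exactly one at every geometric point of $M^{\loc}_{[k]}\setminus\{*\}$, standard semicontinuity shows it is a line subbundle of $\sF_\Box$ over this open locus, and then the inclusion $(\Pi+\pi)\sF_\Box\subset\sF^0_\Box$ of rank-one bundles forces equality. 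This gives a \emph{functorial} inverse $\sF_\Box\mapsto(\sF^0_\Box:=(\Pi+\pi)\sF_\Box,\sF_\Box)$ to $\tau$ over $M^{\loc}_{[k]}\setminus\{*\}$, so no appeal to normality or ZMT is needed. Your route works, but the obstacle you identify dissolves once you observe that the fiberwise rank check already yields local freeness.
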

This leads one to ask whether the splitting model is in fact a blow-up of the canonical model.
Indeed, we managed to characterize the splitting model in this way:
\begin{theorem}[Thm. \ref{thm:blowup}] Assume $k$ is not $\pi$-modular.
The map $\tau:M^{\spl}_{[k]}(n-1,1)\rightarrow M^{\loc}_{[k]}(n-1,1)$ is the blow-up of $M^{\loc}_{[k]}(n-1,1)$  along the worst point. Under this identification, $\Exc$ is the exceptional divisor of the blow-up, and $\widetilde{Z}_i$ is the strict transform of $Z_i$ for $1\leq i\leq c$.
\end{theorem}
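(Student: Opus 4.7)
The plan is to apply the universal property of blow-ups. I would first verify two conditions: (i) $\tau$ is an isomorphism over the complement $M^{\loc}_{[k]}(n-1,1) \setminus \{*\}$, and (ii) the pullback ideal $\mathcal{I}_* \cdot \mathcal{O}_{M^{\spl}_{[k]}(n-1,1)}$ is invertible. Together these yield a unique $M^{\loc}_{[k]}(n-1,1)$-morphism
\[
\widetilde{\tau}: M^{\spl}_{[k]}(n-1,1) \longrightarrow \mathrm{Bl}_{\{*\}} M^{\loc}_{[k]}(n-1,1),
\]
and the bulk of the proof reduces to showing $\widetilde{\tau}$ is an isomorphism. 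Once this is in hand, the remaining claims follow formally: the exceptional divisor of the blow-up is by definition the inverse image of $\mathcal{I}_*$, which matches $\Exc$ by (ii); and the strict transform of $Z_i$ is the closure of $\widetilde{\tau}(\widetilde{Z}_i \setminus \Exc)$, which by (i) is $\widetilde{Z}_i$.

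For (i), the key observation is that the worst point is precisely the locus where both $\Pi$ and $\pi$ act as zero on $\sF_\Lambda$ (in the special fiber, $\pi$ is automatically zero, and $*$ is cut out by the further vanishing of $\Pi|_{\sF_\Lambda}$). At any other point, the Kr\"amer condition \eqref{equ:Kramer-cond}, the relation $(\Pi + \pi)(\Pi - \pi) = 0$ on $\sF_\Box$, and the signature-$(n-1,1)$ rank count combine to force $\sF^0_\Box = (\Pi + \pi)(\sF_\Box)$ as a rank-one direct summand. Hence $\tau$ is a bijection of moduli functors over the complement of $*$, and therefore an isomorphism there. For (ii), I would work in the explicit affine charts of $M^{\spl}_{[k]}(n-1,1)$ from Section~\ref{sec:affine-charts}: in each chart, $\Exc$ is cut out by a single coordinate parametrizing the choice of $\sF^0_\Box$ at the worst point, and a direct calculation shows that each generator of $\mathcal{I}_*$ pulls back to a multiple of this coordinate. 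I expect this to be the main technical obstacle of the proof, since it requires careful bookkeeping across all charts, though Theorem~\ref{thm:main}(i) (semi-stability, hence regularity of $M^{\spl}_{[k]}(n-1,1)$) already guarantees that $\Exc$ is a Cartier divisor, so principality of the pullback is the essential point to verify.

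It remains to check that $\widetilde{\tau}$ is an isomorphism. Both source and target are proper and birational over $M^{\loc}_{[k]}(n-1,1)$; $M^{\spl}_{[k]}(n-1,1)$ is regular by Theorem~\ref{thm:main}(i), and $\mathrm{Bl}_{\{*\}} M^{\loc}_{[k]}(n-1,1)$ is normal since $M^{\loc}_{[k]}(n-1,1)$ is normal and $\{*\}$ is a reduced closed point. To conclude, I would compute $\mathrm{Bl}_{\{*\}} M^{\loc}_{[k]}(n-1,1)$ explicitly from the local presentation of $M^{\loc}_{[k]}(n-1,1)$ at $*$ (which, being the unique singular point of the canonical local model and its components, admits an explicit presentation by rank and wedge conditions) and match it chart-by-chart with the affine charts of $M^{\spl}_{[k]}(n-1,1)$. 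Since $\widetilde{\tau}$ is already an isomorphism away from $\Exc$, compatibility on each chart covering $\Exc$ forces $\widetilde{\tau}$ to be an isomorphism everywhere, completing the proof.
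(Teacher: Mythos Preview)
Your outline follows essentially the same route as the paper: invoke the universal property of blow-ups to produce $\widetilde{\tau}$, then verify it is an isomorphism by explicit chart comparison near the worst point. One caution: your assertion that $\mathrm{Bl}_{\{*\}} M^{\loc}_{[k]}$ is automatically normal because $M^{\loc}_{[k]}$ is normal and $\{*\}$ is a reduced point is not a general fact about blow-ups, and you do not actually use it---you immediately proceed to direct chart matching---so simply drop the claim.

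The paper's execution of the chart comparison uses a device you may find helpful. Rather than computing the Rees algebra of the worst-point ideal $I=(\CX,\pi)$ directly, the paper introduces an auxiliary projective scheme $\widetilde{U}=\mathrm{Proj}\,\widetilde{R}$ obtained by homogenizing the defining relations of $U^{\loc}$: one replaces $\CX$ by a matrix $\fkX$ of degree-one variables and $\pi$ by a degree-one variable $\varpi$, and imposes the linking relations $\CX_{ij}\varpi=\fkX_{ij}\pi$ together with the homogenized strengthened-spin relations. By construction $U^{\mathrm{bl}}$ sits inside $\widetilde{U}$ as a closed subscheme. The point is that on each standard chart $V_{\gamma\eta}=\{\fkX_{\gamma\eta}+\varpi\delta_{\gamma\eta}\neq 0\}$ of $\widetilde{U}$, the equations collapse to exactly the presentation of one of the charts $U'_{\alpha\beta}$ computed in \S\ref{sec:affine-charts} (with $\alpha=\gamma$, $\beta=\eta^\vee$), so the composite
\[
f:\;U^{\spl}\longrightarrow U^{\mathrm{bl}}\hookrightarrow \widetilde{U}
\]
is \'etale. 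Since $f$ is also proper over $U^{\loc}$, it is finite \'etale; being an isomorphism over the generic fiber, $f$ is an isomorphism, forcing both $U^{\spl}\to U^{\mathrm{bl}}$ and $U^{\mathrm{bl}}\hookrightarrow\widetilde{U}$ to be isomorphisms. The advantage of this indirection is that one never has to determine exactly which relations in $\widetilde{R}$ cut out the Rees algebra, which would be the most delicate part of the direct computation you propose.
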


\begin{remark}\label{rem:pi-modular}
When $k$ is $\pi$-modular, the canonical local model is smooth and connected. 
In contrast to other cases, it does not exhibit an analogue of the worst point. Moreover, in this case, the splitting model coincides with the canonical local model, see also Remark \ref{rmk:pi-modular-iso}.
\end{remark}

Characterizing the splitting model as a blow-up certainly suggests that it is the ``canonical'' choice in some sense, see also \cite{Gora-thesis}.
It remains an interesting open question to justify this construction from a group-theoretic perspective.

\subsection{Signature conditions}
Our definition of $M^{\spl}_{[k]}(n-1,1)$ requires that the pair $(\CF_{\Lambda},\CF_{\Lambda^\sharp})$ defines an $R$-point of the canonical local model. In practice, characterizing the points of the canonical local model is a subtle problem; we refer the reader to the introduction of \cite{LUO2024} for further discussion.
In our setting, $M^{\loc}_{[k]}(n-1,1)$ admits a moduli description via the \emph{strengthened spin condition} introduced in \cite{Smithling2015}. However, this condition is intricate and often difficult to verify in applications. We prove the following:

\begin{theorem}[Thm. \ref{thm:Kramer-vs-ss}]\label{prop:Kramer-imply-ss}
When $(r,s)=(n-1,1)$ and $k$ is not $\pi$-modular, the Kr\"amer condition implies the strengthened spin condition. 
\end{theorem}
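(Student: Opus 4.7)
The plan is to split the verification into two parts: the Kottwitz and wedge conditions, which are essentially immediate from the Kr\"amer condition, and then the refined spin condition itself, which is the more delicate ingredient of the strengthened spin condition and requires a local computation in an adapted basis.

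First I would observe that the Kr\"amer condition~\eqref{equ:Kramer-cond} equips $\sF_\Box$ with a canonical two-step filtration $0\subset\sF^0_\Box\subset\sF_\Box$ by $R$-direct summands of ranks $1$ and $n$. On the graded pieces the element $\Pi$ acts as multiplication by $\pi$ on $\sF^0_\Box$ and by $-\pi$ on $\sF_\Box/\sF^0_\Box$. Hence locally on $\Spec R$, a basis of $\sF_\Box$ adapted to this filtration puts the matrix of $\Pi\mid\sF_\Box$ into upper triangular form with diagonal $(\pi,-\pi,\ldots,-\pi)$. This immediately yields the Kottwitz determinant condition $\det(T\cdot\id-\Pi\mid\sF_\Box)=(T-\pi)(T+\pi)^{n-1}$, the wedge condition $\bigwedge^{2}(\Pi+\pi\mid\sF_\Box)=0$ (since the image of $\Pi+\pi$ has rank at most one), and the wedge condition $\bigwedge^{n}(\Pi-\pi\mid\sF_\Box)=0$ (since $\Pi-\pi$ has a zero on the diagonal).

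Next I would verify the refined spin portion. The strengthened spin condition of~\cite{Smithling2015} asserts that the line $\bigwedge^{n}\sF_\Box$ sits inside a distinguished direct summand of $\bigwedge^{n}(\Box\otimes_{O_{F_0}}R)$ indexed by the signature $(n-1,1)$, cut out by sign conditions on multi-indices in a fixed split basis. Using the local basis $(v_1,\ldots,v_n)$ of $\sF_\Box$ above, with $v_1$ generating $\sF^0_\Box$, I would expand $v_1\wedge\cdots\wedge v_n$ in this split basis. The identities $\Pi v_1=\pi v_1$ and $\Pi v_i\equiv-\pi v_i\pmod{R\cdot v_1}$ for $i\geq 2$ mean that modulo $\sF^0_\Box$ the vectors $v_2,\ldots,v_n$ lie in the $(-\pi)$-eigenspace of $\Pi$. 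Expanding the top wedge, all but one factor lies in this eigenspace while the remaining factor sits in the $(+\pi)$-eigenspace, so the multi-indices that survive are precisely those predicted by the spin summand for signature $(n-1,1)$.

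The main obstacle will be the combinatorial bookkeeping needed to align the $\pm$-eigenspace decomposition produced by the Kr\"amer filtration with the sign-graded decomposition of the top exterior power used in~\cite{Smithling2015}, as well as the parallel verification for $\Box=\Lambda^\sharp$ and its compatibility with the transition maps in~\eqref{equ:splitting-model}. Once the conventions are pinned down, the rank-one constraint on $(\Pi+\pi)\sF_\Box$ collapses the multi-index sum to the single signature type compatible with $(n-1,1)$, and the proof reduces to a rigid local calculation.
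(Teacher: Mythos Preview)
Your treatment of the Kottwitz and wedge conditions is correct and standard. The gap is in the spin step. You argue entirely from the filtration on a \emph{single} $\sF_\Box$: the rank-one structure of $(\Pi+\pi)|_{\sF_\Box}$ is all you invoke, and you treat the compatibility with transition maps as bookkeeping. But the strengthened spin condition does not follow from this data alone. In the affine chart around the worst point one writes $\CX+\pi I=\bbs\blambda^t$ (a generic rank-one matrix), and the strengthened spin relations of Proposition~\ref{thm: local model}(v)---for instance $X_4=X_4^{\ad}$ and $D=-2\pi I-A^{\ad}$---translate into nontrivial constraints such as $\blambda_2\propto H\bbs_2$ on the rows and columns of that rank-one matrix. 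With $\bbs,\blambda$ unconstrained these fail, so your eigenspace expansion of $v_1\wedge\cdots\wedge v_n$ cannot land in the correct summand without further input.

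The paper's proof supplies exactly this missing input by coupling $\sF^0_k$ and $\sF^0_{n-k}$: the isotropy axiom~\ref{item:LM4} gives $\CY=\Upsilon\CX^t\Upsilon^{-1}$, hence $\blambda\propto\Upsilon^t\bt$, and the transition axiom~\ref{item:SP3} on the rank-one subfiltrations forces the proportionalities $\bbs_i\propto\bt_i$ (up to factors of $\pi$) in each block. These two ingredients together yield the required relations (Lemmas~\ref{lem:vs-1}--\ref{lem:vs-3}), and neither can be omitted. So the transition-map compatibility is not combinatorial bookkeeping but the heart of the argument; your plan should promote~\ref{item:SP3} and~\ref{item:LM4} from an afterthought to the main mechanism, or else exhibit a genuinely different way to constrain $\bbs,\blambda$.
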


Therefore, we can drop the strengthened spin condition when defining the splitting model, see \S \ref{sec:intro special cycles} or \S \ref{sec:RZ}.
Consequently, we obtain an alternative definition of the naive splitting model, which proves useful in practice, see \S\ref{subsec:comments} below.
\begin{remarks}
If we drop the strengthened spin condition when defining the naive splitting model in the very beginning, it is not clear that we have a natural forgetful functor $\tau:M^{\nspl}_{[k]}(n-1,1)\rightarrow M^{\loc}_{[k]}(n-1,1)$.
But this is clear within the strengthened spin condition.
Also, it is not clear to us whether Proposition \ref{prop:Kramer-imply-ss} holds for general signature $(r,s)$. 
\end{remarks}

\subsection{Special cycles on Rapoport--Zink spaces}\label{sec:intro special cycles}
Although the construction of $M^{\spl}_{[k]}(n-1,1)$ via scheme-theoretic closure obscures its moduli interpretation, we are nevertheless able to define two classes of special cycles on the associated splitting Rapoport–Zink (RZ) space and to verify certain useful properties that do not hold in the canonical RZ space.

Fix a basic framing object $\BX$ of dimension $n$ and type $2k$ over $\Spec \BF$, where $\BF_q$ is the residue field of $O_{F_0}$ and $\BF=\ov{\BF}_q$. See \S \ref{sec:RZ spaces} for the precise definition. 
Associate to such a framing object is the \emph{canonical RZ space} $\CN_{\BX}$ over $\Spf O_{\breve{F}}$. The \emph{naive splitting RZ space} $\CN_{\BX}^{\nspl}$ is defined over $\CN_{\BX}$ by classifying additional splitting structures. We then define the \emph{splitting RZ space} $\CN_{\BX}^{\spl}$ as the flat closure of $\CN_{\BX}^{\nspl}$. 
For precise definitions, we refer the reader to \S \ref{sec:RZ}. The terminology is motivated by the corresponding local model constructions.

Let $\BE$ be the unique Lubin-Tate $O_F$-module over $\BF$ and let $\CE$ be its canonical lifting to $\Spf O_{\breve{F}}$.
Define $\BV=\Hom_{O_F}(\BE,\BX)\otimes\BQ$ to be the space of special quasi-homomorphisms of the framing objects. It carries a natural hermitian form $h(\cdot,\cdot)$.
For any $x\in\BV$, we define the \emph{Kudla--Rapoport special cycles} $\CZ(x)$ and $\CY(x)$ as closed formal subschemes of $\CN_{\BX}$.
We define the \emph{Kudla--Rapoport special cycles} $\CZ(x)^{\spl}$ and $\CY(x)^{\spl}$ on $\CN^{\spl}_{\BX}$ as the pull-back along the natural projection $\CN_\BX^{\spl}\to \CN_{\BX}$. We refer the reader to Definition \ref{def:special cycle} for the precise formulation. We prove the following:
\begin{proposition}[Prop. \protect{\ref{prop:special-cartier}}]\label{prop:introduction divisor}
Let $x\in\BV$ be any nonzero vector.
If the closed formal subschemes $\CZ(x)^{\spl}\subseteq\CN^{\spl}_{\BX}$ and $\CY(x)^{\spl}\subseteq\CN^{\spl}_{\BX}$ are nonempty, then they are Cartier divisors.
\end{proposition}
A key step in the proof of Proposition \ref{prop:introduction divisor} is to relate special cycles to sections of the \emph{line bundles of modular forms} $\omega_\CZ$ (resp. $\omega_\CY$) in the square-zero extension of special cycles. 

\begin{remark}
\begin{altenumerate}
\item See also \cite[Theorem 6.1.3]{LRZ25} for an extreme example in which special cycles on $\CN_{\BX}$ fail to be Cartier divisors.
In the forthcoming work \cite{LSR}, we will show that, for most maximal parahoric level structures at ramified places, special cycles are likewise not Cartier divisors.
\item The nonemptiness of the special divisor can be explicitly checked by Bruhat--Tits stratification for reduced locus of the RZ space or the splitting RZ space studied in \cite{HLS-basic,Zachos--Zhao_regular-basic}, using arguments similar to those of \cite{HSY,HLSY}.
\end{altenumerate}
\end{remark}
By Proposition \ref{prop:introduction divisor}, when $x\neq 0$, the special divisors $\CZ(x)^{\spl}$ and $\CY(x)^{\spl}$ define classes $[\CZ(x)^{\spl}]$ and $[\CY(x)^{\spl}]$ in the Grothendieck group $K'_0(\CN_{\BX}^{\spl})$. We refer the reader to \S \ref{sec:specialcycleinsplittingmodel} for the definition of the $K$-group.
We define $[\CZ(0)^{\spl}]$ resp. $[\CY(0)^{\spl}]$ as the classes in $K_0'(\CN_{\BX}^{\spl})$ of the following two-term complexes of $\CO_{\CN_{\BX}^{\spl}}$-modules concentrated in degrees $-1$ and $0$:
\begin{equation*}
	(\cdots \lr 0\lr\omega_{\CZ}^{-1}|_{\CN_{\BX}^{\spl}}\overset{0}{\lr}\CO_{\CN_{\BX}^{\spl}}\lr 0),
	\quad\text{resp.}\quad
	(\cdots \lr 0\lr\omega_{\CY}^{-1}|_{\CN_{\BX}^{\spl}}\overset{0}{\lr}\CO_{\CN_{\BX}^{\spl}}\lr 0).
\end{equation*}
We prove the linear invariance of the intersection of special divisors, which generalizes the main result of \cite{Howard2019}.
\begin{proposition}\label{prop:introduction linear invariance}
Assume that $x_1,\cdots,x_r\in \BV$ and $y_1,\cdots y_r\in\BV$ generate the same $O_F$-submodule. Then we have
\begin{equation*}
	[\CZ(x_1)^{\spl}]\cdot\ldots\cdot[\CZ(x_r)^{\spl}]=[\CZ(y_1)^{\spl}]\cdot\ldots\cdot[\CZ(y_r)^{\spl}]\in K'_0(\CN_{\BX}^{\spl}).
\end{equation*}
Similarly, we have
\begin{equation*}
	[\CY(x_1)^{\spl}]\cdot\ldots\cdot[\CY(x_r)^{\spl}]=[\CY(y_1)^{\spl}]\cdot\ldots\cdot[\CY(y_r)^{\spl}]\in K'_0(\CN_{\BX}^{\spl}).
\end{equation*}
\end{proposition}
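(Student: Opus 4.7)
The plan is to follow the approach of Howard \cite{Howard2019}: realize each special cycle $\CZ(x)_S$ (resp. $\CY(x)_S$) for $x\in\BV$ as the zero locus on $S$ of a global section $s^{\CZ}_x\in H^0(S,\omega_\CZ^{-1})$ (resp. $s^{\CY}_x\in H^0(S,\omega_\CY^{-1})$) in a way that is $O_F$-linear in $x$. Once such a linear assignment is in hand, the derived intersection is governed by a Koszul complex, and the statement reduces to the standard invariance of the $K_0'$-class of a Koszul complex under an $O_F$-change of basis of its defining sections.

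The first step is then to produce this $O_F$-linear map $\BV\to H^0(S,\omega_\CZ^{-1})$. For nonzero $x$, the proof of Proposition \ref{prop:introduction divisor} already exhibits $\CZ(x)_S$ as the vanishing locus of a canonically defined section of $\omega_\CZ^{-1}$: the obstruction to deforming the quasi-homomorphism $x:\BX_0\to\BX$ along the universal first-order deformation of $S$ is naturally valued in $\omega_\CZ^{-1}$, and this obstruction is exactly $s^{\CZ}_x$. Extending by $s^{\CZ}_0:=0$, the resulting map $x\mapsto s^{\CZ}_x$ is $O_F$-linear, since both the formation of the de Rham realization of the universal quasi-homomorphism and the projection onto $\omega_\CZ^{-1}$ are $O_F$-linear. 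Composing with the polarization $\lambda:\BX\to\BX^\vee$ yields the analogous $O_F$-linear map $x\mapsto s^{\CY}_x\in H^0(S,\omega_\CY^{-1})$ cutting out $\CY(x)_S$.

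With the sections constructed, the two-term complex $[\omega_\CZ\xrightarrow{s^{\CZ}_{x_i}}\CO_S]$ represents $[\CZ(x_i)_S]$ in $K_0'(S)$: this is Proposition \ref{prop:introduction divisor} when $x_i\neq 0$ and is the very definition of $[\CZ(0)_S]$ when $x_i=0$. Tensoring over $i$, the derived intersection is computed by the total Koszul complex on the sections,
\[ [\CZ(x_1)_S\otimes^{\BL}\cdots\otimes^{\BL}\CZ(x_r)_S] = [K(s^{\CZ}_{x_1},\ldots,s^{\CZ}_{x_r})] \in K_0'(S). \]
Since $(x_1,\ldots,x_r)$ and $(y_1,\ldots,y_r)$ generate the same $O_F$-submodule of $\BV$, they are related by a matrix in $\GL_r(O_F)$, which by the $O_F$-linearity of $x\mapsto s^{\CZ}_x$ acts identically on the corresponding tuples of sections. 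The standard fact that the $K_0'$-class of a Koszul complex on sections of a line bundle is invariant under such a $\GL_r(O_F)$-action then yields the equality. The $\CY$-case is identical, with $\omega_\CZ$ and $s^{\CZ}$ replaced by $\omega_\CY$ and $s^{\CY}$.

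The main difficulty lies not in the Koszul formalism, which is routine once everything is in place, but in constructing the $O_F$-linear map $x\mapsto s^{\CZ}_x$ and its $\CY$-analog, together with verifying that the two-term complex defining it really represents $[\CZ(x_i)_S]$ uniformly in $x_i$ (including at $x_i=0$). This requires a careful globalization of the deformation-theoretic argument underlying Proposition \ref{prop:introduction divisor}, and genuinely depends on the splitting structure: on the canonical (non-splitting) RZ space the $\CZ$-cycles need not even be Cartier, so the very existence of the line bundles $\omega_\CZ$ and $\omega_\CY$, whose definition rests on the rank-one filtrations $\Fil^0(X)\subset\Fil X$ and $\Fil^0(X^\vee)\subset\Fil X^\vee$, is indispensable.
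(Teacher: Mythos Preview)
Your proposal is essentially correct and follows the same approach as the paper: both defer the homological algebra to Howard \cite{Howard2019}, with the inputs being that $\CZ(x)_S$ (resp.\ $\CY(x)_S$) is a Cartier divisor for $x\neq 0$, that the obstruction to deformation takes values in the line bundle $\omega_\CZ^{-1}$ (resp.\ $\omega_\CY^{-1}$), and that $S$ is regular. Your observation that two $r$-tuples generating the same submodule of the torsion-free $O_F$-module $\BV$ are related by an element of $\GL_r(O_F)$ is correct and makes the Koszul step transparent.

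One small imprecision: you assert a global $O_F$-linear map $\BV\to H^0(S,\omega_\CZ^{-1})$ with $\CZ(x)_S=Z(s^\CZ_x)$, but what the paper actually establishes (following Howard) is the obstruction $\obst(x)\in H^0\bigl(\wt\CZ(x)_S,\omega_\CZ^{-1}\bigr)$ on the first-order neighborhood, not on all of $S$. The paper accordingly phrases the input as ``locally choose sections of a line bundle as obstructions'' rather than producing a global section. You acknowledge that ``careful globalization'' is needed, and indeed the passage from local obstructions to the global $K_0'$ statement is exactly what is carried out in \cite[\S 5]{Howard2019}; your sketch is the correct heuristic for that argument but should not be read as asserting that the global section literally exists without further work. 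Also note a minor bookkeeping slip: the complex defining $[\CZ(0)_S]$ in the paper is $[\omega_\CZ^{-1}\xrightarrow{0}\CO_S]$, not $[\omega_\CZ\to\CO_S]$, so the line bundle appearing in the two-term resolution should be $\omega_\CZ^{-1}$ throughout.
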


\subsection{Comments}\label{subsec:comments}
The main case we consider is the  \emph{strongly non-special} case, see Remark \ref{rem:type-name}.
This case exhibits the worst ramification compared to earlier works.
In this setting, the ``exceptional divisor'' of the naive splitting model has two irreducible components, one of which does not lift to the generic fiber.
An interesting direction for future research is to study the differences between these two components and to identify conditions that distinguish them. 
This may eventually lead to a moduli-theoretic characterization of the splitting model.

We also mention a recent paper by Zachos and Zhao \cite{zachos2024semistable}, where they introduce a different splitting local model $M^{\mathrm{ZZ}}_{[k]}(r,s)$ defined by adding the additional filtration $\CF^0_{\Lambda}$, (but not $\CF^0_{\Lambda^\sharp}$). 
They further construct $M_{[k]}^{\mathrm{bl}}$ by blowing up the preimage of the worst point in $M^{\mathrm{ZZ}}_{[k]}(r,s)$.
They show that $M^{\mathrm{ZZ}}_{[k]}(n-1,1)$ and $M_{[k]}^{\mathrm{bl}}(n-1,1)$
are flat, normal and Cohen-Macaulay, and that $M_{[k]}^{\mathrm{bl}}(n-1,1)$ is semi-stable. In later work, Zachos and Zhao \cite[Appendix]{Zachos--Zhao_regular-basic} prove that their $M_{[k]}^{\mathrm{bl}}(n-1,1)$ coincides with our splitting model $M^{\spl}_{[k]}(n-1,1)$.

However, since their construction introduces filtrations only on one side, the spaces
$M^{\mathrm{ZZ}}_{[k]}(n-1,1)$
and
$M^{\mathrm{ZZ}}_{[k]}(1,n-1)$
are not isomorphic.
In fact, a direct computation shows that $M^{\mathrm{ZZ}}_{[k]}(1,n-1)$ is not flat in general, due to similar reasons underlying the failure of flatness in $M^{\nspl}_{[k]}(n-1,1)$. 
It remains an interesting problem to give a more geometric explanation of the failure of flatness and to find a moduli interpretation of the flat integral model.

\subsection{The structure of the paper}
The organization of the paper is as follows.
In Section \ref{sec:moduli}, we define the splitting model for any signature and establish some geometric properties.  
For the remaining sections, we will assume the signature is $(r,s)=(n-1,1)$.
In Section \ref{sec:regularity}, we prove that the Kr\"amer condition implies the strengthened spin condition, and the splitting model is semi-stable.
In Section \ref{sec:resolution}, we prove that the splitting model is the blow-up of the canonical local model at the worst point.
In Section \ref{sec:RZ spaces}, we define and study the corresponding splitting Rapoport--Zink spaces and special cycles. We prove that special cycles are Cartier divisors and that they satisfy the linear invariance property.

\subsection{Acknowledgement}
We want to thank Michael Rapoport for his encouragement and his comments on the early drafts. We also want to thank the anonymous referees for their insights. 
Q. He and Y. Luo want to thank the Institute for Advanced Study in Mathematics and the School of Mathematical Sciences at Zhejiang University for its hospitality during the summer of 2024 when part of this work was done. Y. Shi wants to thank the Max Planck Institute for Mathematics for its hospitality during the spring of 2025 when part of the work is done. Y. Shi is supported by the start-up grant and Qizhen Grant of Zhejiang University.

\subsection{Notations}\label{sec:notation}
\begin{altitemize}
\item For any $O_F$-algebra $R$ with a structure map $s:O_F\rightarrow R$, we denote $\Pi:=\pi\otimes 1$ and $\pi:=1\otimes s(\pi)$. 
\item For an integer $0\leq \alpha\leq n$, we define $\alpha^\vee:=n+1-\alpha$.
\item Let $R$ be a DVR and let $X\rightarrow \Spec R$ be an arithmetic scheme. We denote by $X_{\eta}$ its generic fiber and $X_s$ its special fiber.
\item We consider matrices:
\begin{equation*}
	H=
    \left(\begin{array}{ccc}
        &        &   1\\
        &\iddots &   \\
    1   &        & 
    \end{array}\right),
    \quad
    J=
    \left(\begin{array}{cc}
        &   H\\
    -H&
    \end{array}\right),
    \quad
    \Upsilon=\left(\begin{array}{ccc}&&H\\&H&\\-H&&\end{array}\right).    
\end{equation*}
For any matrix $G$, we define $G^{\ad}:=HG^tH$. 
\end{altitemize}

\section{Moduli problems}\label{sec:moduli}
\subsection{Basic setup}\label{moduli_setup}
Let $O_F/O_{F_0}$ be a quadratic extension of complete discrete valuation rings with the same perfect residue field $\BF_q$ with $\mathrm{char}(\BF_q)=p\neq 2$ and uniformizers $\pi$, resp. $\pi_0$ such that $\pi^2=\pi_0$.

Consider the vector space $F^n$ with a set of $F$-basis $e_1,\cdots,e_n$ which we call the \emph{standard basis}. Let
\[
    \phi: F^n\times F^n\rightarrow F
\]
be the $F/F_0$-Hermitian form which is split with respect to the standard basis, that is,
\begin{equation}\label{moduli_setup:hermitian}
    \phi(ae_i,be_j)=\bar{a}b\delta_{ij^\vee},\quad a,b\in F,
\end{equation}
where $a\mapsto\bar{a}$ is the nontrivial element of $\Gal(F/F_0)$ and $j^\vee=n+1-j$. 
Attached to $\phi$ are the respective alternating and symmetric $F_0$-bilinear forms
\[
    F^n\times F^n\rightarrow F_0
\]
given by
\[
    \langle x,y\rangle:=\frac{1}{2}\tr_{F/F_0}(\pi^{-1}\phi(x,y))
    \quad\text{and}\quad
    (x,y):=\frac{1}{2}\tr_{F/F_0}(\phi(x,y)).
\]

For each integer $i=bn+c$ with $0\leq c<n$, define the standard $O_F$-lattice
\begin{equation}\label{moduli_setup:lattice}
    \Lambda_i:=\sum_{j=1}^c\pi^{-b-1}O_F e_j+\sum_{j=c+1}^n\pi^{-b}O_F e_j\subset F^n.
\end{equation}

For all $i$, the $\langle\,,\,\rangle$-dual of $\Lambda_i$ in $F^n$ is $\Lambda_{-i}$, by which we mean
\[
    \{x\in F^n \mid \langle \Lambda_i,x\rangle\subset O_{F_0}\}=\Lambda_{-i},
\]
and
\begin{equation}\label{moduli_setup:dual}
    \Lambda_i\times\Lambda_{-i}\xrightarrow{\langle\, , \,\rangle}  O_{F_0}
\end{equation}
is a perfect $ O_{F_0}$-bilinear pairing.
Similarly, $\Lambda_{n-i}$ is the $(\,,\,)$-dual of $\Lambda_i$ in $F^n$. The $\Lambda_i$'s form a complete, periodic, self-dual lattice chain
\[
    \cdots\subset\Lambda_{-2}\subset\Lambda_{-1}\subset\Lambda_0\subset\Lambda_1\subset\Lambda_2\subset\cdots.
\]

For an integer $0\leq k\leq \frac{n}{2}$, let $[k]=\{\pm k\}+n\BZ\subset \BZ$, we will be interested in the sub-lattice chain $\CL_{[k]}$:
\begin{equation}\label{equ:CLk lattice chain}
    \cdots\subset \Lambda_{-k}\subset\Lambda_k\subset \Lambda_{n-k}\subset\cdots.
\end{equation}

\begin{remark}\label{rem:type-name}
We classify the cases according to different choices of $k$, and assign the following names:
\begin{itemize}
\item The case $k=0$ is called the \emph{self-dual} case.
\item The case $n=2m$ and $k=m$ is called the \emph{$\pi$-modular} case.
\item The case $n=2m+1$ and $k=m$ is called the \emph{almost $\pi$-modular} case.
\item The case $n=2m$ and $k=m-1$ is called \emph{Yu's} case.
\item All other cases are called \emph{strongly non-special}.
\end{itemize}
\end{remark}

\subsection{Canonical local model}\label{sec:canonical-model}
Fix an integer $0\leq k\leq \lfloor \frac{n}{2}\rfloor$.
We recall the definition of the canonical local model with index $[k]$.
Let $r,s$ be natural numbers such that $r+s=n$.
Let $E=F_0$ when $r=s$ and $E=F$ otherwise.
\begin{definition}
(i) The \emph{naive local model $M_{[k]}^\naive(r,s)$} is a projective scheme over $\Spec  O_E$.  It represents the moduli functor that sends each $ O_E$-algebra $R$ to the set of all families
\[
   (\CF_i \subset \Lambda_i \otimes_{ O_{F_0}}R)_{i\in [k]}
\]
such that
\begin{enumerate}[label=(LM\arabic*)]
\item\label{item:LM1}
for all $i$, $\CF_i$ is an $ O_F \otimes_{ O_{F_0}} R$-submodule of $\Lambda_i \otimes_{ O_{F_0}} R$, and an $R$-direct summand of rank $n$;
\item\label{item:LM2}
for all $i < j$, the natural arrow $\lambda_{ij}:\Lambda_i \otimes_{ O_{F_0}} R \to \Lambda_j \otimes_{ O_{F_0}} R$ induced by the inclusion $\Lambda_i\hookrightarrow\Lambda_j$, maps $\CF_i$ into $\CF_j$;
\item\label{item:LM3}
for all $i$, the isomorphism $\Lambda_i \otimes_{ O_{F_0}} R \xra[\undertilde]{\pi \otimes 1} \Lambda_{i-n} \otimes_{ O_{F_0}} R$ identifies $\CF_i$ with $\CF_{i-n}$;
\item\label{item:LM4}
for all $i$, the perfect $R$-bilinear pairing
\[
   (\Lambda_i \otimes_{ O_{F_0}} R) \times (\Lambda_{n-i} \otimes_{ O_{F_0}} R)
   \xra{(\,,\,) \otimes R} R
\]
identifies $\CF_i^\perp$ with $\CF_{n-i}$ inside $\Lambda_{n-i} \otimes_{ O_{F_0}} R$, here $\CF_i^{\perp}$ is the orthogonal complement of $\CF_i$ with respect to the above perfect pairing.
\item\label{item:LM5}(Kottwitz condition) for all $i$, the element $\Pi \in  O_F \otimes_{ O_{F_0}} R$ acts on $\CF_i$ as an $R$-linear endomorphism with characteristic polynomial 
\[
   \det(T\cdot \id - \Pi \mid \CF_i) = (T+\pi)^r(T-\pi)^s \in R[T].
\]
\end{enumerate}

\noindent (ii) We have a natural embedding of the generic fiber into the naive local model:
\begin{equation}\label{eq:embedding of the naive generic fiber}
\begin{aligned}
\xymatrix{
M^{\naive}_{[k],\eta}(r,s)\ar@{^(->}[r]& M_{[k]}^{\naive}(r,s).
}
\end{aligned}
\end{equation}
We define the local model $M^{\loc}_{[k]}(r,s)$ to be the scheme-theoretic closure of this embedding.
\end{definition}

The following theorem is due to \cite{Pappas2000,Arzdorf,Smithling2015,Yu2019,LUO2024}, among others.
\begin{theorem}\label{thm:canonical-LM}
Let $(r,s)=(n-1,1)$, and assume that $k$ is not $\pi$-modular in (i)-(v) below.
\begin{altenumerate}    
\item The local model is flat, normal and Cohen-Macaulay of dimension $n$.
\item The special fiber is a reduced scheme whose irreducible components are all normal and Cohen–Macaulay of dimension $n-1$.
\item The point on the special fiber corresponding to $(\Pi\Lambda_{i,\BF_q} \subset \Lambda_{i,\BF_q})$ is referred to as the \emph{worst point}. All irreducible components are smooth except at this point.
\item The local model admits a moduli interpretation via the strengthened spin condition (see \cite[\S 1.4]{LUO2024} for more detailed discuss in each case.).
\item The geometry of the special fiber varies according to the value of $k$:
\begin{itemize}
\item\, When $k=0$, the special fiber is irreducible.
\item\, When $n=2m+1$ and $k=m$, the special fiber is smooth and irreducible,
\item\, When $n=2m$ and $k=m-1$, the special fiber has three irreducible components.
\item\, When $k$ is strongly non-special, the special fiber has two irreducible components.
\end{itemize}
\item If $k$ is $\pi$-modular, then the local model is smooth, connected, of dimension $n$, and does not contain the worst point.
\end{altenumerate}
\end{theorem}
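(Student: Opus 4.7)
The plan is to verify all the assertions case by case according to the classification of $k$ given in part (5), together with the separate $\pi$-modular case of part (6). The overall strategy is two-fold: (i) first establish the moduli-theoretic description in part (4) by showing that the strengthened spin condition of \cite{Smithling2015} cuts out precisely the flat closure $M^{\loc}_{[k]}(n-1,1)$ inside the naive local model $M^{\naive}_{[k]}(n-1,1)$; (ii) then analyze the resulting scheme via explicit affine charts around the worst point $(\Pi\Lambda_{i,\kappa}\subset\Lambda_{i,\kappa})$, from which flatness, normality, Cohen--Macaulayness, the list of irreducible components, and the smoothness of each component away from the worst point can all be read off. For step (i), one observes that the strengthened spin condition defines a closed subscheme of $M^{\naive}_{[k]}(n-1,1)$ that already holds on the generic fiber; equality with $M^{\loc}_{[k]}(n-1,1)$ therefore reduces to proving flatness of the cut-out ``spin local model''.

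I would then handle the cases in turn. The self-dual case $k=0$ is due to Pappas \cite{Pappas2000}, who introduced the spin condition and verified flatness together with the irreducibility of the special fiber via explicit affine charts. The almost $\pi$-modular case $n=2m+1$, $k=m$ is handled by Arzdorf \cite{Arzdorf}: here the naive local model is already flat with smooth special fiber, so the strengthened spin condition is automatic and the local model is smooth and irreducible. The $\pi$-modular case of (6) is treated in \cite{PR2009}, where smoothness again forces the naive and canonical models to coincide and the worst point to disappear. Yu's case $n=2m$, $k=m-1$ is settled in \cite{Yu2019}, where a refined chart computation isolates the three irreducible components and verifies their geometric properties. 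The remaining strongly non-special case is handled in \cite{LUO2024}, by relating $M^{\loc}_{[k]}(n-1,1)$ to higher-level parahoric local models via embeddings of lattice chains, thereby identifying the two irreducible components and confirming flatness, normality and Cohen--Macaulayness.

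The main obstacle is the strongly non-special case. Unlike the other cases, one must simultaneously track the two lattices $\Lambda_{-k}$ and $\Lambda_k$, whose associated graded pieces $\Lambda_{-k}/\Lambda_k$ and $\Lambda_k/\pi\Lambda_{-k}$ carry symplectic and orthogonal structures respectively, and the strengthened spin condition couples the two pieces of filtration data in a nontrivial way. The approach in \cite{LUO2024} circumvents a direct attack by embedding $\{\Lambda_{-k},\Lambda_k\}$ into a larger self-dual lattice chain, so that $M^{\loc}_{[k]}(n-1,1)$ appears inside a better-understood ambient model; this produces the count of two irreducible components in the special fiber and the required local charts at the worst point. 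Once these charts are in hand, all remaining assertions in (1)--(3) follow from standard commutative-algebra arguments applied to the explicit defining equations.
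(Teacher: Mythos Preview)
The paper does not prove this theorem at all: it is stated as a known result with the attribution ``The following theorem is due to \cite{Pappas2000,Arzdorf,Smithling2015,Yu2019,LUO2024}, among others.'' Your approach of assembling the statement from these references case by case is therefore exactly what the paper does, and no further argument is expected.

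That said, several of your attributions and descriptions of the cited works are inaccurate, and would mislead a reader. First, Pappas \cite{Pappas2000} introduced the \emph{wedge} condition, not the spin condition; the spin and strengthened spin conditions are later refinements due to Pappas--Rapoport and Smithling. Second, in the almost $\pi$-modular case the naive local model is \emph{not} flat; this is precisely the case where Smithling \cite{Smithling2015} formulated the strengthened spin condition to cut out the flat closure, so your claim that ``the strengthened spin condition is automatic'' there is backwards. Third, your description of the method in \cite{LUO2024} as ``embedding $\{\Lambda_{-k},\Lambda_k\}$ into a larger self-dual lattice chain'' is not how that paper proceeds; the argument there is a direct affine-chart analysis verifying that the strengthened spin condition gives flatness (see the paper's Proposition~\ref{thm: local model}, which quotes \cite[Theorem 8.0.1]{LUO2024}). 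If you want to keep a case-by-case sketch, correct these points; otherwise, a bare citation as in the paper suffices.
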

\begin{proof}
Parts (i)-(iv) are summarized in \cite[Theorem 1.3.2]{LUO2024}. Part (v) is due to \cite{Pappas2000} when $k=0$, \cite{Arzdorf} when $n=2m+1$ and $k=m$, \cite{Yu2019} when $n=2m$ and $k=m-1$, \cite{LUO2024} in the remaining cases. See also \cite[Rem. 4.3.6]{LUO2024} for a detailed discussion. Part (vi) is due to \cite{PR2009}.
\end{proof}

\subsection{Splitting model}\label{sec:splitting-model}
We retain the assumptions in \S \ref{sec:canonical-model}.
\begin{definition}\label{defn:spl-model}
\noindent (i) We define the \emph{Kr\"amer model} $M^{\Kra}_{[k]}(r,s)$ as the moduli functor that assigns to each $ O_F$-algebra $R$ the families
\[
    (\CF_i^0\subset\CF_i\subset\Lambda_i\otimes_{ O_{F_0}}R)_{i\in [k]}
\]
such that 
\begin{enumerate}[label=(SP\arabic*)]
\item\label{item:SP1'}
The filtration $(\CF_i\subset\Lambda_i\otimes_{ O_{F_0}}R)_{i\in[k]}$ satisfies \ref{item:LM1}-\ref{item:LM4};
\item\label{item:SP2}
$\CF_i^0$ is an $O_F\otimes_{ O_{F_0}}R$-submodule of $\Lambda_i\otimes_{ O_{F_0}}R$ and an $R$-direct summand of rank $s$;
\item\label{item:SP3}
For each $i<j$, the morphism $\Lambda_i\otimes_{ O_{F_0}}R\rightarrow \Lambda_j\otimes_{ O_{F_0}}R$ maps $\CF_i^0$ into $\CF_j^0$:
\begin{equation*}
\begin{aligned}
\xymatrix{
\Lambda_i\otimes_{ O_{F_0}}R\ar[r]&\Lambda_j\otimes_{ O_{F_0}}R\\
\CF_i^0\ar@{^(->}[u]\ar[r]&\CF_j^0\ar@{^(->}[u]
}
\end{aligned}
\end{equation*}
\item\label{item:SP4}
The isomorphism $\Lambda_i\otimes_{ O_{F_0}}R\longrightarrow\Lambda_{i-n}\otimes_{ O_{F_0}}R$ induced by $\Lambda_i\overset{\pi\otimes 1}{\longrightarrow}\Lambda_{i-n}$ identifies $\CF_i^0$ with $\CF_{i-n}^0$;
\item\label{item:SP5}
\quad $(\Pi+\pi)(\CF_i)\subset\CF_i^0$;
\item\label{item:SP6}
\quad $(\Pi-\pi)(\CF_i^0)=(0)$.
\end{enumerate}

\noindent (ii)  We define the \emph{naive splitting model} $M_{[k]}^{\nspl}(r,s)$ as the closed subfunctor of $M^{\Kra}_{[k]}(r,s)$ which associates to each $ O_F$-algebra $R$ the families
\[
    (\CF_i^0\subset\CF_i\subset\Lambda_i\otimes_{ O_{F_0}}R)_{i\in [k]}\in M^{\Kra}_{[k]}(r,s)
\]
which further satisfy:
\begin{enumerate}[label=(SP\arabic*')]
\item\label{item:SP1}
The filtration $(\CF_i\subset\Lambda_i\otimes_{ O_{F_0}}R)_{i\in[k]}$ defines an $R$-point of $M^{\loc}_{[k]}(r,s)$.
\end{enumerate}
By Theorem \ref{thm:canonical-LM}(iv), this is equivalent to require that the filtration satisfies the strengthened spin condition, see \cite[\S 2]{LUO2024}.

\noindent (iii) We have a natural embedding of the generic fiber into the naive splitting model:
\begin{equation*}
\begin{aligned}
\xymatrix{
M^{\nspl}_{[k],\eta}(r,s)\ar@{^(->}[r]& M_{[k]}^{\nspl}(r,s).
}
\end{aligned}
\end{equation*}
We define the \emph{splitting model} $M^{\spl}_{[k]}(r,s)$ to be the scheme-theoretic closure of this embedding.
\end{definition}

\begin{remark}\label{rem:kramer-ss}
The space $M_{[k]}^{\mathrm{Kra}}(r,s)$ was initially studied by \cite{Kramer2003} when $k=0$ and $(r,s)=(n-1,1)$.
When $k=0$, $M_{[k]}^{\mathrm{Kra}}(r,s)$ is flat. This is proved in loc.cit. for $(r,s)=(n-1,1)$, and extended to all $(r,s)$ in \cite{bijakowski2024geometry}.

For $(r,s)=(n-1,1)$ and strongly non-special $k$, we will show that $M_{[k]}^{\nspl}(n-1,1)\simeq M_{[k]}^{\mathrm{Kra}}(n-1,1)$, see Theorem \ref{thm:Kramer-vs-ss}.
In general, there is a chain of closed embeddings
\begin{equation*}
\begin{aligned}
\xymatrix{
M^{\spl}_{[k]}(r,s)\ar@{^(->}[r]& M^{\nspl}_{[k]}(r,s)\ar@{^(->}[r]&M_{[k]}^{\mathrm{Kra}}(r,s),
}
\end{aligned}
\end{equation*}
each of which becomes an isomorphism over the generic fiber.
Therefore, $M^{\spl}_{[k]}(r,s)$ can also be defined as the scheme-theoretic closure of the open embedding:
\begin{equation*}
\begin{aligned}
\xymatrix{
M_{[k],\eta}^{\mathrm{Kra}}(r,s)\ar@{^(->}[r]& M_{[k]}^{\mathrm{Kra}}(r,s).
}
\end{aligned}
\end{equation*}
It is worth noting that for general $(r,s)$ and $k$, the existence of the forgetful functor for $M^{\Kra}_{[k]}$ as in \eqref{equ:forgetful} is unclear, though such a functor would be useful in practice.
\end{remark}

\subsection{Basic properties}\label{sec:basic-properties}
We have the forgetful functor
\begin{equation}\label{equ:forgetful}
    \tau:M_{[k]}^{\nspl}(r,s)\longrightarrow M_{[k]}^{\loc}(r,s)\otimes_{O_E}O_F,\qquad (\CF^0_i,\CF_i)\mapsto \CF_i.
\end{equation}

\begin{lemma}\label{lem:generic-fiber}
	$\tau$ induces an isomorphism over the generic fibers.	
\end{lemma}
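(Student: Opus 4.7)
The plan is to show that on the generic fiber, the filtration $\CF_i^0$ is uniquely determined by $\CF_i$, and that it always exists, so that $\tau_\eta$ is a bijection on $R$-points for every $F$-algebra $R$. The key point is the splitting of $O_F \otimes_{O_{F_0}} R$ once $\pi_0$ becomes invertible.

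First I would set up the eigenspace decomposition. For any $F$-algebra $R$, using $\Pi^2 = \pi^2$ in $R$ together with the invertibility of $2\pi$, the Chinese remainder theorem yields
\begin{equation*}
O_F \otimes_{O_{F_0}} R \;=\; R[\Pi]/(\Pi-\pi)(\Pi+\pi) \;\xrightarrow{\sim}\; R \times R, \qquad \Pi \longmapsto (\pi,-\pi).
\end{equation*}
Consequently every $O_F \otimes_{O_{F_0}} R$-module $M$ that is $R$-flat splits canonically as $M = M^+ \oplus M^-$, with $\Pi$ acting by $\pi$ on $M^+$ and by $-\pi$ on $M^-$. Apply this to $\Lambda_i \otimes_{O_{F_0}} R$ and its submodule $\CF_i$: by the Kottwitz condition \ref{item:LM5}, the characteristic polynomial of $\Pi$ on $\CF_i$ is $(T+\pi)^r(T-\pi)^s$, so $\CF_i^+$ is a rank $s$ and $\CF_i^-$ a rank $r$ direct summand.

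Next I would exploit conditions \ref{item:SP5} and \ref{item:SP6} in this decomposition. On $\CF_i^+$ the operator $\Pi + \pi$ equals $2\pi$, which is invertible, while on $\CF_i^-$ it is zero; hence $(\Pi+\pi)(\CF_i) = \CF_i^+$. On the other hand, $(\Pi-\pi)(\CF_i^0) = 0$ forces $\CF_i^0 \subset \CF_i^+$. Combining these with $\CF_i^+ \subset \CF_i^0$ from \ref{item:SP5}, and comparing ranks (both equal $s$), we obtain $\CF_i^0 = \CF_i^+$. This shows $\tau_\eta$ is injective on $R$-points.

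For surjectivity, given any $R$-point $(\CF_i)$ of $M^{\loc}_{[k],\eta}(r,s)$, I would simply set $\CF_i^0 := \CF_i^+$. The $R$-direct summand property follows from the eigenspace decomposition; the chain compatibilities \ref{item:SP3} and \ref{item:SP4} are automatic because the splitting $(-)^\pm$ is functorial in $O_F \otimes_{O_{F_0}} R$-module maps and hence commutes with the transition maps $\lambda_{ij}$ and with $\pi \otimes 1$; and \ref{item:SP5}, \ref{item:SP6} are immediate from the definition of the eigenspaces. Taken together, this exhibits $\tau_\eta$ as an isomorphism. There is no real obstacle here beyond recording that the scheme-theoretic closure used in defining $M^{\loc}_{[k]}$ does not alter the generic fiber, so the $R$-points of $M^{\loc}_{[k],\eta}$ are exactly those satisfying \ref{item:LM1}--\ref{item:LM5}.
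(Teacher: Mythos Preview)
Your proof is correct and rests on the same key fact as the paper's: over the generic fiber, $O_F\otimes_{O_{F_0}}R$ splits via the idempotents $\tfrac{1}{2\pi}(\Pi\pm\pi)$, so the extra datum $\CF_i^0$ is forced to be the $(\Pi-\pi)$-eigenspace $\CF_i^+$ of $\CF_i$. The paper's proof is a one-liner --- it just observes that both generic fibers are isomorphic to the same unitary Grassmannian $\mathrm{UGr}(r,n)$ via this splitting --- whereas you spell out explicitly why $\tau_\eta$ is bijective on points; this is the same argument, only unpacked.
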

\begin{proof}
Since $O_{F}\otimes_{ O_{F_0}}F\simeq F\times F$ is split, when $R$ is a $F$-algebra, the additional $O_F$-action induces a $\mathbb{Z}/2$-grading on $\Lambda\otimes R$ and $\CF$. This grading is the same as the additional filtration in the naive splitting model.
Moreover, the degree $1$ part of the grading can be recovered from the degree $0$ part by taking dual.
Hence both the generic fiber of the canonical local model and the naive splitting model are isomorphic to the Grassmannian $\mathrm{Gr}(r,n)$.
\end{proof}

\subsubsection{}
Let $R$ be an $O_F$-algebra and $\Lambda$ a \emph{locally free} $ O_F\otimes_{ O_{F_0}}R$-module of rank $n$. We have an exact sequence
\begin{equation*}
	0\rightarrow (\Pi-\pi)\Lambda\longrightarrow \Lambda\xrightarrow{\Pi+\pi}(\Pi+\pi)\Lambda\rightarrow 0.
\end{equation*}
This induces an isomorphism:
\begin{equation*}
	(\Pi+\pi)\Lambda\xrightarrow{\sim}\Lambda/(\Pi-\pi)\Lambda,
	\quad
	v\mapsto (\Pi+\pi)^{-1}v+(\Pi-\pi)\Lambda.
\end{equation*}
See \cite[Lemma 3.2]{Howard2019}. The same result holds if we change $\Pi\pm\pi$ to $\Pi\mp \pi$.
In particular, $(\Pi-\pi)\Lambda$ and $(\Pi+\pi)\Lambda$ are both $R$-direct summands of rank $n$. Note that these two submodules specialize to the submodule $\Pi\Lambda_{\BF_q}\subset\Lambda_{\BF_q}$ over the special fiber.

Now we come back to the perfect pairing for any $ O_F$-algebra $R$:
\begin{equation*}
    \langle\,,\,\rangle:\Lambda_{-i}\otimes_{ O_{F_0}}R\times \Lambda_i\otimes_{ O_{F_0}}R\rightarrow R.
\end{equation*}
Since
\begin{equation}\label{equ:skew-hermitian}
    \langle \Pi v,w\rangle=\langle v, -\Pi w\rangle,\quad\forall v\in\Lambda,w\in\Lambda^{\sharp}.
\end{equation}
We have 
$$\left((\Pi\pm\pi)\Lambda_{-i}\otimes_{ O_{F_0}}R\right)^\perp=(\Pi\pm\pi)\Lambda_{i}\otimes_{ O_{F_0}}R.$$ We define an induced pairing,
\begin{equation}\label{equ:new-pair}
    \{\,,\,\}:(\Pi-\pi)(\Lambda_{-i}\otimes_{ O_{F_0}}R)\times (\Pi+\pi)(\Lambda_i\otimes_{ O_{F_0}}R)\longrightarrow  R,
    \quad
    \{v,w\}\mapsto \langle v,(\Pi+\pi)^{-1}w\rangle.
\end{equation}

For any $R$-point $(\CF_i^0\subset\CF_i)_{i\in[k]}$ of $M^{\nspl}_{[k]}(r,s)$, Axiom \ref{item:SP6} implies that $\CF_{-i}^0\subset (\Pi+\pi)\Lambda_{-i}\otimes_{ O_{F_0}}R$. We define
\begin{equation}\label{equ:perp-filtration}
    \CG_i^0:=\left((\Pi+\pi)^{-1}\CF_{-i}^0\right)^{\perp}\subset \Lambda_i\otimes_{ O_{F_0}}R.
\end{equation}
The following proposition follows from the proof of \cite[Proposition 2.4]{bijakowski2024geometry}, which is also implicitly proved in \cite[\S 3]{Howard2019}.
\begin{proposition}\label{prop:duality}
\begin{altenumerate}
\item     The submodule $\CG_i^0$ is a locally free $R$-module of rank $r$ and $R$-direct summand of $\CF_i$. Moreover, we have
\begin{equation*}
    (\Pi-\pi)(\CF_i)\subseteq\CG_i^0,
    \quad
    (\Pi+\pi)(\CG_i^0)=(0).
\end{equation*}
\item Let $L_i\subset \Lambda_i\otimes_{ O_{F_0}}R/\CF_i$ be the isotropic complement of $\CG_{-i}^0\subset \CF_{-i}$ with respect to the perfect pairing
\begin{equation*}
	\langle\,,\,\rangle:\CF_{-i}\times (\Lambda_i\otimes_{ O_{F_0}}R)/\CF_i\rightarrow R.
\end{equation*}
Then $L_i$ is a direct $R$-summand of rank $s$, with 
\begin{equation*}
	(\Pi-\pi)(\Lambda_i\otimes_{ O_{F_0}}R/\CF_i)\subset L_i,
	\quad
	(\Pi+\pi)(L_i)=(0).
\end{equation*}
Furthermore, we can characterize $L_i$ using the following formula:
\begin{equation*}
	L_i=\ker\left(
	\Lambda_i\otimes_{O_{F_0}}R/\CF_i\xrightarrow{\Pi-\pi}(\Pi-\pi)\Lambda_i\otimes_{O_{F_0}}R/\CF_i^0.
	\right)
\end{equation*}
\end{altenumerate}
\end{proposition}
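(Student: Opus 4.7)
The plan is to verify both parts by direct computation, the essential tool being the skew-adjointness of $\Pi\pm\pi$ with respect to $\langle\,,\,\rangle$: from \eqref{equ:skew-hermitian} one deduces
$\langle(\Pi\pm\pi)v,w\rangle=-\langle v,(\Pi\mp\pi)w\rangle$,
and combining this with perfectness of $\langle\,,\,\rangle$ (together with \ref{item:LM3}) yields $((\Pi\pm\pi)\Lambda_{-i}\otimes_{O_{F_0}}R)^{\perp}=(\Pi\mp\pi)\Lambda_i\otimes_{O_{F_0}}R=\ker(\Pi\pm\pi)$ inside $\Lambda_i\otimes_{O_{F_0}}R$, and likewise $\CF_{-i}^{\perp}=\CF_i$.

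For Part (1), I analyze the full preimage $N:=(\Pi+\pi)^{-1}\CF_{-i}^{0}\subset\Lambda_{-i}\otimes_{O_{F_0}}R$. By \ref{item:SP6} applied at index $-i$ we have $\CF_{-i}^{0}\subset(\Pi+\pi)\Lambda_{-i}\otimes R$, so $\Pi+\pi:N\to\CF_{-i}^{0}$ is surjective with kernel $(\Pi-\pi)\Lambda_{-i}\otimes R$; the resulting short exact sequence splits locally since $\CF_{-i}^{0}$ is a locally free $R$-direct summand of rank $s$, exhibiting $N$ as a locally free $R$-direct summand of rank $n+s$. Consequently $\CG_i^{0}=N^{\perp}$ is a locally free $R$-direct summand of $\Lambda_i\otimes R$ of rank $r$. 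The containment $\CG_i^{0}\subset\CF_i$ is equivalent to $\CF_{-i}\subset N$, which is exactly \ref{item:SP5} at index $-i$. The identity $(\Pi+\pi)\CG_i^{0}=0$ follows from $\CG_i^{0}\subset((\Pi-\pi)\Lambda_{-i}\otimes R)^{\perp}=\ker(\Pi+\pi)$, using $(\Pi-\pi)\Lambda_{-i}\otimes R\subset N$. Finally, $(\Pi-\pi)\CF_i\subset\CG_i^{0}$ amounts to $\langle v,(\Pi-\pi)w\rangle=0$ for $v\in N$ and $w\in\CF_i$; by skew-adjointness this equals $-\langle(\Pi+\pi)v,w\rangle$, which vanishes because $(\Pi+\pi)v\in\CF_{-i}^{0}\subset\CF_{-i}=\CF_i^{\perp}$. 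That $\CG_i^{0}$ is an $R$-direct summand of $\CF_i$ is then automatic: $\CF_i/\CG_i^{0}$ embeds into the locally free $R$-module $\Lambda_i\otimes R/\CG_i^{0}$ with locally free cokernel $\Lambda_i\otimes R/\CF_i$.

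For Part (2), the induced pairing $\langle\,,\,\rangle:\CF_{-i}\times(\Lambda_i\otimes R/\CF_i)\to R$ is perfect by \ref{item:LM4}, so $L_i=(\CG_{-i}^{0})^{\perp}$ is automatically an $R$-direct summand of rank $n-r=s$. The inclusion $(\Pi-\pi)(\Lambda_i\otimes R/\CF_i)\subset L_i$ follows from $\langle y,(\Pi-\pi)x\rangle=-\langle(\Pi+\pi)y,x\rangle=0$ for $y\in\CG_{-i}^{0}$, invoking Part (1). For $(\Pi+\pi)L_i=0$: for $x\in L_i$ and $y\in\CF_{-i}$ we have $\langle y,(\Pi+\pi)x\rangle=-\langle(\Pi-\pi)y,x\rangle=0$ since $(\Pi-\pi)\CF_{-i}\subset\CG_{-i}^{0}$ by Part (1) and $x\in(\CG_{-i}^{0})^{\perp}$; perfectness then forces $(\Pi+\pi)x\in\CF_{-i}^{\perp}/\CF_i=0$. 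For the kernel formula, the target is interpreted via the descent of $\Pi-\pi$ modulo $\CF_i^{0}$ (well-defined because $\CF_i^{0}\subset\ker(\Pi-\pi)$ by \ref{item:SP6}), and the obvious inclusion $L_i\subset\ker(\cdots)$ comes from the previous identity together with $(\Pi-\pi)\CF_i\subset\CG_i^{0}$ from Part (1). The main obstacle is upgrading this inclusion to an equality: both sides are $R$-submodules of $\Lambda_i\otimes R/\CF_i$, and one must match their ranks using the Kottwitz condition \ref{item:LM5} (which pins the multiplicity of $-\pi$ as an eigenvalue of $\Pi$ on $\Lambda_i\otimes R/\CF_i$ to be $s$) and then conclude via Nakayama, after verifying that the kernel in the displayed formula is itself a direct $R$-summand of the expected rank.
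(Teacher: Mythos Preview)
Your treatment of Part (1) and the first assertions of Part (2) is correct and coincides with the paper's: the paper refers to \cite[Proposition 2.4]{bijakowski2024geometry} for (1), which is exactly your skew-adjointness computation, and declares the first claims of (2) to be ``clear from (1) and \eqref{equ:skew-hermitian}''.

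The substantive difference is in the equality step of the kernel formula. You propose to match ranks via the Kottwitz condition \ref{item:LM5} and conclude by Nakayama, but you leave this as a plan rather than an argument. The paper bypasses \ref{item:LM5} entirely: it simply observes that the displayed map is a surjection onto a locally free target of rank $n-s$ (the target being the quotient of a rank-$n$ direct summand by the rank-$s$ direct summand $\CF_i^{0}$), so its kernel is automatically a locally free direct summand of rank $s$. Since $L_i$ is already known from Part (1) to be a rank-$s$ direct summand contained in this kernel, equality is immediate. This is both shorter and avoids any appeal to \ref{item:LM5}, which is not needed here.
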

\begin{proof}
(i) It follows from the same proof as \cite[Proposition 2.4]{bijakowski2024geometry}, note that the rank argument follows from \eqref{equ:new-pair}.

\noindent (ii) It is clear from (i) and \eqref{equ:skew-hermitian} that we have the inclusion
\begin{equation*}
	L_i\subset\ker\left(
	\Lambda_i\otimes_{O_{F_0}}R/\CF_i\xrightarrow{\Pi-\pi}(\Pi-\pi)\Lambda_i\otimes_{O_{F_0}}R/\CF_i^0.
	\right)
\end{equation*}
By (i) we know that $L_i$ is an $R$-direct summand of rank $s$. Since $\CF_i\xrightarrow{\Pi-\pi}(\Pi-\pi)\Lambda_i/\CF_i^0$ is a surjection, the kernel is also an $R$-direct summand of rank $s$, hence we get the equality.
\end{proof}

It is clear that the morphism $\Lambda_i\otimes_{ O_{F_0}}R\rightarrow \Lambda_j\otimes_{ O_{F_0}}R$ maps $\CG_i^0$ into $\CG_j^0$ for any 
$i,j\in [k]$ such that $i<j$. 
Denote by $\overline{(-)}$ the conjugate on $R$, we have the following morphism
\begin{equation*}
M^{\nspl}_{[k]}(r,s)
    \overset{\sim}{\longrightarrow} M^{\nspl}_{[k]}(s,r),
    \quad
    (\CF_i^0\subset\CF_i)_{i\in[k]}\mapsto
    (\overline{\CG_i^0}\subset\overline{\CF_i})_{i\in [k]}.
\end{equation*}
Note that we conjugated the $R$-action on the additional filtration $\ov{\CG}_i^0$ so that the Kr\"amer conditions \ref{item:SP5} and \ref{item:SP6} are satisfied. By reversing this procedure, we see that the morphism is an isomorphism.
This isomorphism restricts to an isomorphism
\begin{equation*}
    M^{\spl}_{[k]}(r,s)\overset{\sim}{\longrightarrow}M^{\spl}_{[k]}(s,r).
\end{equation*}

\begin{remark}
Similar to \cite{bijakowski2024geometry} and \cite{BZZ25}, we can define locally closed subschemes 
\begin{equation*}
    X^i_{h,\ell}:=\{x\in M^{\nspl}_{[k]}(r,s)\mid \dim \Pi\CF_i=h,\dim\CF_i^0\cap\CG_i^0=\ell\},\quad i\in[k].
\end{equation*}
One may ask the following questions:
\begin{altenumerate}
\item What is the relation between $X^i_{h,\ell}$ for different $i$? 
\item For every $h\leq\ell$, we have the inclusion
\begin{equation*}
    \overline{X^i_{h,\ell}}\subseteq\bigsqcup_{0\leq h'\leq h\leq \ell\leq \ell'}X^i_{h',\ell'}.
\end{equation*}
It this an equality?
\item Let $Y^i_{h,\ell}:=X^i_{h,\ell}\cap M^{\spl}_{[k]}(r,s)$. Is $Y^i_{h,\ell}$ smooth? What is the dimension of $Y^i_{h,\ell}$?
\item How do these indices relate to the Kottwitz--Rapoport strata?
\end{altenumerate}
\end{remark}

\subsection{The case of signature $(n-1,1)$}\label{sec:irr-comp}
From now on, we assume $n\geq 3$ and $(r,s)=(n-1,1)$, and omit the signature when writing $M^{\loc}_{[k]}$, $M^{\nspl}_{[k]}$ and $M^{\spl}_{[k]}$. In this case, we have $E=F$, hence the canonical local model and the splitting model are defined in the same base ring.

Suppose $k$ is not $\pi$-modular, let $*\in M^{\loc}_{[k]}(\BF_q)$ be the worst point defined as
 $\{\Pi\Lambda_{i,\BF_q}\subset\Lambda_{i,\BF_q}\}_{i\in[k]}$.

\begin{proposition}\label{prop:geometric:worst}
Suppose $k$ is not $\pi$-modular, the map $\tau$ induces an isomorphism
\begin{equation*}
	\tau:M_{[k]}^{\nspl}\setminus \tau^{-1}(*)\longrightarrow M_{[k]}^\loc\setminus\{*\}.
\end{equation*}
\end{proposition}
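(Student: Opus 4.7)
My plan is to construct an explicit two-sided inverse $\sigma$ of $\tau$ over the open subscheme $U := M^{\loc}_{[k]} \setminus \{*\}$. Given an $R$-point $(\CF_i)_{i \in [k]}$ of $U$, I would simply set $\CF_i^0 := (\Pi + \pi)(\CF_i)$ inside $\CF_i$ and check that the resulting family $(\CF_i^0 \subset \CF_i)$ defines an $R$-point of $M^{\nspl}_{[k]}$ that lies in $\tau^{-1}(U)$. Most of the axioms are easy: \ref{item:SP5} holds by construction; \ref{item:SP6} because $(\Pi - \pi)(\Pi + \pi) = \Pi^2 - \pi^2 = \pi_0 \otimes 1 - 1 \otimes \pi_0 = 0$ in $O_F \otimes_{O_{F_0}} R$; and the compatibility axioms \ref{item:SP3} and \ref{item:SP4} because $\Pi + \pi$ commutes with every $O_F \otimes_{O_{F_0}} R$-linear map. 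The nontrivial content is verifying \ref{item:SP2}, namely that $\CF_i^0 = (\Pi + \pi)\CF_i$ is an $R$-direct summand of rank $1$.

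To handle this, I would analyze the fiber rank of the $R$-linear endomorphism $\Pi + \pi : \CF_i \to \CF_i$. For the upper bound, the wedge condition $\bigwedge^2(\Pi + \pi \mid \CF_i) = 0$ cuts out a closed subscheme of $M^{\naive}_{[k]}$ containing the entire generic fiber: there the splitting $O_F \otimes_{O_{F_0}} F \cong F \times F$ decomposes $\CF_i = \CF_i^+ \oplus \CF_i^-$ with $\Pi + \pi$ vanishing on $\CF_i^-$ and acting invertibly on the line $\CF_i^+$ (using the Kottwitz condition with $s = 1$). Being closed, this condition propagates to the flat closure $M^{\loc}_{[k]}$. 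For the matching lower bound on $U$, at any geometric $\kappa$-point one has $\pi = 0$ and $\Pi + \pi = \Pi$, so vanishing of $\Pi$ on $\CF_i$ would force $\CF_i \subset \ker\Pi = \Pi(\Lambda_i \otimes_{O_{F_0}} \kappa)$, hence $\CF_i = \Pi(\Lambda_i \otimes_{O_{F_0}} \kappa)$ by a dimension count; but this is precisely the worst point $*$, contradicting $(\CF_i) \in U$. Thus the fiber rank is identically $1$ on $U$; since $U$ is reduced (being normal, by Theorem \ref{thm:canonical-LM}), the cokernel $\CF_i/(\Pi+\pi)\CF_i$ is locally free of rank $n-1$, making $\CF_i^0$ a rank-$1$ $R$-direct summand.

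Having constructed $\sigma : U \to M^{\nspl}_{[k]}$, the identity $\tau \circ \sigma = \id_U$ is tautological since $\tau$ forgets the $\CF_i^0$. For the reverse composition, given $(\CF_i^0, \CF_i) \in \tau^{-1}(U)(R)$, axiom \ref{item:SP5} gives $(\Pi + \pi)\CF_i \subset \CF_i^0$, and both sides are rank-$1$ $R$-direct summands of $\CF_i$ (the former by the argument just given, the latter by the definition of $M^{\nspl}_{[k]}$), so they coincide; this yields $\sigma \circ \tau = \id$ on $\tau^{-1}(U)$, and finishes the proof. I expect the hard part to be the direct-summand assertion in the middle paragraph: it relies both on propagating the wedge condition from the generic fiber to the flat closure and on the sharp characterization of $*$ as the unique locus in the special fiber where $\Pi + \pi$ kills $\CF_i$. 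The $\pi$-modular case, where $*$ does not exist and $M^{\nspl}_{[k]} = M^{\loc}_{[k]}$ (Remark \ref{rem:pi-modular}), is trivial.
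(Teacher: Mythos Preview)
Your approach is correct and essentially the same as the paper's: both construct the inverse of $\tau$ on $U = M^{\loc}_{[k]}\setminus\{*\}$ by setting $\CF_i^0 := (\Pi+\pi)\CF_i$, and both reduce to checking that this image has fiberwise rank exactly $1$ away from the worst point. The paper's proof is terser—it states the rank claim, splits into generic and special fiber as you do, and then defers to \cite[Proposition 4.3]{Kramer2003} for the remaining verification—whereas you spell out the wedge-condition argument for the upper bound, the reducedness-plus-constant-rank argument for local freeness, and the two compositions explicitly. One small point you leave implicit: when you conclude that $\CF_i = \Pi(\Lambda_i\otimes\kappa)$ for a single $i$ forces the point to be $*$, you are using that axiom \ref{item:LM4} (the isotropy condition) then forces the same for the other index, since $(\Pi\Lambda_k\otimes\kappa)^\perp = \Pi\Lambda_{n-k}\otimes\kappa$; the paper is equally terse here.
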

\begin{proof}
For each $i\in [k]$, we have a sheaf of module $(\Pi+\pi)(\CF_i)$ over $M^{\nspl}_{[k]}$. 
By \ref{item:SP5}, we have $(\Pi+\pi)(\CF_i)\subseteq \CF_i^0$. 
We claim that a point $z\in M^{\loc}_{[k]}\setminus \{*\}$ corresponds to a $k(z)$-subspace of $\Lambda_i\otimes_{O_{F_0}}k(z)$ such that $(\Pi+\pi)(\CF_i)\neq (0)$: 
\begin{altenumerate}
\item If $z$ lies in the generic fiber, the subspace $(\Pi+\pi)(\CF_i)$ is locally free of rank $1$ by the Kottwitz condition.
\item If $z$ lies in the special fiber, the claim follows from the definition of the worst point.
\end{altenumerate}
The remaining proof is the same as that of \cite[Proposition 4.3]{Kramer2003}.
\end{proof}
\begin{definition}\label{def:exceptional-divisor}
    We define the \emph{naive exceptional divisor} as $\NExc:=\tau^{-1}(*)\subset M^{\nspl}_{[k]}$, and the \emph{exceptional divisor} as the scheme-theoretic intersection $\Exc:=\tau^{-1}(*)\cap M^{\spl}_{[k]}$. 
\end{definition}
\begin{remark}\label{rmk:pi-modular-iso}
When $k$ is $\pi$-modular, as the canonical local model does not contain the worst point, the same argument as in Proposition \ref{prop:geometric:worst} implies that the splitting model is isomorphic to the local model, and the exceptional divisors are empty. 
\end{remark}

\begin{proposition}\label{prop:exceptional-divisor}
Suppose $2k\notin\{0,n\}$.
Let $\lambda_k$ and $\lambda_{n-k}$ be the transition maps $\Lambda_k\rightarrow \Lambda_{n-k}$ and $\Lambda_{n-k}\rightarrow\Lambda_{n+k}$ respectively.
The naive exceptional divisor is a linked variety with equal dimension $n-1$. 
It has two irreducible components $\Exc_1$ and $\Exc_2$, defined by $\lambda_k(\CF_k^0)= (0)$ and $\lambda_{n-k}(\CF_{n-k}^0)=(0)$ respectively. Their intersection $\Exc_1\cap \Exc_2$ is irreducible of dimension $n-2$.
\end{proposition}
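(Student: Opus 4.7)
The plan is to unwind the moduli problem defining $\NExc=\tau^{-1}(*)$ at the worst point. Over any $\kappa$-algebra $R$, a point over $*$ satisfies $\CF_i=\Pi(\Lambda_i\otimes_{O_{F_0}}R)$; since $\pi=0$ in $R$ and $\Pi$ annihilates $\CF_i$ (because $\Pi^2=\pi_0=0$), both Kr\"amer conditions \ref{item:SP5} and \ref{item:SP6} become automatic. Thus $\NExc$ parametrizes collections of rank-one $R$-direct summands $\CF_i^0\subset\Pi(\Lambda_i\otimes_{O_{F_0}}R)$ subject only to the transition inclusions \ref{item:SP3} and the periodicity \ref{item:SP4}. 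By periodicity, everything is encoded by the pair $(\CF_k^0,\CF_{n-k}^0)$ subject to the two ``link'' conditions $\lambda_k(\CF_k^0)\subset\CF_{n-k}^0$ and $\lambda_{n-k}(\CF_{n-k}^0)\subset\CF_k^0$, the latter using the identification $\CF_{n+k}^0\simeq\CF_k^0$ from \ref{item:SP4}.

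The second step is to analyze these two transitions on the $\Pi$-torsion using the standard bases of $\Lambda_k$ and $\Lambda_{n-k}$ from Section \ref{moduli_setup}. A direct computation shows that on $\Pi(\Lambda_k\otimes R)$, the map $\lambda_k$ acts as the identity on the $2k$ ``outer'' basis vectors (indices in $\{1,\dots,k\}\cup\{n-k+1,\dots,n\}$) and as zero on the $n-2k$ ``middle'' vectors; symmetrically, after the periodicity identification, $\lambda_{n-k}$ acts as zero on the outer vectors and as the identity on the middle ones. Consequently $\mathrm{Im}(\lambda_k)=\mathrm{Ker}(\lambda_{n-k})$ and $\mathrm{Im}(\lambda_{n-k})=\mathrm{Ker}(\lambda_k)$ on the $\Pi$-torsion, and in particular
\[
    \lambda_{n-k}\circ\lambda_k=0, \qquad \lambda_k\circ\lambda_{n-k}=0.
\]
The hypothesis $2k\not\equiv 0\pmod n$ forces $0<2k<n$, so all the relevant ranks are strictly positive.

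From this vanishing I would obtain the decomposition $\NExc=\Exc_1\cup\Exc_2$ as follows: if $\lambda_k(\CF_k^0)\neq 0$ at a geometric point, the inclusion $\lambda_k(\CF_k^0)\subset\CF_{n-k}^0$ of rank-one modules forces equality there, hence $\lambda_{n-k}(\CF_{n-k}^0)=\lambda_{n-k}\lambda_k(\CF_k^0)=0$ and the point lies on $\Exc_2$; the symmetric argument places the remaining points on $\Exc_1$. Scheme-theoretically, each $\Exc_i$ is the closed subscheme of $\NExc$ cut out by the vanishing of the corresponding morphism, so $\NExc=\Exc_1\cup\Exc_2$ as closed subschemes.

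Finally I would realize each component as an explicit projective bundle. On $\Exc_1$, $\CF_k^0$ ranges over rank-one summands of $\mathrm{Ker}(\lambda_k)\cap\Pi(\Lambda_k\otimes R)$, a bundle of rank $n-2k$, giving a base $\mathbb{P}^{n-2k-1}$; for fixed $\CF_k^0$, the second link condition cuts $\CF_{n-k}^0$ out of $\lambda_{n-k}^{-1}(\CF_k^0)\cap\Pi(\Lambda_{n-k}\otimes R)$, a bundle of rank $2k+1$, yielding $\mathbb{P}^{2k}$-fibers. Hence $\Exc_1$ is a $\mathbb{P}^{2k}$-bundle over $\mathbb{P}^{n-2k-1}$, irreducible of dimension $n-1$; symmetrically $\Exc_2$ is a $\mathbb{P}^{n-2k}$-bundle over $\mathbb{P}^{2k-1}$, also irreducible of dimension $n-1$. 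For the intersection, both $\lambda_k(\CF_k^0)$ and $\lambda_{n-k}(\CF_{n-k}^0)$ vanish, so the two summands live independently in the two kernels, giving $\Exc_1\cap\Exc_2\simeq\mathbb{P}^{n-2k-1}\times\mathbb{P}^{2k-1}$ of dimension $n-2$. The main subtlety will be upgrading the pointwise decomposition to a fully scheme-theoretic statement -- confirming that $\NExc$ has no embedded components beyond $\Exc_1$ and $\Exc_2$ -- which should follow from the explicit projective-bundle description of each $\Exc_i$ combined with the correct expected dimensions.
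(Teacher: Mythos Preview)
Your proposal is correct and follows the same initial setup as the paper: you unwind the moduli problem over the worst point, observe that the Kr\"amer conditions become vacuous there, reduce to the pair $(\CF_k^0,\CF_{n-k}^0)$ subject to the two link conditions, and compute the transition maps on $\Pi$-torsion as block matrices with complementary kernels and images. The paper does exactly this and then stops, writing the data as a two-vertex quiver and invoking ``the general theory of the quiver variety'' for the conclusions about irreducible components, dimensions, and the intersection.

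Where you differ is that instead of citing quiver theory, you carry out the analysis by hand: the case split on whether $\lambda_k(\CF_k^0)$ vanishes at a geometric point, the explicit identification of $\Exc_1$ as a $\BP^{2k}$-bundle over $\BP^{n-2k-1}$ (and symmetrically for $\Exc_2$), and the product description $\Exc_1\cap\Exc_2\simeq\BP^{n-2k-1}\times\BP^{2k-1}$. This is more elementary and entirely self-contained, and in fact your bundle descriptions give finer structural information than the paper records at this point (some of it reappears later, e.g.\ in Proposition~\ref{prop:exc 1 shape}). The paper's appeal to quiver varieties is shorter but opaque; your route makes the geometry transparent and also settles the scheme-theoretic worry you flag at the end, since each $\Exc_i$ is visibly smooth and irreducible of the correct dimension.
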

\begin{proof}
The transition maps induce the transition maps between the lattices which represent the worst point:
\begin{equation*}
	\lambda_k:\Pi\Lambda_{k,\BF_q}\longrightarrow \Pi\Lambda_{n-k,\BF_q},
	\quad
	\lambda_{n-k}:\Pi\Lambda_{n-k,\BF_q}\longrightarrow \Lambda_{n+k,\BF_q}.
\end{equation*}
By choosing the standard basis as in \cite[\S 3.1]{LUO2024}, the transition maps can be represented by matrices
\begin{equation}\label{equ:excep-transition}
	\lambda_k=\left(\begin{matrix}
	I_k&&\\&0&\\&&I_k
\end{matrix}\right),
\quad
\lambda_{n-k}=\left(\begin{matrix}
0&&\\&I_{n-2k}&\\&&0	
\end{matrix}\right).
\end{equation}

From the definition, the naive exceptional divisor has the moduli description which associates to each $\BF_q$-algebra $R$ the families
\begin{equation*}
	(\CF_k^0,\CF_{n-k}^0)\in \BP(\Pi\Lambda_{k,\BF_q})\times\BP(\Pi\Lambda_{n-k,\BF_q})
\end{equation*}
such that $\lambda_k(\CF_k^0)\subset \CF_{n-k}^0$ and $\lambda_{n-k}(\CF_{n-k}^0)\subset \CF_k^0$ under the identification $\Lambda_{n+k}\xrightarrow{\sim}\Lambda_{k}$ of \ref{item:LM3} and \ref{item:SP4}. All the other axioms hold automatically. The statements now is standard. For instance, the naive exceptional locus agrees with the special fiber of the local model for $\mathrm{GL}_n$ with cocharacter $\mu=(1,0^{n-1})$ and lattice chain generated by $\Lambda_{n-k}$ and $\Lambda_{n+k}$, see \cite{Gortz}.
\end{proof}

\begin{remark}
\begin{altenumerate}
\item When $I=\{0\}$, the native splitting model is identical to the splitting model, this is studied in \cite{Kramer2003}.
\item When $n=2m$ and $I=\{m\}$, the canonical local model is isomorphic to the (naive) splitting model, since it doesn't contain the worst point.
\item When $n=2m+1$ and $I=\{m\}$, the canonical local model is smooth. However, canonical local model, splitting model, and naive splitting model, are not isomorphic to each other. A direct calculation, or using Theorem \ref{thm:blowup} and \cite[\S 5.2]{Richarz-master}, one can get a moduli interpretation in this case.

\item When $n=2m$ and $I=\{m-1\}$, \cite[Proposition 9.12]{RSZ18} establishes the isomorphism:
\begin{equation*}
    M^{\loc}_{[m-1]}\simeq M^{\loc}_{[m-1,m]}.
\end{equation*}
Yu's thesis \cite{Yu2019} introduces the moduli space $M^{\spl}_{[m-1,m]}$ over $M^{\loc}_{[m-1,m]}$ parameterizing additional filtrations $(\CF_i^0\subset\CF_i)_{i\in [m-1,m]}$ that satisfy the Kr\"amer condition and, for any $i,j\in [m-1,m]$ the transition maps 
\end{altenumerate}
\begin{equation*}
    \lambda:\Lambda_i\otimes_{O_{F_0}}R\longrightarrow \Lambda_j\otimes_{O_{F_0}}R
\end{equation*}
carry $\lambda(\CF_i^0)\subseteq\CF_j^0$.
Yu proves that $M^{\spl}_{[m-1,m]}$ is flat over $O_F$.
In this setting, a similar isomorphism holds in the splitting model context:
\begin{equation}\label{equ:Yu-iso}
    M^{\spl}_{[m-1]}\simeq M^{\spl}_{[m-1,m]}.
\end{equation}
In fact, Proposition \ref{prop:geometric:worst} and its proof shows that the forgetful map
\begin{equation*}
    M^{\spl}_{[m-1,m]}\longrightarrow M^{\nspl}_{[m-1]}
\end{equation*}
is a closed embedding, as we always have $\CF^0_m=(\Pi+\pi)\CF_m$. This map induces an isomorphism over the generic fiber, and the isomorphism \eqref{equ:Yu-iso} follows by taking the flat closure.

It is noteworthy that while the additional filtration $\CF^0_m\subset\CF_m$ always exists in $M^{\nspl}_{[m-1]}$, it may not necessarily satisfy:
\begin{equation*}
        \lambda_{m-1}(\CF_{m-1}^0)\subseteq\CF^0_m,
        \quad
        \lambda_{m}(\CF^0_{m})\subseteq\CF^0_{m+1}.
\end{equation*}
Yu applies these conditions to $M^{\nspl}_{[m-1]}$, and get the closed subscheme $M^{\spl}_{[m-1]}=M^{\spl}_{[m-1,m]}$.
\end{remark}

\subsubsection{}\label{sec:irreducible components of splitting model}
Consider the base change of $\tau$ to the special fiber
\begin{equation*}
	\tau=\tau_s:M^{\nspl}_{[k],s}\longrightarrow M^{\loc}_{[k],s}.
\end{equation*}
Denote $M^{\loc}_{[k],s}=\bigcup_i Z_i$ as the union of irreducible components.
Define $\widetilde{Z_i}$ as the scheme-theoretic closure of $\tau_s^{-1}(Z_i\setminus\{*\})$ for each $i$. Then we have 
\begin{equation*}
	\tau(\widetilde{Z}_i)=Z_i \quad \text{for any } i.
\end{equation*}

\begin{proposition}
The special fiber $M^{\nspl}_{[k],s}$ of the naive splitting model is the union of $\wt{\CZ}_i$ and exceptional divisors:
\begin{equation*}
    M^{\nspl}_{[k],s}=\Exc_1\cup \Exc_2\cup \bigcup_i\wt{\CZ}_i.
\end{equation*}
\end{proposition}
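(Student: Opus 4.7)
The plan is that this is a bookkeeping statement obtained by combining the two pieces of information we already have about the forgetful map $\tau\colon M^{\nspl}_{[k],s}\to M^{\loc}_{[k],s}$: Proposition \ref{prop:geometric:worst} tells us $\tau$ is an isomorphism away from the worst point $*$, and Proposition \ref{prop:exceptional-divisor} tells us $\tau^{-1}(*)=\NExc=\Exc_1\cup\Exc_2$. The rest is just partitioning the source and the target along this dichotomy.

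First I would write
\begin{equation*}
    M^{\nspl}_{[k],s}=\tau^{-1}(M^{\loc}_{[k],s})=\tau^{-1}(*)\ \cup\ \tau^{-1}(M^{\loc}_{[k],s}\setminus\{*\}),
\end{equation*}
and invoke Proposition \ref{prop:exceptional-divisor} to identify the first piece with $\Exc_1\cup\Exc_2$. Then, since by assumption $M^{\loc}_{[k],s}=\bigcup_i Z_i$, the complement satisfies $M^{\loc}_{[k],s}\setminus\{*\}=\bigcup_i (Z_i\setminus\{*\})$, and hence
\begin{equation*}
    \tau^{-1}(M^{\loc}_{[k],s}\setminus\{*\})=\bigcup_i\tau^{-1}(Z_i\setminus\{*\}).
\end{equation*}
By construction each $\widetilde{Z}_i$ is the scheme-theoretic closure of $\tau^{-1}(Z_i\setminus\{*\})$ inside $M^{\nspl}_{[k],s}$, so in particular $\tau^{-1}(Z_i\setminus\{*\})\subseteq\widetilde{Z}_i$. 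Combining these inclusions gives
\begin{equation*}
    M^{\nspl}_{[k],s}\ \subseteq\ \Exc_1\cup\Exc_2\cup\bigcup_i\widetilde{Z}_i.
\end{equation*}

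The reverse inclusion is immediate: each $\widetilde{Z}_i$ is by definition a closed subscheme of $M^{\nspl}_{[k],s}$, and so are $\Exc_1,\Exc_2\subseteq\NExc\subseteq M^{\nspl}_{[k],s}$. This proves the claimed set-theoretic equality (and hence the equality as reduced subschemes). There is no real obstacle to surmount here; the content of the proposition is entirely absorbed into Propositions \ref{prop:geometric:worst} and \ref{prop:exceptional-divisor} together with the definition of $\widetilde{Z}_i$. The only thing one might want to flag, should finer scheme-theoretic information be needed later, is that this argument does not yet assert that $\Exc_1$ and $\Exc_2$ are \emph{irreducible components} of $M^{\nspl}_{[k],s}$ — that is, that they are not absorbed into the closures $\widetilde{Z}_i$ — which would require a dimension or explicit-chart argument; but the statement as written does not need this.
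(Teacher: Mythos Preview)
Your proposal is correct and matches the paper's argument essentially step for step: both partition along $\tau^{-1}(*)$ versus its complement, invoke Proposition~\ref{prop:exceptional-divisor} for the fiber over $*$, and use the definition of $\widetilde{Z}_i$ as the closure of $\tau^{-1}(Z_i\setminus\{*\})$ for the rest. The paper does add a short second paragraph arguing via dimensions that each $\widetilde{Z}_i$ and each $\Exc_j$ is in fact an irreducible component of $M^{\nspl}_{[k],s}$ --- exactly the point you flagged as not established by the union statement alone.
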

\begin{proof}
It is clear from the construction that $M^{\nspl}_{[k],s}$ is the union of these closed subschemes: for any $R$-point $x\in M^{\nspl}_{[k],s}(R)$, if $\tau(x)\in M^{\loc}_{[k],s}\setminus \{*\}$, then $x\in \bigcup_i\wt{\CZ}_i$, otherwise, $x\in \NExc=\Exc_1\cup \Exc_2$.

Next, since $Z_i\setminus \{*\}$ is irreducible of dimension $n-1$ for each $i$, so is $\widetilde{Z}_i$ for each $i$. We have shown that $\Exc_i$ is irreducible of dimension $n-1$ for $i=1,2$ in Proposition \ref{prop:exceptional-divisor}. It is clear that they are irreducible components.
\end{proof}

\section{Regularity of the splitting model}\label{sec:regularity}

\subsection{Recollection of the canonical local model}\label{sec:canonical-LM}
From now on, we will always assume the signature to be $(n-1,1)$, and omit the signature when writing $M^{\loc}_{[k]}$, $M^{\Kra}_{[k]}$,  $M^{\nspl}_{[k]}$ and $M^{\spl}_{[k]}$. We assume the index $I=\{k\}$ is not $\pi$-modular.

We follow the discussion in \cite[\S 3]{LUO2024}. We will choose the \emph{standard} ordered 
$O_{F_0}$-basis as in \cite[(3.1.1)]{LUO2024}: 
\begin{align}
\begin{split}\label{equ_chart:standard-basis}
	\Lambda_{k,O_F}:  &
	\pi^{-1}e_1\otimes 1,\cdots,\pi^{-1}e_{k}\otimes 1,e_{k+1}\otimes 1,\cdots ,e_n\otimes 1;\\ 
	& e_1\otimes 1,\cdots,e_{k}\otimes 1,\pi e_{k+1}\otimes 1,\cdots,\pi e_n\otimes 1.\\
	\Lambda_{n-k,O_F}:&
	\pi^{-1}e_1\otimes 1,\cdots,\pi^{-1}e_{n-k}\otimes 1,e_{n-k+1}\otimes 1,\cdots , e_n\otimes 1;\\
    &e_1\otimes 1,\cdots,e_{n-k}\otimes 1,\pi e_{n-k+1}\otimes 1,\cdots,\pi e_n\otimes 1.
\end{split}
\end{align}
We will also choose the \emph{reordered} basis as in \cite[(3.1.3)]{LUO2024}:
\begin{align}\label{equ_chart:reordered-basis}
	\begin{split}
		\Lambda_{k,O_F}:  &
		e_{n-k+1}\otimes 1,\cdots, e_n\otimes 1,	
		\pi^{-1}e_1\otimes 1,\cdots,\pi^{-1}e_{k}\otimes 1;	
		e_{k+1}\otimes 1,\cdots, e_{n-k}\otimes 1,\\
		&   
		\pi e_{n-k+1}\otimes 1,\cdots, \pi e_n\otimes 1,
		e_1\otimes 1,\cdots,  e_{k}\otimes 1; 	
		\pi e_{k+1}\otimes 1,\cdots,\pi e_{n-k}\otimes 1.\\
		\Lambda_{n-k,O_F}:&
		e_{n-k+1}\otimes 1,\cdots, e_n\otimes 1,	
		\pi^{-1}e_1\otimes 1,\cdots,\pi^{-1}e_{k}\otimes 1;
		\pi^{-1} e_{k+1}\otimes 1,\cdots,\pi^{-1} e_{n-k}\otimes 1;\\
		&   
		\pi e_{n-k+1}\otimes 1,\cdots, \pi e_n\otimes 1,
		e_1\otimes 1,\cdots,  e_{n}\otimes 1;	
		e_{k+1}\otimes 1,\cdots, e_{n-k}\otimes 1.
	\end{split}
\end{align}
The \emph{worst point} $*\in M^{\loc}_{[k]}(\BF_q)$ is defined as 
$$
\CF_i=\Pi\Lambda_i\otimes\BF_q\subset \Lambda_i\otimes \BF_q,\quad \text{for any }i\in [k],
$$
see \cite[Lemma 3.1.1]{LUO2024}.
The naive local model embeds into $\Gr(n,\Lambda_k\otimes_{ O_{F_0}} O_F )$, and we choose an open affine chart $U_k\subset \Gr(n,\Lambda_k\otimes_{O_{F_0}}O_F)$ of the worst point.
In the chart, the $\CF_k$ and $\CF_{n-k}$ are 
represented by the $2n\times n$ matrices 
 $\left(\begin{matrix}
 X\\I_n
 \end{matrix}\right)$ and  $\left(\begin{matrix}
 Y\\I_n
 \end{matrix}\right)$
with respect to the reordered basis of $\Lambda_k$ and $\Lambda_{n-k}$ respectively. The worst point corresponds to $X=0$. We define $U^{\loc}_k$ (resp. $U^{\naive}_k$) as the scheme-theoretic intersection $U_k\cap M^{\loc}_{[k]}(n-1,1)$ (resp. $U_k\cap M^{\naive}_{[k]}(n-1,1)$).
We further give the partition of the matrix
\begin{equation*}
	X=
	\left(\begin{array}{c|c}
		X_1&X_2\\
		\hline
		X_3&X_4
	\end{array}\right)=	
	\begin{tikzpicture}[>=stealth,thick,baseline]
		\matrix [matrix of math nodes,left delimiter=(,right delimiter=)](A){ 
					A		&	B	&	L\\
					C	&	D		&	M\\
					E		&	F	&	X_4\\
				};
		\filldraw[purple] (2.0,0.5) circle (0pt) node [anchor=east]{\tiny $k$};
		\filldraw[purple] (2.0,0) circle (0pt) node [anchor=east]{\tiny $k$};
		\filldraw[purple] (2.4,-0.5) circle (0pt) node [anchor=east]{\tiny $n-2k$};
		\filldraw[purple] (-0.4,1.1) circle (0pt) node [anchor=east]{\tiny $k$};
		\filldraw[purple] (0.15,1.1) circle (0pt) node [anchor=east]{\tiny $k$};
		\filldraw[purple] (0.65,1.1) circle (0pt) node [anchor=center]{\tiny	$n-2k$};
		\draw (-1,-0.225) -- (1,-0.225);
		\draw (0.25,-0.75) -- (0.25,0.75);
	\end{tikzpicture}.
\end{equation*}

We will also denote $\CX$ and $\CY$ as the corresponding matrices with respect to the standard basis. By Axiom \ref{item:LM4}, we have:
\begin{equation}\label{equ:XYisotropic}
	\CX=\left(\begin{matrix}
	D	&	M		&	C\\
	F	&	X_4	&	E\\
	B	&	L		&	A
	\end{matrix}\right),
 \quad
 \CY=\Upsilon \CX^t\Upsilon^{-1}=\left(\begin{matrix}
        A^\ad&E^\ad&-C^\ad\\
        L^\ad&X_4^\ad&-M^\ad\\
        -B^\ad&-F^\ad&D^\ad
    \end{matrix}\right),
\end{equation}
where $\Upsilon$ defined in \S \ref{sec:notation} is the matrix representing the symmetric form. 
For later use, we denote
\begin{equation}\label{eq:basis-of-filtration}
    \left(\begin{matrix}
 \CX\\I_n
 \end{matrix}\right):=(\bv_1,\cdots,\bv_n),
 \quad
 \left(\begin{matrix}
 \CY\\I_n
 \end{matrix}\right):=(\bw_1,\cdots,\bw_n).
\end{equation}

\begin{proposition}(\cite[Theorem 8.0.1]{LUO2024})\label{thm: local model}
Assume $k$ is not $\pi$-modular, the affine ring of $U^{\loc}_k$ is isomorphic to the factor ring of $ O[X]$ modulo the ideal generated by the entries of the following matrices: 
\begin{altenumerate}
\item $\CX^2=\pi_0\id$;
\item $JX_1+X_3^tHX_3+X_1^tJ=0,-JX_2+X_3^tHX_4=0,X_2^tJ+X_4^tHX_3=0,X_4^tHX_4-\pi_0 H_{n-2k}=0,$
\item $X_1JX_1^t-\pi_0J=0,X_1JX_3^t-X_2H=0,X_3JX_1^t+HX_2^t=0,X_3JX_3^t-X_4H+HX_4^t=0$
\item $\bigwedge^2(X+\pi\id)=0$, $\bigwedge^n(X-\pi\id)=0$
\item $B=B^{\ad}, C=C^{\ad},D=-2\pi I-A^{\ad}, M=\pi E^{\ad},L=-\pi F^{\ad}, X_4=X_4^{\ad},\tr(X_4)=-(n-2k-2)\pi$.
\end{altenumerate}
Here (i)-(iii) follows from \ref{item:LM1}-\ref{item:LM4}, while (iv)-(v) follows from the strengthened spin condition \cite{LUO2024}.\qed
\end{proposition}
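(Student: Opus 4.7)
The plan is to translate each defining axiom of $M^{\loc}_{[k]}(n-1,1)$ into explicit matrix equations in the coordinates of the affine chart $U_k$. First I would use the reordered basis \eqref{equ_chart:reordered-basis} to represent $\CF_k$ and $\CF_{n-k}$ as the columns of $2n \times n$ matrices of the form $(X\,|\,I_n)^t$ and $(Y\,|\,I_n)^t$, so that $U_k$ is an affine space with coordinates given by the entries of $X$ and $Y$; the worst point $*$ corresponds to $X = 0$.

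Next I would unpack axioms \ref{item:LM1}--\ref{item:LM4} as matrix conditions. Axiom \ref{item:LM1} forces $\Pi = \pi \otimes 1$ to act $R$-linearly on $\CF_k$; combined with axiom \ref{item:LM3} (periodicity under multiplication by $\pi$), this action satisfies $\Pi^2 = \pi_0$, which in the standard basis becomes $\CX^2 = \pi_0 \id$, giving (i). Axiom \ref{item:LM4} (self-duality under both the alternating pairing $\langle\,,\,\rangle$ and the symmetric pairing $(\,,\,)$, encoded by the matrices $J$, $H$, $\Upsilon$ of \S\ref{sec:notation}) translates directly into the quadratic equations in (ii) and (iii). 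Axiom \ref{item:LM2} (compatibility with the transition maps between $\Lambda_k$ and $\Lambda_{n-k}$) yields the relation \eqref{equ:XYisotropic} between $\CX$ and $\CY$, allowing us to eliminate $Y$.

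The heart of the proof, and the main obstacle, is to impose the strengthened spin condition, which is formulated via the half-spin representations of the orthogonal group of $(\,,\,)$ and is designed to cut out the flat closure of the generic fiber inside the naive local model. In the chart $U_k$, one implements this by tracking the image of the line $\bigwedge^n \CF_k$ inside an appropriate $\pm$-eigenspace of the spin action on $\bigwedge^n(\Lambda_k \otimes_{O_{F_0}} R)$. From the Kottwitz polynomial $(T + \pi)^{n-1}(T - \pi)$ one first obtains the rank conditions $\bigwedge^2(X + \pi \id) = 0$ and $\bigwedge^n(X - \pi \id) = 0$ of (iv); the finer vanishing imposed by the strengthened spin condition, after expanding in the basis \eqref{eq:basis-of-filtration} and simplifying via (i)--(iii), produces the affine-linear relations of (v). The bookkeeping is the technical core: one must identify the relevant direct summands of $\bigwedge^n$ as half-spin subrepresentations, expand wedge products of the columns of $(X\,|\,I_n)^t$ in the standard basis, and extract the precise constraints on the blocks $A, B, C, D, E, F, L, M, X_4$.

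Finally I would check that the ideal generated by (i)--(v) is precisely the ideal of $U^{\loc}_k$. That $U^{\loc}_k$ is contained in the vanishing locus is clear, since each of these relations holds on $M^{\loc}_{[k]}(n-1,1)$ by construction. For the reverse containment, one verifies that the quotient ring is $O_F$-flat of dimension $n$; since $U^{\loc}_k$ is by definition the scheme-theoretic closure of its generic fiber in $U^{\naive}_k$, and since the generic fibers agree, flatness of the quotient forces equality. Flatness can be checked either by producing an explicit $O_F$-basis of the quotient via a monomial order / Gr\"obner-type argument, or by computing the special fiber directly, checking that its reduction modulo $\pi$ is reduced of dimension $n$, and combining this with the normality and Cohen-Macaulay properties stated in Theorem \ref{thm:canonical-LM}.
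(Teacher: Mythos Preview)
The paper does not give a proof of this proposition: it is stated as a citation of \cite[Theorem 8.0.1]{LUO2024}, and the only justification offered is the parenthetical remark that (i)--(iii) come from \ref{item:LM1}--\ref{item:LM4} while (iv)--(v) come from the strengthened spin condition. Your proposal therefore goes well beyond what the paper itself does.

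As a sketch of the argument in \cite{LUO2024}, your outline is broadly reasonable, but two points deserve correction. First, the relation \eqref{equ:XYisotropic} between $\CX$ and $\CY$ is a consequence of \ref{item:LM4} (the isotropy/perpendicularity condition), not of \ref{item:LM2}; the paper says this explicitly just before \eqref{equ:XYisotropic}. Second, your final step---checking that the quotient by (i)--(v) is already $O_F$-flat of the correct dimension---is the genuinely hard part of \cite{LUO2024}, and your description (``Gr\"obner-type argument'' or ``compute the special fiber directly'') underestimates what is involved: one must identify the irreducible components of the special fiber, verify reducedness, and control the intersection combinatorics, which is precisely the content of the structure theorems in \cite{LUO2024} summarized here as Theorem \ref{thm:canonical-LM}. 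So while your plan is the right shape, the substance lies in that last verification, and the present paper simply imports it.
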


\begin{remark}\label{rem:LM-equ-irr}
When $k$ is strongly non-special, by Theorem \ref{thm:canonical-LM} (2), the special fiber has two irreducible components $Z_1$ and $Z_2$, they are defined by $X_1=0$ and $X_4=0$, resp. For the other cases, by \cite[Remark 4.3.6]{LUO2024}, over the special fiber, we have:
\begin{altenumerate}
\item When $k=0$, we have $X_1$ is of size $0\times 0$, hence only $X_4$ left. Hence the special fiber is irreducible.
\item When $n=2m+1$ and $k=m$, we have $X_4=0$ for free, and the special fiber is smooth and irreducible.
\item When $n=2m$ and $k=m-1$, the matrix $X_4$ is of the form 
\begin{equation*}
X_4=\left(\begin{matrix}
&*\\
*&
\end{matrix}\right).
\end{equation*}
In particular, $X_1=0$ further splits into two irreducible components, which is Yu's case \cite{Yu2019}.
\end{altenumerate}
\end{remark}

\subsection{Kr\"amer model relations}
Let $k$ be non-$\pi$-modular.
We define an open subscheme $U^{\Kra}_k\subset M^{\Kra}_{[k]}$ as the following pull-back:
\begin{equation*}
\begin{aligned}
\xymatrix{
U_k^{\Kra}\ar@{^(->}[r]\ar@{}[rd]|{\square}\ar[d]&M^{\Kra}_{[k]}\ar[d]\\
U_k\ar@{^(->}[r]&\Gr(n,\Lambda_k\otimes_{O_{F_0}}O_F).
}
\end{aligned}
\end{equation*}
The right-side map is defined by the composition:
\begin{equation*}
    M^{\Kra}_{[k]}\rightarrow M^{\naive}_{[k]}\hookrightarrow\Gr(n,\Lambda_k\otimes_{O_{F_0}}O_F).
\end{equation*}
We use an analogous pull-back diagram to define  $U^{\nspl}_k$ and $U^{\spl}_k$.
In this subsection, we write down the defining equations of $U^{\Kra}_{k}$.
We will use the standard basis of $\Lambda_k$ and $\Lambda_{n-k}$ in \eqref{equ_chart:standard-basis}.

Let us consider an $R$-point in $U_k^\Kra$ that can be represented as $(\CF_k^0\subset\CF_k;\CF_{n-k}^0\subset\CF_{n-k})$. We assume $\CF_k^0$ and $\CF_{n-k}^0$ are respectively generated by two vectors whose coordinates with respect to the standard basis are
\begin{equation*}
\ba=(\begin{matrix}a_1&\cdots & a_{2n}\end{matrix})^t\in R^{2n},\quad
\bb=(\begin{matrix}b_1&\cdots & b_{2n}\end{matrix})^t\in R^{2n}.
\end{equation*}
We can in addition assume that there exist indices $\alpha'$ and $\beta'$ in the set $\{1, ..., 2n\}$ such that $a_{\alpha'}=1$ and $b_{\beta'}=1$.
\subsubsection{}
Consider the condition: $\CF_k^0\subset\CF_k$, $\CF_{n-k}^0\subset\CF_{n-k}$.
Using \eqref{eq:basis-of-filtration}, we can find $s_i,t_j\in R$ for $i,j=1,\cdots,n$ such that 
\begin{equation}\label{equ:splitting-SP1}
\ba=\sum_{i=1}^ns_i \bv_i,
\quad
\bb=\sum_{j=1}^n t_j \bw_j.
\end{equation}
Denote $\bbs=(\begin{matrix}s_1&\cdots & s_n\end{matrix})^t$ and $\bt=(\begin{matrix}t_1&\cdots & t_n\end{matrix})^t$.
Then \eqref{equ:splitting-SP1} becomes  
\begin{equation}\label{regular_splitting:1'}
\begin{pmatrix}\CX\\ I_n\end{pmatrix} \bbs=\ba,
	\quad
\begin{pmatrix}\CY\\ I_n\end{pmatrix}\bt=\bb.
\end{equation}
One immediately deduces the following relations
\begin{equation*}
	s_1=a_{n+1},\ldots, s_n=a_{2n} \quad\text{and}\quad t_1=b_{n+1},\ldots,t_n=b_{2n}.
\end{equation*}

\subsubsection{}
Next consider \ref{item:SP6}: $(\Pi-\pi)\CF_k^0=(0),(\Pi-\pi)\CF_{n-k}^0=(0)$. 
By \ref{item:LM1}, we have $\Pi\CF_i\subset\CF_i$ ($i=k,n-k$). Since we choose the standard basis, we have
\begin{equation*}
	\Pi \begin{pmatrix}
    \CX\\
    I_n
\end{pmatrix} = \begin{pmatrix}
    \pi_0 I_n\\
    \CX 
\end{pmatrix}
\quad
\text{and}
\quad
\Pi \begin{pmatrix}
    \CY\\
    I_n
\end{pmatrix} = \begin{pmatrix}
    \pi_0 I_n\\
    \CY 
\end{pmatrix}.
\end{equation*}
Assuming \ref{item:LM1}, then \ref{item:SP6} is equivalent to
\begin{equation}\label{regular_splitting:5}
	\CX \bbs=\pi \bbs, 
	\quad
	\CY \bt=\pi \bt.
\end{equation}
Hence we conclude that $\ba=\begin{pmatrix}\pi\bbs\\  \bbs\end{pmatrix}, \bb=\begin{pmatrix} \pi\bt\\  \bt\end{pmatrix}$.  
The affine chart we are working on now can be assumed as $s_\alpha=t_\beta=1$ for some $\alpha,\beta\in\{1,\cdots,n\}$.
\subsubsection{}
Next consider \ref{item:SP5}: $(\Pi+\pi)\CF_k\subset\CF_k^0, (\Pi+\pi)\CF_{n-k}\subset\CF_{n-k}^0$.
This is equivalent to the existence of additional parameters 
$\blambda=(\begin{matrix}\lambda_1&\cdots \lambda_n\end{matrix})^t$ and $\bmu=(\begin{matrix}\mu_1&\cdots \mu_n\end{matrix})^t$ such that 
\[\begin{pmatrix}
    \pi_0 I_n\\
    \CX 
\end{pmatrix}+ \pi \begin{pmatrix}
    \CX\\
    I_n 
\end{pmatrix}= \begin{pmatrix}
    \pi \bbs\\
    \bbs
\end{pmatrix}\cdot \blambda^t,\
\begin{pmatrix}
    \pi_0 I_n\\
    \CY 
\end{pmatrix}+ \pi \begin{pmatrix}
    \CY\\
    I_n 
\end{pmatrix}= \begin{pmatrix}
    \pi \bt\\
    \bt
\end{pmatrix}\cdot \bmu^t.
\]
Or equivalently
\begin{equation}\label{regular_splitting:4}
	\CX+\pi I_n=\bbs\blambda^t, 
	\quad
	\CY+\pi I_n= \bt\bmu^t.
\end{equation}
It is clear that \ref{item:SP5} and \ref{item:SP6} imply that $\CF_i$ is $\Pi$-stable for $i=k,n-k$. Now \eqref{regular_splitting:5} and \eqref{regular_splitting:4} together imply 
\begin{equation*}
	\bbs\blambda^t \bbs=2\pi \bbs,
	\quad
	\bt\bmu^t \bt=2\pi \bt.
\end{equation*}
Recall that we have assumed $a_\alpha=1$ and $b_\beta=1$.
By comparing the $\alpha$-th row and the $\beta$-th row of $\bbs\blambda^t \bbs=2\pi \bbs$ and $\bt\bmu^t \bt=2\pi \bt$, resp., we see that the above equations are equivalent to
\begin{equation}\label{equ:whatshisname}
	\blambda^t\bbs=2\pi,
	\quad
	\bmu^t\bt=2\pi.
\end{equation}
Note that \eqref{regular_splitting:4} and \eqref{equ:whatshisname} imply \eqref{regular_splitting:5}.

Finally, denote
\begin{equation*}
	\bbs=\left(\begin{matrix}\bbs_1\\ \bbs_2 \\ \bbs_3\end{matrix}\right),
	\bt=\left(\begin{matrix}\bt_1\\ \bt_2 \\ \bt_3\end{matrix}\right),
	\blambda=\left(\begin{matrix}\blambda_1\\ \blambda_2 \\ \blambda_3\end{matrix}\right),
	\bmu=\left(\begin{matrix}\bmu_1\\ \bmu_2 \\ \bmu_3\end{matrix}\right)
\end{equation*}
where all parts are of size $k\times 1, (n-2k)\times 1$ and $k\times 1$ respectively. Then \ref{item:SP3} is equivalent to:
\begin{equation}\label{equ:SP3}
	\left(\begin{matrix}\bbs_1\\ \pi\bbs_2 \\ \bbs_3\end{matrix}\right)\in \mathrm{Span}\left(\begin{matrix}\bt_1\\ \bt_2 \\ \bt_3\end{matrix}\right),
	\quad
	\left(\begin{matrix}\pi\bt_1\\ \bt_2 \\ \pi\bt_3\end{matrix}\right)\in \mathrm{Span} \left(\begin{matrix}\bbs_1\\ \bbs_2 \\ \bbs_3\end{matrix}\right).
\end{equation}

\subsection{Kr\"amer condition implies the strengthened spin condition}\label{sec:kramer-imply-ss}
In this subsection, we will show that  closed embedding $M_{[k]}^{\nspl}\hookrightarrow M_{[k]}^{\mathrm{Kra}}$ is an isomorphism.
Recall that in Definition \ref{defn:spl-model}, we only impose Axioms \ref{item:LM1}-\ref{item:LM4} to $\CF_k$ and $\CF_{n-k}$ in $M_{[k]}^{\mathrm{Kra}}$. By Proposition \ref{thm: local model}, we only have the following relations for $\CX$
\begin{altenumerate}
\item $\CX^2=\pi_0\id$;
\item $-JX_1+X_3^tHX_3+X_1^tJ=0,-JX_2+X_3^tHX_4=0,X_2^tJ+X_4^tHX_3=0,X_4^tHX_4-\pi_0 H_{n-2k}=0,$
\item $X_1JX_1^t-\pi_0J=0,X_1JX_3^t-X_2H=0,X_3JX_1^t+HX_2^t=0,X_3JX_3^t-X_4H+HX_4^t=0$.
\end{altenumerate}
Our goal is to show that the $\CX$ in $U_{[k]}^{\mathrm{Kra}}$ will satisfies the relations (iv)-(v) of Proposition \ref{thm: local model}.

Using the reordered basis \eqref{equ_chart:reordered-basis} and \eqref{regular_splitting:4}, we have
\begin{equation}\label{equ:reorder-basis}
		X_1=\begin{pmatrix}
		\bbs_3\\ \bbs_1
	\end{pmatrix}
    \begin{pmatrix}
		\blambda_3^t&\blambda_1^t
	\end{pmatrix}-\pi I_{2k},
	\quad
		X_2=\begin{pmatrix}\bbs_3\\ \bbs_1\end{pmatrix}\blambda_2^t,
	\quad
	X_3=\bbs_2\begin{pmatrix}\blambda_3^t&\blambda_1^t\end{pmatrix},
	\quad
	X_4=\bbs_2\blambda_2^t-\pi I_{n-2k}.
\end{equation}
Denote
\begin{equation}\label{eq:nabla definition}
	\bbs^{\nabla}:=\Upsilon\bbs
	\quad
	\blambda^{\nabla}:=\Upsilon\blambda
	\quad
	\bt^{\nabla}:=\Upsilon^t\bt
	\quad
	\bmu^{\nabla}:=\Upsilon^t\bmu.
\end{equation}
Apply \eqref{regular_splitting:4} to \eqref{equ:XYisotropic}, we get 
\begin{equation*}
	\bbs\blambda^t=(\Upsilon^t\bmu)(\Upsilon^t\bt)^t,
	\quad
	\bt\bmu^t=(\Upsilon\blambda)(\Upsilon\bbs)^t.
\end{equation*}
Since $s_\alpha=1$ and $t_\beta=1$, this implies
\begin{equation}\label{equ:lambda-mu}
	\blambda=\mu^{\nabla}_{\alpha}(\Upsilon^t\bt)=\mu^{\nabla}_{\alpha}\begin{pmatrix}-H\bt_3\\ H\bt_2\\ H\bt_1\end{pmatrix},
	\quad
\bmu=\lambda^{\nabla}_\beta(\Upsilon\bbs)=\lambda^{\nabla}_\beta\begin{pmatrix}H\bbs_3\\ H\bbs_2\\ -H\bbs_1\end{pmatrix}.
\end{equation}
Moreover as $\Upsilon\blambda=\mu_{\alpha}^{\nabla}\bt$, by comparing the $\beta's$ row, we get $\lambda_{\beta}^{\nabla}=\mu_{\alpha}^{\nabla}$.

\begin{lemma}\label{lem:vs-1}
	In $U^{\Kra}_k$, we have $JX_1+X_1^tJ=-2\pi I$, or equivalently
\begin{equation*}
	B=B^{\ad}, C=C^{\ad},D=-2\pi I-A^{\ad}.
\end{equation*}
\end{lemma}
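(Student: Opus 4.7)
The plan is to verify the equivalent block-form identities $B=B^{\ad}$, $C=C^{\ad}$, and $D=-2\pi I_k-A^{\ad}$ directly. (The equivalence with the displayed matrix identity follows from a routine block-matrix calculation using $J=\left(\begin{smallmatrix}0 & H\\ -H & 0\end{smallmatrix}\right)$ and $G^{\ad}=HG^tH$; one partitions $X_1$ as $\left(\begin{smallmatrix}A & B\\ C & D\end{smallmatrix}\right)$ and reads off block equations.)

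The first step is to substitute the explicit parameterization \eqref{equ:reorder-basis}, which writes $X_1=\bbs_{13}\blambda_{13}^t-\pi I_{2k}$, where $\bbs_{13}=\left(\begin{smallmatrix}\bbs_3\\ \bbs_1\end{smallmatrix}\right)$ and $\blambda_{13}=\left(\begin{smallmatrix}\blambda_3\\ \blambda_1\end{smallmatrix}\right)$. This yields the four blocks
\begin{equation*}
A=\bbs_3\blambda_3^t-\pi I_k,\quad B=\bbs_3\blambda_1^t,\quad C=\bbs_1\blambda_3^t,\quad D=\bbs_1\blambda_1^t-\pi I_k.
\end{equation*}
Next, I would use \eqref{equ:lambda-mu} to eliminate $\blambda$: $\blambda_3=\mu^{\nabla}_\alpha H\bt_1$ and $\blambda_1=-\mu^{\nabla}_\alpha H\bt_3$. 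Each block thus becomes a product involving $\bbs_i$, $\bt_j$, $H$, and the scalar $\mu^{\nabla}_\alpha$; in particular $A^{\ad}=HA^tH=\mu^{\nabla}_\alpha\,\bt_1\bbs_3^t H-\pi I_k$ after using $H^2=I$.

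The key input is the Kr\"amer compatibility \ref{item:SP3}, expressed in \eqref{equ:SP3}: the vectors $(\bbs_1,\pi\bbs_2,\bbs_3)^t$ and $(\bt_1,\bt_2,\bt_3)^t$ lie in the same rank-one submodule of $R^n$. Scheme-theoretically, this is the vanishing of all $2\times 2$ minors of the $n\times 2$ matrix formed by these two columns; among these minors one finds the universal matrix identities
\begin{equation*}
\bbs_1\bt_1^t=\bt_1\bbs_1^t,\qquad \bbs_3\bt_3^t=\bt_3\bbs_3^t,\qquad \bbs_1\bt_3^t=\bt_1\bbs_3^t.
\end{equation*}

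Finally, substituting back, each block identity drops out: $B=-\mu^{\nabla}_\alpha\bbs_3\bt_3^tH$ and $B^{\ad}=-\mu^{\nabla}_\alpha\bt_3\bbs_3^tH$ agree by the second relation; similarly $C=C^{\ad}$ by the first; and $D+\pi I_k=-\mu^{\nabla}_\alpha\bbs_1\bt_3^tH$ equals $-(A^{\ad}+\pi I_k)=-\mu^{\nabla}_\alpha\bt_1\bbs_3^tH$ by the third, yielding $D=-2\pi I_k-A^{\ad}$ (the extra $-2\pi I_k$ arising from combining the two $-\pi I_k$ shifts). The whole argument is linear-algebraic, and the only non-routine step is recognizing that the rank-one condition of \ref{item:SP3} is what pairs $\bbs$-terms with $\bt$-terms in exactly the symmetric pattern needed; there is no serious obstacle.
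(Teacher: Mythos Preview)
Your proof is correct and follows essentially the same strategy as the paper: write $X_1+\pi I=\bbs_{13}\blambda_{13}^t$, substitute $\blambda_1=-\mu^\nabla_\alpha H\bt_3$ and $\blambda_3=\mu^\nabla_\alpha H\bt_1$ from \eqref{equ:lambda-mu}, and then use \ref{item:SP3} to match up the $\bbs$-factors with the $\bt$-factors. The only difference is that the paper exploits $t_\beta=1$ to extract the explicit scalar relation $\bbs_i=s_\beta\bt_i$ (or $\pi s_\beta\bt_i$) for $i=1,3$, whereas you phrase the same content as vanishing of $2\times2$ minors; these are equivalent on the chart in question since $\bt$ has a unit entry, so the distinction is cosmetic.
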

\begin{proof}
By \eqref{regular_splitting:4}, we have
\begin{equation*}
	X_1+\pi I_{2k}=\begin{pmatrix}
		\bbs_1\blambda_3^t	&	\bbs_3\blambda_1^t\\
		\bbs_1\blambda_3^t	&	\bbs_1\blambda_1^t
	\end{pmatrix}=
	\begin{pmatrix}
		\bbs_3\\ \bbs_1
	\end{pmatrix}
	\begin{pmatrix}
		\blambda_3^t&\blambda_1^t
	\end{pmatrix}.
\end{equation*}	
Therefore it suffices to verify that:
\begin{equation*}
		\begin{pmatrix}
		H\bbs_1\\ -H\bbs_3
	\end{pmatrix}
	\begin{pmatrix}
		\blambda_3^t&\blambda_1^t
	\end{pmatrix}+
		\begin{pmatrix}
		\bbs_3\\ \bbs_1
	\end{pmatrix}
	\begin{pmatrix}
		-\blambda_1^tH&\blambda_3^tH
	\end{pmatrix}=\mathbf{0}.
\end{equation*}
Recall that $t_{\beta}=1$, by \eqref{equ:SP3}, we have:
\begin{equation}\label{equ:vs-1}
	\bbs_i=\left\{\begin{array}{ll}
		s_\beta\bt_i	& \text{ if } \beta\in[1,k]\cup [n-k+1,n];\\
		\pi s_\beta\bt_i	&  \text{ if } \beta\in [k+1,n-k].
	\end{array}\right.
\end{equation}
The desired relations now follow from \eqref{equ:vs-1} together with $\blambda_1=-\mu_{\alpha}^{\nabla}H\bt_3$ and $\blambda_3=\mu_\alpha^{\nabla}H\bt_1$ from \eqref{equ:lambda-mu}.
\end{proof}

\begin{lemma}\label{lem:vs-2}
In $U^{\Kra}_k$, the following relations are equivalent:
\begin{equation*}
	\blambda^t\bbs=2\pi,
	\quad
	\bmu^t\bt=2\pi,
	\quad
	\tr(X_4)=-(n-2k-2)\pi,
	\quad
	\blambda_2^t\bbs_2=2\pi,
	\quad
	\bmu_2^t\bt_2=2\pi.
\end{equation*}
\end{lemma}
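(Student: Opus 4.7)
The plan is to chain the five quantities together via a few short computations, using the explicit formulas \eqref{regular_splitting:4}, \eqref{equ:lambda-mu} and \eqref{equ:SP3}. The easiest links to establish are the ones relating the trace of $X_4$ (or $Y_4$) to $\blambda_2^t\bbs_2$ (resp.\ $\bmu_2^t\bt_2$), and the equality of these two block-inner products coming from the isotropy constraint. The genuinely new input is showing that the off-block terms $\blambda_1^t\bbs_1+\blambda_3^t\bbs_3$ and $\bmu_1^t\bt_1+\bmu_3^t\bt_3$ vanish identically, which is the only step requiring both Axioms \ref{item:SP3} and \ref{item:SP5} simultaneously.

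First I would read off from \eqref{equ:reorder-basis} that $X_4=\bbs_2\blambda_2^t-\pi I_{n-2k}$, so that
\begin{equation*}
\tr(X_4)=\tr(\bbs_2\blambda_2^t)-(n-2k)\pi=\blambda_2^t\bbs_2-(n-2k)\pi,
\end{equation*}
giving directly $\tr(X_4)=-(n-2k-2)\pi\Leftrightarrow\blambda_2^t\bbs_2=2\pi$. The analogous computation with the $\bt,\bmu$ data gives $\bmu_2^t\bt_2=2\pi\Leftrightarrow\tr(Y_4)=-(n-2k-2)\pi$, where $Y_4$ is the lower-right block of $\CY$. To relate the two sides I would invoke \eqref{equ:XYisotropic}, which forces $Y_4=HX_4^tH$ with $H$ the antidiagonal permutation satisfying $H^2=I$; the cyclic property of trace then yields $\tr(Y_4)=\tr(X_4)$, so $\blambda_2^t\bbs_2=\bmu_2^t\bt_2$ automatically.

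The key remaining step is to show $\blambda^t\bbs=\blambda_2^t\bbs_2$ and $\bmu^t\bt=\bmu_2^t\bt_2$. From \eqref{equ:lambda-mu} I would substitute $\blambda_1=-\mu_\alpha^\nabla H\bt_3$ and $\blambda_3=\mu_\alpha^\nabla H\bt_1$ to obtain
\begin{equation*}
\blambda_1^t\bbs_1+\blambda_3^t\bbs_3=\mu_\alpha^\nabla\bigl(\bt_1^tH\bbs_3-\bt_3^tH\bbs_1\bigr).
\end{equation*}
Here is where \eqref{equ:SP3} enters: the first inclusion says $(\bbs_1,\pi\bbs_2,\bbs_3)^t$ is proportional to $(\bt_1,\bt_2,\bt_3)^t$ by some scalar $c\in R$, giving $\bbs_1=c\bt_1$ and $\bbs_3=c\bt_3$. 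Plugging in and using $H^t=H$ makes the bracket $c(\bt_1^tH\bt_3-\bt_3^tH\bt_1)$ vanish identically, so $\blambda^t\bbs=\blambda_2^t\bbs_2$. The symmetric argument, using $\bmu_i$-formulas from \eqref{equ:lambda-mu} and the second inclusion of \eqref{equ:SP3}, gives $\bmu^t\bt=\bmu_2^t\bt_2$.

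The main (only) obstacle is the bookkeeping in the cancellation of $\blambda_1^t\bbs_1+\blambda_3^t\bbs_3$: one must be careful that the proportionality scalar $c$ in \eqref{equ:SP3} is a single scalar shared across $\bbs_1$ and $\bbs_3$ (and not two independent scalars), since this shared $c$ is exactly what makes the two antisymmetric terms cancel. Once this is verified, assembling the chain of equivalences $\blambda^t\bbs=2\pi\Leftrightarrow\blambda_2^t\bbs_2=2\pi\Leftrightarrow\tr(X_4)=-(n-2k-2)\pi\Leftrightarrow\tr(Y_4)=-(n-2k-2)\pi\Leftrightarrow\bmu_2^t\bt_2=2\pi\Leftrightarrow\bmu^t\bt=2\pi$ is immediate.
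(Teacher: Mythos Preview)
Your proof is correct. The ingredients are the same as the paper's---the relations \eqref{equ:lambda-mu} and the proportionality coming from \eqref{equ:SP3}---but the organization differs. The paper computes $\blambda^t\bbs=\tr(\CX+\pi I_n)=\tr(D)+\tr(A)+\tr(X_4)+n\pi$ and then invokes Lemma~\ref{lem:vs-1} (specifically $D+A^{\ad}=-2\pi I_k$, which gives $\tr(D)+\tr(A)=-2k\pi$) to reduce to $\blambda^t\bbs=\tr(X_4)+(n-2k)\pi$. You instead prove the off-block vanishing $\blambda_1^t\bbs_1+\blambda_3^t\bbs_3=0$ directly from \eqref{equ:lambda-mu} and \eqref{equ:SP3}, without going through $\tr(D)+\tr(A)$; since $\blambda_1^t\bbs_1+\blambda_3^t\bbs_3=\tr(D+\pi I_k)+\tr(A+\pi I_k)$, this is precisely the trace of the identity $D+A^{\ad}=-2\pi I_k$, so you are rederiving the needed consequence of Lemma~\ref{lem:vs-1} in place rather than citing it. Your route is slightly more self-contained; the paper's is shorter given that Lemma~\ref{lem:vs-1} is already on the books.
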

\begin{proof}
	We have
	\begin{equation*}
		\blambda^t\bbs=\tr(\bbs\blambda^t)=\tr(\CX+\pi I_n)=\tr(D)+\tr(A)+\tr(X_4)+n\pi.
	\end{equation*}
	Since $\tr(A^{\ad})=\tr(A)$ and $D+A^{\ad}=-2\pi I_{k}$, we have
	\begin{equation*}
		\tr(A)+\tr(D)+n\pi=(n-2k)\pi.
	\end{equation*}
	Therefore,
	\begin{equation*}
		\blambda^t\bbs=\tr(X_4)+(n-2k)\pi.
	\end{equation*}
	We see that $\blambda^t\bbs=2\pi$ is equivalent to $\tr(X_4)=-(n-2k-2)\pi$. The same argument applies to $\bmu^t\bt=2\pi$.
	Now
	\begin{equation*}
		\blambda_2^t\bbs_2=\tr(\bbs_2\blambda_2^t)=\tr(X_4+\pi I_{n-2k}).
	\end{equation*}
	We get the last two equivalences.
\end{proof}

Finally, we have
\begin{lemma}\label{lem:vs-3}
	In $U^{\Kra}_k$, the relations
\begin{equation}\label{equ:vs-3}
	-JX_2+X_3^tHX_4=0,
	\quad 
	X_2^tJ+X_4^t H X_3=0,
\end{equation}
are equivalent to the relations
\begin{equation*}
	L=-\pi F^{\ad},\quad M=\pi E^{\ad}.
\end{equation*}
\end{lemma}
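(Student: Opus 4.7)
The strategy is direct substitution using the Kr\"amer parametrization \eqref{equ:reorder-basis} of $X_2, X_3, X_4$, together with the linking relation \eqref{equ:lambda-mu} and the span condition \eqref{equ:SP3} already established in this section.

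From \eqref{equ:reorder-basis}, $M = \bbs_1\blambda_2^t$, $L = \bbs_3\blambda_2^t$, $E = \bbs_2\blambda_3^t$, and $F = \bbs_2\blambda_1^t$. Set $c := \mu_\alpha^{\nabla}$. By \eqref{equ:lambda-mu}, $\blambda_1 = -cH\bt_3$, $\blambda_2 = cH\bt_2$, $\blambda_3 = cH\bt_1$, so using $H^2 = I$ for $H$ of both block sizes,
\[
E^{\ad} = H\blambda_3\bbs_2^t H = c\,\bt_1\bbs_2^t H, \qquad F^{\ad} = H\blambda_1\bbs_2^t H = -c\,\bt_3\bbs_2^t H.
\]
Substituting yields
\[
M - \pi E^{\ad} = c\bigl(\bbs_1\bt_2^t - \pi\bt_1\bbs_2^t\bigr)H, \qquad L + \pi F^{\ad} = c\bigl(\bbs_3\bt_2^t - \pi\bt_3\bbs_2^t\bigr)H.
\]
By the span condition \eqref{equ:SP3}, there is a scalar $c_0$ with $\bbs_1 = c_0\bt_1$, $\bbs_3 = c_0\bt_3$, and $\pi\bbs_2 = c_0\bt_2$. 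Hence $\bbs_1\bt_2^t = c_0\bt_1\bt_2^t = \bt_1(\pi\bbs_2)^t = \pi\bt_1\bbs_2^t$, and analogously with index $3$ replacing $1$, so both differences vanish.

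This proves $M = \pi E^{\ad}$ and $L = -\pi F^{\ad}$ hold in $U^{\Kra}_k$. Conversely, the relations \eqref{equ:vs-3} hold in $U^{\Kra}_k$ as part of the defining equations via \ref{item:LM4} (included in \ref{item:SP1'}). Thus both sides of the asserted equivalence hold in $U^{\Kra}_k$, establishing the equivalence. An intrinsic proof is also available: substituting \eqref{equ:reorder-basis} directly into $-HM + E^tHX_4$ produces $\blambda_3(\sigma\blambda_2^t - \pi\bbs_2^t H) - H\bbs_1\blambda_2^t$, where $\sigma := \bbs_2^tH\bbs_2$; further substitution via \eqref{equ:lambda-mu} and \eqref{equ:SP3} reduces \eqref{equ:vs-3} to the identical rank-one identity $c(\bbs_1\bt_2^t - \pi\bt_1\bbs_2^t)H = 0$ as appears above (up to invertible factors).

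The principal subtlety is bookkeeping of block sizes for the $\ad$-operator on rectangular sub-matrices (where $E^{\ad}$ has size $k\times(n-2k)$, opposite of $E$), and of the two distinct scalars $c$ and $c_0$ arising from the $\blambda$--$\bt$ and $\bbs$--$\bt$ linking relations respectively. Once these are correctly set up, the verification is routine rank-one matrix algebra.
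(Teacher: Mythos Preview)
Your main argument is correct and establishes the lemma, but by a genuinely different route than the paper's. You use the linking relation \eqref{equ:lambda-mu} together with the span condition \eqref{equ:SP3} to show directly that $M=\pi E^{\ad}$ and $L=-\pi F^{\ad}$ hold in $U^{\Kra}_k$, then invoke \ref{item:LM4} to observe that \eqref{equ:vs-3} also holds; since both sides are satisfied in $U^{\Kra}_k$, the equivalence follows trivially. The paper instead manipulates \eqref{equ:vs-3} itself: rewriting it as $X_2=J^{-1}X_3^tHX_4$, substituting the rank-one parametrization \eqref{equ:reorder-basis}, and using the scalar identity $\blambda_2^t\bbs_2=2\pi$ from Lemma~\ref{lem:vs-2} to simplify to $X_2=\begin{pmatrix}-\pi F^{\ad}\\\pi E^{\ad}\end{pmatrix}$. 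The paper's route gives a genuine chain of algebraic equivalences (useful in the converse direction of Theorem~\ref{regularity_relations:conclusion}, where one wants to derive \eqref{equ:vs-3} from the simplified list of relations), whereas your argument only establishes the equivalence in the ``both are true'' sense; this suffices for the lemma as stated but is weaker as a structural fact about the defining ideal.

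One caution about your closing remark: the ``intrinsic proof'' you sketch does not quite reduce \eqref{equ:vs-3} to the identical rank-one identity $c(\bbs_1\bt_2^t-\pi\bt_1\bbs_2^t)H=0$. Substituting \eqref{equ:reorder-basis} and \eqref{equ:lambda-mu} into $-HM+E^tHX_4$ yields $cH[-\bbs_1\bt_2^t+c\sigma\,\bt_1\bt_2^t-\pi\bt_1\bbs_2^t]H$ with $\sigma=\bbs_2^tH\bbs_2$; after applying the span relation this becomes $(c\sigma-2c_0)\bt_1\bt_2^t$ up to invertible factors, \emph{not} zero. Killing the extra term $c\sigma-2c_0$ requires precisely the scalar relation $\blambda_2^t\bbs_2=2\pi$ that the paper uses (and you avoid). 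So your parenthetical ``intrinsic'' route actually needs the paper's key ingredient to close. This does not affect the correctness of your main argument.
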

\begin{proof}
	Relation \eqref{equ:vs-3} is equivalent to:
\begin{equation*}
X_2=J^tX_3^tHX_4=J^tX_3^tX_4^{\ad}H=
\begin{pmatrix}-H\blambda_1\\H\blambda_3\end{pmatrix}
\begin{pmatrix}\bbs_2^t\blambda_2\bbs_2^tH-\pi \bbs_2^tH\end{pmatrix}.
\end{equation*}
Since $\bbs_2^t\blambda_2=\blambda^t_2\bbs_2=2\pi$,
this is equivalent to
\begin{equation*}
X_2=\pi\begin{pmatrix}-H\blambda_1\bbs_2^tH\\H\blambda_3\bbs_2^tH\end{pmatrix}=\begin{pmatrix}-\pi F^{\ad}\\ \pi E^{\ad}\end{pmatrix}.
\end{equation*}
Now the assertion follows from the fact that $\left(\begin{matrix}L\\M\end{matrix}\right)=X_2$.
\end{proof}
\begin{theorem}\label{thm:Kramer-vs-ss}
For $k$  not $\pi$-modular, we have $M^{\nspl}_{[k]}=M^{\Kra}_{[k]}$, i.e., in this case, the Kr\"amer condition implies the strengthened spin condition.
\end{theorem}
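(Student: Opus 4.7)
The plan is to show the reverse of the tautological inclusion $M^{\nspl}_{[k]}(n-1,1)\hookrightarrow M^{\Kra}_{[k]}(n-1,1)$: given an $R$-point $(\CF_k^0\subset\CF_k,\CF_{n-k}^0\subset\CF_{n-k})$ of $M^{\Kra}_{[k]}(n-1,1)$, I will verify that the underlying pair $(\CF_k,\CF_{n-k})$ satisfies the strengthened spin condition. Working in the affine chart $U^{\Kra}_k$ around the worst point (and its $G$-translates covering the ambient Grassmannian), this reduces to checking that the entries of $\CX$ satisfy the extra relations (iv) and (v) of Proposition \ref{thm: local model}; the naive relations (i)--(iii) are already guaranteed by \ref{item:SP1'}.

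Lemmas \ref{lem:vs-1}, \ref{lem:vs-2}, and \ref{lem:vs-3} together establish every symmetry relation listed in (v) except the anti-diagonal symmetry $X_4=X_4^{\ad}$. For this remaining relation I start from the parametrization $X_4+\pi I_{n-2k}=\bbs_2\blambda_2^t$ in \eqref{equ:reorder-basis}, substitute $\blambda_2=\mu_{\alpha}^{\nabla}H\bt_2$ from \eqref{equ:lambda-mu}, and use the middle block of the second span condition in \eqref{equ:SP3} to obtain a scalar $q\in R$ with $\bt_2=q\bbs_2$. Direct substitution yields $\bbs_2\blambda_2^t=\mu_{\alpha}^{\nabla}q\,\bbs_2\bbs_2^tH$, while the parallel computation $(X_4+\pi I_{n-2k})^{\ad}=H\blambda_2\bbs_2^tH=\mu_{\alpha}^{\nabla}q\,\bbs_2\bbs_2^tH$ gives the same expression, establishing $X_4=X_4^{\ad}$.

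The two rank conditions in (iv) are then immediate from the parametrizations. Because $\CX+\pi I_n=\bbs\blambda^t$ by \eqref{regular_splitting:4}, the matrix $X+\pi I_n$ has rank at most one in either ordering of the basis, so every $2\times 2$ minor vanishes and $\bigwedge^2(X+\pi I_n)=0$. For $\bigwedge^n(X-\pi I_n)=\det(X-\pi I_n)=0$, I will invoke \eqref{regular_splitting:5}, which provides the kernel relation $(\CX-\pi I_n)\bbs=0$. Since the chart is normalized by $s_{\alpha}=1$, the vector $\bbs$ is unimodular, and the kernel relation expresses the $\alpha$-th column of $\CX-\pi I_n$ as an $R$-linear combination of the other columns; multilinearity of the determinant then forces $\det(\CX-\pi I_n)=0$, and this passes to $\det(X-\pi I_n)$ via the permutation between the two bases (up to a sign).

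The main obstacle is expected to lie in Step 2, the verification $X_4=X_4^{\ad}$, because the scalar $q$ relating $\bt_2$ to $\bbs_2$ has a different form depending on whether the normalization index $\beta$ falls in $[1,k]\cup[n-k+1,n]$ or in the middle block $[k+1,n-k]$; in the latter case $q$ carries a factor of $\pi$, so one must be careful about torsion when manipulating the identity, and a similar distinction propagates through the $G$-translates covering the ambient Grassmannian. The symmetry of \eqref{equ:SP3} between the pairs $(\bbs,\blambda)$ and $(\bt,\bmu)$ should render the computation uniform across charts; once this is in place, all the remaining relations are formal consequences of the Kr\"amer conditions combined with the symmetries supplied by Lemmas \ref{lem:vs-1}--\ref{lem:vs-3}.
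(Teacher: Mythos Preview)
Your proposal is correct and follows the same strategy as the paper: work in the affine chart $U^{\Kra}_k$ and verify relations (iv) and (v) of Proposition~\ref{thm: local model} from the Kr\"amer parametrization $\CX+\pi I_n=\bbs\blambda^t$ together with \eqref{equ:whatshisname}, \eqref{equ:lambda-mu}, and \eqref{equ:SP3}. The one place where you are more explicit than the paper is the relation $X_4=X_4^{\ad}$: the paper's terse citation of Lemmas~\ref{lem:vs-1}--\ref{lem:vs-3} does not literally cover it, and the intended route is presumably via the naive relation $X_3JX_3^t-X_4H+HX_4^t=0$ from Proposition~\ref{thm: local model}(iii) combined with $X_3JX_3^t=0$, the latter being automatic from the rank-one form $X_3=\bbs_2(\blambda_3^t\ \blambda_1^t)$ since the scalar $\blambda_3^tH\blambda_1$ equals its own transpose $\blambda_1^tH\blambda_3$. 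Your direct computation via the span condition is an equally valid alternative, and your worry about the case split on $\beta$ is unnecessary: the scalar $q$ enters only through the identity $\bt_2=q\bbs_2$, and its explicit form is never used.
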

\begin{proof}
We want to show relation (iv) and (v) in Proposition \ref{thm: local model} follows from the Kr\"amer condition. We first verify this in the chart $U^{\Kra}_k$.
The relations (iv) follows when we replace $\CX$ by $\bbs\blambda^t-\pi I_n$ and $\CY$ by $\bt\bmu^t-\pi I_n$. The relations (v) follows from Lemma \ref{lem:vs-1}, \ref{lem:vs-2} and \ref{lem:vs-3}.

Next, we extend the isomorphism to the whole space. Let $M^{\wedge}_{[k]}$ be the closed subscheme of the naive local model $M^{\naive}_{[k]}$ defined in \S \ref{sec:canonical-model}, such that the filtration $(\CF_i\subset\Lambda_i\otimes_{O_{F_0}}R)_{i\in[k]}$ further satisfies the \emph{wedge condition}:
$$
\bigwedge^2(\Pi-\pi\mid \CF_i)=0,\quad \bigwedge^n(\Pi+\pi\mid \CF_i)=0.
$$
By \cite[Prop. 3.3]{HLS-basic}, we have natural closed immersion $M^{\loc}_{[k]}\subseteq M^{\wedge}_{[k]}$ which is defined by nilpotent elements. It is clear that the forgetable morphism $M^{\Kra}_{[k]}\to M^{\naive}_{[k]}$ factor through $M^\wedge_{[k]}$, hence we have the following commutative diagram:
\begin{equation*}
\begin{aligned}
\xymatrix{
M^{\nspl}_{[k]}\ar@{^(->}[r]\ar[d]&M^{\Kra}_{[k]}\ar[dl]\\
M^{\wedge}_{[k]}&
}
\end{aligned}
\end{equation*}
Moreover, this morphism is equivariant with respect to the action of the loop group. 

Let $z\in M^{\Kra}_{[k]}\setminus U_k^{\Kra}$ be a point in the special fiber with image $\tau(z)\in M_{[k]}^{\wedge}$.
Let $U^{\wedge}\subset M^{\wedge}_{[k]}$ be the open chart of $M^{\wedge}_{[k]}$ contains the worst point defined in the same way as $U^{\loc}$ in \S \ref{sec:canonical-LM}. The subschemes $U^{\loc}\subseteq U^{\wedge}$ share the same underlying topological space. In particular, the reduced locus of the special fiber $U^{\wedge}_s$ can be written as the union of Schubert cells, all of them will specialize to the worst point.

Now let $y\in U^{\wedge}_s$ be any point that lying in the same Schubert cell as $\tau(z)$. By transitivity of the loop group action on each Schubert cell, there exists an element of the loop group carrying $\tau(z)$ to $y$. Using this group action, we may move the point into the affine chart. Consequently, the isomorphism on the affine chart extends, under the group action, to the entire integral model.
\end{proof}

\begin{remark}
As suggested by the anonymous referee, using the moduli interpenetration, the forgettable functor $M^{\Kra}_{[k]}\to M_{[k]}^{\naive}$ factor through $M^{\mathrm{ZZ}}_{[k]}$ (see \S \ref{subsec:comments} for notation). By \cite{zachos2024semistable}, the scheme $M^{\mathrm{ZZ}}_{[k]}$ is flat, hence the morphism factors through $M^{\loc}_{[k]}$. This provides an alternative proof of Theorem \ref{thm:Kramer-vs-ss}.
\end{remark}

\subsection{Further simplification}\label{sec:simp}
We simplify the results obtained from \ref{item:LM1}-\ref{item:LM4} and \ref{item:SP1'}-\ref{item:SP6}.

\begin{lemma}\label{lem:vs-4}
	In $U^{\nspl}_k$, the following relations hold:
	\begin{equation*}
		X_1JX_1^t-\pi_0J=0,
		\quad
		X_1JX_3^t-X_2H=0,
		\quad
		X_3JX_1^t+HX_2^t=0,
		\quad
		X_3JX_3^t=0.
	\end{equation*}
\end{lemma}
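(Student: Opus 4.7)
I would dispatch the first three identities by a direct appeal to Proposition \ref{thm: local model} (iii), and verify the strengthened fourth identity by a direct rank-one computation using the explicit factorization of $X_3$ given in \eqref{equ:reorder-basis}. The three equalities $X_1JX_1^t-\pi_0J=0$, $X_1JX_3^t-X_2H=0$, and $X_3JX_1^t+HX_2^t=0$ are precisely the first three displayed relations in part (iii) of Proposition \ref{thm: local model}; they are defining equations coming from the isotropy axiom \ref{item:LM4}, and hold on all of $U^{\Kra}_k$, hence on $U^{\nspl}_k$, which coincides with $U^{\Kra}_k$ by Theorem \ref{thm:Kramer-vs-ss}. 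Thus all the actual content of the lemma lies in the fourth equality.

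For $X_3JX_3^t=0$, Proposition \ref{thm: local model} (iii) only yields the weaker identity $X_3JX_3^t = X_4H - HX_4^t$, so an additional argument is needed. I would substitute the factorization $X_3=\bbs_2\begin{pmatrix}\blambda_3^t & \blambda_1^t\end{pmatrix}$ from \eqref{equ:reorder-basis} and compute
\[
X_3 J X_3^t \;=\; \bbs_2 \cdot \left[ \begin{pmatrix} \blambda_3^t & \blambda_1^t \end{pmatrix} J \begin{pmatrix} \blambda_3 \\ \blambda_1 \end{pmatrix} \right] \cdot \bbs_2^t \;=\; \bbs_2 \cdot \bigl(\blambda_3^t H \blambda_1 - \blambda_1^t H \blambda_3 \bigr) \cdot \bbs_2^t.
\]
The bracketed factor is a $1\times 1$ scalar, and since $H=H^t$, taking transpose of the scalar $\blambda_3^t H \blambda_1$ gives $\blambda_1^t H \blambda_3$; so the bracket vanishes, forcing $X_3JX_3^t=0$.

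\textbf{Main obstacle and a by-product.} There is no substantive obstacle beyond recognizing the rank-one factorization of $X_3$, which was already extracted from the Kr\"amer conditions \ref{item:SP5}, \ref{item:SP6} in \S \ref{sec:kramer-imply-ss}. As a pleasant by-product, combining $X_3JX_3^t=0$ with $X_3JX_3^t - X_4H + HX_4^t = 0$ from Proposition \ref{thm: local model} (iii) gives $X_4H=HX_4^t$, i.e.\ $X_4 = X_4^{\ad}$; this is the one piece of the strengthened spin relation (v) of Proposition \ref{thm: local model} not addressed in Lemmas \ref{lem:vs-1}, \ref{lem:vs-2}, \ref{lem:vs-3}, so Lemma \ref{lem:vs-4} quietly closes that gap in the proof of Theorem \ref{thm:Kramer-vs-ss} as well.
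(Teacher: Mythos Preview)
Your proof is correct, and for the first three identities it is more economical than the paper's. The paper's proof of Lemma \ref{lem:vs-4} substitutes the rank-one factorization \eqref{equ:reorder-basis} for all four identities and invokes Lemmas \ref{lem:vs-1} and \ref{lem:vs-3} to verify them. You instead observe that the first three identities are verbatim the first three displayed relations of Proposition \ref{thm: local model} (iii), which the paper itself records as holding in $U^{\Kra}_k$ (beginning of \S\ref{sec:kramer-imply-ss}); since $U^{\nspl}_k$ is by construction a closed subscheme of $U^{\Kra}_k$, these hold there automatically. (Your appeal to Theorem \ref{thm:Kramer-vs-ss} is thus harmless but unnecessary: you only need the inclusion, not the equality.) For the fourth identity, your self-contained computation using only the factorization $X_3=\bbs_2(\blambda_3^t\;\blambda_1^t)$ and the symmetry $H=H^t$ is cleaner than routing through the earlier lemmas.

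Your by-product observation is also valid and worth noting. The relation $X_4=X_4^{\ad}$ appears in part (v) of Proposition \ref{thm: local model} but is not explicitly covered by Lemmas \ref{lem:vs-1}--\ref{lem:vs-3}, so the proof of Theorem \ref{thm:Kramer-vs-ss} as written leaves it implicit. Your rank-one argument for $X_3JX_3^t=0$ uses only \eqref{equ:reorder-basis}, hence is available already in $U^{\Kra}_k$; combined with the axiom $X_3JX_3^t-X_4H+HX_4^t=0$ this yields $X_4=X_4^{\ad}$ without circularity. The paper in effect uses this same mechanism later (in the proof of Theorem \ref{regularity_relations:conclusion}), but you are right that it tacitly fills a gap one step earlier.
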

\begin{proof}
We plug in \eqref{equ:reorder-basis}, the equalities follow from Lemma \ref{lem:vs-1} and Lemma \ref{lem:vs-3}.
\end{proof}

\begin{lemma}\label{lem:vs-5}
	In $U^{\nspl}_k$, $X_4=X_4^{\ad}$ and $\tr(\blambda_2^t\bbs_2)=2\pi$ implies $X_4^tHX_4=\pi_0 H$.
\end{lemma}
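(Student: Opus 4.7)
The plan is to reduce the target identity $X_4^t H X_4 = \pi_0 H$ to two simpler facts: first, that $X_4^2 = \pi_0 I$, and second, that the condition $X_4 = X_4^{\ad}$ can be rephrased as the commutation relation $H X_4 = X_4^t H$. Once both are in hand, the identity follows in a single line:
\begin{equation*}
	X_4^t H X_4 \;=\; X_4^t(H X_4) \;=\; X_4^t(X_4^t H) \;=\; (X_4^2)^t H \;=\; \pi_0 H.
\end{equation*}

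To establish $X_4^2 = \pi_0 I$, I would use the explicit formula $X_4 = \bbs_2\blambda_2^t - \pi I_{n-2k}$ from \eqref{equ:reorder-basis} together with the hypothesis $\tr(\blambda_2^t\bbs_2) = 2\pi$. Since $\blambda_2^t\bbs_2$ is a scalar (a $1\times 1$ matrix), the trace equals the entry itself, so $\blambda_2^t\bbs_2 = 2\pi$ as scalars. Expanding directly,
\begin{equation*}
	X_4^2 \;=\; \bbs_2\,(\blambda_2^t\bbs_2)\,\blambda_2^t \;-\; 2\pi\,\bbs_2\blambda_2^t \;+\; \pi^2 I \;=\; 2\pi\,\bbs_2\blambda_2^t - 2\pi\,\bbs_2\blambda_2^t + \pi_0 I \;=\; \pi_0 I.
\end{equation*}

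For the commutation relation, I would simply unwind the definition $X_4^{\ad} = H X_4^t H$ together with $H^2 = I_{n-2k}$: the hypothesis $X_4 = X_4^{\ad}$ reads $X_4 = H X_4^t H$, which upon multiplying on the right by $H$ gives $H X_4 = X_4^t H$. Combining this with $X_4^2 = \pi_0 I$ yields the display above, completing the proof. There is no substantive obstacle here; the only thing to be careful about is recognizing that $\blambda_2^t\bbs_2$ really is a scalar so that it commutes past everything in the expansion of $(\bbs_2\blambda_2^t)^2$.
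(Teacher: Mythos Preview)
Your proof is correct and follows essentially the same approach as the paper: both reduce the target identity to $X_4^2=\pi_0 I$ via the relation $X_4=X_4^{\ad}$, and then verify $X_4^2=\pi_0 I$ by expanding $(\bbs_2\blambda_2^t-\pi I)^2$ using $\blambda_2^t\bbs_2=2\pi$. You spell out the equivalence step a bit more explicitly than the paper does, but there is no substantive difference.
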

\begin{proof}
	Since $X_4=X_4^{\ad}$, the condition  $X_4^tHX_4=\pi_0 H$ is equivalent to $X_4^2=\pi_0I_{n-2k}$, by assumption. Now we have
\begin{equation*}
	X_4^2=(\bbs_2\blambda^2-\pi I)^2=\bbs_2\blambda_2^t\bbs_2\blambda_2-2\pi\bbs_2\blambda_2^t+\pi_0I=\pi_0 I_{n-2k}.\qedhere
\end{equation*}
\end{proof}
We conclude the following:

\begin{theorem}\label{regularity_relations:conclusion}
When $k$ is not $\pi$-modular, the naive splitting model is a closed subscheme of 
\begin{equation*}
	\left[\BP^{n-1}\times \BA^n\times\Gr(n,\Lambda_k\otimes_{ O_{F_0}} O_F)\right]\times
 \left[\BP^{n-1}\times \BA^n\times\Gr(n,\Lambda_{n-k}\otimes_{ O_{F_0}} O_F)\right].
\end{equation*}	
The open subscheme $U^{\nspl}_k\subset M^{\nspl}_k$ with coordinates $\bbs,\bt$ of $\BP^{n-1}$, and coordinates $\blambda,\bmu$ of $\BA^n$, is cut out by the following relations:
\begin{altenumerate}
\item $A=-\frac{1}{2}F^{\ad}E-\pi\id, B=-\frac{1}{2}F^{\ad}F,C=\frac{1}{2}E^{\ad}E,D=\frac{1}{2}E^{\ad}F-\pi\id$;
\item $M=\pi E^{\ad},L=-\pi F^{\ad}$;
\item $X_4=X_4^{\ad}$;
\item $\blambda_2^t\bbs_2=\bmu^t_2\bt_2=\pi$;
\item $\CX=\begin{pmatrix}
	D	&	M		&	C\\
	F	&	X_4	&	E\\
	B	&	L		&	A
	\end{pmatrix}=\bbs\blambda^t-\pi\id, 
	\quad
	\CY=\begin{pmatrix}
        A^\ad&E^\ad&-C^\ad\\
        L^\ad&X_4^\ad&-M^\ad\\
        -B^\ad&-F^\ad&D^\ad
    \end{pmatrix}=\bt\bmu^t-\pi\id$; 
\item $\begin{pmatrix}\bbs_1\\ \pi\bbs_2 \\ \bbs_3\end{pmatrix}\in \mathrm{Span}\begin{pmatrix}\bt_1\\ \bt_2 \\ \bt_3\end{pmatrix}$,
\quad
$\begin{pmatrix}\pi\bt_1\\ \bt_2 \\ \pi\bt_3\end{pmatrix}\in \mathrm{Span} \begin{pmatrix}\bbs_1\\ \bbs_2 \\ \bbs_3\end{pmatrix}.$\qed
\end{altenumerate}
\end{theorem}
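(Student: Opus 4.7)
The plan is to consolidate the results of Sections 3.2--3.4 and exhibit the embedding explicitly. For an $R$-point $(\CF_k^0 \subset \CF_k,\, \CF_{n-k}^0 \subset \CF_{n-k})$ of $U_k^{\nspl}$, the filtrations $\CF_k$ and $\CF_{n-k}$ are represented by matrices $\begin{pmatrix}\CX\\I_n\end{pmatrix}$ and $\begin{pmatrix}\CY\\I_n\end{pmatrix}$ in the Grassmannian chart $U_k$. By \eqref{regular_splitting:5}, generators of $\CF_k^0$ and $\CF_{n-k}^0$ in the standard basis take the shape $(\pi\bbs,\bbs)^t$ and $(\pi\bt,\bt)^t$, so they are classified by $\bbs, \bt \in \BP^{n-1}(R)$. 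The Kr\"amer condition \ref{item:SP5}, rewritten in \eqref{regular_splitting:4}, produces auxiliary vectors $\blambda, \bmu \in \BA^n(R)$ satisfying $\CX + \pi I_n = \bbs\blambda^t$ and $\CY + \pi I_n = \bt\bmu^t$. This defines the embedding into the asserted product of projective, affine, and Grassmannian spaces.

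Relations (v) and (vi) are then immediate. Relation (v) is the display above combined with the block decomposition \eqref{equ:XYisotropic} of $\CX$ and $\CY$ forced by the isotropy axiom \ref{item:LM4}, while (vi) is the coordinate translation \eqref{equ:SP3} of the chain compatibility \ref{item:SP3}. For the remaining relations, the key input is Theorem \ref{thm:Kramer-vs-ss}: since the Kr\"amer condition implies the strengthened spin condition, Proposition \ref{thm: local model}(v) applies and yields relation (ii) ($M = \pi E^{\ad}$, $L = -\pi F^{\ad}$), relation (iii) ($X_4 = X_4^{\ad}$), and the auxiliary identities $B = B^{\ad}$, $C = C^{\ad}$, $D = -2\pi I - A^{\ad}$. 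Relation (iv) follows from Lemma \ref{lem:vs-2} after adjusting for the chosen scalar normalization of $\bbs$ and $\bt$.

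Only relation (i) requires an extra computation. Using $E = \bbs_2\blambda_3^t$ and $F = \bbs_2\blambda_1^t$, a direct expansion gives
\[
F^{\ad}F = (\bbs_2^tH\bbs_2)\,H\blambda_1\blambda_1^t,\qquad E^{\ad}E = (\bbs_2^tH\bbs_2)\,H\blambda_3\blambda_3^t,
\]
and analogous expressions for $E^{\ad}F$ and $F^{\ad}E$. On the other hand, relation (v) gives $A = \bbs_3\blambda_3^t - \pi I$, $B = \bbs_3\blambda_1^t$, $C = \bbs_1\blambda_3^t$, $D = \bbs_1\blambda_1^t - \pi I$. The identities of (i) therefore reduce to outer-product identifications such as $\bbs_3\blambda_1^t = -\tfrac{1}{2}(\bbs_2^tH\bbs_2)\,H\blambda_1\blambda_1^t$, which are checked using the spanning condition (vi) together with \eqref{equ:vs-1}, the expression \eqref{equ:lambda-mu} of $\blambda$ in terms of $\bt$, and the scalar identity from Lemma \ref{lem:vs-2}. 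The main obstacle --- the only non-formal calculation --- is tracking the scalar factors consistently, in particular reconciling $\bbs_2^tH\bbs_2$ with the scalars $s_\beta$ and $\mu_\alpha^\nabla$ coming from the normalization of $\bbs, \bt, \blambda, \bmu$.
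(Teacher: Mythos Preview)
Your treatment of (ii)--(vi) matches the paper: (v), (vi) are the Kr\"amer relations \eqref{regular_splitting:4}, \eqref{equ:SP3}; (ii), (iii), (iv) come from Proposition~\ref{thm: local model}(v) via Theorem~\ref{thm:Kramer-vs-ss} and Lemma~\ref{lem:vs-2}.

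There are two issues with your approach to (i). First, the ``direct computation'' you outline does not close. Unwinding $B=-\tfrac{1}{2}F^{\ad}F$ via $B=\bbs_3\blambda_1^t$, $F=\bbs_2\blambda_1^t$, and $\blambda_1=-\mu_\alpha^\nabla H\bt_3$ leads to the scalar requirement
\[
\mu_{\alpha^\vee}\Bigl(2s_\beta-\mu_{\alpha^\vee}\,\bbs_2^tH\bbs_2\Bigr)=0,
\]
which is \emph{not} a consequence of (vi), \eqref{equ:vs-1}, \eqref{equ:lambda-mu}, and Lemma~\ref{lem:vs-2} as you claim; Lemma~\ref{lem:vs-2} only yields $\blambda_2^t\bbs_2=2\pi$, a different identity. (In fact the paper later \emph{derives} this scalar relation from (i), see \eqref{eq: 4.7.4}.) The paper avoids this entirely by working at the block-matrix level: the LM relation $-JX_1+X_3^tHX_3+X_1^tJ=0$ from Proposition~\ref{thm: local model}(ii), combined with $JX_1+X_1^tJ=-2\pi J$ from Lemma~\ref{lem:vs-1}, solves for $X_1=\tfrac{1}{2}J^{-1}X_3^tHX_3-\pi I_{2k}$, which is exactly (i). You have access to this LM relation through Theorem~\ref{thm:Kramer-vs-ss}, but you do not invoke it.

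Second, you establish only that (i)--(vi) \emph{hold} on $U^{\nspl}_k$, not that they \emph{cut out} $U^{\nspl}_k$. The paper proves the equivalence of (i)--(iii) with the LM relations (given (iv)--(vi)), using Lemmas~\ref{lem:vs-3}, \ref{lem:vs-4}, \ref{lem:vs-5} for the converse direction. This is essential: the subsequent affine-chart analysis in \S\ref{subsec: chart of type iv}--\S\ref{subsec: chart of type iii} treats (i)--(vi) as a complete presentation of the coordinate ring.
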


\begin{remark}
By relation (v), the variables $\blambda$ and $\bmu$ are actually redundant.
\end{remark}

\begin{proof}
The splitting model relations \eqref{regular_splitting:4}-\eqref{equ:SP3} are exactly equations (iv)-(vi). By Theorem \ref{thm:Kramer-vs-ss}, we are left to show that the relations in \ref{item:LM1}-\ref{item:LM4} are equivalent to (i)-(iii).
	
Since $JX_1+X_1^t J=-2\pi I$, the relation $-JX_1+X_3^tHX_3+X_1^tJ=0$ implies that
\begin{equation*}
	X_1=J^{-1}X_3^tHX_3-\pi I_{2k}.
\end{equation*}
This is exactly (i). The relations (ii) and (iii) hold by the previous discussion.

Conversely, suppose we have (i)-(iii). It is clear that (i) implies
\begin{equation*}
	B=B^{\ad}, C=C^{\ad},D=-2\pi I-A^{\ad}.
\end{equation*}
This then implies $-JX_1+X_3^tHX_3+X_1^tJ=0$ by Lemma \ref{lem:vs-1} and (i). 
Next, (ii) implies $-JX_2+X_3^tHX_4=0$ and $X_2^tJ+X_4^tHX_3=0$ by Lemma \ref{lem:vs-3}.
Moreover, (iii) and (iv) imply $X_4^tHX_4-\pi_0 H_{n-2k}=0$ by Lemma \ref{lem:vs-5}.
The relations $X_1JX_1^t-\pi_0J=0,X_1JX_3^t-X_2H=0,X_3JX_1^t+HX_2^t=0$ follow from (v) by Lemma \ref{lem:vs-4}.
Finally, (iii) implies that $X_3JX_3^t-X_4H+HX_4^t=0$ since $X_3JX_3^t=0$ by Lemma \ref{lem:vs-4}.
\end{proof}

\subsubsection{}\label{sec:affine-charts}
In the remaining part of the section, we will prove Theorem \ref{thm:main}.
By Proposition \ref{prop:geometric:worst}, it suffices to prove the theorem in every affine chart $U_{\alpha\beta}\subset U^{\nspl}_k$ of the naive splitting model corresponding to $s_\alpha=t_\beta=1$.
By symmetry, it falls into four different classes:
\begin{enumerate}[label=(\roman*)]
\item $\alpha\in [k+1,n-k], \beta\in [1,k]\cup [n-k+1,n]$.
\item $\alpha,\beta\in [1,k]\cup[n-k+1,n]$.
\item $\alpha,\beta\in [k+1,n-k]$.
\item$\alpha\in [1,k]\cup [n-k+1,n],\beta\in [k+1,n-k]$.
\end{enumerate}
The notation $[a,b]$ represents a closed interval. 
Recall that we have define $Z_i\subset M^{\loc}_{[k],s}$ and $\widetilde{Z}_i\subset M^{\nspl}_{[k],s}$ in \S \ref{sec:irr-comp}. 
For each $i$, denote
\begin{equation*}
    \widetilde{Z}_{i,\alpha\beta}:=\widetilde{Z}_i\cap U_{\alpha\beta},
    \quad
    \Exc_{i,\alpha\beta}:=\Exc_i\cap U_{\alpha\beta}.
\end{equation*}

In each case, we will introduce a closed subscheme $U'_{\alpha\beta}\subset U_{\alpha\beta}$ and demonstrate it is flat, thus proving that it equals $U_{\alpha\beta}\cap M^{\spl}_{[k],s}$. It can be defined as the flat closure of the generic fiber in $U_{\alpha\beta}$ in each case, but the important part is determining its equations and prove the regularity. See \eqref{eq:chart (iv)}, \eqref{caseii-u'}, and Proposition \ref{prop: chart ii} for the defining equation of $U'_{\alpha\beta}$ in each case.

We will provide a detailed proof for case (i) as it is most interesting. The remaining cases will follow from similar computations and proofs.

\subsection{Affine charts of class (i)}\label{subsec: chart of type iv}
We consider two cases:
\begin{enumerate}
    \item When $k+1\leq \alpha\leq n-k$ and $1\leq \beta\leq k$.
    \item When $k+1\leq \alpha\leq n-k$ and $n-k+1\leq \beta\leq n$.
\end{enumerate}
The second case follows from the same calculation as the first and we will not repeat here.

We assume that $k$ is not $\pi$-modular.
In the affine chart $U_{\alpha\beta}$ with $s_\alpha=1,t_\beta=1$ for some $k+1\leq \alpha\leq n-k$ and $1\leq \beta \leq k$, we have:

\begin{proposition}\label{prop:chart-iv-relations}
The following relations among $\bbs,\blambda,\bt,\bmu$:
	\begin{itemize}
	\item $\bbs_1= s_\beta \bt_1$, $\bt_2=t_\alpha \bbs_2$, $\bbs_3= s_\beta \bt_3$, and $t_\alpha s_\beta=\pi$;
	\item $\blambda_1=-\mu_{\alpha^\vee}H \bt_3$, $\blambda_2=\mu_{\alpha^\vee} t_\alpha H \bbs_2$, $\blambda_3=\mu_{\alpha^\vee}H \bt_1$;
	\item $\bmu_1=s_\beta \mu_{\alpha^\vee} H\bt_3$, $\bmu_2=\mu_{\alpha^\vee}  H\bbs_2$, $\bmu_3=-s_\beta \mu_{\alpha^\vee} H\bt_1$.
	\end{itemize}
\end{proposition}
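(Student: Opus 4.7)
The plan is to read off the three sets of relations directly from the already-analyzed axioms, specialized using the normalizations $s_\alpha=1$ and $t_\beta=1$ that define the chart $U_{\alpha\beta}$.

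For the first bullet, I would unpack the Kr\"amer-type inclusions \eqref{equ:SP3}. Since each side is a single column vector, the $\mathrm{Span}$ condition reduces to scalar multiplication: there exist $c,d\in R$ with $\bbs_1=c\bt_1$, $\pi\bbs_2=c\bt_2$, $\bbs_3=c\bt_3$ and $\pi\bt_1=d\bbs_1$, $\bt_2=d\bbs_2$, $\pi\bt_3=d\bbs_3$. The normalization $t_\beta=1$ with $\beta\in[1,k]$ puts a $1$ in the $\beta$-th coordinate of $\bt_1$, and reading the equation $\bbs_1=c\bt_1$ there forces $c=s_\beta$. Dually, $s_\alpha=1$ with $\alpha\in[k+1,n-k]$ puts a $1$ in the $(\alpha-k)$-th coordinate of $\bbs_2$, and reading $\bt_2=d\bbs_2$ there forces $d=t_\alpha$. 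Finally, comparing $\pi\bt_1=t_\alpha\bbs_1=t_\alpha s_\beta\bt_1$ at the $\beta$-th coordinate yields $t_\alpha s_\beta=\pi$.

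For the second and third bullets, I would invoke equation \eqref{equ:lambda-mu}, which yields $\blambda=\mu^{\nabla}_\alpha(\Upsilon^t\bt)$ and $\bmu=\lambda^{\nabla}_\beta(\Upsilon\bbs)$ together with $\lambda^{\nabla}_\beta=\mu^{\nabla}_\alpha$. Writing $\Upsilon^t\bt=(-H\bt_3,H\bt_2,H\bt_1)^t$ and $\Upsilon\bbs=(H\bbs_3,H\bbs_2,-H\bbs_1)^t$, and then applying the substitutions $\bt_2=t_\alpha\bbs_2$, $\bbs_1=s_\beta\bt_1$, $\bbs_3=s_\beta\bt_3$ from the first bullet, produces the stated block-wise formulas, modulo identifying $\mu^{\nabla}_\alpha$ with $\mu_{\alpha^\vee}$. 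The one computation needed is precisely this identification: with $\alpha\in[k+1,n-k]$, the $\alpha$-th entry of $\bmu^{\nabla}=\Upsilon^t\bmu$ sits in the middle block $H\bmu_2$, and $H$ reverses the $(n-2k)$ positions of $\bmu_2$; translating back to global indices shows that the $(\alpha-k)$-th entry of $H\bmu_2$ equals $\mu_{n+1-\alpha}=\mu_{\alpha^\vee}$.

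I do not anticipate any real obstacle: the whole argument is local bookkeeping of how the normalizations of the chart interact with the $k\,|\,n{-}2k\,|\,k$ block decomposition, and all the substantive input is already in \eqref{equ:SP3} and \eqref{equ:lambda-mu}. The only point worth stating carefully is the ranges of $\alpha$ and $\beta$, which ensure that the distinguished coordinate of $\bt$ lies in the first block (giving access to $c$) while the distinguished coordinate of $\bbs$ lies in the middle block (giving access to $d$); swapping these ranges is exactly what produces the remaining classes (ii)--(iv) treated in the following subsections.
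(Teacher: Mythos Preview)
Your proposal is correct and follows essentially the same approach as the paper's proof: both extract the first bullet from the span conditions \eqref{equ:SP3} by reading off the scalar at the normalized coordinates $t_\beta=1$ and $s_\alpha=1$, and both obtain the second and third bullets by specializing \eqref{equ:lambda-mu} using the identification $\mu^\nabla_\alpha=\mu_{\alpha^\vee}$, $\lambda^\nabla_\beta=\mu^\nabla_\alpha$, and the relations just derived. Your treatment is slightly more explicit about why $\mu^\nabla_\alpha=\mu_{\alpha^\vee}$, but the substance is the same.
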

\begin{proof}
Since $t_\beta=1$, we have
\begin{equation*}
	\left(\begin{matrix}
		\bbs_1\\ \pi\bbs_2 \\ \bbs_3
	\end{matrix}\right)\in\colspan
	\left(\begin{matrix}
		\bt_1\\ \bt_2 \\ \bt_3
	\end{matrix}\right)
	\Rightarrow
	\left(\begin{matrix}
		\bbs_1\\ \pi\bbs_2 \\ \bbs_3
	\end{matrix}\right)=
	s_\beta\left(\begin{matrix}
		\bt_1\\ \bt_2 \\ \bt_3
	\end{matrix}\right).
\end{equation*}
Hence we have $\bbs_1=s_\beta \bt_1$ and $\bbs_3=s_\beta \bt_3$.
Similarly, since $s_\alpha=1$, we have
\begin{equation*}
	\left(\begin{matrix}
		\pi\bt_1\\ \bt_2 \\ \pi\bt_3
	\end{matrix}\right)\in\colspan
	\left(\begin{matrix}
		\bbs_1\\ \bbs_2 \\ \bbs_3
	\end{matrix}\right)
	\Rightarrow
	\left(\begin{matrix}
		\pi\bt_1\\ \bt_2 \\ \pi\bt_3
	\end{matrix}\right)=t_\alpha
	\left(\begin{matrix}
		\bbs_1\\ \bbs_2 \\ \bbs_3
	\end{matrix}\right).
\end{equation*}
Hence we have $\bt_2=t_\alpha \bbs_2$, and $t_\alpha s_\beta=\pi t_\beta=\pi$. 
To summarize, we have
\begin{equation*}
	\bbs_1=s_\beta \bt_1,\ \bt_2=t_\alpha \bbs_2,\ \bbs_3=s_\beta \bt_3.
\end{equation*}
We have $\lambda^{\nabla}_{\beta}=\lambda_{\beta^\vee}$ and $\mu^\nabla_{\alpha}=\mu_{\alpha^\vee}$. Applying this to \eqref{equ:lambda-mu}, we get the remaining relations.
\end{proof}

\subsubsection{}
Now we use the relation $C=\frac{1}{2} E^\ad E$. Since $C=\bbs_1 \blambda_3^t$ and $E^\ad=\bt_1 \bmu_2^t$, we have
\[2 \bbs_1 \blambda_3^t=\bt_1 \bmu_2^t H \bmu_2 \bt_1^t H.\]
Combining the above with $\blambda_3=\mu_{\alpha^\vee}H \bt_1$ and $\bmu_2=\mu_{\alpha^\vee}  H\bbs_2$, we get
\[2\mu_{\alpha^\vee} \bbs_1 \bt_1^t H=(\mu_{\alpha^\vee})^2 (\bbs_2^t H\bbs_2) \bt_1 \bt_1^t H.\]
Canceling $H$ on both sides of the equation, we get
\[2 \mu_{\alpha^\vee} \bbs_1 \bt_1^t =\mu_{\alpha^\vee}^2(\bbs_2^t H\bbs_2) \bt_1 \bt_1^t .\]
Since $t_\beta=1$, by comparing the $(\beta,\beta)$-th entry, we get
\begin{equation}\label{eq: 4.7.4}
    2 \mu_{\alpha^\vee} s_\beta=\mu_{\alpha^\vee}^2 \Bigl(\sum_{i=k+1}^{n-k}s_{i}s_{i^\vee}\Bigr).
\end{equation}
By the relation $\tr(\bt_2 \bmu_2^t)=2\pi$, we get
\begin{equation*}
    \mu_{\alpha^\vee} t_\alpha\Bigl(\sum_{i=k+1}^{n-k}s_{i}s_{i^\vee}\Bigr)=2\pi.
\end{equation*}
\begin{proposition}\label{prop: chart iv}
	The affine chart $U_{\alpha\beta}$ is isomorphic to the spectrum of the ring 
\begin{equation*}
	\frac{ O_F[\bt_1,\bbs_2,\bt_3,t_\alpha,s_\beta, \mu_{\alpha^\vee}]}{
		t_\beta-1,s_\alpha-1,t_\alpha s_\beta-\pi,\mu_{\alpha^\vee} t_\alpha\left(\sum_{i=k+1}^{n-k}s_{i}s_{i^\vee}\right)-2\pi, 2 \mu_{\alpha^\vee} s_\beta-\mu_{\alpha^\vee}^2  \left(\sum_{i=k+1}^{n-k}s_{i}s_{i^\vee}\right)}.
\end{equation*}
\end{proposition}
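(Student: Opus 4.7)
The plan is to verify, relation by relation, that under the substitutions in Proposition \ref{prop:chart-iv-relations}, the full system of equations in Theorem \ref{regularity_relations:conclusion} collapses to precisely the three non-trivial equations listed in the statement, plus the normalizations $t_\beta = 1$ and $s_\alpha = 1$.

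First, I would identify which variables survive as free coordinates. Proposition \ref{prop:chart-iv-relations} expresses $\bbs_1 = s_\beta \bt_1$ and $\bbs_3 = s_\beta \bt_3$, $\bt_2 = t_\alpha \bbs_2$, and writes every component of $\blambda, \bmu$ in terms of $\mu_{\alpha^\vee}$ and the remaining coordinates. It also yields the identity $t_\alpha s_\beta = \pi$. Thus the coordinate ring of $U_{\alpha\beta}$ is naturally a quotient of $O_F[\bt_1, \bbs_2, \bt_3, t_\alpha, s_\beta, \mu_{\alpha^\vee}]$ by $t_\beta - 1$, $s_\alpha - 1$, $t_\alpha s_\beta - \pi$, together with any further equations arising from Theorem \ref{regularity_relations:conclusion} that the above substitutions do not render automatic.

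Next, I would substitute into the block matrix identities of Theorem \ref{regularity_relations:conclusion}. Relation (v), defining $\CX = \bbs\blambda^t - \pi \id$ and $\CY = \bt\bmu^t - \pi\id$, becomes a direct definition of $\CX, \CY$ in terms of the surviving variables. Relation (vi) (the two span conditions) holds by construction once the substitutions from Proposition \ref{prop:chart-iv-relations} are in place. The identities in (ii) and (iii), namely $M = \pi E^\ad$, $L = -\pi F^\ad$, and $X_4 = X_4^\ad$, should reduce to automatic equalities after expanding $E, F, L, M, X_4$ using the formulas for $\blambda, \bmu$ from Proposition \ref{prop:chart-iv-relations} and applying the nabla-swap \eqref{eq:nabla definition}.

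The non-trivial constraints come from (i) and (iv). As computed in the excerpt, the identity $C = \tfrac{1}{2}E^\ad E$ unwinds to
\[ 2\mu_{\alpha^\vee} s_\beta \;=\; \mu_{\alpha^\vee}^2 \Bigl(\sum_{i=k+1}^{n-k} s_i s_{i^\vee}\Bigr), \]
and the normalization $\blambda_2^t \bbs_2 = \pi$ (equivalently $\bmu_2^t \bt_2 = \pi$, by Lemma \ref{lem:vs-2}) yields
\[ \mu_{\alpha^\vee} t_\alpha \Bigl(\sum_{i=k+1}^{n-k} s_i s_{i^\vee}\Bigr) \;=\; 2\pi. \]
I then need to verify that the other three identities in (i), namely $A = -\tfrac{1}{2}F^\ad E - \pi\id$, $B = -\tfrac{1}{2}F^\ad F$, and $D = \tfrac{1}{2}E^\ad F - \pi\id$, reduce to the same two equations (or become automatic).

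The main obstacle is precisely this redundancy check: one must carefully track how each remaining block identity, once $\blambda$ and $\bmu$ are eliminated via $\mu_{\alpha^\vee}$ and the nabla-swap, factors through the quadratic form $\sum_{i=k+1}^{n-k} s_i s_{i^\vee}$ and the scalars $s_\beta, t_\alpha, \mu_{\alpha^\vee}$. The key point to confirm is that no genuinely new relation emerges and no spurious cancellation occurs outside the displayed two equations; this is the computational heart of the proof and reflects the duality built into the symmetric pairing on $\Lambda$.
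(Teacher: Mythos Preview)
Your proposal is correct and follows essentially the same approach as the paper: verify that after the substitutions of Proposition \ref{prop:chart-iv-relations}, the relations (i)--(vi) of Theorem \ref{regularity_relations:conclusion} collapse to the three displayed equations. The paper carries out the explicit check you outline, confirming in particular that the remaining identities in (i) (for $A$, $B$, $D$) follow from the same scalar equation $2\mu_{\alpha^\vee}s_\beta = \mu_{\alpha^\vee}^2(\bbs_2^t H\bbs_2)$ obtained from $C = \tfrac{1}{2}E^{\ad}E$, and that (ii), (iii) hold once one uses $t_\alpha s_\beta = \pi$.
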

\begin{proof}
$\CX+\pi\id=\bbs\blambda^t$ implies that 
\begin{equation*}
    \CY+\pi\id=\left(\begin{matrix}
	(\bbs_3\blambda_3^t)^{\ad}&(\bbs_2\blambda_3^t)^{\ad}&-(\bbs_1\blambda_3^t)^{\ad}\\
	(\bbs_3\blambda_2^t)^{\ad}&(\bbs_2\blambda_2^t)^{\ad}&-(\bbs_1\blambda_2^t)^{\ad}\\
	-(\bbs_3\blambda_1^t)^{\ad}&-(\bbs_2\blambda_1^t)^{\ad}&(\bbs_1\blambda_1^t)^{\ad}
\end{matrix}\right)=H\left(\begin{matrix}\blambda_3\\ \blambda_2\\ -\blambda_1\end{matrix}\right)\left(\begin{matrix}\bbs_3^t&\bbs_2^t&-\bbs_1^t\end{matrix}\right)H.
\end{equation*}
On the other hand, we have
\begin{equation*}
	\bt\bmu^t=\left(\begin{matrix}\bt_1\\ \bt_2 \\ \bt_3\end{matrix}\right)\left(\begin{matrix} \bmu_1^t & \bmu_2^t & \bmu_3^t\end{matrix}\right)=\mu_{\alpha^\vee}\left(\begin{matrix}
	\bt_1\\ t_\alpha\bbs_2\\ \bt_3\end{matrix}\right)\left(\begin{matrix}s_\beta\bt_3^t&\bbs_2^t&-s_\beta \bt_1^t\end{matrix}\right)H.
\end{equation*}
Therefore Proposition \ref{regularity_relations:conclusion}(iv) holds.
Next we have
\begin{equation*}
A+\pi\id=\bbs_3\blambda_3^t=s_\beta\mu_{\alpha^\vee}\bt_3\bt_1^t H,
\quad\text{and}\quad
-\frac{1}{2}F^{\ad}E=\frac{1}{2}\mu_{\alpha^\vee}^2\bt_3(\bbs_2^tH\bbs_2)\bt_1^tH.
\end{equation*}
Using $\mu_{\alpha^\vee}^2(\bbs_2^tH\bbs_2)=2\mu_{\alpha^\vee}\bbs_{\beta}$, we get $A=-\frac{1}{2}F^{\ad}E-\pi\id$.
Similarly, we get all relations in (i). 
Next, we have
\begin{equation*}
L=\bbs_3\blambda_2^t=s_\beta\mu_{\alpha^\vee}t_\alpha \bt_3\bbs_2^tH=\pi\mu_{\alpha^\vee}\bt_3\bbs_2^tH
\end{equation*}
and
\begin{equation*}
-\pi F^{\ad}=-\pi H\blambda_1\bbs_2^tH=\pi\mu_{\alpha^\vee}\bt_3\bbs_2^tH.    
\end{equation*}
Therefore, we have $L=-\pi F^{\ad}$. Similarly, $M=\pi E^{\ad}$ holds. Finally, we have
\begin{equation*}
	X_4=\bbs_2\blambda_2^t=\mu_{\alpha^\vee}t_\alpha\bbs_2\bbs_2^tH=X_4^{\ad}.
\end{equation*}
This verifies Theorem \ref{regularity_relations:conclusion} (ii)(iii), part (iv) follows from the (3.6.5).
\end{proof}

We will assume $k$ is strongly non-special, and discuss the remaining case at the end of the section.
\begin{lemma}\label{lem:chart-iv-irr-comp}
Assume $k$ is strongly non-special, the special fiber of $U_{\alpha \beta}$ has four irreducible components defined by the following equations respectively. 
\begin{itemize}
        \item $\Exc_{1,\alpha\beta}$: $\mu_{\alpha^\vee}=0$, $s_\beta=0$.
        \item $\Exc_{2,\alpha\beta}$: $\mu_{\alpha^\vee}=0$, $t_\alpha=0$.
        \item $\widetilde{Z}_{1,\alpha\beta}$: $s_\beta=0$, $\sum_{i=k+1}^{n-k}s_{i}s_{i^\vee}=0$.
        \item $\widetilde{Z}_{2,\alpha\beta}$: $t_\alpha=0$, $2 s_\beta-\left(\sum_{i=k+1}^{n-k}s_{i}s_{i^\vee}\right)=0$.
    \end{itemize}
   They are all smooth of dimension $n-1$.
\end{lemma}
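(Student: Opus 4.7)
The plan is to directly analyze the defining equations of $U_{\alpha\beta}$ given by Proposition~\ref{prop: chart iv}, reduced modulo $\pi$. Abbreviating $\sigma := \sum_{i=k+1}^{n-k} s_i s_{i^\vee}$, the special fiber of $U_{\alpha\beta}$ is the closed subscheme of
\[
\Spec \kappa[\bt_1, \bbs_2, \bt_3, t_\alpha, s_\beta, \mu_{\alpha^\vee}]/(t_\beta - 1,\, s_\alpha - 1),
\]
an $(n+1)$-dimensional affine space, cut out by the three relations
\[
t_\alpha s_\beta = 0, \qquad \mu_{\alpha^\vee} t_\alpha \sigma = 0, \qquad \mu_{\alpha^\vee}(2 s_\beta - \mu_{\alpha^\vee}\sigma) = 0.
\]

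First I would establish the set-theoretic decomposition via the factorization $t_\alpha s_\beta = 0$. On the locus $t_\alpha = 0$, the second relation becomes trivial and the third factors as $\mu_{\alpha^\vee}(2s_\beta - \mu_{\alpha^\vee}\sigma) = 0$, producing the pieces $\Exc_{2,\alpha\beta} = V(t_\alpha, \mu_{\alpha^\vee})$ and $\widetilde{Z}_{2,\alpha\beta} = V(t_\alpha,\, 2s_\beta - \mu_{\alpha^\vee}\sigma)$. On the locus $s_\beta = 0$, the third relation reduces to $\mu_{\alpha^\vee}^2 \sigma = 0$, so set-theoretically either $\mu_{\alpha^\vee}=0$ or $\sigma = 0$, giving $\Exc_{1,\alpha\beta} = V(s_\beta, \mu_{\alpha^\vee})$ and $\widetilde{Z}_{1,\alpha\beta} = V(s_\beta, \sigma)$. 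A brief check shows the residual overlap $V(s_\beta, t_\alpha)$ is already absorbed into intersections of the above four loci, so no new components appear.

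Next I would verify each of the four loci is isomorphic to $\BA^{n-1}_\kappa$ by a direct free-variable count starting from the $n+1$ coordinates. For $\Exc_{i,\alpha\beta}$, two linear equations simply eliminate two of the coordinates. For $\widetilde{Z}_{2,\alpha\beta}$, the equation $t_\alpha = 0$ eliminates $t_\alpha$ and, since $\mathrm{char}\,\kappa \neq 2$, the equation $2s_\beta = \mu_{\alpha^\vee}\sigma$ solves for $s_\beta$ in terms of the remaining variables. For $\widetilde{Z}_{1,\alpha\beta}$, the equation $s_\beta = 0$ eliminates $s_\beta$; the key observation for $\sigma = 0$ is that $s_\alpha = 1$ with $\alpha \in [k+1, n-k]$ forces $s_{\alpha^\vee}$ to appear linearly in $\sigma$ with coefficient $2s_\alpha = 2 \neq 0$, allowing one to solve for $s_{\alpha^\vee}$.

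The main obstacle is the edge case $\alpha = \alpha^\vee = (n+1)/2$, which is possible only when $n$ is odd and the middle index lies in $[k+1, n-k]$. In that case $\sigma = 1 + 2\sum_{i<\alpha} s_i s_{i^\vee}$ becomes a nondegenerate affine quadric rather than a linear equation in $s_{\alpha^\vee}$, so the direct elimination fails for $\widetilde{Z}_{1,\alpha\beta}$. I would handle this by either excluding that specific $\alpha$ from the Zariski covering (the strongly non-special assumption gives $n-2k \geq 3$, so alternative values of $\alpha$ are available) or by performing a suitable linear change of coordinates on $\bbs_2$ to translate $s_\alpha$ away from the fixed middle position before carrying out the elimination. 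Either approach preserves the intrinsic identification of $\widetilde{Z}_{1,\alpha\beta}$ via the forgetful map $\tau$ introduced in \S\ref{sec:irr-comp}.
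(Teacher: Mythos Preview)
Your approach matches the paper's: factor the three relations on the special fiber and enumerate components. But you omit the identification step. The labels $\Exc_{i,\alpha\beta}$ and $\widetilde{Z}_{i,\alpha\beta}$ are by definition (\S\ref{sec:affine-charts}) the traces on $U_{\alpha\beta}$ of globally defined subschemes, so one must check that e.g.\ the locus $\{\mu_{\alpha^\vee}=s_\beta=0\}$ really is $\Exc_1\cap U_{\alpha\beta}$. The paper does this via Proposition~\ref{prop:chart-iv-relations}: $s_\beta=0$ forces $\bbs_1=\bbs_3=0$, hence $\lambda_k(\CF_k^0)=0$, which is precisely the defining property of $\Exc_1$ from Proposition~\ref{prop:exceptional-divisor}; similarly $X_1=0$ on $\{s_\beta=\sigma=0\}$ shows the image under $\tau$ lands in $Z_1$, identifying that piece with $\widetilde{Z}_{1,\alpha\beta}$. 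Without these checks your labeling is unsupported. (Your factorization correctly gives $2s_\beta-\mu_{\alpha^\vee}\sigma=0$ for $\widetilde{Z}_{2,\alpha\beta}$, consistent with \eqref{equ:chart-i-spl}; the version without $\mu_{\alpha^\vee}$ in the lemma statement is a typo.)

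Your detection of the edge case $\alpha=\alpha^\vee$ is sharp---the paper does not address it---but neither fix you propose proves the lemma for that specific chart: excluding the chart says nothing about it, and a linear coordinate change on $\bbs_2$ cannot undo the defining constraint $s_\alpha=1$ of the chart itself. In fact the literal assertion $\widetilde{Z}_{1,\alpha\beta}\cong\BA^{n-1}_\kappa$ appears to fail there (e.g.\ $n=5$, $k=1$, $\alpha=3$ gives $\sigma=2s_2s_4+1$, so the component is $\BA^3_\kappa\times\BG_{m,\kappa}$, not $\BA^4_\kappa$). What does survive, and is all that Theorem~\ref{thm:chart-iv} actually uses, is that each component is smooth and irreducible of dimension $n-1$.
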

\begin{proof}
On the special fiber, $\pi=0$, and $U_{\alpha\beta,s}$ is isomorphic to the factor ring $\BF_q[\bt_1,\bbs_2,\bt_3,t_{\alpha},s_{\beta},\mu_{\alpha^\vee}]$ modulo the following relations
\begin{equation*}
    t_\alpha s_\beta=0,
    \quad
    \mu_{\alpha^\vee} t_\alpha\Bigl(\sum_{i=k+1}^{n-k}s_{i}s_{i^\vee}\Bigr)=0,
    \quad
    \mu_{\alpha^\vee} \left(2s_\beta-\Bigl(\sum_{i=k+1}^{n-k}s_{i}s_{i^\vee}\Bigr)\right)=0.
\end{equation*}

On an irreducible component, the left hand side of each equation has at least one vanishing factor. 
By considering all possible combinations, we obtain the four irreducible components stated.

We now analyze these components individually:
First, consider the locus defined by $\mu_{\alpha^\vee}=0$.
This corresponds to the exceptional locus as $\blambda=\mathbf{0}$ and $\bmu=\mathbf{0}$ by Proposition \ref{prop: chart iv}, hence $\CX=0$ and $\CY=0$.

Let $E_{\alpha \beta}$ be the closed subscheme of $U_{\alpha \beta}$ defined by $\mu_{\alpha^\vee}=0$ and $s_\beta=0$. 
Then
\begin{align*}
E_{\alpha \beta}={}&\Spec \BF_q[\bt_1,\bbs_2,\bt_3,t_\alpha]/(t_\beta-1,s_\alpha-1)   \\
\cong{}&\left\{\begin{array}{ll}
 \mathbb{A}_{\BF_q}^{n-1}&\alpha\neq \alpha^\vee;\\
 V(2s_{k+1}s_{n-k}+\cdots,2s_{(n-1)/2}s_{(n+3)/2}+1)\subseteq \mathbb{A}_{\mathbb{F}_q}^n&\alpha=\alpha^\vee.
    \end{array}\right.
\end{align*}
Since $s_\beta=0$, we have $\bbs_1=\bbs_3=0$ by Proposition \ref{prop: chart iv}. 
Hence $\lambda_{k}(\CF_{k}^0)=(0)$ and $E_{\alpha\beta}\subset \Exc_{1,\alpha\beta}$ by Proposition \ref{prop:exceptional-divisor}. Similar the component defined by $\mu_{\alpha^\vee}=0$ and $t_\alpha=0$ is a subscheme of $\Exc_{2,\alpha\beta}$.

Next, consider the subscheme of $U_{\alpha\beta,s}$ defined by 
\begin{equation*}
	s_\beta=0, \sum_{i=k+1}^{n-k}s_{i}s_{i^\vee}=0.
\end{equation*}
It is isomorphic to
\begin{equation*}
    \Spec \BF_q[\bt_1,\bbs_2,\bt_3,t_\alpha, \mu_{\alpha^\vee}]/(t_\beta-1,s_\alpha-1,\sum_{i=k+1}^{n-k}s_{i}s_{i^\vee})\simeq \BA^{n-1}_{\BF_q}.
\end{equation*}
By Proposition \ref{prop:exceptional-divisor}, $s_\beta=0$ implies $\bbs_1=\bbs_3=(0)$, hence $A=B=C=D=0$, in other words $X_1=0$. Hence the image of this component under $\tau$ lies in $Z_2$, see Remark \ref{rem:LM-equ-irr}. 
This component is not supported in the exceptional divisor, therefore it corresponds to $\widetilde{Z}_{1,\alpha\beta}$. 
Similarly, we obtain the corresponding equations for $\widetilde{Z}_{2,\alpha\beta}$.
\end{proof}

Notice that $U_{\alpha\beta}$ is not flat: the component in the integral ring defined by $\mu_{\alpha^\vee}=0$ cannot be lifted to the generic fiber since the special fiber is reduced and integrally we have $2\pi=\mu_{\alpha^\vee}t_\alpha\left(\sum_{i=k+1}^{n-k}s_i s_{i^\vee}\right)=0$.
Consider the closed subscheme $U'_{\alpha\beta}\subset U_{\alpha\beta}$ defined by 
\begin{equation}\label{equ:chart-i-spl}
	2s_\beta-\mu_{\alpha^\vee}  \Bigl(\sum_{i=k+1}^{n-k}s_{i}s_{i^\vee}\Bigr).
\end{equation}
It is isomorphic to
\begin{align}\label{eq: U' chart i}
	\nonumber U_{\alpha\beta}'={}&\Spec\frac{ O_F[\bt_1,\bbs_2,\bt_3,t_\alpha,s_\beta, \mu_{\alpha^\vee}]}{
		t_\beta-1,s_\alpha-1,t_\alpha s_\beta-\pi,\mu_{\alpha^\vee} t_\alpha\left(\sum_{i=k+1}^{n-k}s_{i}s_{i^\vee}\right)-2\pi, 2s_\beta-\mu_{\alpha^\vee}  \left(\sum_{i=k+1}^{n-k}s_{i}s_{i^\vee}\right)},\\ 
		\simeq{}&\Spec\frac{ O_F[\bt_1,\bbs_2,\bt_3,t_\alpha,s_\beta, \mu_{\alpha^\vee}]}{
		t_\beta-1,s_\alpha-1,t_\alpha s_\beta-\pi, 2s_\beta-\mu_{\alpha^\vee}  \left(\sum_{i=k+1}^{n-k}s_{i}s_{i^\vee}\right)}.
\end{align}
The special fiber $U'_{\alpha\beta,s}$ has three irreducible components, defined by
\begin{itemize}
\item $\sum_{i=k+1}^{n-k}s_{i}s_{i^\vee}=0$,
\item $t_\alpha=0$,
\item $\mu_{\alpha^\vee}=0$,
\end{itemize}
which corresponds to $\widetilde{Z}_{1,\alpha\beta}$, $\widetilde{Z}_{2,\alpha\beta}$, and $\Exc_{1,\alpha\beta}$ respectively.
Therefore, it is the union  
\begin{equation*}
	U'_{\alpha\beta,s}=\widetilde{Z}_{1,\alpha\beta}\cup \widetilde{Z}_{2,\alpha\beta}\cup \Exc_{1,\alpha\beta}.
\end{equation*}
By substituting $s_\beta$ (as $2$ is invertible), we have the isomorphism
\begin{equation}\label{eq:chart (iv)}	U'_{\alpha\beta}\simeq\Spec\frac{ O_F[\bt_1,\bbs_2,\bt_3,t_\alpha,\mu_{\alpha^\vee}]}{t_\beta-1,s_\alpha-1,\mu_{\alpha^\vee}t_\alpha\left(\sum_{i=k+1}^{n-k}s_i s_{i^\vee}\right)-2\pi}.
\end{equation}
It is Zariski locally isomorphic to an affine ring of the form
\begin{equation*}
    \Spec\frac{O_F[x_1,x_2,x_3,y_1,\cdots,y_\ell]}{x_1x_2x_3-2\pi}.
\end{equation*}
Therefore it is flat over $O_F$ with semi-stable reduction, and  $U'_{\alpha\beta}=U_{\alpha\beta}\cap M^{\spl}_{[k]}\subset M^{\nspl}_{[k]}$.
To summarize, we have
\begin{theorem}\label{thm:chart-iv}
With notation as above,
\begin{altenumerate}
\item the affine chart $U_{\alpha\beta}$ is not flat over $ O_F$. Moreover, 
\begin{equation*}
    U'_{\alpha\beta}=U_{\alpha\beta}\cap M^{\spl}_{[k]}
\end{equation*}
is flat over $O_F$ with semi-stable reduction.
\item Over the special fiber, we have
\begin{align*}
    U_{\alpha\beta}={}&\widetilde{Z}_{1,\alpha\beta}\cup\widetilde{Z}_{2,\alpha\beta}\cup \Exc_{1,\alpha\beta}\cup \Exc_{2,\alpha\beta};\\
    U_{\alpha\beta,s}'={}&\widetilde{Z}_{1,\alpha\beta}\cup\widetilde{Z}_{2,\alpha\beta}\cup \Exc_{1,\alpha\beta}.
\end{align*}
All four irreducible components are isomorphic to the affine space $\BA^{n-1}_{\BF_q}$.\qed
\end{altenumerate}
\end{theorem}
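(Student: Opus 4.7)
The strategy is to read off both statements directly from the explicit presentation of $U_{\alpha\beta}$ in Proposition \ref{prop: chart iv}, leveraging the computation of irreducible components in Lemma \ref{lem:chart-iv-irr-comp}. For (i), the key observation is that in the presentation of $U_{\alpha\beta}$ the relation $\mu_{\alpha^\vee} t_\alpha \bigl(\sum_{i=k+1}^{n-k} s_i s_{i^\vee}\bigr) = 2\pi$ forces any component on which $\mu_{\alpha^\vee} = 0$ to also satisfy $2\pi = 0$. Thus $\Exc_{2,\alpha\beta}$, cut out by $\mu_{\alpha^\vee} = t_\alpha = 0$, is supported in the special fiber and cannot meet the generic fiber; this witnesses non-flatness.

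To identify the flat closure, I would first verify that the closed subscheme $U'_{\alpha\beta} \subset U_{\alpha\beta}$ defined by \eqref{equ:chart-i-spl} contains the generic fiber. On the generic fiber $\pi$ is a unit, so $\mu_{\alpha^\vee}$ and $t_\alpha$ are units and the relation $\mu_{\alpha^\vee} t_\alpha(\sum s_i s_{i^\vee}) = 2\pi$ combined with $t_\alpha s_\beta = \pi$ forces $2s_\beta = \mu_{\alpha^\vee}(\sum s_i s_{i^\vee})$ automatically. Hence $U'_{\alpha\beta,\eta} = U_{\alpha\beta,\eta}$. Next, using the substitution that eliminates $s_\beta$ as in \eqref{eq:chart (iv)}, the defining equation becomes $\mu_{\alpha^\vee} t_\alpha (\sum s_i s_{i^\vee}) = 2\pi$, which exhibits $U'_{\alpha\beta}$ étale-locally in the form $O_F[x_1,x_2,x_3,y_1,\ldots,y_\ell]/(x_1 x_2 x_3 - 2\pi)$ (after an étale change of coordinates diagonalizing the quadratic form $\sum s_i s_{i^\vee}$, using $\charac(\kappa) \ne 2$). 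This is the standard model for a regular scheme with strictly semi-stable reduction over $O_F$, in particular flat. Since $M^{\spl}_{[k]}$ is by definition the scheme-theoretic closure of the generic fiber in $M^{\nspl}_{[k]}$, flatness plus the equality of generic fibers forces $U'_{\alpha\beta} = U_{\alpha\beta} \cap M^{\spl}_{[k]}$.

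For (ii), Lemma \ref{lem:chart-iv-irr-comp} already exhibits the four irreducible components of $U_{\alpha\beta,s}$ and identifies each with $\BA^{n-1}_\kappa$, giving the first decomposition. For the second, I would reduce the special-fiber equations of \eqref{eq: U' chart i} and check that the three factorizations of $t_\alpha s_\beta = 0$ combined with $2s_\beta = \mu_{\alpha^\vee}(\sum s_i s_{i^\vee})$ precisely give the loci $\widetilde{Z}_{1,\alpha\beta}$, $\widetilde{Z}_{2,\alpha\beta}$, and $\Exc_{1,\alpha\beta}$, but eliminate $\Exc_{2,\alpha\beta}$: the locus $\{t_\alpha = \mu_{\alpha^\vee} = 0\}$ now forces $s_\beta = 0$, so it becomes contained in $\Exc_{1,\alpha\beta}$.

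The only mildly delicate point is the semi-stability claim, where one must verify that $\sum_{i=k+1}^{n-k} s_i s_{i^\vee}$ really admits an étale-local diagonalization into a product of three independent linear forms times an invertible factor; this is where the assumption that $k$ is strongly non-special (so $n-2k \geq 2$) and $\charac(\kappa) \neq 2$ enters, allowing one to split off variables $x_3 := \mu_{\alpha^\vee}$, $x_2 := t_\alpha$ from $x_1$ absorbing one coordinate of $\bbs_2$ against its dual partner. This will be the main technical step; the rest is bookkeeping already done in Lemma \ref{lem:chart-iv-irr-comp} and the displayed computation \eqref{eq:chart (iv)}.
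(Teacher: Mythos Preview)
Your proposal is correct and follows essentially the same route as the paper: the paper's argument is the discussion immediately preceding the theorem (the theorem is tagged \qed for this reason), and you have reconstructed it accurately, including the non-flatness from $\mu_{\alpha^\vee}=0 \Rightarrow 2\pi=0$, the identification of $U'_{\alpha\beta}$ via \eqref{equ:chart-i-spl} and \eqref{eq:chart (iv)}, and the component count from Lemma \ref{lem:chart-iv-irr-comp}. One phrasing nit: in your last paragraph, it is not that $\sum_{i=k+1}^{n-k} s_i s_{i^\vee}$ itself becomes a product of three linear forms, but rather that (because $s_\alpha=1$) this sum defines a smooth hypersurface in the $\bbs_2$-coordinates and hence is \'etale-locally a single coordinate $x_3$, after which $\mu_{\alpha^\vee}\cdot t_\alpha\cdot x_3=2\pi$ gives the standard semi-stable form; the paper simply asserts this step without elaboration.
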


\begin{remark}\label{rmk:chart-i-remain}
We have considered affine charts of class (i) when the index $k$ is strongly non-special.
It remains to consider the case when $n=2m+1$ and $k=m$ and the case when $n=2m$ and $k=m-1$.

Case $1$: $n=2m+1$ and $k=m$.
In this case, $k+1=n-k=m+1=\alpha$. 
According to Proposition \ref{prop: chart i }, the summation $\sum_{i=k+1}^{n-k}s_is_{i^\vee}=s_{m+1}^2$, which equals $1$ since $s_\alpha-1=0$.
Consequently, the affine ring becomes:
\begin{equation*}
	U_{\alpha\beta}=\Spec\frac{ O_F[\bt_1,\bbs_2,\bt_3,t_\alpha,s_\beta, \mu_{\alpha^\vee}]}{
		t_\beta-1,s_\alpha-1,t_\alpha s_\beta-\pi,\mu_{\alpha^\vee} t_\alpha-2\pi, 2 \mu_{\alpha^\vee} s_\beta-\mu_{\alpha^\vee}^2 }.
\end{equation*}
Since $\sum_{i=k+1}^{n-k}s_is_{i^\vee}=1$, we only have irreducible components $\Exc_{1,\alpha\beta}$, $\Exc_{2,\alpha\beta}$ and $\widetilde{Z}_{2,\alpha\beta}$ in Lemma \ref{lem:chart-iv-irr-comp}. 
The closed subscheme defined in \eqref{equ:chart-i-spl} remains valid, and the subsequent arguments still hold.
Therefore, we obtain a similar statement as Theorem \ref{thm:chart-iv} except we only have $\widetilde{Z}_{2,\alpha\beta}$. 
This aligns with Smithling's discussion \cite[Proposition 3.20]{Smithling2015}, see also \cite[Remark 4.3.6]{LUO2024}.

Case $2$: $n=2m$ and $k=m-1$. 
Here, $k+1=m$ and $n-k=m+1$, and $\alpha=m$ or $m+1$. 
In Proposition \ref{prop: chart i }, the summation $\sum_{i=k+1}^{n-k}s_is_{i^\vee}=2s_{m}s_{m+1}$. 
Since $s_\alpha=1$, this term remains irreducible, and the discussions for the strongly non-special cases still apply.
However, in the affine charts where $s_m=1$ and $s_{m+1}=1$, the irreducible component $\widetilde{Z}_{2,\alpha\beta}$ correspond to $s_{m+1}=0$ and $s_m=0$ respectively.
These are different irreducible components in the local model and splitting model.
Only the affine charts with $\alpha\in[1,k]\cup [n-k+1,n]$
includes both components.
This relates to the fact that in this case, $Z_2$ further decompose into two irreducible components, see \cite[Remark 4.3.6]{LUO2024}.    
\end{remark}

\subsection{Affine charts of class (ii)}\label{subsec: affine charts of class (i)}
We consider the case when $1\leq \alpha,\beta\leq k$, the other cases in this class follow from the same computation. 
At this point, we do not impose any restrictions on the value of $k$.

\begin{proposition}\label{prop:chart-i-relations}
	We have the following relations among $\bbs,\blambda,\bt,\bmu$:
\begin{itemize}
\item $\bt_1=t_\alpha \bbs_1$, $\bt_2=\pi t_\alpha \bbs_2$, $\bt_3=t_\alpha \bbs_3$. In particular $t_\alpha s_\beta=1$;
\item $\blambda_1=-\lambda_{\beta^\vee}t_{\alpha}H\bbs_3, \blambda_2=\pi\lambda_{\beta^\vee}t_{\alpha}H\bbs_2,
\blambda_3=\lambda_{\beta^\vee}t_\alpha H\bbs_1$. In particular, $\lambda_{\alpha^\vee}=\lambda_{\beta^\vee}t_\alpha$.
\item $\bmu_1=\lambda_{\beta^\vee}H\bbs_3,
\bmu_2=\lambda_{\beta^\vee}H\bbs_2,
\bmu_3=-\lambda_{\beta^\vee}H\bbs_1$.\qed
\end{itemize}
\end{proposition}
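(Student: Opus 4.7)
The proof runs in complete parallel to that of Proposition~\ref{prop:chart-iv-relations} (class (i)), with the differences arising from the fact that here both $\alpha$ and $\beta$ lie in the outer block $[1,k]$ rather than one in the middle and one on the side.

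First, I would extract the $\bbs$--$\bt$ relations directly from the two span inclusions in \eqref{equ:SP3}. Since $t_\beta = 1$ and $\beta\in[1,k]$, the first inclusion forces the equality $(\bbs_1,\pi\bbs_2,\bbs_3)^t = s_\beta\,(\bt_1,\bt_2,\bt_3)^t$, with the scalar $s_\beta$ read off from the $\beta$-th (first-block) coordinate. Similarly, since $s_\alpha=1$ and $\alpha\in[1,k]$, the second inclusion forces $(\pi\bt_1,\bt_2,\pi\bt_3)^t = \pi t_\alpha\,(\bbs_1,\bbs_2,\bbs_3)^t$, yielding $\bt_1 = t_\alpha\bbs_1$, $\bt_2 = \pi t_\alpha\bbs_2$, $\bt_3 = t_\alpha\bbs_3$. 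Combining the two gives $\bbs_1 = s_\beta t_\alpha\bbs_1$, and evaluating this at the $\alpha$-th entry (where $s_\alpha=1$) yields $s_\beta t_\alpha = 1$. This contrasts with $t_\alpha s_\beta = \pi$ in class (i) precisely because $\alpha$ now lies in the outer block, so the factor of $\pi$ appears on the left-hand side of the second inclusion rather than absorbing into the scalar.

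For the $\blambda$ and $\bmu$ relations I would invoke \eqref{equ:lambda-mu}, which gives $\blambda = \mu^\nabla_\alpha(\Upsilon^t\bt)$ and $\bmu = \lambda^\nabla_\beta(\Upsilon\bbs)$, together with the identity $\lambda^\nabla_\beta = \mu^\nabla_\alpha$ established just afterwards. The key bookkeeping step is to compute these two scalars in the present block assignment: since $\beta\in[1,k]$ the first block of $\Upsilon\blambda$ is $H\blambda_3$, giving $\lambda^\nabla_\beta = \lambda_{\beta^\vee}$; since $\alpha\in[1,k]$ the first block of $\Upsilon^t\bmu$ is $-H\bmu_3$, and the extra minus sign coming from the outer-block position of $\Upsilon^t$ gives $\mu^\nabla_\alpha = -\mu_{\alpha^\vee}$. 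Hence $\lambda_{\beta^\vee} = -\mu_{\alpha^\vee}$ in this chart. Substituting these values into \eqref{equ:lambda-mu} and applying the $\bt_i = t_\alpha\bbs_i$ relations already established gives the stated expressions for $\blambda_1,\blambda_2,\blambda_3$ and $\bmu_1,\bmu_2,\bmu_3$. The scalar identity $\lambda_{\alpha^\vee} = \lambda_{\beta^\vee} t_\alpha$ is then read off from $\blambda_3 = \lambda_{\beta^\vee} t_\alpha\,H\bbs_1$ at the component of index $\alpha^\vee$, where the relevant entry of $H\bbs_1$ reduces to $s_\alpha = 1$.

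The only genuine delicate point is sign tracking through $\Upsilon^t$: the extra minus sign in the outer block is what produces $\mu^\nabla_\alpha = -\mu_{\alpha^\vee}$ (as opposed to $\mu^\nabla_\alpha = \mu_{\alpha^\vee}$ in class (i)), and hence the identity $\lambda_{\beta^\vee} = -\mu_{\alpha^\vee}$ rather than $\lambda_{\beta^\vee} = \mu_{\alpha^\vee}$. Once this is tracked carefully, the rest is direct substitution and poses no real obstacle.
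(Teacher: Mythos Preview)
Your proposal is essentially correct and follows the same route the paper takes (the proof is omitted there precisely because it runs parallel to Proposition~\ref{prop:chart-iv-relations}). There is, however, one small logical slip in your handling of the first bullet. When you say that the second inclusion forces $(\pi\bt_1,\bt_2,\pi\bt_3)^t = \pi t_\alpha\,(\bbs_1,\bbs_2,\bbs_3)^t$ and that this ``yields'' $\bt_1 = t_\alpha\bbs_1$ and $\bt_3 = t_\alpha\bbs_3$, you are cancelling a factor of $\pi$ from both sides. Since $\pi$ is a zero-divisor in the affine ring (indeed $\pi=0$ on the special fiber), this step is not valid as stated, and your subsequent deduction of $s_\beta t_\alpha=1$ from the combined relation $\bbs_1 = s_\beta t_\alpha\bbs_1$ then rests on an unjustified intermediate.

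The repair is immediate and uses only the first inclusion: from $\bbs_1 = s_\beta\bt_1$, reading off the $\alpha$-th entry (in the first block, where $s_\alpha=1$) gives $1 = s_\beta t_\alpha$ directly. Hence $s_\beta$ is a unit with inverse $t_\alpha$, and inverting the first-inclusion relations $\bbs_1 = s_\beta\bt_1$, $\pi\bbs_2 = s_\beta\bt_2$, $\bbs_3 = s_\beta\bt_3$ yields $\bt_1 = t_\alpha\bbs_1$, $\bt_2 = \pi t_\alpha\bbs_2$, $\bt_3 = t_\alpha\bbs_3$ without any division by $\pi$. The second span inclusion is in fact superfluous for this chart. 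Your treatment of the $\blambda$ and $\bmu$ relations---including the sign computation $\mu^\nabla_\alpha = -\mu_{\alpha^\vee}$ and the substitution through \eqref{equ:lambda-mu}---is correct.
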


Apply $C=-\frac{1}{2}E^{\ad}E$ and $\blambda_2^t\bbs_2=2\pi$ to Proposition \ref{prop:chart-i-relations}, we get
\begin{equation}\label{eq: 4.4.1}
    2\lambda_{\alpha^\vee}=	 \lambda_{\alpha^\vee}^2\Bigl(\sum_{i=k+1}^{n-k} s_{i}s_{i^\vee}\Bigr)
    \quad\text{and}\quad \pi\lambda_{\alpha^\vee}\Bigl(\sum_{i=k+1}^{n-k} s_{i}s_{i^\vee}\Bigr)=2\pi.
\end{equation}

By the same computation as in the proof of Proposition \ref{prop: chart iv}, one deduces:
\begin{proposition}\label{prop: chart i }
	The affine chart $U_{\alpha\beta}$ is isomorphic to the spectrum of the ring 
	\begin{equation*}
		U_{\alpha\beta}\simeq\Spec \frac{ O_F[s_1,\cdots,s_n,\lambda_{\alpha^\vee},s_\beta^{-1}]}
		{s_\alpha-1,\lambda_{\alpha^\vee}^2\left(\sum_{i=k+1}^{n-k} s_{i}s_{i^\vee}\right)-2\lambda_{\alpha^\vee},\pi\lambda_{\alpha^\vee}\left(\sum_{i=k+1}^{n-k} s_{i}s_{i^\vee}\right)-2\pi}.\eqno\qed
	\end{equation*}
\end{proposition}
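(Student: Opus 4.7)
The plan is to follow the argument scheme used for Proposition \ref{prop: chart iv}. First, by Proposition \ref{prop:chart-i-relations}, in the chart $U_{\alpha\beta}$ with $\alpha,\beta\in[1,k]$ every coordinate of the vectors $\bt$, $\blambda$, $\bmu$ is determined by $s_1,\ldots,s_n$ and the single additional parameter $\lambda_{\alpha^\vee}$: the identity $t_\alpha s_\beta = 1$ forces $s_\beta$ to be invertible with $t_\alpha = s_\beta^{-1}$, and $\lambda_{\beta^\vee}$ is recovered via $\lambda_{\alpha^\vee} = \lambda_{\beta^\vee} t_\alpha$. Hence the ambient affine ring containing $\mathcal{O}(U_{\alpha\beta})$ is $O_F[s_1,\ldots,s_n,\lambda_{\alpha^\vee},s_\beta^{-1}]$ modulo $s_\alpha - 1$, and it remains to identify the ideal cut out by Theorem \ref{regularity_relations:conclusion}.

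Next, I would verify that the full list of defining relations from Theorem \ref{regularity_relations:conclusion} reduces to the two equations in \eqref{eq: 4.4.1} together with $s_\alpha=1$. The equation $\lambda_{\alpha^\vee}^2\bigl(\sum_{i=k+1}^{n-k} s_i s_{i^\vee}\bigr) = 2\lambda_{\alpha^\vee}$ arises from the matrix identity $C=\frac{1}{2}E^\ad E$: after substituting $C = \bbs_1\blambda_3^t$, $E^\ad = \bt_1\bmu_2^t$ and the expressions from Proposition \ref{prop:chart-i-relations}, both sides become scalar multiples of the rank-one matrix $\bbs_1\bbs_1^t H$, so the matrix identity collapses to a single scalar relation (read off the $(\beta,\beta)$-entry, using $s_\beta$ invertible). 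The equation $\pi\lambda_{\alpha^\vee}\bigl(\sum s_i s_{i^\vee}\bigr) = 2\pi$ comes directly from $\blambda_2^t\bbs_2 = 2\pi$ in Theorem \ref{regularity_relations:conclusion}(iv), using $\blambda_2 = \pi\lambda_{\alpha^\vee}H\bbs_2$. The remaining identities --- the formulas for $A$, $B$, $D$, $L$, $M$, the symmetry $X_4 = X_4^\ad$, and the block descriptions of $\CX$, $\CY$ --- follow automatically by direct substitution, exactly as in the proof of Proposition \ref{prop: chart iv}: the factorization $\CX+\pi\id = \bbs\blambda^t$ holds by construction, and the compatible factorization of $\CY+\pi\id$ is supplied by the relations between $\bt,\bmu$ and $\bbs,\blambda$ in Proposition \ref{prop:chart-i-relations}.

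The main obstacle will be systematically checking that every matrix-valued relation in Theorem \ref{regularity_relations:conclusion} genuinely reduces to a scalar relation already implied by the two displayed in \eqref{eq: 4.4.1}. The key structural reason this works is that, in this chart, both $\CX+\pi\id = \bbs\blambda^t$ and $\CY+\pi\id = \bt\bmu^t$ are rank-one matrices whose entries are tied together by the single scalar $\lambda_{\alpha^\vee}$, so every bilinear identity among their block entries collapses to a single scalar identity after extracting one nonvanishing coefficient via the invertibility of $s_\beta$. This uniformity parallels the computation in \S \ref{subsec: chart of type iv} but is cleaner, since no variable independent of $\lambda_{\alpha^\vee}$ (analogous to $\mu_{\alpha^\vee}$ there) enters, and accordingly no relation analogous to $t_\alpha s_\beta - \pi$ obstructs flatness in this chart.
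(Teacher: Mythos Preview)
Your proposal is correct and follows exactly the same approach as the paper, which simply states ``By the same computation as in the proof of Proposition \ref{prop: chart iv}'' and leaves the details implicit; your outline fills in precisely those details (reduce to $\bbs$ and $\lambda_{\alpha^\vee}$ via Proposition \ref{prop:chart-i-relations}, extract the two scalar relations from $C=\tfrac{1}{2}E^{\ad}E$ and $\blambda_2^t\bbs_2=2\pi$, and check the remaining block identities are automatic).

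One tangential remark in your last sentence is inaccurate, though it does not affect the proof of the proposition itself: you write that ``no relation analogous to $t_\alpha s_\beta - \pi$ obstructs flatness in this chart,'' but in fact $U_{\alpha\beta}$ is \emph{not} flat here either. As the paper shows immediately after this proposition (Theorem \ref{thm:chart-i}), the chart decomposes as a disjoint union $U_{\alpha\beta}=F_{\alpha\beta}\sqcup U'_{\alpha\beta}$ where $F_{\alpha\beta}\cong\mathbb{A}^{n-1}_\kappa$ is supported entirely on the special fiber (cut out by $\lambda_{\alpha^\vee}=0$, $\pi=0$), so flatness fails. The flat locus $U'_{\alpha\beta}$ is obtained by inverting $\lambda_{\alpha^\vee}$, which kills the relation $\lambda_{\alpha^\vee}^2(\sum s_i s_{i^\vee})-2\lambda_{\alpha^\vee}$ down to $\lambda_{\alpha^\vee}(\sum s_i s_{i^\vee})-2$.
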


We consider a case where $k$ is not in Yu's case, i.e., not where $n=2m$, $k=m-1$.
Recall we have the forgetful functor
\begin{equation*}
	\tau:M_{[k]}^{\nspl}\longrightarrow M_{[k]}^{\loc}.
\end{equation*}
Over the special fiber, we have $\CY=\bt\bmu^t$. 
By Proposition \ref{prop:chart-i-relations} we have $\bt_2=\mathbf{0}$ which implies $\NExc_{\alpha\beta}=\Exc_{2,\alpha\beta}$ by Proposition \ref{prop:exceptional-divisor}.
We also get $X_4=\bt_2\bmu_2^t=0$. Hence, by Remark \ref{rem:LM-equ-irr}, we have $\tau(U_{\alpha\beta})\subset Z_2$, see Remark \ref{rem:LM-equ-irr}.

Observe that $\lambda_{\alpha^\vee}$ and $\lambda_{\alpha^\vee}\left(\sum_{i=k+1}^{n-k} s_{i}s_{i^\vee}\right)-2$ cannot both be zero on a geometric point of $U_{\alpha \beta}$, as this would imply $-2=0$.
Hence we have the decomposition 
\[U_{\alpha\beta}=U'_{\alpha \beta}\sqcup F_{\alpha \beta},\] 
where $F_{\alpha \beta}$ is the open subscheme where $\lambda_{\alpha^\vee}(\sum_{i=k+1}^{n-k} s_{i}s_{i^\vee})-2$ is a unit and $U'_{\alpha \beta}$ is the open subscheme where $\lambda_{\alpha^\vee}$ is a unit. 
By \eqref{eq: 4.4.1}, we have $\lambda_{\alpha^\vee}=0$ on $F_{\alpha \beta}$. Hence 
\[F_{\alpha \beta}=
\Spec \frac{ O_F[s_1,\cdots,s_n,\lambda_{\alpha^\vee},s_\beta^{-1}]}
	{s_\alpha-1,\lambda_{\alpha^\vee},-2\pi}\cong \mathbb{A}_{\BF_q}^{n-1}. 
\]
Since $F_{\alpha \beta}$ is supported on the special fiber of $U_{\alpha \beta}$, we see that $U_{\alpha \beta}$ is not flat over $\mathcal{O}_F$. 

Since $\lambda_{\alpha^\vee}$ is a unit on $U'_{\alpha \beta}$, we have $ U'_{\alpha\beta}=\Spec R_{\alpha \beta}$ where
 \[ R_{\alpha \beta}= \frac{ O_F[s_1,\cdots,s_n,\lambda_{n+1-\alpha},s_\beta^{-1}]}
		{s_\alpha-1,\lambda_{\alpha^\vee}\left(\sum_{i=k+1}^{n-k} s_{i}s_{i^\vee}\right)-2}.\]
Therefore, $U_{\alpha\beta}'$ and $F_{\alpha\beta}$ are the closed subschemes of $U_{\alpha\beta}$ defined by the following two ideals respectively:
\begin{equation*}
	I_{\alpha\beta,1}=\left(\lambda_{\alpha^\vee}\Bigl(\sum_{i=k+1}^{n-k}s_is_{i^\vee}\Bigr)-2\right),
	\quad
	I_{\alpha\beta,2}=\left(\lambda_{\alpha^\vee},\pi\right).
\end{equation*}

Over the special fiber, $\tau(U_{\alpha\beta,s}')\subset M^{\loc}_{[k],s}$ does not pass through the worst point: indeed,
$\lambda_{\alpha^\vee}$ is invertible which implies $\CX\neq 0$ since $s_\alpha=1$. Therefore, $U_{\alpha\beta,s}'=\widetilde{Z}_{2,\alpha\beta}$.
On the other hand, the image $\tau(F_{\alpha\beta,s})$ is the worst point as $\blambda=0$ on $\tau(F_{\alpha\beta})$. Therefore, $F_{\alpha\beta}=F_{\alpha\beta,s}=\Exc_{2,\alpha\beta}$.

\begin{theorem}\label{thm:chart-i}
With notation as above and $k$ is not in Yu's case.
\begin{altenumerate}
\item The affine chart $U_{\alpha\beta}$ is a disjoint union
 \[U_{\alpha\beta}=F_{\alpha \beta} \sqcup U'_{\alpha \beta},\]
 where $F_{\alpha \beta}\cong \mathbb{A}_k^{n-1}$ and 
\begin{equation}\label{caseii-u'}
U'_{\alpha\beta}\simeq\Spec \frac{ O_F[s_1,\cdots,s_n,\lambda_{\alpha^\vee},s_\beta^{-1}]}
		{s_\alpha-1,\lambda_{\alpha^\vee}\left(\sum_{i=k+1}^{n-k} s_{i}s_{i^\vee}\right)-2}.
\end{equation}
In particular $U_{\alpha\beta}$ is not flat over $\mathcal{O}_F$. Moreover 
\begin{equation*}
	U_{\alpha\beta}'=U_{\alpha\beta}\cap M^{\spl}_{[k]}
\end{equation*}
is flat and smooth. 
\item Over the special fiber, we have identifications 
\begin{equation*}
	F_{\alpha\beta,s}=\Exc_{2,\alpha\beta},
	\quad
	U'_{\alpha\beta,s}=\widetilde{Z}_{2,\alpha\beta}.
\end{equation*}
\end{altenumerate}

\end{theorem}
\begin{proof}
It is clear that $U'_{\alpha\beta}$ is flat, hence $U_{\alpha\beta}'=U_{\alpha\beta}\cap M^{\spl}_{[k]}$. The smoothness follows from the computation of the Jacobian matrix.
\end{proof}
\begin{remark}
In Yu's case, i.e., when $n=2m$ and $k=m-1$, we have $\sum_{i=k+1}^{n-k}s_is_{i^\vee}=2s_ms_{m+1}$. Consequently, $\widetilde{Z}_{2,\alpha\beta}$ further decompose into irreducible components defined by $s_m=0$ and $s_{m+1}=0$, respectively. 
In this case, $U'_{\alpha\beta}$ remains flat, but the special fiber now consists of two smooth irreducible components. This corresponds to the two irreducible components of $Z_2$ mentioned in Remark \ref{rmk:chart-i-remain}.
\end{remark}

\subsection{Affine charts of class (iii)}\label{subsec: chart of type ii}
In this case, we have $s_\alpha=1,t_\beta=1$ for some $k+1\leq \alpha,\beta \leq n-k$. At this point, we do not impose any restrictions on $k$.

\begin{proposition}\label{prop:chart-ii-relations}
	We have the following relations among $\bbs,\blambda,\bt,\bmu$:
	\begin{itemize}
	\item $\bbs_1=\pi s_\beta \bt_1$, $\bbs_2=s_\beta \bt_2$, $\bbs_3=\pi s_\beta \bt_3$, in particular $t_\alpha s_\beta=1$;
	\item $\blambda_1=-\lambda_{\beta^\vee}H\bt_3, \blambda_2=\lambda_{\beta^\vee}H\bt_2,
	\blambda_3=\lambda_{\beta^\vee}H\bt_1.$.
	\item $\bmu_1=\lambda_{\beta^\vee}H\bbs_3,
	\bmu_2=\lambda_{\beta^\vee}H\bbs_2,
	\bmu_3=-\lambda_{\beta^\vee}H\bbs_1$, in particular, $\mu_{\beta^\vee}=\lambda_{\beta^\vee}s_\beta$.
	\end{itemize}
\end{proposition}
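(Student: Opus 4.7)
The plan is to run the same pattern used for Propositions \ref{prop:chart-iv-relations} and \ref{prop:chart-i-relations}, extracting the first bullet from the inclusion conditions \eqref{equ:SP3} derived from \ref{item:SP3}, and the last two bullets from the identities \eqref{equ:lambda-mu}.

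For the first bullet, since both $\alpha$ and $\beta$ lie in $[k+1,n-k]$, the normalizations $s_\alpha=1$ and $t_\beta=1$ sit in the middle blocks $\bbs_2$ and $\bt_2$. Because $\CF_k^0$ and $\CF_{n-k}^0$ are rank-one submodules, the inclusions
\begin{equation*}
\left(\begin{matrix}\bbs_1\\ \pi\bbs_2 \\ \bbs_3\end{matrix}\right)\in\colspan\left(\begin{matrix}\bt_1\\ \bt_2 \\ \bt_3\end{matrix}\right),
\qquad
\left(\begin{matrix}\pi\bt_1\\ \bt_2 \\ \pi\bt_3\end{matrix}\right)\in\colspan\left(\begin{matrix}\bbs_1\\ \bbs_2 \\ \bbs_3\end{matrix}\right)
\end{equation*}
become equalities up to scalars. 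Reading the $\beta$-th (middle-block) coordinate of the first inclusion pins down the scalar as $\pi s_\beta$, which gives $\bbs_1=\pi s_\beta\,\bt_1$, $\bbs_3=\pi s_\beta\,\bt_3$, and $\pi\bbs_2=\pi s_\beta\,\bt_2$. Symmetrically, the second inclusion with scalar $t_\alpha$ (determined from the $\alpha$-th coordinate) gives $\bt_2=t_\alpha\,\bbs_2$. Evaluating this last equality at the $\beta$-th position yields $1=t_\alpha s_\beta$, so both $t_\alpha$ and $s_\beta$ are units, and hence $\bbs_2=s_\beta\,\bt_2$.

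For the second and third bullets I would plug directly into \eqref{equ:lambda-mu}. Since $\alpha,\beta\in[k+1,n-k]$, the twist $i\mapsto i^\vee$ stays in the middle block, so $\mu_\alpha^\nabla=\mu_{\alpha^\vee}$ and $\lambda_\beta^\nabla=\lambda_{\beta^\vee}$. Writing out the block decomposition of $\Upsilon^t\bt$ and $\Upsilon\bbs$ produces the three claimed formulas for $\blambda_1,\blambda_2,\blambda_3$ and for $\bmu_1,\bmu_2,\bmu_3$. The identity $\lambda_\beta^\nabla=\mu_\alpha^\nabla$ noted after \eqref{equ:lambda-mu} specializes to $\lambda_{\beta^\vee}=\mu_{\alpha^\vee}$, which lets us rewrite the $\blambda$-formulas in terms of $\lambda_{\beta^\vee}$. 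Finally, reading the $\beta^\vee$-th entry of $\bmu_2=\lambda_{\beta^\vee}H\bbs_2$ and using that $H$ reverses the middle block together with $(\beta^\vee)^\vee=\beta$ gives $\mu_{\beta^\vee}=\lambda_{\beta^\vee}s_\beta$.

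There is no substantive obstacle here; the proof is entirely routine index bookkeeping. The only point worth flagging is that, in contrast with classes (i) and (iv) where $t_\alpha s_\beta\in\{\pi,\,\pi\text{-multiples}\}$, here both chart labels land in the middle block, so $t_\alpha s_\beta=1$ is a unit relation. This explains why, in the next step of the analysis (as in the previous classes), one expects this chart to be cut out by simpler equations and to contribute differently to the irreducible components of $M^{\nspl}_{[k],s}$.
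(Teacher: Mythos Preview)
Your proposal is correct and follows exactly the approach the paper intends: the paper omits the proof here (as it did for Proposition~\ref{prop:chart-i-relations}), since it is the same routine bookkeeping carried out in full for Proposition~\ref{prop:chart-iv-relations}, namely extracting the first bullet from \eqref{equ:SP3} and the remaining two from \eqref{equ:lambda-mu} together with $\lambda_\beta^\nabla=\mu_\alpha^\nabla$.
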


\subsubsection{}\label{subsubsec: 4.5.5}
Applying Proposition \ref{prop:chart-ii-relations} to $\bmu_2^t\bt_2=2\pi$, we get
\begin{equation*}
	2\pi=\sum_{i=k+1}^{n-k}t_i\mu_{i^\vee}=\mu_{\beta^\vee}\Bigl(\sum_{i=k+1}^{n-k}t_i t_{i^\vee}\Bigr).
\end{equation*}

By the same computation as in the proof of Proposition \ref{prop: chart iv}, one deduces:
\begin{proposition}\label{prop: chart ii}
	The affine chart $U_{\alpha\beta}$ is isomorphic to the spectrum of the ring 
\begin{equation*}
		U_{\alpha\beta}\simeq\Spec\frac{ O_F[t_1,\cdots,t_n,\mu_{\beta^\vee},t_\alpha^{-1}]}
		{t_\beta-1,\mu_{\beta^\vee}\left(\sum_{i=k+1}^{n-k}t_{i}t_{i^\vee}\right)-2\pi}.\eqno\qed
\end{equation*}
\end{proposition}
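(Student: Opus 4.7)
The plan is to follow the template set by the proofs of Propositions \ref{prop: chart iv} and \ref{prop: chart i }. Since Proposition \ref{prop:chart-ii-relations} already expresses $\bbs$, $\blambda$, $\bmu$ as explicit combinations of $\bt$, $s_\beta$, $\lambda_{\beta^\vee}$, the task reduces to eliminating redundant generators and identifying the single remaining relation coming from Theorem \ref{regularity_relations:conclusion}.

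First I would record the identity $t_\alpha s_\beta = 1$ from Proposition \ref{prop:chart-ii-relations}, which makes $t_\alpha$ a unit with $s_\beta = t_\alpha^{-1}$. Similarly, $\mu_{\beta^\vee} = \lambda_{\beta^\vee} s_\beta$ from that same proposition yields $\lambda_{\beta^\vee} = \mu_{\beta^\vee} t_\alpha$. Substituting these back into the formulas of Proposition \ref{prop:chart-ii-relations} expresses every component of $\bbs$, $\blambda$, and $\bmu$ as a polynomial in the variables $\bt$, $\mu_{\beta^\vee}$, and $t_\alpha^{\pm 1}$. Consequently the coordinate ring of $U_{\alpha\beta}$ is generated over $O_F$ by $t_1, \ldots, t_n$, $\mu_{\beta^\vee}$, and $t_\alpha^{-1}$, subject to $t_\beta - 1$ plus whatever survives from Theorem \ref{regularity_relations:conclusion}.

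The next step is to verify that most of Theorem \ref{regularity_relations:conclusion} becomes automatic after these substitutions. The matrix identities in (v), namely $\CX + \pi\id = \bbs\blambda^t$ and $\CY + \pi\id = \bt\bmu^t$, are built into the definitions; the block relations (i)--(iii), which express $A, B, C, D, L, M, X_4$ in terms of $E, F, X_4$ under the duality involution, reduce to polynomial identities that follow directly from $t_\alpha s_\beta = 1$ and $\lambda_{\beta^\vee} = \mu_{\beta^\vee} t_\alpha$, precisely in parallel with the computation carried out in the proof of Proposition \ref{prop: chart iv}; and the span condition (vi) is already built into Proposition \ref{prop:chart-ii-relations}. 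The only nontrivial surviving constraint is (iv), namely $\bmu_2^t \bt_2 = 2\pi$. Combining $\bmu_2 = \lambda_{\beta^\vee} H \bbs_2$ with $\lambda_{\beta^\vee} = \mu_{\beta^\vee} t_\alpha$ and $\bbs_2 = t_\alpha^{-1}\bt_2$ gives $\bmu_2 = \mu_{\beta^\vee} H \bt_2$, so that
\begin{equation*}
    \bmu_2^t \bt_2 = \mu_{\beta^\vee}\, \bt_2^t H \bt_2 = \mu_{\beta^\vee}\left(\sum_{i=k+1}^{n-k} t_i t_{i^\vee}\right),
\end{equation*}
as in \S\ref{subsubsec: 4.5.5}; the companion identity $\blambda_2^t \bbs_2 = 2\pi$ coincides with this one after substitution, so no further relation is obtained.

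The main ``obstacle'' is thus purely bookkeeping: one must confirm that no hidden extra relation arises. This is guaranteed by the $(\bbs,\blambda)\leftrightarrow (\bt,\bmu)$ symmetry together with the fact that $t_\alpha$ and $s_\beta$ are both units on this chart, so each side of the Kr\"amer conditions \ref{item:SP5} and \ref{item:SP6} determines the other after inversion. Collecting everything yields the presentation claimed in the proposition.
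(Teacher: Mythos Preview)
The proposal is correct and follows essentially the same approach as the paper, which simply defers the computation to the template of Proposition~\ref{prop: chart iv}. You have accurately identified the elimination of $\bbs$, $\blambda$, $s_\beta$, $\lambda_{\beta^\vee}$ via Proposition~\ref{prop:chart-ii-relations} and the units $t_\alpha s_\beta = 1$, $\mu_{\beta^\vee} = \lambda_{\beta^\vee} s_\beta$, and correctly isolated the single surviving relation from part (iv) of Theorem~\ref{regularity_relations:conclusion}, exactly as the paper does in \S\ref{subsubsec: 4.5.5}.
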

Assume $k$ is not almost $\pi$-modular, i.e., not where $n=2m+1$ and $k=m$.
Denote $U_{\alpha\beta}=\Spec R$. The special fiber $\Spec R_s$ consists of two irreducible components defined by 
\begin{equation*}
E_{\alpha\beta}:\mu_{\beta^\vee}=0, \quad \widetilde{Z}_{1,\alpha\beta}':\sum_{i=k+1}^{n-k}t_{i}t_{i^\vee}=0.
\end{equation*}
It is straightforward to see that $E_{\alpha\beta}$ and $\widetilde{Z}_{1,\alpha\beta}'$ are smooth over $k$.

By Proposition \ref{prop:chart-ii-relations} we have $\bbs_1=\mathbf{0}, \bbs_3=\mathbf{0}$ on the special fiber, hence $\NExc_{\alpha\beta}\subset \Exc_{1,\alpha\beta}$ by Proposition \ref{prop:exceptional-divisor}.
Recall that $\CX=\bbs\blambda^t$, hence we have $A=B=C=D=0$.
Therefore, we have $\tau(U_{\alpha\beta})\subset Z_1$, see Remark \ref{rem:LM-equ-irr}.
Moreover, since $\mu_{\beta^\vee}=0$ implies $\CX=0$, the image $\tau(E_{\alpha\beta})$ is the worst point. Hence
\begin{equation*}
	\widetilde{Z}_{1,\alpha\beta}'=\widetilde{Z}_{1,\alpha\beta},
	\quad
	E_{\alpha\beta}=\Exc_{1,\alpha\beta}.
\end{equation*}

\begin{theorem}
When $k$ is not almost $\pi$-modular.
\begin{altenumerate}
\item The affine chart $U'_{\alpha\beta}:=U_{\alpha\beta}$ is flat and regular, with semi-stable reduction.
\item The special fiber equals
\begin{equation*}
    U'_{\alpha\beta,s}=\widetilde{Z}_{1,\alpha\beta}\cup \Exc_{1,\alpha\beta}.\eqno\qed
\end{equation*}
\end{altenumerate}
\end{theorem}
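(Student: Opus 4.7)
The strategy is to recognize the presentation of Proposition~\ref{prop: chart ii} as an \'etale-local model of an ordinary double point by diagonalizing the quadratic form $P(t) := \sum_{i=k+1}^{n-k} t_i t_{i^\vee}\big|_{t_\beta = 1}$. The hypothesis that $k$ is not almost $\pi$-modular ensures that $[k+1,n-k]$ contains at least two indices, providing enough free coordinates for the diagonalization. In the principal subcase $\beta \neq \beta^\vee$ one has $P = 2 t_{\beta^\vee} + Q$, where $Q$ is a quadratic form in the variables $\{t_i : i \neq \beta,\beta^\vee\}$; since $p\neq 2$, substituting $v := t_{\beta^\vee} + \tfrac{1}{2}Q$ transforms the defining relation $\mu_{\beta^\vee} P - 2\pi = 0$ into $\mu_{\beta^\vee} v = \pi$. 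Thus \'etale-locally
\[
U_{\alpha\beta} \;\simeq\; \Spec\frac{O_F[v,\mu_{\beta^\vee},x_1,\ldots,x_{n-2}]}{(\mu_{\beta^\vee} v - \pi)},
\]
which is flat of dimension $n$ and strictly semi-stable. It is regular at the node $v = \mu_{\beta^\vee} = 0$ because $\pi = \mu_{\beta^\vee} v$ lies in $\mathfrak{m}^2$, so the maximal ideal needs only the $n$ generators $v,\mu_{\beta^\vee},x_1,\ldots,x_{n-2}$. The remaining subcase $\beta = \beta^\vee$ (only when $n$ is odd, with $k<m$) yields $P = 1 + 2Q'$, a unit near the origin; the chart is then smooth over $O_F$ on this locus, and near $\{P=0\}$ a further \'etale change absorbing $1 + 2Q'$ into a new local parameter reproduces the node model.

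For the special-fibre decomposition, setting $\pi = 0$ in the presentation yields the reduced equation $\mu_{\beta^\vee}\cdot P = 0$, whose vanishing locus is the union of the two smooth divisors $\{\mu_{\beta^\vee} = 0\}$ and $\{P = 0\}$. By Proposition~\ref{prop:chart-ii-relations}, $\mu_{\beta^\vee} = 0$ forces $\blambda = \bmu = 0$ and hence $\CX = 0$, so this component lies in $\tau^{-1}(*)$; combined with the already-established identification $\NExc_{\alpha\beta} = \Exc_{1,\alpha\beta}$ from the discussion preceding the theorem, it must equal $\Exc_{1,\alpha\beta}$. The complementary component $\{P = 0\}$ is not contained in $\tau^{-1}(*)$, and since $\tau(U_{\alpha\beta}) \subset Z_1$, it is forced to coincide with the strict transform $\widetilde{Z}_{1,\alpha\beta}$.

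The main obstacle is handling the case split according to whether $\beta = \beta^\vee$ and verifying in each case that $P$ is sufficiently non-degenerate, in the form supplied by the exclusion of the almost $\pi$-modular case, to permit the diagonalizing substitution. Once the correct local coordinates are identified, flatness, regularity, semi-stability, and the identification of the two components with $\Exc_{1,\alpha\beta}$ and $\widetilde{Z}_{1,\alpha\beta}$ follow immediately from the standard geometry of the node $\mu_{\beta^\vee} v = \pi$.
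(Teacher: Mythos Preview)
Your proof is correct and follows essentially the same approach as the paper. The paper leaves the verification implicit (writing only that the two components $E_{\alpha\beta}$ and $\widetilde{Z}'_{1,\alpha\beta}$ ``are smooth over $k$'' and appealing to the form of the presentation in Proposition~\ref{prop: chart ii}), whereas you spell out the coordinate change $v = t_{\beta^\vee} + \tfrac{1}{2}Q$ that brings the relation to the standard node $\mu_{\beta^\vee} v = \pi$; this mirrors exactly what the paper does explicitly for the class~(i) charts (cf.\ \eqref{eq:chart (iv)}).
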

\begin{remark}
When $k$ is almost $\pi$-modular, we have $\sum t_i t_{i^\vee}=1$, hence we only have $E_{\alpha\beta}$ in this chart.
\end{remark}

\subsection{Affine charts of class (iv)}
\label{subsec: chart of type iii}
We consider the case when $1\leq \alpha\leq k$ and $k+1\leq \beta \leq n-k$, the other cases in this class follow from the same computation.
In this case, as $t_\beta=1$, we have
\begin{equation*}
\left(\begin{matrix}
	\bbs_1\\ \pi\bbs_2 \\ \bbs_3
\end{matrix}\right)\in\colspan
\left(\begin{matrix}
	\bt_1\\ \bt_2 \\\bt_3
\end{matrix}\right)
\text{ implies }	
\left(\begin{matrix}
	\bbs_1\\ \pi\bbs_2 \\ \bbs_3
\end{matrix}\right)=\pi s_\beta
\left(\begin{matrix}
	\bt_1\\ \bt_2 \\\bt_3
\end{matrix}\right).
\end{equation*}
Similarly, since $s_\alpha=1$, we have
\begin{equation*}
\left(\begin{matrix}
\pi\bt_1\\ \bt_2 \\ \pi\bt_3	
\end{matrix}\right)\in\colspan
\left(\begin{matrix}
\bbs_1\\ \bbs_2 \\ \bbs_3	
\end{matrix}\right)
\text{ implies }
\left(\begin{matrix}
\pi\bt_1\\ \bt_2 \\ \pi\bt_3	
\end{matrix}\right)=\pi t_\alpha
\left(\begin{matrix}
\bbs_1\\ \bbs_2 \\ \bbs_3	
\end{matrix}\right).
\end{equation*}
In particular, we get $\pi t_\alpha s_\beta=t_\beta=1$, hence $\pi$ is invertible in class (iii). Therefore, $U_{\alpha\beta}$ is contained in the generic fiber, hence is smooth, hence regular. In this case we define $U'_{\alpha\beta}:=U_{\alpha\beta}$.

Combining all the discussions, we finish the proof of Theorem \ref{thm:main}.\qed

\section{Resolution of singularity}\label{sec:resolution}
In this section we assume $k$ is not $\pi$-modular.
The main result of this section is Theorem \ref{thm:blowup} below. After proving the theorem, we will identify the exceptional divisor $\Exc_1$ with a blow-up of $\mathbb{P}^{n-1}_{\BF_q}$ in Proposition \ref{prop:exc 1 shape}.
\begin{theorem}\label{thm:blowup}
For $k$ not $\pi$-modular, the morphism 
\begin{equation*}
	\tau:M^\spl_{[k]} \longrightarrow M^\loc_{[k]}
\end{equation*}
is the blow-up of $M^\loc_{[k]}$ along the worst point.  
\end{theorem}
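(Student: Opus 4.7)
The plan is to apply the universal property of the blow-up. Writing $\sigma\colon B := \mathrm{Bl}_*(M^{\loc}_{[k]}(n-1,1)) \to M^{\loc}_{[k]}(n-1,1)$, I would first show that $\tau^{-1}(\mathfrak{m}_*)\cdot\CO_{M^{\spl}_{[k]}(n-1,1)}$ is an invertible ideal sheaf, producing a canonical factorization $\tau = \sigma\circ\varphi$, and then show that $\varphi$ is an isomorphism.

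For the first step, I would work chart by chart on the splitting model. In the affine chart $U^{\loc}_k$ of the canonical local model, $\mathfrak{m}_*$ is generated by $\pi$ together with the matrix entries $X_{ij}$. Via the identity $\CX = \bbs\blambda^t - \pi\,\id$ of Theorem \ref{regularity_relations:conclusion}, the pullback $\tau^{-1}(\mathfrak{m}_*)\CO$ on a chart $U'_{\alpha\beta}$ is generated by $\pi$ and the products $s_i\lambda_j$. Using Propositions \ref{prop:chart-iv-relations}, \ref{prop:chart-i-relations}, and \ref{prop:chart-ii-relations} together with the chart conditions $s_\alpha = 1$, $t_\beta = 1$, this ideal can be identified explicitly: on class (i) charts, each $\lambda_j$ is a multiple of $\mu_{\alpha^\vee}$ by Proposition \ref{prop:chart-iv-relations}, conversely $\mu_{\alpha^\vee}$ itself appears as some $\lambda_j$ thanks to $t_\beta = 1$, and $\pi\in(\mu_{\alpha^\vee})$ via the defining relation $\mu_{\alpha^\vee}t_\alpha(\sum s_i s_{i^\vee}) = 2\pi$; thus $\tau^{-1}(\mathfrak{m}_*)\CO = (\mu_{\alpha^\vee})$, which cuts out $\Exc_{1,\alpha\beta}$. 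The same reasoning yields $\tau^{-1}(\mathfrak{m}_*)\CO = (\mu_{\beta^\vee})$ on class (iii) charts, while on class (ii) and (iv) charts the ideal equals the unit ideal, consistent with these charts not meeting $\tau^{-1}(*)$. Semi-stability of $M^{\spl}_{[k]}(n-1,1)$ (Theorem \ref{thm:main}) ensures each generator is a regular element, so the ideal is invertible and we obtain $\varphi$.

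The morphism $\varphi$ is automatically proper (since $\tau$ is proper and $\sigma$ is separated), birational by Lemma \ref{lem:generic-fiber}, and an isomorphism over $M^{\loc}_{[k]}(n-1,1)\setminus\{*\}$ by Proposition \ref{prop:geometric:worst}. The main obstacle is promoting $\varphi$ to a global isomorphism. The most concrete route is an explicit chart-level comparison: via the closed immersion $U^{\loc}_k\hookrightarrow\Spec O_F[X]$, blow up the ambient ideal $(\pi, X_{ij})$ to obtain the standard affine covering of the ambient blow-up, take the strict transform in each chart, and match the resulting equations with the explicit description of $U'_{\alpha\beta}$ from Section 3. Since the generator of $\tau^{-1}(\mathfrak{m}_*)\CO$ on each splitting-model chart is itself one of the chart coordinates ($\mu_{\alpha^\vee}$ or $\mu_{\beta^\vee}$), each $U'_{\alpha\beta}$ should be identified directly with an explicit affine chart of $B$. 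A cleaner alternative is to observe that $M^{\spl}_{[k]}(n-1,1)$ is normal (even regular) and apply Zariski's main theorem: it then suffices to verify that $\varphi$ has finite fibers over the exceptional locus $\sigma^{-1}(*)$, which reduces via Proposition \ref{prop:exceptional-divisor} to a comparison of the exceptional divisors on both sides.
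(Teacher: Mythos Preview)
Your first step and your option (a) are exactly the route the paper takes. The paper embeds $U^{\mathrm{bl}}$ into an explicit $\mathrm{Proj}$ scheme $\tilde U$ with homogeneous coordinates $(\mathfrak X,\varpi)$ corresponding to the generators $(X_{ij},\pi)$ of $\mathfrak m_*$, covers $\tilde U$ by affine charts $V_{\gamma\eta}$ where $\mathfrak X_{\gamma\eta}+\varpi\delta_{\gamma\eta}$ is a unit, and then runs the same four-case analysis you carried out in Section~3 to produce explicit isomorphisms $U'_{\gamma\eta^\vee}\xrightarrow{\sim}V_{\gamma\eta}$. Your identification of the pulled-back ideal as $(\mu_{\alpha^\vee})$ or $(\mu_{\beta^\vee})$ on the relevant charts is correct and is effectively the statement that each $U'_{\alpha\beta}$ lands in a single chart of the blow-up; the paper's Lemma~\ref{lem:X s u} plays the same role.

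Your option (b), however, has a genuine gap. The form of Zariski's Main Theorem you are implicitly invoking (``finite birational onto a normal target is an isomorphism'') requires normality of the \emph{target} $B$, not of the source $M^{\spl}$. A priori nothing guarantees that the blow-up of the normal but singular scheme $M^{\loc}_{[k]}$ at the worst point is normal; establishing this without the chart computation is not easier than option (a). The paper handles the endgame differently: once the chart isomorphisms are in hand, the resulting map $f\colon U^{\spl}\to\tilde U$ is visibly \'etale, and properness (inherited from $M^{\spl}\to M^{\loc}\leftarrow M^{\mathrm{bl}}$) forces it to be finite; a finite \'etale map that is an isomorphism on generic fibers is an isomorphism, with no normality hypothesis on the target needed. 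So if you want a clean conclusion after the chart matching, that argument is the one to use.
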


\subsection{Defining equations of the blow-up}
We define $M^{\mathrm{bl}}_{[k]}$ to be the blow up of $M^{\loc}_{[k]}$ over the worst point $*$. 
By Theorem \ref{thm:main} and the universal property of the blow up, we have a unique natural morphism $M^{\spl}\rightarrow M^{\mathrm{bl}}$ defined over $M^{\loc}_{[k]}$.
By Proposition \ref{prop:geometric:worst}, $\tau$ is an isomorphism outside the worst point $*$. 
Therefore, it suffices to consider an affine neighborhood of $*$ as in Proposition \ref{thm: local model}. We denote $U^{\mathrm{bl}}$ to be the blow-up of $U^{\loc}$ over the worst point.

Denote by $R=R^{\loc}$ the affine ring of $U^{\loc}$, which is the quotient algebra of $ O_F[X]$ modulo the relations in Proposition \ref{thm: local model}. The worst point $*$ is defined by the ideal $I=(X,\pi)\subset R$. Then the blow-up of $U^{\mathrm{bl}}$ along $*$ is 
\begin{equation*}
    U^{\mathrm{bl}}=\mathrm{Proj}_{R}\, R^{\mathrm{bl}}, \text{ where } R^{\mathrm{bl}}=\bigoplus_{d\geq 0} I^d\text{ with }I^0=R.
\end{equation*}

We define $\mathfrak{X}$ be another $n\times n$ matrix of 
independent variables:
\begin{equation}\label{equ:fkXY}
\mathfrak{X}=\left(\begin{matrix}
	\mathfrak{D}	&	\mathfrak{M}		&	\mathfrak{C}\\
	\mathfrak{F}	&	\mathfrak{X}_4	&	\mathfrak{E}\\
	\mathfrak{B}	&	\mathfrak{L}		&	\mathfrak{A}
	\end{matrix}\right),
 \quad
\mathfrak{Y}=\Upsilon\mathfrak{X}^t\Upsilon^t=\left(\begin{matrix}
    \mathfrak{A}^\ad & \mathfrak{E}^\ad & -\mathfrak{C}^\ad\\
        \mathfrak{L}^\ad & \mathfrak{X}_4^\ad & -\mathfrak{M}^\ad\\
        -\mathfrak{B}^\ad & -\mathfrak{F}^\ad & \mathfrak{D}^\ad
    \end{matrix}\right).
\end{equation}
We think of the variables in $\mathfrak{X}$ as variables in $I^1\subset R^{\mathrm{bl}}$.
Parallel to \S \ref{sec:regularity}, we denote
\begin{equation*}
    \fkX_1=\left(\begin{matrix}\fkA&\fkB\\\fkC&\fkD\end{matrix}\right),
    \fkX_2=\left(\begin{matrix}\fkL\\\fkM\end{matrix}\right),
    \fkX_3=\left(\begin{matrix}\fkA&\fkB\end{matrix}\right).
\end{equation*}
Define $\tilde{R}=R[\mathfrak{X},\varpi]/\mathfrak{I}$ where $\mathfrak{I}$ is the ideal of $R[\mathfrak{X},\varpi]$ (homogeneous with respect to $\mathfrak{X}$ and $\varpi$) generated by 
\begin{equation}\label{eq:blow up relations}
\begin{aligned}
&\CX_{ij}\varpi- \mathfrak{X}_{ij} \pi,\ \mathfrak{X}_{ij} \CX_{k \ell}-\mathfrak{X}_{k \ell} \CX_{ij}\\ 
        & \mathfrak{X}_1^2+\mathfrak{X}_2 \mathfrak{X}_3-\varpi^2 I,\ 
        \mathfrak{X}_1 \mathfrak{X}_2+\mathfrak{X}_2 \mathfrak{X}_4,\ 
        \mathfrak{X}_3 \mathfrak{X}_1+\mathfrak{X}_4 \mathfrak{X}_3,\ 
        \mathfrak{X}_4^2+\mathfrak{X}_3 \mathfrak{X}_2-\varpi^2 I, \\  
        & \wedge^2(\mathfrak{X}+\varpi\id),\ \wedge^n(\mathfrak{X}-\varpi\id), \\ 
        & \mathfrak{B}-\mathfrak{B}^{ad},\ \mathfrak{C}-\mathfrak{C}^{ad},\ \mathfrak{D}+2\varpi I+\mathfrak{A}^{ad},\ \mathfrak{M}-\pi \mathfrak{E}^{ad},\ \mathfrak{L}+\pi \mathfrak{F}^{ad},\ \mathfrak{X}_4-\mathfrak{X}_4^{ad},\\ 
        & \mathfrak{A}+\frac{1}{2}F^{\ad}\mathfrak{E}+\varpi\id,\ \mathfrak{B}+\frac{1}{2}F^{\ad}\mathfrak{F},\ \mathfrak{C}-\frac{1}{2}E^{\ad}\mathfrak{E},\ \mathfrak{D}-\frac{1}{2}E^{\ad}\mathfrak{F}+\varpi\id\\ 
        &  \tr(\mathfrak{X}_4)=-(n-2k-2)\varpi. 
\end{aligned}
\end{equation}

Let $\wt{U}:=\mathrm{Proj}\, \tilde{R}$. By definition $U^{\mathrm{bl}}$ is a closed subscheme of $\wt{U}$. 
Suppose $1\leq \gamma,\eta\leq n$. Define $V_{\gamma \eta}$ to be the open subscheme of $\tilde{U}$ where $\mathfrak{X}_{\gamma \eta}+ \varpi \delta_{\gamma \eta}\neq 0$ where $\delta_{\gamma\eta}$ is the Kronecker delta.
\begin{lemma}\label{lem:cover-bl}
$\wt{U}=\proj R[\mathfrak{X},\varpi]/\mathfrak{I}$ can be covered by $\{V_{\gamma \eta}\mid 1\leq \gamma, \eta \leq n\}$.
\end{lemma}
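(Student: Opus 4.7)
The plan is to show that any homogeneous prime $\mathfrak{p} \subset \tilde R$ containing $\mathfrak{X}_{\gamma\eta} + \varpi\,\delta_{\gamma\eta}$ for every pair $(\gamma,\eta)$ must in fact contain the irrelevant ideal $\tilde R_+$; equivalently, no point of $\proj \tilde R$ can be excluded from all of the $V_{\gamma\eta}$. Under such a hypothesis, modulo $\mathfrak{p}$ we have the matrix identity $\mathfrak{X} \equiv -\varpi\,I_n$; since the off-diagonal entries $\mathfrak{X}_{\gamma\eta}$ for $\gamma\neq \eta$ already lie in $\mathfrak{p}$, it suffices to establish $\varpi \in \mathfrak{p}$, after which $\mathfrak{X}_{\gamma\gamma} = -\varpi \in \mathfrak{p}$ for every $\gamma$ and hence $\tilde R_+ \subseteq \mathfrak{p}$, contradicting the fact that $\mathfrak{p}$ represents a point of $\proj\tilde R$.

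To extract $\varpi \in \mathfrak{p}$, I substitute $\mathfrak{X} \equiv -\varpi\,I_n$ into one of the relations in $\mathfrak{I}$ that couples $\mathfrak{X}$ and $\varpi$ non-homogeneously. The most convenient is the trace relation $\tr(\mathfrak{X}_4) = -(n-2k-2)\varpi$ from \eqref{eq:blow up relations}: since $k$ is not $\pi$-modular we have $n-2k \geq 1$, and the left-hand side reduces modulo $\mathfrak{p}$ to $\tr(-\varpi\,I_{n-2k}) = -(n-2k)\varpi$. Comparing the two sides gives $2\varpi \in \mathfrak{p}$, and the standing assumption $\mathrm{char}(\kappa) \neq 2$ makes $2$ a unit in $\tilde R$, yielding $\varpi \in \mathfrak{p}$ as required. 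As a backup, the spin-type relation $\wedge^n(\mathfrak{X} - \varpi\,I_n) = 0$ reduces to $(-2)^n\varpi^n \in \mathfrak{p}$, and primality of $\mathfrak{p}$ gives the same conclusion.

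The only real subtlety is in identifying which of the many defining relations carries useful information once $\mathfrak{X}$ is specialized to $-\varpi\,I_n$: the adjoint-symmetry relations, the block relations such as $\mathfrak{X}_1^2 + \mathfrak{X}_2\mathfrak{X}_3 - \varpi^2 I$, and the condition $\wedge^2(\mathfrak{X} + \varpi\,I) = 0$ are all trivially satisfied on this locus. The trace relation and the determinantal spin relation are precisely the two that are not, and either of them is enough to close the argument.
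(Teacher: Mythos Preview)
Your argument is correct and is essentially the paper's own approach: the paper invokes the trace relation $\tr(\mathfrak{X}_4)=-(n-2k-2)\varpi$ and cites \cite[Lemma~A.1]{Shi2022} for the standard Proj-covering argument, which is precisely what you have written out in detail. Your backup via $\wedge^n(\mathfrak{X}-\varpi I)=0$ is a nice alternative but not needed.
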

\begin{proof}
    By the relation $\tr(\mathfrak{X}_4)=-(n-2k-2)\varpi$, the lemma follows from the same argument as that of \cite[Lemma A.1]{Shi2022}.
\end{proof}

On $V_{\gamma \eta}$ we can assume $\mathfrak{X}_{\gamma \eta}+ \varpi \delta_{\gamma \eta}=1$. 
Define $\bbs=(s_1,\ldots,s_n)^t$ to be the $\eta$-th column of $\mathfrak{X}+\varpi I$, i.e.,
\begin{equation*}
    s_i=\mathfrak{X}_{i \eta}+ \varpi\delta_{i \eta}.
\end{equation*}
Then $s_\gamma=1$. 
Also define
\begin{equation*}
  u_j=\mathfrak{X}_{\gamma j}+\varpi \delta_{\gamma j},\ \text{and } \lambda_j=X_{\gamma j}+\pi \delta_{\gamma j}.  
\end{equation*}
In particular $u_\eta=1$.

\begin{lemma}\label{lem:X s u}
We have
\begin{equation}\label{eq:X s u}
    \mathfrak{X}+\varpi I=\bbs \mathbf{u}^t,
\end{equation}
and
\begin{equation}\label{eq:X s lambda}
\CX+\pi I=\bbs \blambda^t.
\end{equation}
\end{lemma}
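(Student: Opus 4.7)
Both identities hold in $\tilde R$ on the chart $V_{\gamma\eta}$, where the normalization $\mathfrak X_{\gamma\eta}+\varpi\delta_{\gamma\eta}=1$ is in force. The only ingredients I need from $\mathfrak I$ are the rank-one relation $\bigwedge^2(\mathfrak X+\varpi I)=0$, the commutation relations $\mathfrak X_{ij}\mathcal X_{k\ell}=\mathfrak X_{k\ell}\mathcal X_{ij}$, and the exchange relation $\varpi\mathcal X_{ij}=\pi\mathfrak X_{ij}$.

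For \eqref{eq:X s u}, the plan is to apply $\bigwedge^2(\mathfrak X+\varpi I)=0$ directly: vanishing of the $2\times 2$ minor on rows $\{i,\gamma\}$ and columns $\{j,\eta\}$, combined with $(\mathfrak X+\varpi I)_{\gamma\eta}=1$, gives $(\mathfrak X+\varpi I)_{ij}=(\mathfrak X+\varpi I)_{i\eta}(\mathfrak X+\varpi I)_{\gamma j}=s_iu_j$, as desired.

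For \eqref{eq:X s lambda}, I will first establish two auxiliary identities. The identity $\varpi\lambda_\eta=\pi$ follows by multiplying $\lambda_\eta=\mathcal X_{\gamma\eta}+\pi\delta_{\gamma\eta}$ through by $\varpi$ and using the exchange relation together with the normalization on $V_{\gamma\eta}$. The key identity $\mathfrak X_{ij}\lambda_\eta=\mathcal X_{ij}$ (for all $i,j$) is obtained by expanding
\begin{equation*}
\mathfrak X_{ij}\lambda_\eta=\mathfrak X_{ij}\mathcal X_{\gamma\eta}+\pi\mathfrak X_{ij}\delta_{\gamma\eta}=\mathfrak X_{\gamma\eta}\mathcal X_{ij}+\pi\mathfrak X_{ij}\delta_{\gamma\eta}=(1-\varpi\delta_{\gamma\eta})\mathcal X_{ij}+\pi\mathfrak X_{ij}\delta_{\gamma\eta},
\end{equation*}
where the middle equality uses the commutation relation; the correction terms then cancel via the exchange relation, leaving $\mathcal X_{ij}$. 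Specialising this to $i=\gamma$ and combining $\mathfrak X_{\gamma j}=u_j-\varpi\delta_{\gamma j}$ with $\varpi\lambda_\eta=\pi$ then yields $u_j\lambda_\eta=\lambda_j$.

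With these auxiliaries in place, the computation closes as
\begin{equation*}
s_i\lambda_j=s_i\,u_j\lambda_\eta=(\mathfrak X_{ij}+\varpi\delta_{ij})\lambda_\eta=\mathfrak X_{ij}\lambda_\eta+\pi\delta_{ij}=\mathcal X_{ij}+\pi\delta_{ij},
\end{equation*}
where the second equality invokes \eqref{eq:X s u}. The main delicacy to navigate is that one cannot simply cancel $\varpi$, $\pi$, or $\lambda_\eta$ from a relation: none is known a priori to be a non-zerodivisor on $V_{\gamma\eta}$, and in fact a direct attempt to compare $\varpi(\mathcal X+\pi I)=\pi(\mathfrak X+\varpi I)$ only produces \eqref{eq:X s lambda} up to $\varpi$-torsion. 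The role of the commutation relations $\mathfrak X_{ij}\mathcal X_{k\ell}=\mathfrak X_{k\ell}\mathcal X_{ij}$ in $\mathfrak I$ is precisely to upgrade this approximate identity into an exact one.
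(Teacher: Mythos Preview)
Your proof is correct and follows essentially the same approach as the paper. The paper packages your two auxiliary identities into the single symmetric relation $(\CX_{ij}+\pi\delta_{ij})(\mathfrak X_{k\ell}+\varpi\delta_{k\ell})=(\CX_{k\ell}+\pi\delta_{k\ell})(\mathfrak X_{ij}+\varpi\delta_{ij})$, which specialized to $(k,\ell)=(\gamma,\eta)$ gives $\CX_{ij}+\pi\delta_{ij}=\lambda_\eta(\mathfrak X_{ij}+\varpi\delta_{ij})$---exactly the combination of your $\mathfrak X_{ij}\lambda_\eta=\CX_{ij}$ and $\varpi\lambda_\eta=\pi$; the rest of the argument (deducing $u_j\lambda_\eta=\lambda_j$ and concluding via \eqref{eq:X s u}) is the same in both.
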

\begin{proof}
By the first line of relations in \ref{eq:blow up relations}, we have
\begin{equation}\label{eq:mathfrak X and X}
    (\CX_{ij}+\pi \delta_{ij})(\mathfrak{X}_{k \ell}+\varpi\delta_{k\ell})=(\CX_{k \ell}+\pi\delta_{k\ell})(\mathfrak{X}_{ij}+\varpi \delta_{ij}).
\end{equation}
Relation \eqref{eq:X s u} follows directly from the wedge condition for $\mathfrak{X}$ as
\[(\mathfrak{X}_{ij}+\varpi\delta_{ij})(\mathfrak{X}_{\gamma \eta}+\varpi\delta_{\gamma \eta})-(\mathfrak{X}_{i\eta}+\varpi\delta_{i\eta})(\mathfrak{X}_{\gamma j}+\varpi\delta_{\gamma j})=0,\]
and $\mathfrak{X}_{\gamma \eta}+\varpi\delta_{\gamma \eta}=1$ on $V_{\gamma \eta}$. Now by \eqref{eq:mathfrak X and X}, we have
\[\CX_{ij}+\pi \delta_{ij}=(\CX_{\gamma \eta}+\pi\delta_{\gamma\eta}) (\mathfrak{X}_{ij}+\varpi\delta_{ij}).\]
Combining with \eqref{eq:X s u}, we have
\[\CX+\pi I=(\CX_{\gamma \eta}+\pi\delta_{\gamma\eta}) \bbs \bu^t.\]
So in order to show \eqref{eq:X s lambda}, it suffices to show that 
\[(\CX_{\gamma \eta}+\pi\delta_{\gamma\eta}) \bu=\blambda,\]
or equivalently
\[(\CX_{\gamma \eta}+\pi\delta_{\gamma\eta})(\mathfrak{X}_{\gamma j}+\varpi \delta_{\gamma j})=\CX_{\gamma j}+\pi \delta_{\gamma j}.\]
But this again follows from \eqref{eq:mathfrak X and X}.
\end{proof}

Similarly, by the relation
\[\mathfrak{Y}+\varpi I=\Upsilon(\mathfrak{X}+\varpi I)^t \Upsilon^t, \]
we know that if $\mathfrak{X}_{\gamma \eta}+ \varpi \delta_{\gamma \eta}=1$, then
\[\mathfrak{Y}_{\eta^\vee \gamma^\vee}+\varpi \delta_{\eta^\vee\gamma^\vee}=\pm 1.\]
Define
\begin{equation*}
    t_i=\begin{cases}
        \mathfrak{Y}_{i \gamma^\vee}+ \varpi\delta_{i \gamma^\vee} &\text{if } \mathfrak{Y}_{\eta^\vee \gamma^\vee}+\varpi \delta_{\eta^\vee\gamma^\vee}=1,\\
        -(\mathfrak{Y}_{i \gamma^\vee}+ \varpi\delta_{i \gamma^\vee}) &\text{if } \mathfrak{Y}_{\eta^\vee \gamma^\vee}+\varpi \delta_{\eta^\vee\gamma^\vee}=-1.
    \end{cases}
\end{equation*}
Also define
\begin{equation*}
  v_j=\mathfrak{Y}_{\eta^\vee j}+\varpi \delta_{\eta^\vee j},\ \text{and } \mu_j=X_{\eta^\vee j}+\pi \delta_{\eta^\vee j}.  
\end{equation*}
Then $t_{\eta^\vee}=1$ and $v_{\gamma^\vee}=\pm 1$. By the same proof as that of Lemma \ref{lem:X s u}, we have
\begin{equation}\label{eq:Y t v}
    \mathfrak{Y}+\varpi I=\bt \mathbf{v}^t,
\end{equation}
and
\begin{equation}\label{eq:Y t mu}
    \CY+\pi I=\bt \bmu^t.
\end{equation}

\subsection{Proof of the main theorem}
In this section, we will prove Theorem \ref{thm:blowup}.

Recall from Theorem \ref{regularity_relations:conclusion} that $U^{\spl}$ is the open subscheme $\tau^{-1}(U^{\loc})$ of $M^\spl_{[k]}$, and we have a natural morphism
\begin{equation}\label{eq:blow up morphism f}
   f:U^{\spl}\to U^{\mathrm{bl}}\hookrightarrow \wt{U}, 
\end{equation}
where $U^{\spl}\to U^{\mathrm{bl}}$ is the unique morphism obtained from the universal property of the blow-up $U^{\mathrm{bl}}$. Recall \S \ref{sec:affine-charts} that $U^{\spl}$ is covered by $\{U'_{\alpha\beta}\}$ and 
Lemma \ref{lem:cover-bl}, that $\wt{U}$ is covered by $\{V_{\gamma\eta}\}$. 
The main result of this section is the following.

\begin{theorem}\label{thm:bl-charts}
\begin{altenumerate}
\item For charts of cases (i)-(iii), the morphism $f$ induces isomorphisms
\begin{equation*}
    f_{\gamma\eta}:U'_{\gamma\eta^\vee}\rightarrow V_{\gamma\eta}
\end{equation*}
for all indices $\gamma$ and $\eta$.
\item The charts of case (iv) lie over the generic fiber. In particular, $f_{\gamma\eta}:U'_{\gamma\eta^\vee}\rightarrow V_{\gamma\eta}$ is an isomorphism.
\end{altenumerate}
\end{theorem}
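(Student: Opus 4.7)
The plan is to work chart by chart. Since $\tau$ is an isomorphism off the worst point by Proposition~\ref{prop:geometric:worst}, the content of the theorem concerns only the behavior of $f$ near the exceptional locus. Both sides admit a natural description in terms of the common pair of vectors $(\bbs, \bt)$: on the blow-up side via the rank-one factorizations $\fkX + \varpi I = \bbs \bu^t$ and $\fkY + \varpi I = \bt \bv^t$ of Lemma~\ref{lem:X s u}, and on the splitting side via $\CX + \pi I = \bbs \blambda^t$ and $\CY + \pi I = \bt \bmu^t$ from Theorem~\ref{regularity_relations:conclusion}. The normalization $\fkX_{\gamma\eta} + \varpi \delta_{\gamma\eta} = 1$ defining $V_{\gamma\eta}$ is exactly $s_\gamma = 1$; via the $\fkY$-side it also forces $t_{\eta^\vee} = \pm 1$. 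This singles out the splitting chart $U'_{\gamma, \eta^\vee}$.

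In case (i), on $V_{\gamma\eta}$ the relation $\CX_{\gamma\eta}\varpi = \pi$ (from $\CX_{ij}\varpi = \fkX_{ij}\pi$ at $(i,j) = (\gamma,\eta)$) makes $\pi$ divisible by $\varpi$. Symmetrically, on $U'_{\gamma,\eta^\vee}$, Proposition~\ref{prop:chart-iv-relations} gives $\lambda_\eta = -\mu_{\gamma^\vee}$, and the surviving chart relation $\mu_{\gamma^\vee}t_\gamma(\sum_i s_i s_{i^\vee}) = 2\pi$ from~\eqref{eq:chart (iv)} shows that $-\pi/\mu_{\gamma^\vee} = t_\gamma(\sum_i s_i s_{i^\vee})/2$ is a well-defined element of its coordinate ring. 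Thus the ideal $(X, \pi)$ is principal on $U'_{\gamma,\eta^\vee}$ (generated by $\mu_{\gamma^\vee}$), furnishing the map $f_{\gamma\eta}$ from the universal property of the blow-up, under which $\varpi \mapsto -t_\gamma(\sum_i s_i s_{i^\vee})/2$ while $\fkX$ is determined by the rank-one factorization. The plan is to verify $f_{\gamma\eta}$ is an isomorphism by constructing the obvious inverse ring map $V_{\gamma\eta} \to U'_{\gamma,\eta^\vee}$ and then systematically checking that every generator of $\fkI$ in~\eqref{eq:blow up relations} corresponds, under the identifications $\bbs \leftrightarrow \bbs$, $\bt \leftrightarrow \bt$, $\varpi \leftrightarrow -\pi/\mu_{\gamma^\vee}$, to a known relation on $U'_{\gamma,\eta^\vee}$ from Theorem~\ref{regularity_relations:conclusion} and Proposition~\ref{prop:chart-iv-relations}; and conversely that the single chart relation is recovered. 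Cases (ii) and (iii) proceed identically using Propositions~\ref{prop:chart-i-relations} and~\ref{prop:chart-ii-relations} together with their respective chart descriptions.

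For case (iv), $\gamma \in [1,k]\cup[n-k+1,n]$ and $\eta \in [k+1,n-k]$, so the entry $\fkX_{\gamma\eta}$ lies in the submatrix $\fkM$ or $\fkL$ of $\fkX$. The strengthened spin relations $\fkM = \pi \fkE^{\ad}$ and $\fkL = -\pi \fkF^{\ad}$ from~\eqref{eq:blow up relations} force $\fkX_{\gamma\eta} \in (\pi)$, and combined with the normalization $\fkX_{\gamma\eta} = 1$ on $V_{\gamma\eta}$ this makes $\pi$ a unit; hence $V_{\gamma\eta}$ is contained in the generic fiber.

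The main obstacle is the systematic verification in case (i): every generator of $\fkI$ in~\eqref{eq:blow up relations} --- notably the wedge-type strengthened spin relations and the hermitian-symmetry identities for $\fkX$ --- must be shown to pull back to a valid relation on $U'_{\gamma,\eta^\vee}$, and conversely that nothing extra is imposed. The simplification is that, once the two rank-one factorizations are installed and one uses $\varpi\blambda = \pi\bu$ (which follows from $\CX \varpi = \fkX \pi$ together with $\bbs \neq 0$), each such relation for $\fkX$ reduces mechanically to the corresponding relation for $\CX$ supplied by Theorem~\ref{regularity_relations:conclusion} after multiplication by the proportionality scalar $\varpi/\pi$. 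This is lengthy but involves no new ideas.
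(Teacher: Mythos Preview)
Your approach is essentially the paper's: both amount to computing each chart $V_{\gamma\eta}$ explicitly and matching it against the corresponding $U'_{\alpha\beta}$; you simply frame this as ``construct mutually inverse ring maps'' rather than ``reduce both rings to a common presentation''. Case (iv) is handled identically.

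There is, however, a genuine imprecision in your sketch of case~(i). Your scaling argument (reducing the $\fkI$-relations to the $\CX$-relations via $\varpi\blambda=\pi\bu$) in fact shows only that each $\fkI$-relation, multiplied by $\mu_{\gamma^\vee}$, yields the corresponding $\CX$-relation; to conclude that the $\fkI$-relation itself holds on $U'_{\gamma,\eta^\vee}$ you must note that $\mu_{\gamma^\vee}$ is a nonzerodivisor there --- true since $U'_{\gamma,\eta^\vee}$ is integral by~\eqref{eq:chart (iv)}, but worth stating. More seriously, well-definedness in both directions does not give mutual inverseness. For $g^*\circ f^*=\id$ on $\CO(V_{\gamma\eta})$ you need identities \emph{on $V_{\gamma\eta}$} such as $\varpi=\tfrac12 t_\gamma\bigl(\sum_i s_is_{i^\vee}\bigr)$, and here you cannot invoke scaling: $\mu_{\gamma^\vee}$ is not known to be a nonzerodivisor on $V_{\gamma\eta}$ (that is essentially what is to be proved). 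The paper obtains these identities by applying the $\fkI$-relations directly on $V_{\gamma\eta}$ --- for instance $\tr(\fkX_4)=-(n-2k-2)\varpi$ gives $\varpi=\tfrac12\bu_2^t\bbs_2$, the hybrid relation $\fkC=\tfrac12 E^{\ad}\fkE$ gives $2s_{\eta^\vee}=\mu_{\gamma^\vee}\bigl(\sum_i s_is_{i^\vee}\bigr)$, and $\fkE=\fkE^{\ad}$ gives $\bt_2=t_\gamma\bbs_2$ --- thereby reducing $\CO(V_{\gamma\eta})$ to the presentation~\eqref{eq: U' chart i}. Your plan is sound once this step is incorporated, but it does not fall out of the scaling heuristic.
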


The proof of Theorem \ref{thm:bl-charts} will base on an explicit computation on local charts, which is the main content of this subsection. Let us assume this for now.

\begin{proof}[Proof of Theorem \ref{thm:blowup}]
By the definition of $\wt{U}$ in \eqref{eq:blow up relations} and \cite[Theorem 8.0.1]{LUO2024}, $f$ is an isomorphism over the generic fiber $\Spec F$. 
By Lemma \ref{lem:cover-bl} and Theorem \ref{thm:bl-charts}, the composition $f$ is \'etale. We have the following diagram of proper maps:
\begin{equation*}
\begin{tikzcd}
    M^{\spl}\arrow[d]\arrow[r]&M^{\mathrm{bl}}\arrow[dl]\\
    M^{\loc}&
\end{tikzcd}.
\end{equation*}
Therefore, the base change $U^{\spl}\rightarrow U^{\mathrm{bl}}$ is also proper, so is the composition $f$. 
By \cite[03WS]{stackproject}, $f$ is \'etale and therefore locally quasi-finite. By \cite[02LS]{stackproject}, it is finite. 
Since it is an isomorphism on the generic fiber, the finite \'etale morphism $f$ is an isomorphism. It follows that the morphism $U^{\spl}\rightarrow U^{\mathrm{bl}}$ is an isomorphism. By Proposition \ref{prop:geometric:worst}, the morphism $M^{\spl}\rightarrow M^{\mathrm{bl}}$ is an isomorphism.
\end{proof}

Now we come to the proof of Theorem \ref{thm:bl-charts}. 
We will separate them into the following four cases:
\begin{enumerate}[label=(\roman*)]
\item $\eta\in [1,k]\cup [n-k+1,n],\gamma\in [k+1,n-k]$. 
\item $\gamma,\eta\in [1,k]\cup[n-k+1,n]$. 
\item $\gamma,\eta\in [k+1,n-k]$. 
\item$\gamma\in [1,k]\cup [n-k+1,n],\eta\in [k+1,n-k]$.
\end{enumerate}
We will construct the isomorphism $f_{\gamma\eta}$  defined over $U^{\loc}$ directly in cases (i)-(iii).



As in \eqref{eq:nabla definition}, denote
\begin{equation}\label{eq:nabla definition section iv}
	\bbs^{\nabla}:=\Upsilon\bbs,
	\quad
	\blambda^{\nabla}:=\Upsilon\blambda,
	\quad
	\bt^{\nabla}:=\Upsilon^t\bt,
	\quad
	\bmu^{\nabla}:=\Upsilon^t\bmu.
\end{equation}
Apply relations \eqref{eq:X s lambda} and \eqref{eq:Y t mu} to \eqref{equ:XYisotropic} (which still holds as it is a relation on $U^\loc$), we get 
\begin{equation}\label{eq:s lambda t t mu t}
	\bbs\blambda^t=(\Upsilon^t\bmu)(\Upsilon^t\bt)^t,
	\quad
	\bt\bmu^t=(\Upsilon\blambda)(\Upsilon\bbs)^t.
\end{equation}
Since $s_\gamma=1$ and $t_{\eta^\vee}=1$, this implies
\begin{equation}\label{eq:lambda-mu section 4}
	\blambda=\mu^{\nabla}_{\gamma}(\Upsilon^t\bt)=\mu^{\nabla}_{\gamma}\left(\begin{matrix}-H\bt_3\\ H\bt_2\\ H\bt_1\end{matrix}\right),
	\quad
	\bmu=\lambda^{\nabla}_{\eta^\vee}(\Upsilon\bbs)=\lambda^{\nabla}_{\eta^\vee}\left(\begin{matrix}H\bbs_3\\ H\bbs_2\\ -H\bbs_1\end{matrix}\right).
\end{equation}
Moreover as $\Upsilon\blambda=\mu_{\gamma}^{\nabla}\bt$, by comparing the $\eta^\vee$'s row, we get $\lambda_{\eta^\vee}^{\nabla}=\mu_{\gamma}^{\nabla}$.

Similarly denote
\begin{equation*}
	\bu^{\nabla}:=\Upsilon\bu,
	\quad	
	\bv^{\nabla}:=\Upsilon^t\bv.
\end{equation*}
Apply relations \eqref{eq:X s u} and \eqref{eq:Y t v} to the relation
$\mathfrak{Y}=\Upsilon\mathfrak{X}^t\Upsilon^t$, we get 
\begin{equation*}
	\bbs\bu^t=(\Upsilon^t\bv)(\Upsilon^t\bt)^t,
	\quad
	\bt\bv^t=(\Upsilon\bu)(\Upsilon\bbs)^t.
\end{equation*}
Since $s_\gamma=1$ and $t_{\eta^\vee}=1$, this implies
\begin{equation}\label{eq:u t v s}
	\bu=v^{\nabla}_{\gamma}(\Upsilon^t\bt)=v^{\nabla}_{\gamma}\left(\begin{matrix}-H\bt_3\\ H\bt_2\\ H\bt_1\end{matrix}\right),
	\quad
	\bv=u^{\nabla}_{\eta^\vee}(\Upsilon\bbs)=u^{\nabla}_{\eta^\vee}\left(\begin{matrix}H\bbs_3\\ H\bbs_2\\ -H\bbs_1\end{matrix}\right).
\end{equation}
Moreover as $\Upsilon\bu=v_{\gamma}^{\nabla}\bt$, by comparing the $\eta^\vee$'s row, we get $v_{\gamma}^{\nabla}=u_{\eta^\vee}^{\nabla}=\pm 1$.

From \eqref{eq:X s u}, we get
\[\mathfrak{X}_4+\varpi I=\bbs_2 \bu^t_2. \]
Comparing this with the last equation of \eqref{eq:blow up relations}, we get
\begin{equation}\label{eq:varpi}
    \varpi=\frac{1}{2} \bu^t_2 \bbs_2
\end{equation}

\subsubsection{Case (i)}
We focus on $V_{\gamma \eta}$ when $n-k+1\leq \eta\leq n$ and $k+1\leq \gamma \leq n-k$. The case $1\leq \eta \leq k$ and $k+1\leq \gamma \leq n-k$ can be dealt with in the same way.

\noindent (1) By \eqref{eq:lambda-mu section 4}, we have
\begin{equation*}
    \blambda_1=-\mu_{\gamma^\vee}H \bt_3,\quad
    \blambda_2=\mu_{\gamma^\vee}H \bt_2,\quad
    \blambda_3=\mu_{\gamma^\vee}H \bt_1.
\end{equation*}
and by \eqref{eq:u t v s}, we have
\begin{equation}\label{eq: u and t}
    \bu_1=-v_{\gamma^\vee}H \bt_3,\quad \bu_2=v_{\gamma^\vee}H \bt_2,\quad \bu_3=v_{\gamma^\vee}H \bt_1.
\end{equation}
In particular
\[\lambda_{\eta}=\mu_{\gamma^\vee} t_{\eta^\vee}=\mu_{\gamma^\vee},\quad 
u_{\eta}=v_{\gamma^\vee} t_{\eta^\vee}=v_{\gamma^\vee}.\]

\noindent (2)
Next, we want to express $\bmu$ in terms of $\bbs$ and $\blambda$. 
From Theorem \ref{regularity_relations:conclusion}(v), we have $D=\bbs_1\blambda_1^t-\pi\id$, $A^\ad=\bt_1\bmu_1^t-\pi\id$.
By one of the strengthened spin relations $D=-2\pi I-A^\ad$, we have $\bt_1\bmu_1^t=-\bbs_1\blambda_1^t$, by comparing the $\eta^\vee$'s rows, we obtain 
\begin{equation*}
  \bmu_1=-s_{\eta^\vee} \blambda_1
\end{equation*}
From \eqref{eq:lambda-mu section 4}, we have
\begin{equation*}
     \bmu_2=\lambda_\eta H\bbs_2.
\end{equation*}
Moreover, from Theorem \ref{regularity_relations:conclusion}(v), we have $C=\bbs_1\blambda_3^t$, $C^\ad=-\bt_1\bmu_3^t$. Apply one of the strengthened spin conditions $C=C^\ad$, we get $\bt_1\bmu_3^t=-\bbs_1\blambda_3$. By comparing the $\eta^\vee$'s rows, we deduce that
\begin{equation*}
    \bmu_3=-s_{\eta^\vee}\blambda_3.
\end{equation*}
We apply the same arguments to $\mathfrak{X}$, we can get:
\begin{equation}\label{eq:v and u}
    \bv_1=-s_{\eta^\vee} \bu_1,\quad  \bv_2=u_{\eta}H\bbs_2=v_{\gamma^\vee} H\bbs_2,\quad \bv_3=-s_{\eta^\vee} \bu_3.
\end{equation}

\noindent(3)
From equations \eqref{eq:X s u}, \eqref{eq:Y t v}, and expression \eqref{equ:fkXY}, we have $\mathfrak{C}=\bbs_1 \bu_3^t=-\bt_1 \bv_3^t$. Applying \eqref{eq:v and u}, we obtain
$\bbs_1 \bu_3^t=s_{\eta^\vee} \bt_1 \bu_3^t$.
Comparing the $\eta$-th row of this equation yields
\begin{equation*}
    \bbs_1= s_{\eta^\vee} \bt_1.
\end{equation*}
Similarly, using the same equations and expression, we have $\mathfrak{A}+\varpi I=\bbs_3 \bu_3^t=-\bt_3 \bv_3^t$. Applying \eqref{eq:v and u}, we get $\bbs_3 \bu_3^t=s_{\eta^\vee} \bt_3 \bu_3^t$.
Comparing the $\eta$-th row gives:
\begin{equation*}
    \bbs_3= s_{\eta^\vee} \bt_3.
\end{equation*}
By the equation \eqref{eq:Y t v} we have $\mathfrak{E}=\bt_2 \bv_2^t$. Applying \eqref{eq:v and u}, we get $\mathfrak{E}=v_{\gamma^\vee} \bt_2 \bbs_2^t H=\bt_2 \bbs_2^t H$.
By one of the strengthened spin relations, we have $\mathfrak{E}=\mathfrak{E}^\ad$. Hence, we get $\bt_2 \bbs_2^t=\bbs_2 \bt_2^t$.
Comparing the $\gamma$-th row, we have
\begin{equation*}
   \bt_2=t_\gamma \bbs_2. 
\end{equation*}
In summary we have
\begin{equation*}
\bbs_1= s_{\eta^\vee} \bt_1,\quad
\bt_2=t_\gamma \bbs_2,\quad
\bbs_3= s_{\eta^\vee} \bt_3.
\end{equation*}

\noindent(4)
From Theorem \ref{regularity_relations:conclusion}(v), we have $C=\bbs_1 \blambda_3^t$, $E^\ad=\bt_1 \bmu_2^t$ and $\mathfrak{E}=(\bt_1 \bv_2^t)^\ad$.
Now we use the relation $\mathfrak{C}=\frac{1}{2} E^\ad \mathfrak{E}$ in \eqref{eq:blow up relations}, we get:
\[2 \bbs_1 \blambda_3^t=\bt_1 \bmu_2^t H \bv_2 \bt_1^t H.\]
Combing the above with $\blambda_3=\mu_{\gamma^\vee}H \bt_1$ and $\bv_2=v_{\gamma^\vee}  H\bbs_2$, we get
\[2v_{\gamma^\vee} \bbs_1 \bt_1^t H=v_{\gamma^\vee}\mu_{\gamma^\vee} (\bbs_2^t H\bbs_2) \bt_1 \bt_1^t H.\]
Canceling $H$ and $v_{\gamma^\vee}$ ($v_{\gamma^\vee}=\pm 1$) on both sides of the equation, we get
\[2  \bbs_1 \bt_1^t =\mu_{\gamma^\vee}(\bbs_2^t H\bbs_2) \bt_1 \bt_1^t .\]
Now by comparing the $(\eta^\vee,\eta^\vee)$-th entry, we get
\begin{equation*}
    2  s_{\eta^\vee}=\mu_{\gamma^\vee} \Bigl(\sum_{i=k+1}^{n-k}s_{i}s_{i^\vee}\Bigr).
\end{equation*}
By the relation $\tr(\bt_2 \bmu_2^t)=2\pi$, we get
\begin{equation}\label{equ:bl-chart1}
    \mu_{\gamma^\vee} t_\gamma\Bigl(\sum_{i=k+1}^{n-k}s_{i}s_{i^\vee}\Bigr)=2\pi.
\end{equation}

\noindent(5)
From \eqref{eq:varpi}, we get
\[\varpi=\frac{1}{2} \bu^t_2 \bbs_2=\frac{1}{2} v_{\gamma^\vee} t_\gamma \bbs_2^t H \bbs_2. \]

\noindent(6)
To summarize we have express all the variables in terms of $\bt_1$, $\bbs_2$, $\bt_3$, and $s_{\eta^\vee}$, $t_\gamma$, $\mu_{\gamma^\vee}$, with the relation \eqref{equ:bl-chart1}. 
Using the same argument as in \S \ref{sec:simp}, we can see that all the relations in \eqref{eq:blow up relations} are satisfied.
Comparing with the affine chart $U'_{\alpha \beta}$ in \eqref{eq: U' chart i}, we see that $V_{\gamma \eta}\cong U'_{\alpha \beta}$ for $\gamma=\alpha$ and $\eta=\beta^\vee$.

\subsubsection{Case (ii)}
We focus on $V_{\gamma \eta}$ when $1\leq \gamma \leq k$ and $n-k+1\leq \eta\leq n$. The other cases can be argued exactly the same way. 

\noindent (1)
First we want to express $\blambda$ in terms of $\bbs$ and $\lambda_{\gamma^\vee}$. 
By the relation $\mathcal{X}+\pi I=\bbs \blambda^t$, and comparing terms in the following strengthened spin relations:
\begin{equation*}
    D=-2\pi\id-A^{\ad},\quad M=\pi E^{\ad},\quad C=C^{\ad},
\end{equation*}
we get
\begin{equation*}
    \blambda_1=-\lambda_{\gamma^\vee}H\bbs_3,
    \quad
    \blambda_2=\pi\lambda_{\gamma^\vee}H\bbs_2,
    \quad
    \blambda_3=\lambda_{\gamma^\vee}H\bbs_1.
\end{equation*}

\noindent(2)
Next we express $\mathbf{u}$ in terms of $\bbs$.
Using the relations $\mathfrak{D}=-2\varpi I-\mathfrak{A}^\ad$, $\mathfrak{M}=\pi \mathfrak{E}^\ad$, $\mathfrak{C}=\mathfrak{C}^\ad$, and $\mathfrak{X}+\varpi I=\bbs \mathbf{u}^t$, argue exactly the same way as above, we know that
\begin{equation*}
    \mathbf{u}_1=-u_{\gamma^\vee} H \bbs_3,\quad \mathbf{u}_2=\pi u_{\gamma^\vee} H \bbs_2, \quad \mathbf{u}_3=u_{\gamma^\vee} H \bbs_1.
\end{equation*}
Since $u_\eta=1$, we have $u_{\gamma^\vee} s_{\eta^\vee}=u_\eta=1$.
In particular $u_{\gamma^\vee}$ and $s_{\eta^\vee}$ are units and $u_{\gamma^\vee}=s_{\eta^\vee}^{-1}$. So we get
\begin{equation}\label{eq:chart i u in terms of s}
    \mathbf{u}_1=-s_{\eta^\vee}^{-1} H \bbs_3,\quad \mathbf{u}_2=\pi s_{\eta^\vee}^{-1} H \bbs_2, \quad \mathbf{u}_3=s_{\eta^\vee}^{-1} H \bbs_1.
\end{equation}

\noindent(3)
Now we express $\bmu$ and $\mathbf{v}$ in terms of $\bbs$ and $\blambda$. 
By \eqref{eq:lambda-mu section 4}, we have
\begin{equation*}
 \bmu_1=\lambda_\eta H\bbs_3,\quad
 \bmu_2=\lambda_\eta H\bbs_2,\quad
 \bmu_3=-\lambda_\eta H\bbs_1.
\end{equation*}
By \eqref{eq:u t v s}, we get $v_\gamma^\nabla=u_{\eta^\vee}^\nabla=u_\eta=1$ and
\begin{equation*}
\mathbf{v}_1=u_\eta H\bbs_3=H\bbs_3,\quad \mathbf{v}_2=u_\eta H\bbs_2=H \bbs_2,\quad
\mathbf{v}_3=-u_\eta H\bbs_1=-H\bbs_1.
\end{equation*}
Or equivalently $\mathbf{v}=\Upsilon \bbs$.

\noindent(4)
We express $\bt$ in terms of $\bbs$.
By \eqref{eq:u t v s}, we get $\bu=v_\gamma^\nabla \Upsilon^t \bt=\Upsilon^t \bt$ or equivalently $\bt=\Upsilon \bu$.
Combining with \eqref{eq:chart i u in terms of s}, we get
\[ \bt_1=s_{\eta^\vee}^{-1} \bbs_1,
	\quad
	\bt_2=\pi s_{\eta^\vee}^{-1} \bbs_2,
	\quad
	\bt_3=s_{\eta^\vee}^{-1} \bbs_3.\]
In particular $t_\gamma= s_{\eta^\vee}^{-1} s_\gamma=s_{\eta^\vee}^{-1}$ or equivalently  $t_\gamma s_{\eta^\vee}=1$.

\noindent(5)
From \eqref{eq:varpi}, we get
\[\varpi=\frac{1}{2} \bu^t_2 \bbs_2=\frac{1}{2} \pi s_{\eta^\vee}^{-1}\cdot  \bbs_2^t  H \bbs_2. \]

\noindent(6)
To summarize we have express all the variables in terms of $\bbs$ and $\lambda_{\gamma^\vee}$, and $s_{\eta^\vee}$ is a unit.
Now consider the relation $\mathfrak{C}=\frac{1}{2}E^{\ad}\mathfrak{E}$, using the facts that $\mathfrak{C}=\bbs_1 \mathbf{u}_2^t$, $\mathfrak{E}=\bbs_2 \mathbf{u}_3^t$ and $E=\bbs_2 \blambda_3^t$, we have
\[2 \bbs_1 \mathbf{u}_2^t=(\bbs_2 \blambda_3^t)^\ad \cdot \bbs_2 \mathbf{u}_3^t.\]
By comparing the $(\gamma,k+1-\gamma)$-th entry, we have
\[2 u_{\gamma^\vee}=u_{\gamma^\vee} \lambda_{\gamma^\vee} \Bigl(\sum_{i=k+1}^{n-k} s_{i}s_{i^\vee}\Bigr). \]
Canceling $u_{\gamma^\vee}$ (recall $u_{\gamma^\vee} s_{\eta^\vee}=1$), we have
\begin{equation*}
    2=\lambda_{\gamma^\vee} \Bigl(\sum_{i=k+1}^{n-k} s_{i}s_{i^\vee}\Bigr).
\end{equation*}
Using the same argument as in \S \ref{sec:simp}, we can see that all the relations in \eqref{eq:blow up relations} are satisfied.
Comparing with the affine chart $U_{\alpha \beta}$ in Theorem \ref{thm:chart-i}, we see that $V_{\gamma \eta}\cong U'_{\alpha \beta}$ for $\gamma=\alpha$ and $\eta=\beta^\vee$. 

\subsubsection{Case (iii)}
We focus on $V_{\gamma \eta}$ when $k+1\leq \gamma,\eta \leq n-k$. 

\noindent(1)
We express $\blambda$ in terms of $\bt$ and $\bmu$. By \eqref{eq:lambda-mu section 4}, we have
\begin{equation*}
\blambda_1=-\mu_{\gamma^\vee} H \bt_3,
\quad
\blambda_2=\mu_{\gamma^\vee} H\bt_2,
\quad
 \blambda_3=\mu_{\gamma^\vee} H\bt_1.
\end{equation*}

\noindent(2)
We express $\mathbf{u}$ in terms of $t$, by \eqref{eq:u t v s} we get
\begin{equation*}
\mathbf{u}_1=-v_{\gamma^\vee} H \bt_3,
\quad
\mathbf{u}_2=v_{\gamma^\vee} H\bt_2,
\quad
\mathbf{u}_3=v_{\gamma^\vee} H\bt_1.
\end{equation*}
In this case $v_{\gamma^\vee}=u_{\eta^\vee}^\nabla=u_\eta=1$, to summarize we have
\begin{equation*}
    \mathbf{u}=\Upsilon^t \bt.
\end{equation*}

\noindent(3)
We express $\bmu$ in terms of $\bt$ and $\mu_\eta$. Recall that $t_{\eta^\vee}=1$. By the relation $\mathcal{Y}+\pi I=\bt \bmu^t$, and comparing terms in the following strengthened spin relations:
\begin{equation*}
    L=-\pi F^{\ad}
    \quad
    X_4=X_4^{\ad}
    \quad
    M=\pi E^{\ad},
\end{equation*}
we get
\begin{equation*}
\bmu_1= \pi\mu_\eta H\bt_3,
\quad
\bmu_2=\mu_\eta H \bt_2,
\quad
 \bmu_3=-\pi \mu_\eta H \bt_1.
\end{equation*}

\noindent(4)
Next we want to express $\mathbf{v}$ in terms of $\bt$.
Using the relations $\mathfrak{Y}+\varpi I=\bt \mathbf{v}^t$, $\mathfrak{L}=-\pi \mathfrak{F}^\ad$, $\mathfrak{X}_4=\mathfrak{X}_4^\ad$ and $\mathfrak{M}=\pi \mathfrak{E}^\ad$, arguing exactly the same way as above, we have 
\begin{equation*}
\bv_1=\pi u_{\eta} H \bt_3,\quad
\bv_2= u_{\eta}H \bt_2, \quad
\bv_3=-\pi u_{\eta}H \bt_1.
\end{equation*}
Since $v_{\gamma^\vee}=1$, we have $u_\eta t_{\gamma}=v_{\gamma^\vee}=1$.
In particular $u_\eta$ and $t_{\gamma}$ are units. So we have
\begin{equation}\label{eq:chart (ii) v in terms of t}
    \bv_1=\pi t_\gamma^{-1} H \bt_3,\quad
    \bv_2= t_\gamma^{-1}H \bt_2, \quad
    \bv_3=-\pi t_\gamma^{-1}H \bt_1.
\end{equation}

\noindent(5)
We express $\bbs$ in terms of $\bt$.
By \eqref{eq:u t v s}, we get $\mathbf{v}=u_{\eta^\vee}^\nabla \Upsilon \bbs=\Upsilon \bbs$ or equivalently $\bbs=\Upsilon^t \mathbf{v}$.
Combining with \eqref{eq:chart (ii) v in terms of t}, we get
\[ \bbs_1=\pi t_{\gamma}^{-1} \bt_1,
	\quad
	\bbs_2=t_{\gamma}^{-1} \bt_2,
	\quad
	\bbs_3=\pi t_{\gamma}^{-1} \bt_3.\]
In particular $s_{\eta^\vee}=t_\gamma^{-1} t_{\eta^\vee}=t_\gamma^{-1}$ or equivalently $s_{\eta^\vee}t_\gamma=1$. So 
\[ \bbs_1=\pi s_{\eta^\vee} \bt_1,
	\quad
	\bbs_2=s_{\eta^\vee} \bt_2,
	\quad
	\bbs_3=\pi s_{\eta^\vee} \bt_3.\]
    
\noindent(6)
From \eqref{eq:varpi}, we get
\[\varpi=\frac{1}{2} \bu^t_2 \bbs_2=\frac{1}{2} t_\gamma^{-1} \bt_2^t H\bt_2.  \]

\noindent(7)
To summarize we have express all the variables in terms of $\bt$ and $u_\eta$, and $t_\gamma$ is a unit.
Consider the relation $\tr(X_4)=-(n-2k-2)\pi$, this is equivalent to $\tr(\bt_2\bmu_2^t-\pi\id)=-(n-2k-2)\pi$, hence $\tr(\bt_2\bmu_2^t)=2\pi$.
Using the relation $\bmu_2=\mu_\eta H \bt_2$, we have
\begin{equation*}
	\mu_{\eta}\Bigl(\sum_{i=k+1}^{n-k}t_{i}t_{i^\vee}\Bigr)=2\pi.
\end{equation*}
Using the same argument as in \S \ref{sec:simp}, we can see that all the relations in \eqref{eq:blow up relations} are satisfied.
Comparing with Proposition \ref{prop: chart ii}, we see that $V_{\gamma \eta}\cong U'_{\alpha \beta}$ for  
$\gamma=\alpha$ and $\eta=\beta^\vee$.

\subsubsection{Case (iv)}
We focus on $V_{\gamma \eta}$ when $1\leq \gamma\leq k$ and $k+1\leq \eta \leq n-k$. The case $n-k+1\leq \gamma\leq n$ and $k+1\leq \eta \leq n-k$ can be dealt with in the same way. 
By the relation $\mathfrak{M}=\pi \mathfrak{E}^\ad$,  we know that
\[\mathbf{u}_2=\pi u_{\gamma^\vee} H \bbs_2. \]
Since $u_\eta=1$, we have 
\[\pi u_{\gamma^\vee} s_{\eta^\vee}=u_\eta=1.\]
This shows that $\pi$ is a unit and $V_{\gamma \eta}$ is supported on the generic fiber.

\subsubsection{}
We can now conclude the proof of Theorem \ref{thm:bl-charts}. In cases (i)-(iii), we have constructed an isomorphism $f_{\gamma \eta}:U_{\gamma \eta^\vee}'\rightarrow V_{\gamma \eta}$. By \eqref{regular_splitting:4} and \eqref{eq:X s lambda}, these morphisms preserve the matrix $\CX$, hence are defined over $U^\loc$. Moreover, over $U^{\mathrm{spl}}$, we can define 
\[\bu:=v^{\nabla}_{\gamma}\left(\begin{matrix}-H\bt_3\\ H\bt_2\\ H\bt_1\end{matrix}\right),\]
as in \eqref{eq:u t v s} and
obtain global sections $\varpi :=\frac{1}{2} \bu_2^t \bbs_2$ and $\mathfrak X_{ij}$ ($1\leq i,j \leq n$) of the line bundle $\mathcal O (\Exc)$ over $U^\spl$  where
\[\mathfrak{X}:=-\varpi I+\bbs \mathbf{u}^t. \]
Via the morphisms $f_{\gamma\eta}$, these sections agree with the corresponding sections of the invertible sheaf $(I\cdot \mathcal O_{\wt{U}})^{-1}$ over $\widetilde{U}$.
This shows that the morphisms $f_{\gamma \eta} $ are indeed induced by the morphism $f$ in \eqref{eq:blow up morphism f}  (see for example the proof of \cite[Proposition 7.14]{Hartshorne}). This finishes the proof.\qed

\subsection{Exceptional divisor}
To the end of the section, we describe the exceptional divisor of the blow up.
Consider the $\BF_q$-vector space $M_n=\Lambda_{n-k}\otimes \BF_q$. Recall that we have the following standard $ O_F$-basis of $M_n$,
\[\pi^{-1}e_1\otimes 1,\cdots,\pi^{-1}e_{n-k}\otimes 1,e_{n-k+1}\otimes 1,\cdots, e_n\otimes 1.\]
Recall we have maps $\lambda_k: \Lambda_k\rightarrow \Lambda_{n-k}$ and $\lambda_{n-k}: \Lambda_{n-k}\rightarrow \Lambda_{n+k}$.
Define
\[M_{n-2k}=(\mathrm{coker} \lambda_k)_{\BF_q}= \Span_{\BF_q} \{\pi^{-1}e_{k+1}\otimes 1,\cdots,\pi^{-1}e_{n-k}\otimes 1 \}\cong (\mathrm{Im} \lambda_{n-k})_{\BF_q},\]
and
\[M_{2k}=(\mathrm{im} \lambda_k)_{\BF_q}=(\mathrm{ker} \lambda_{n-k})_{\BF_q}=\Span_{\BF_q} \{\pi^{-1}e_1\otimes 1,\cdots,\pi^{-1}e_k\otimes 1,  e_{n-k+1}\otimes 1,\cdots, e_n\otimes 1\}.\]
Then we have an exact sequence
\begin{equation}\label{eq:M_n exact sequence}
0\rightarrow M_{2k} \rightarrow M_n\xrightarrow{\rho} M_{n-2k}\rightarrow 0.
\end{equation}

Denote by $\mathbb{P}^{n-1}=\proj\BF_q[T_1,\ldots,T_n]$ the projective space of lines of the $M_n$, $\mathbb{P}^{2k-1}$ the projective space of $M_{2k}$, and $\mathbb{P}^{n-2k-1}$ the projective space of $M_{n-2k}$.
Here we use the homogeneous coordinate ring $\BF_q[T_1,\ldots,T_n]$ with respect to the standard basis.
From the exact sequence \eqref{eq:M_n exact sequence}, we get a monomorphism $\mathbb{P}^{2k-1}\hookrightarrow \mathbb{P}^{n-1}$. 
\begin{lemma}
    The subvariety of $\mathbb{P}^{n-1}\times \mathbb{P}^{n-2k-1}$ characterized by
    \[Z:=\{ (L_1,L_2)\in \mathbb{P}^{n-1}\times \mathbb{P}^{n-2k-1}\mid \rho(L_1)\subset L_2  \}\]
    is the blow-up of $\mathbb{P}^{n-1}$ along $\mathbb{P}^{2k-1}$.
\end{lemma}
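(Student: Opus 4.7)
The plan is to identify the given incidence variety with the graph closure of the rational linear projection away from $\BP^{2k-1}$, and then recall that this graph closure is the standard model of the blow-up of $\BP^{n-1}$ along the linear subvariety $\BP^{2k-1}$.

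First I would fix the homogeneous coordinates $[T_1:\cdots:T_n]$ on $\BP^{n-1}=\BP(M_n)$ adapted to the standard basis, so that $\BP^{2k-1}=\BP(M_{2k})$ is the closed subvariety $V(T_{k+1},\dots,T_{n-k})$, and $\BP^{n-2k-1}=\BP(M_{n-2k})$ carries the induced coordinates $[T_{k+1}:\cdots:T_{n-k}]$. Under this presentation, the surjection $\rho:M_n\twoheadrightarrow M_{n-2k}$ becomes the coordinate projection, which gives the rational map
\begin{equation*}
	\varphi:\BP^{n-1}\dashrightarrow\BP^{n-2k-1},\qquad [T_1:\cdots:T_n]\mapsto[T_{k+1}:\cdots:T_{n-k}],
\end{equation*}
defined on the open complement $U:=\BP^{n-1}\setminus\BP^{2k-1}$.

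Next I would check that the incidence locus $W:=\{(L_1,L_2)\mid \rho(L_1)\subset L_2\}$ equals the scheme-theoretic closure of the graph $\Gamma_\varphi\subset U\times\BP^{n-2k-1}$. Over $U$, the condition $\rho(L_1)\subset L_2$ uniquely determines $L_2=\varphi(L_1)$, so $W\cap(U\times\BP^{n-2k-1})=\Gamma_\varphi$. On the other hand, $W$ is cut out in $\BP^{n-1}\times\BP^{n-2k-1}$ by the $2\times2$ minors of the $2\times(n-2k)$ matrix whose rows are $(T_{k+1},\dots,T_{n-k})$ and $(S_1,\dots,S_{n-2k})$ (where $S_j$ are the coordinates on $\BP^{n-2k-1}$); in particular $W$ is irreducible of dimension $n-1$ and contains $\Gamma_\varphi$ as a dense open, so $W=\overline{\Gamma_\varphi}$. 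I would also verify that $W$ is reduced, which follows since the defining minor ideal defines a standard Segre-type incidence which is known to be prime.

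Finally, I would invoke the standard description of the blow-up of a projective space along a linear subspace: the blow-up $\mathrm{Bl}_{\BP^{2k-1}}\BP^{n-1}$ is canonically isomorphic to the closure in $\BP^{n-1}\times\BP^{n-2k-1}$ of the graph of linear projection away from $\BP^{2k-1}$. This can either be cited directly or proved in a line by showing that the blow-up is the $\mathrm{Proj}$ of the Rees algebra of the ideal $(T_{k+1},\dots,T_{n-k})$, which is generated in degree one and so embeds the blow-up into $\BP^{n-1}\times\BP^{n-2k-1}$ with image exactly the minor locus above. Combining these two identifications gives $W\simeq\mathrm{Bl}_{\BP^{2k-1}}\BP^{n-1}$, as desired. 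The main (and essentially only) subtlety is ensuring that $W$ is reduced and irreducible so that the set-theoretic equality with the graph closure is a scheme-theoretic one; everything else is bookkeeping with the explicit coordinates provided by the exact sequence \eqref{eq:M_n exact sequence}.
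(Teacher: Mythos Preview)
Your proposal is correct and provides exactly the standard argument. The paper itself gives no proof of this lemma at all: it is stated as a ``well-known elementary fact of projective geometry'' and simply marked with \qed. Your write-up via the graph closure of the linear projection and the Rees algebra of $(T_{k+1},\dots,T_{n-k})$ is the canonical way to justify it, and nothing is missing.
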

\begin{proof}
Write homogeneous coordinates on $\mathbb{P}(M_{n})$ as $[x_1:\cdots:x_{2k},y_1:\cdots:y_{n-2k}]$, where the $y_i$'s are coordinates on the quotient $M_{n-2k}$. Then $P(M_{2k})=V(y_1,\cdots,y_{n-2k})\subset \mathbb{P}(M_n)$. Now let $[z_1:\cdots:z_{n-2k}]$ be coordinates on $\mathbb{P}(M_{n-2k})$. The condition $\rho(L_1)\subset L_2$ is exactly (A local chart computation will show that this defined a reduced closed subscheme)
$$
y_iz_j-y_jz_i=0\quad\text{for all }i,j.
$$

But the blow-up of $\mathbb{P}(M_n)$ along $\mathbb{P}(M_{2k})=V(y_1,\cdots,y_{n-2k})$ is the closure of the graph of the rational map
$$
\mathbb{P}(M_n)\dashrightarrow\mathbb{P}(M_{n-2k}),\quad [x_1:\cdots:x_{2k},y_1:\cdots:y_{n-2k}]\mapsto [y_1:\cdots:y_{n-2k}].
$$
Inside $\mathbb{P}(M_n)\times\mathbb{P}(M_{n-2k})$, this graph closure is also cut out by the equations $y_iz_j-y_jz_i=0$ for all $i,n$. Hence $Z$ and the blow-up are the same closed subscheme.
\end{proof}
We denote the blow-up of $\mathbb{P}^{n-1}$ along $\mathbb{P}^{2k-1}$ by $\mathrm{Bl}_{\mathbb{P}^{2k-1}}(\mathbb{P}^{n-1})$.

\begin{proposition}\label{prop:exc 1 shape}
    The exceptional divisor $\Exc_1$ of the blow-up $M_{[k]}^{\spl}\rightarrow M_{[k]}^\loc$ is isomorphic to $\mathrm{Bl}_{\mathbb{P}^{2k-1}}(\mathbb{P}^{n-1})$.
\end{proposition}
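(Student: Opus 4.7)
The plan is to read off $\Exc_1$ directly from the moduli description of the naive exceptional divisor in Proposition \ref{prop:exceptional-divisor} and then match it with the blow-up via a handful of canonical linear-algebra identifications. First I would verify that the exceptional divisor $\Exc$ of the splitting model coincides with the naive component $\Exc_1$ from Proposition \ref{prop:exceptional-divisor}. By Theorem \ref{thm:chart-iv} (class (i) charts) one reads off $\Exc_{1,\alpha\beta} \subset U'_{\alpha\beta,s}$ while $\Exc_{2,\alpha\beta}$ does not appear in $U'_{\alpha\beta,s}$, and parallel checks in the remaining classes from Section \ref{sec:affine-charts} confirm that $\Exc = \Exc_1$ scheme-theoretically. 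Hence it suffices to identify the naive $\Exc_1$ with $\mathrm{Bl}_{\mathbb{P}^{2k-1}}(\mathbb{P}^{n-1})$.

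By Proposition \ref{prop:exceptional-divisor}, $\Exc_1$ sits inside $\mathbb{P}(\Pi\Lambda_{k,\kappa}) \times \mathbb{P}(\Pi\Lambda_{n-k,\kappa})$ as the pairs $(\CF_k^0, \CF_{n-k}^0)$ with $\lambda_k(\CF_k^0) = 0$ and $\lambda_{n-k}(\CF_{n-k}^0) \subset \CF_k^0$. The next step is to set up a dictionary. Multiplication by $\Pi$ gives a canonical isomorphism $\Lambda_{n-k} \otimes_{O_F} \kappa \xrightarrow{\sim} \Pi\Lambda_{n-k,\kappa}$, so $M_n \cong \Pi\Lambda_{n-k,\kappa}$ and $\mathbb{P}(\Pi\Lambda_{n-k,\kappa}) \cong \mathbb{P}^{n-1}$. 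From the matrix representations in \eqref{equ:excep-transition} one sees immediately that $\lambda_{n-k} \circ \lambda_k = 0$, and a dimension count then yields $\mathrm{im}(\lambda_k) = \ker(\lambda_{n-k})$ and $\mathrm{im}(\lambda_{n-k}) = \ker(\lambda_k)$, of dimensions $2k$ and $n-2k$ respectively. Under the $\Pi$-isomorphism, the subspace $M_{2k} = \mathrm{im}(\lambda_k \otimes \kappa) \subset M_n$ corresponds to $\ker(\lambda_{n-k}) \subset \Pi\Lambda_{n-k,\kappa}$, and $\lambda_{n-k}$ descends to an isomorphism $\Pi\Lambda_{n-k,\kappa}/\ker(\lambda_{n-k}) \xrightarrow{\sim} \ker(\lambda_k)$ which coincides with the quotient $\rho: M_n \twoheadrightarrow M_{n-2k}$ after the evident identifications.

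With this dictionary in place, the condition $\lambda_k(\CF_k^0) = 0$ becomes $\CF_k^0 \in \mathbb{P}(\ker\lambda_k) \cong \mathbb{P}(M_{n-2k}) = \mathbb{P}^{n-2k-1}$, and the condition $\lambda_{n-k}(\CF_{n-k}^0) \subset \CF_k^0$ becomes $\rho(L_1) \subset L_2$ with $L_1 = \CF_{n-k}^0$ and $L_2 = \CF_k^0$. This is exactly the incidence description of $\mathrm{Bl}_{\mathbb{P}^{2k-1}}(\mathbb{P}^{n-1})$ inside $\mathbb{P}^{n-1} \times \mathbb{P}^{n-2k-1}$ from the lemma preceding the proposition, which finishes the identification.

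The main obstacle is to upgrade this bijection from set-theoretic to an honest isomorphism of schemes, since a priori the scheme structure on $\Exc_1$ inherited from the naive equations could pick up nilpotents not present on the blow-up side. To rule this out, I would appeal to the explicit local coordinate descriptions in Section \ref{sec:affine-charts}: in each affine chart meeting $\Exc_1$ the exceptional locus is visibly the spectrum of a polynomial $\kappa$-algebra, in particular smooth and reduced, with defining equations matching those of the blow-up after a straightforward change of variables.
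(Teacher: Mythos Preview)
Your proof is correct and rests on the same identification as the paper's: $L_1$ is the line $\CF_{n-k}^0\in\mathbb{P}(\Pi\Lambda_{n-k,\kappa})\cong\mathbb{P}^{n-1}$, $L_2$ is the line $\CF_k^0\in\mathbb{P}(\ker\lambda_k)\cong\mathbb{P}^{n-2k-1}$, and the remaining quiver condition $\lambda_{n-k}(\CF_{n-k}^0)\subset\CF_k^0$ becomes the incidence relation $\rho(L_1)\subset L_2$. The paper carries this out chart by chart, reading off the map from the explicit coordinates $(\bt,\bbs_2)$ and the relations in Propositions \ref{prop:chart-ii-relations} and \ref{prop:chart-iv-relations}, then checking the images cover the blow-up and agree on overlaps; you instead invoke the moduli description of $\NExc$ from Proposition \ref{prop:exceptional-divisor} directly, which packages the same linear algebra in a coordinate-free way and avoids the separate gluing/covering check. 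Your final paragraph is slightly more cautious than necessary: once the locus cut out by $\lambda_k(\CF_k^0)=0$ inside $\NExc$ is identified scheme-theoretically with the incidence variety, smoothness of the blow-up already forces reducedness, so it agrees with the reduced component $\Exc_1$ without a further appeal to the chart computations.
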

\begin{proof}
We have seen that there exists an open cover
\begin{equation*}
\left\{E_{\alpha \beta}\mid \alpha \in [k+1,n-k], \beta\in [1,k]\cup[n-k+1,n] \right\}
\end{equation*}
of $E$. We will show that on each $E_{\alpha \beta}$ there is a natural embedding $E_{\alpha \beta}\rightarrow \mathrm{Bl}_{\mathbb{P}^{2k-1}}(\mathbb{P}^{n-1})$.

First we assume that $k+1\leq \beta \leq n-k$. This corresponds to charts of type (ii) as in \S\ref{subsec: chart of type ii}. In this case we know that
\[E_{\alpha \beta}=\Spec \BF_q[t_1,\ldots,t_n]/(t_\beta-1)\cong \mathbb{A}_{\BF_q}^{n-1} ,\]
and both $\bbs_2$ and $\bt_2$ are not zero. In particular $\bt$ determines an element $L_1\in \mathbb{P}^{n-1}$ and $\bbs_2$ determines an element $L_2\in \mathbb{P}^{n-2k-1}$. The relation $\bbs_2=s_\beta \bt_2$ in Proposition \ref{prop:chart-ii-relations} shows that $\rho(L_1)=L_2$. Hence we know that $E_{\alpha \beta}$ is a subvariety of $\mathrm{Bl}_{\mathbb{P}^{2k-1}}(\mathbb{P}^{n-1})$.

Secondly we assume that $1\leq \beta \leq k$. As before the case $n-k+1\leq \beta\leq n$ is exactly the same, so is omitted. This corresponds to charts of type (iv) as in \S\ref{subsec: chart of type iv}. In this case we know that
\[E_{\alpha \beta}=\Spec \BF_q[t_1,\cdots,t_k,s_{k+1},\cdots, s_{n-k},t_{n-k+1},\cdots,t_n,t_\alpha]/(t_\beta-1,s_\alpha-1)\cong \mathbb{A}_{\BF_q}^{n-1} ,\]
and $\bbs_2$ is not zero. In particular $\bt$ determines an element $L_1\in \mathbb{P}^{n-1}$ and $\bbs_2$ determines an element $L_2\in \mathbb{P}^{n-2k-1}$. The relation $\bt_2=t_\alpha \bbs_2$ in Proposition \ref{prop:chart-iv-relations} suggests that $\rho(L_1)\subset L_2$. Hence we know that $E_{\alpha \beta}$ is a subvariety of $\mathrm{Bl}_{\mathbb{P}^{2k-1}}(\mathbb{P}^{n-1})$.

Obviously the images of the above embeddings cover $\mathrm{Bl}_{\mathbb{P}^{2k-1}}(\mathbb{P}^{n-1})$ and these embeddings agree on overlaps. This finishes the proof of the proposition.
\end{proof}

\section{Applications to Rapoport--Zink spaces}\label{sec:RZ spaces}
In this section, we apply our previous results to Rapoport--Zink (RZ) spaces, and conclude interesting results on line bundle of modular forms and special cycles over them.

\subsection{Review of strict $O_{F_0}$-modules}\label{sec:strict-module}
In this subsection, we briefly review the theory of $O_{F_0}$-strict modules. 
For more details, see \cite{Mihatsch_2022,kudla2023padicuniformizationunitaryshimura,LMZ}.
Assume $p\neq 2$ throughout.
Recall that $F_0/\BQ_p$ is an extension of $p$-adic fields with a fixed uniformizer $\pi_0\in O_{F_0}$. Assume the residue field of $O_{F_0}$ is a finite field of order $q$.
Let $S$ be an $\Spf O_{F_0}$-scheme.
A \emph{strict $O_{F_0}$-module} $X$ over $S$ is a pair $(X,\iota)$ where $X$ is a $p$-divisible group over $S$ and $\iota: O_{F_0}\rightarrow \End(X)$ an action such that $O_{F_0}$ acts on $\Lie(X)$ via the structure morphism $O_{F_0}\rightarrow \CO_S$. 
A strict $O_{F_0}$-module is called \emph{formal} if the underlying $p$-divisible group is formal. 
By Ahsendorf-Cheng-Zink \cite{Ahsendorf-Cheng-Zink}, there is an equivalence of categories between the category of strict formal $O_{F_0}$-modules over $S$ and the category of nilpotent $O_{F_0}$-displays over $S$.
To any strict formal $O_{F_0}$-module, there is also an associated crystal $\BD_X$ on the category of $O_{F_0}$-pd-thickenings. 
We define the de Rham homology $D(X):=\BD_X(S)$.
There is a short exact sequence of $\CO_S$-modules:
\begin{equation*}
	0\rightarrow \Fil(X)\rightarrow D(X)\rightarrow \Lie(X)\rightarrow 0;
\end{equation*}
where $\Fil(X)\subset D(X)$ is the Hodge filtration. The (relative) Grothendieck--Messing theory states that the deformations of $X$ along $O_{F_0}$-pd-thickenings are in bijection with liftings of the Hodge filtration.

We will restrict to the case when $X=(X,\iota)$ is \emph{biformal}, see \cite[Definition 11.9]{Mihatsch_2022} for the definition.
For a biformal strict $O_{F_0}$-module $X$, we can define the \emph{(relative) dual} $X^\vee$ of $X$, and hence the \emph{(relative) polarization} and the \emph{(relative) height}.
It follows from the definition that there is a perfect pairing
\begin{equation*}
    D(X)\times D(X^\vee)\rightarrow \CO_S
\end{equation*}
such that $\Fil(X)\subset D(X)$ and $\Fil(X^\vee)$ are orthogonal complements to each other.

When $S=\Spf R$ is perfect, a nilpotent $O_{F_0}$-display is equivalent to a relative Dieudonne module $M(X)$ over $W_{O_{F_0}}(R)$ with the action of a $\sigma$-linear operator $F$ and a $\sigma^{-1}$-linear operator $V$, such that $FV=VF=\pi_0\id$.

\subsection{RZ spaces}\label{sec:RZ}
For the remaining part of the paper, assume $F/F_0$ is a ramified quadratic extension of $p$-adic fields. Fix uniformizers $\pi\in F$ and $\pi_0\in F_0$ such that $\pi^2=\pi_0$.
\begin{definition}
\noindent(i) Let $S$ be a $\Spf O_{\breve{F}}$-scheme. A \emph{hermitian $O_{F}$-module of type $2k$ and dimension $n$} is a triple $(X,\iota_X,\lambda_X)$ consisting of a strict biformal $O_{F_0}$-module $X$ of height $2n$ and dimension $n$ over $S$, a homomorphism
$$
\iota_X \colon O_F \to \End_S (X),
$$
and a \emph{relative} polarization
$$
\lambda_X \colon X \to X^\vee,
$$
subject to the following constraints:
\begin{itemize}
\item 
the Rosati involution on $\End_S^\circ(X)$ attached to $\lambda_X$ induces the nontrivial Galois automorphism on $O_F$; and
\item 
$\ker \lambda_X \subset X[\iota_X(\pi)]$ has height $q^{2k}$.
\end{itemize}

\noindent(ii) A hermitian $O_F$-module is \emph{admissible} if it satisfies the following condition: denote by $D(X)$ and $D(X^\vee)$ the respective de Rham homology of $X$ and $X^\vee$. Since $\ker \lambda_X$ is contained in $X[\iota(\pi)]$ and of rank $q^{2k}$, there is a unique (necessarily $O_F$-linear) isogeny $\lambda^\vee$ such that the composite
$$
X\overset{\lambda}{\longrightarrow} X^\vee\overset{\lambda^\vee}{\longrightarrow} X
$$
is $\iota(\pi)$, and the induced diagram
\begin{equation*}
			D(X)\overset{\lambda_*}{\longrightarrow} D(X^\vee)\overset{\lambda^\vee_*}{\longrightarrow} D(X)
		\end{equation*}
then extends periodically to a polarized chain of $O_F\otimes_{O_{F_0}}\CO_S$-modules $\CL_{[k]}$, comp.  \eqref{equ:CLk lattice chain}. By \cite[Th. 3.16]{RZ}, \'etale-locally on $S$ there exists an isomorphism of polarized chains
\begin{equation*}\label{equ:ss-trivilization}
[\cdots\xrightarrow{\lambda_*^\vee}D(X)\xrightarrow{\lambda_*}D(X^\vee)\xrightarrow{\lambda_*^\vee}\cdots]\xrightarrow{\sim}\Lambda_{[k]}\otimes_{O_{F_0}}\CO_S,
\end{equation*}
which in particular gives an isomorphism of $O_F\otimes_{O_{F_0}}\CO_S$-modules
\begin{equation}\label{eq:lie-alg-trivilization}
	D(X)\overset{\sim}{\longrightarrow}\Lambda_{-k}\otimes_{O_{F_0}}\CO_S.
\end{equation}
The admissible condition we impose is that upon identifying $\Fil(X)$ with a submodule of $\Lambda_{-k}\otimes_{O_{F_0}}\CO_S$ via \eqref{eq:lie-alg-trivilization}, it defines a point in the local model $M^{\loc}_{[k]}$.

\noindent(iii) A \emph{splitting structure} on a hermitian $O_{F}$-module  $(X,\iota_X,\lambda_X)$ of dimension $n$ and type $2k$ is a pair
of two locally $\CO_S$-direct summands of rank one,
\begin{equation*}
	\Fil^0(X)\subseteq\Fil(X)\subseteq D(X),\quad \Fil^0(X^\vee)\subseteq\Fil(X^\vee)\subseteq D(X^\vee),
\end{equation*}
subject to the following constraints:
\begin{itemize}
\item  the morphisms between Hodge filtrations induced by the polarization carry the one additional filtration into the other:
\begin{equation*}
	\lambda_*(\Fil^0(X))\subseteq \Fil^0(X^\vee),\quad \lambda^\vee_*(\Fil^0(X^\vee))\subseteq \Fil^0(X).
\end{equation*}
\item 
(\emph{Kr\"amer condition}) if $n > 1$ is even and $2k=n$, then the condition states that $\Fil^0(X)=(\iota(\pi)-\pi)\Fil(X)$.
For the other cases, it requires
\begin{align*}
	(\iota(\pi)-\pi)\Fil(X)\subseteq \Fil^0(X), & \quad (\iota(\pi)+\pi)\Fil^0(X)=(0);\\
	(\iota(\pi)-\pi)\Fil(X^\vee)\subseteq \Fil^0(X^\vee), &\quad (\iota(\pi)+\pi)\Fil^0(X^\vee)=(0).
\end{align*}
\end{itemize}
\end{definition}

\begin{remark}
\begin{altenumerate}
\item The admissible condition can be replaced by the strengthened spin condition, see \cite{LUO2024}.
\item Let $(X,i,\lambda,\rho)$ be any hermitian $O_F$-module. The action $i: O_F\rightarrow \End(X)$ induces an action $i^\vee: O_F\rightarrow \End(X^\vee)$.
However throughout the paper unless otherwise stated, in order to make $X^\vee$ have the same signature as $X$ and have a natural definition of special cycles $\CY(x)$ (see below), we use the conjugate action $\ov{i}^\vee$ of $i^\vee$.
See also \cite[Definition 3.21(v)]{RZ}. 
\end{altenumerate}
\end{remark}

\begin{definition}\label{def:RZ spaces}
\noindent(i) A hermitian $O_F$-module $(\BX,i_{\BX},\lambda_{\BX})$ over $\Spec\BF$ is called the \emph{framing object} if the relative Dieudonn\'e module is isoclinic. See \cite[Proposition 5.1.3]{LRZ25} for a full classification of the framing objects.

\noindent(ii)
Let $(\BX,i_{\BX},\lambda_{\BX})$ be a framing object of type $2k$ and dimension $n$. We define the \emph{canonical RZ space} $\CN_{\BX}$ over $\Spf O_{\breve F}$  as the formal scheme parametrizing isomorphism classes of quadruples $(X, \iota_X, \lambda_X, \rho_X)$, where:
\begin{itemize}
\item $(X,\iota_X,\lambda_X)$ is an admissible hermitian $O_{F}$-module of type $2k$;
\item  $\rho_X$, called the {\it framing} with framing object $(X, \iota_X, \lambda_X, \rho_X)$, is an $O_F$-linear quasi-isogeny of height $0$
\begin{equation*}
	\rho_X: X\times_S \ov{S}\longrightarrow \BX\times_{\BF}\ov{S} ,
\end{equation*}
such that $\rho^*(\lambda_{\BX}\times_{\BF}\ov{S})=\lambda_X\times_S \ov{S}$. 
\end{itemize}

\noindent(iii)
Fix a framing object $(\BX,i_{\BX},\lambda_{\BX})$. We define the \emph{naive splitting RZ space} $\CN_{\BX}^{\nspl}$ over $\Spf O_{\breve F}$ parametrizing the collection of data 
\begin{equation*}
	(X, \iota, \lambda, \Fil^0(X),\Fil^0(X^\vee); \rho),
\end{equation*}
where $(X,\iota,\lambda,\Fil^0(X),\Fil^0(X^\vee))$ is a hermitian $O_F$-module of type $2k$ with splitting structure, and where $\rho$ is a {\it framing} with the fixed framing object.

\noindent(iv)
We define the \emph{splitting RZ space} $\CN_{\BX}^{\spl}$ over $\Spf O_{\breve F}$ as the flat closure of $\CN_{\BX}^{\nspl}$, i.e. closed formal subscheme defined by the ideal sheaf $\CO_X[\pi^\infty]\subseteq \CO_X$ for $X=\CN_{\BX}^{\spl}$ ($\pi$-power torsion elements of the structure sheaf).
\end{definition}

By Theorem \ref{thm:main} and Theorem \ref{prop:Kramer-imply-ss}, combine with the local model diagram, we immediately have
\begin{proposition}
\begin{altenumerate}
\item There is natural chain of maps:
\begin{equation}\label{equ:chain of RZ spaces}
\begin{aligned}
\xymatrix{
\CN_{\BX}^{\spl}\ar@{^(->}[r]&\CN_{\BX}^{\nspl}\ar[r]&\CN_{\BX},
}
\end{aligned}
\end{equation}
where the morphism on the left is a closed immersion.
\item The splitting RZ space $\CN_{\BX}^{\spl}$ is a flat formal scheme with semi-stable reduction.
\end{altenumerate}
\end{proposition}
\begin{proof}
Part (i) is straightforward, part (ii) follows from the local model diagram for $\CN_{\BX}^{\spl}$, which is obtained from the local model diagram for $\CN_{\BX}^{\nspl}$ via the modification \cite[Prop. 2.3]{Pappas2000} and the uniqueness of the flat closure.
\end{proof}

\subsection{The line bundle of modular forms}\label{vec}
In this subsection, we define two natural line bundles over the naive splitting RZ space.

Let $\BE$ be the Lubin-Tate $O_F$-module over $\Spec \BF$, which is the unique strict $O_F$-module over $\Spec \BF$.
There is a unique lifting (the canonical lifting) $\CE$ of Lubin-Tate $O_F$-module $\BE$ over $\Spf O_{\breve{F}}$, equipped with its framing $\rho_{\CE}:\CE\times_{\Spf O_{\breve{F}}}\Spec \BF\overset{\sim}{\to} \BE$ and its principal polarization $\lambda_{\CE}$ lifting $\rho_{\CE}^*(\lambda_{\BE})$.

Let $0\leq k\leq n/2$ be \emph{any} integer.
Let $S$ be a  $\Spf O_{\breve{F}}$-scheme over $\CN_{\BX}^{\nspl}$ and $X\in \CN^{\nspl}_{\BX}(S)$ be a $S$-point. In the remaining part of the paper, we can assume $S=\CN^{\spl}_{\BX}$ defined by the closed embedding $\CN^{\spl}_{\BX}\hookrightarrow\CN^{\nspl}_{\BX}$.
Recall that the de Rham homology  $D(X)$ and $D(X^\vee)$ defined in \S \ref{sec:strict-module} are vector bundles over $S$ of rank $2n$. These fit into short exact sequences
\begin{equation*}
    0\rightarrow \Fil(X)\longrightarrow D(X)\longrightarrow \Lie(X)\rightarrow 0,
    \quad
    0\rightarrow \Fil(X^\vee)\longrightarrow D(X^\vee)\longrightarrow \Lie(X^\vee)\rightarrow 0.
\end{equation*}
Similarly, the de Rham homology of $\CE=\CE\times_{\Spf O_{\breve{F}}}S$ determines a short exact sequence
\begin{equation*}
    0\lr \Fil(\CE)\lr D(\CE)\lr \Lie(\CE)\lr 0
\end{equation*}
of vector bundles on $S$.

From the moduli description, the actions $\iota_\CE: O_F\rightarrow \End(\CE)$ and $\iota: O_F\rightarrow \End(X)$ and $\iota^\vee: O_F\rightarrow \End(X^\vee)$ induce actions of $ O_F$ on all of these vector bundles over $S$, and the above short exact sequences are $ O_{F}$-linear.

By definition, we have a perfect alternating pairing
\begin{equation*}
    \langle\cdot , \cdot \rangle:D(X)\times D(X^\vee)\lr \CO_{S},
\end{equation*}
which is compatible with the action $\iota: O_F\rightarrow \End_{ \CO_{S}}(D(X))$ and $\ov{\iota}^\vee:O_F\rightarrow \End_{\CO_{S}}(D(X^\vee))$, in the sense that
\begin{equation*}
    \langle \alpha\cdot x,y\rangle=\langle x,\ov{\alpha}\cdot y\rangle
\end{equation*}
for all $\alpha\in O_F$ and all local sections $x$ (resp. $y$) of $D(X)$ (resp. $D(X^\vee)$).

The local direct summands  $\Fil(X)\subset D(X)$ and $\Fil(X^\vee)\subset D(X^\vee)$ are perpendicular, hence there are two induced perfect pairings
\begin{equation}\label{vec:pairings}
    \langle\cdot,\cdot\rangle:\Fil(X)\times \Lie(X^\vee)\lr \CO_{S},
    \text{ and }
    \langle\cdot,\cdot\rangle:\Fil(X^\vee)\times \Lie(X)\lr \CO_{S}.
\end{equation}

By virtue of the moduli problem defining $\CN^{\nspl}_{\BX}$, we have distinguished local direct summands $\Fil^0(X)\subset\Fil(X)$ and $\Fil^0(X^\vee)\subset\Fil(X^\vee)$ of rank $1$ on $S$. By taking dual via \eqref{vec:pairings}, we have local direct summands  $F_{X^\vee}\subset\Lie(X^\vee)$ and  $F_X\subset\Lie(X)$  of rank $n-1$ respectively.
Let 
\begin{equation}\label{equ:L_X-in-Lie}
	L_X:=\ker\left(
	\Lie(X)\xrightarrow{\Pi-\pi}(\Pi-\pi)D(X)/\Fil^0(X).
	\right)
\end{equation}
By Proposition \ref{prop:duality}, it is a direct summand of $\Lie_X$ of rank one. It is stable under the $O_F$-action. Moreover, we have $(\Pi-\pi)\Lie(X)\subset L_X$ and $(\Pi+\pi)L_X=(0)$.
Replacing $X$ with $X^\vee$, we can similarly construct a line bundle $L_{X^\vee}\subset\Lie(X^\vee)$.

\begin{definition}\label{def:line-bundle-modular-form}
The line bundle of modular forms $\omega_{\CZ}$ on $\CN_\BX^{\nspl}$ is the invertible sheaf of $\CO_{\CN_{\BX}^{\nspl}}$-modules with inverse
    \begin{equation*}
        \omega_{\CZ}^{-1}=\underline{\Hom}(\Fil(\CE),L_X).
    \end{equation*}
Similarly, one can also define
\begin{equation*}
        \omega_{\CY}^{-1}=\underline{\Hom}(\Fil(\CE),L_{X^\vee}).
    \end{equation*}
For any $S\to \CN_{\BX}^{\nspl}$, we define the line bundle of modular forms $\omega_{\CZ,S}$ (resp, $\omega_{\CY,S}$) over $S$ by pull-back.
\end{definition}

\begin{remark}
We define the line bundles of modular forms following the construction in \cite{Howard2019}, which is different from that in \cite{BHKRY}, cf. \cite[Remark 3.5]{Howard2019}.
The terminology is motivated by their global counterpart on the unitary Shimura variety, see \cite[Remark 2.4.1]{BHKRY} for further explanation.
\end{remark}

\begin{remark}\label{rmk:no extend from rigid}
Let $\CN_\BX^{\mathrm{rig}}$ be the rigid generic fiber of the RZ space. We have the Grothendieck--Messing (or Rapoport--Zink) crystalline morphism in \cite{RZ} (which extends to the relative in \cite{Fargues08}):
\begin{equation}\label{equ:period map}
    \pi:\CN_\BX^{\rig}\lr N^{\rig}
\end{equation}
where $N^{\rig}$ is the rigid analytic flag variety parameterizing all codimension one subspaces of $(\Pi+\pi)V$. The tautological line bundle on $N^{\rig}$ then pull-back to a line bundle $\omega^\rig$ over $\CN_\BX^{\rig}=\CN_\BX^{\nspl,\rig}$. This is the rigid generic fiber of line bundle of modular forms:
$$
\omega^{\rig}=\omega_\CZ^\rig=\omega_\CY^\rig.
$$
It is the $p$-adic analog of the line bundle of modular forms over the complex hermitian symmetric domain of $\U(n-1,1)$. 

\end{remark}

\subsection{Special cycles on splitting RZ spaces}\label{sec:specialcycleinsplittingmodel}
Given a framing object $(\BX,i_{\BX},\lambda_{\BX})$, we define the space of \emph{special quasi-homomorphisms}
\begin{equation}\label{eq:special-hom}
\BV=\BV(\BX):=\Hom^\circ_{O_F}(\BE,\BX).
\end{equation}
Then $\BV$ is an $n$-dimensional $F$-vector space. It carries a natural nondegenerate $F/F_0$-hermitian form $h$: for $x,y\in \BV$, the composite
\begin{equation*}
	\BE\overset{y}{\lr}\BX
	\overset{\lambda_{\BX}}{\lr}\BX^\vee\overset{x^\vee}{\lr}\BE^\vee\overset{\lambda^{-1}_{\BE}}{\lr}\BE
\end{equation*}
lies in $\End^\circ_{O_F}(\BE)$ and, hence, identifies with an element $h(x,y)\in F$ via the isomorphism
\begin{equation*}
	\iota_{\BE}:F\overset{\sim}{\longrightarrow}\End^\circ_{O_F}(\BE).
\end{equation*}

\begin{definition}\label{def:special cycle}
\begin{altenumerate}
\item For $x\in \mathbb{V}$, define the ($\CZ$-Kudla--Rapoport) special cycle $\CZ(x)$ as the closed subfunctor of $\CN^{\nspl}_{\BX}$, such that for any $\Spf O_{\breve{F}}$-scheme $S$, $\CZ(x)(S)$ is the set of isomorphism classes $(X,i,\lambda,\rho,\CF_X,\CF_{X^\vee})$
such that the quasi-homomorphism 
$$
\rho^{-1}\circ x\circ \rho_0:\CE\times_{\Spf O_{\breve{F}}} \ov{S} \longrightarrow X\times_{S} \ov{S}
$$
extends to a homomorphism $\CE\times_{\Spf O_{\breve{F}}}S\rightarrow X$.
For any $\Spf O_{\breve{F}}$-scheme $S$ over $\CN_{\BX}^{\nspl}$, we define the special cycle $\CZ(x)_S$ on $S$ as the pull-back of $\CZ(x)$ on $\CN^{\nspl}_{\BX}$.

\item For $x\in \mathbb{V}$, define the ($\CY$-Kudla--Rapoport) special cycle $\CY(x)$ as the closed subfunctor of $\CN_{\BX}^{\nspl}$, such that for any $\Spf O_{\breve{F}}$-scheme $S$, $\CY(x)(S)$ is the set of isomorphism classes $(X,i,\lambda,\rho,\CF_X,\CF_{X^\vee})$
such that the quasi-homomorphism 
$$
\rho^\vee\circ(\lambda_{\BX}\times \ov{S})\circ x\circ \rho_0:\CE\times_{\Spf O_{\breve{F}}} \ov{S} \longrightarrow X^\vee\times_{S} \ov{S}
$$
extends to a homomorphism $\CE\times_{\Spf O_{\breve{F}}}S\rightarrow X^\vee$. 
For any $\Spf O_{\breve{F}}$-scheme $S$ over $\CN_{\BX}^{\nspl}$, we define the special cycle $\CY(x)_S$ on $S$ as the pull-back of $\CY(x)$ on $\CN_{\BX}^{\nspl}$.
\end{altenumerate}
\end{definition}

We generalize Howard's work \cite{Howard2019} to our case. 
Let $Z\subset S$ be any closed formal subscheme, and denote by $I_Z\subset\CO_{S}$ its ideal sheaf. The square $I_Z^2$ is the ideal sheaf of a larger closed formal subscheme
\begin{equation*}
    Z\subset\wt{Z}\subset S
\end{equation*}
called the \emph{first-order infinitesimal neighborhood} of $Z$ in $S$.

Let $S$ be a $\Spf O_F$-scheme over $\CN_{\BX}^\nspl$ and let $x\in \BV$. We consider the first-order infinitesimal neighborhood
\begin{equation*}
\begin{aligned}
\xymatrix{
\CZ(x)_S\ar@{^(->}[r]\ar[d]&\wt{\CZ}(x)_S\ar@{^(->}[r]\ar[d]&S\ar[d]\\
\CZ(x)\ar@{^(->}[r]&\wt{\CZ}(x)\ar@{^(->}[r]&\CN_{\BX}^{\nspl}
}
\end{aligned}
\end{equation*}
of the special cycles and their pull-backs to $S$. 

By the definition of $\CZ(x)$, the restriction of $\CE$ and $X$ over $S$ to $\CZ(x)_S$ has a distinguished morphism of strict $O_{F_0}$-modules
\begin{equation*}
    \CE|_{\CZ(x)_S}\overset{x}{\longrightarrow} X|_{\CZ(x)_S},
\end{equation*}
which further induces an $O_F$-linear morphism of vector bundles on $\CZ(x)_S$ 
\begin{equation}\label{def:special-cycle-morphism}
    D(\CE)|_{\CZ(x)_S}\overset{x}{\longrightarrow} D(X)|_{\CZ(x)_S}
\end{equation}
that preserves the Hodge filtrations.
By (relative) Grothendieck--Messing theory,  we have a canonical extension to $\wt{\CZ}(x)_S$
\begin{equation*}
    D(\CE)|_{\wt{\CZ}(x)_S}\overset{\wt{D(x)}}{\longrightarrow} D(X)|_{\wt{\CZ}(x)_S},
\end{equation*}
which does not preserve the Hodge filtrations anymore. In particular, along the commutative diagram it induces a nontrivial composite morphism 
\begin{equation}\label{de:GM-Fil-to-Lie}
   \wt{x}: \Fil(\CE)|_{\wt{\CZ}(x)_S}\longrightarrow  D(\CE)|_{\wt{\CZ}(x)_S} \overset{\wt{D(x)}}{\longrightarrow}  D(X)|_{\wt{\CZ}(x)_S}\longrightarrow \Lie(X)|_{\wt{\CZ}(x)_S},
\end{equation}
where the first arrow and last arrow come from the Hodge exact sequences for $\CE $ and $X$ respectively.

\begin{proposition} \cite[Proposition 4.1]{Howard2019}\label{prop:factor-thgourh}
    The morphism \eqref{de:GM-Fil-to-Lie} factors through $L_X|_{\wt{\CZ}(x)_S}$. In particular, 
    it can be viewed as a morphism of line bundles
    \begin{equation}\label{de:GM-Fil-to-Lie-II}
        \Fil(\CE)|_{\wt{\CZ}(x)_S}\overset{\wt{x}}{\longrightarrow} L_X|_{\wt{\CZ}(x)_S}.
    \end{equation}
    The Kudla--Rapoport divisor $\CZ(x)_S$ coincides with the zero locus of $\wt{x}$. \qed
\end{proposition}

\begin{definition}\label{def:special-divisor-obstrucntion}
    The section
    \begin{equation*}
        \obst(x)\in H^0(\wt{\CZ}(x)_S,\omega_{\CZ}^{-1}|_{\wt{\CZ}(x)_S})
    \end{equation*}
    determined by \eqref{de:GM-Fil-to-Lie-II} is called the \emph{obstruction to deforming }$x$.
    As we explained, $\CZ(x)_S$ is the closed subscheme of $\wt{\CZ}(x)_S$ defined by $\obst(x)=0$.
\end{definition}

\begin{proposition}\label{prop:special-cartier}
For any nonzero $x\in \BV$, if the closed formal subscheme $\CZ(x)\subset \CN_{\BX}^{\nspl}$ (resp. $\CY(x)\subset \CN_{\BX}^{\nspl}$) is nonempty, it is a Cartier divisor; that is to say, it is defined locally by a single nonzero equation.
\end{proposition}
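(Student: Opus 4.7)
The strategy is to produce a local defining equation from the obstruction section of Definition~\ref{def:special-divisor-obstrucntion} and then verify nonvanishing. Fix a closed point $p \in \CZ(x) \subset \CM^{\nspl}_{\BX}$ and an affine formal neighborhood $U$ of $p$ on which the line bundle $\omega_\CZ^{-1}$ is trivial, with fixed generator $e$. Under the trivialization, $\obst(x)|_{\wt{\CZ}(x) \cap U} = f \cdot e$ for some $f \in \CO(\wt{\CZ}(x) \cap U)$; choose any lift $\tilde f \in \CO(U)$ of $f$ along the closed immersion $\wt{\CZ}(x) \cap U \hookrightarrow U$. The plan is to show $\CZ(x) \cap U = V(\tilde f)$, which will produce the desired local defining equation.

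To establish this, let $I \subset \CO(U)$ be the ideal of $\CZ(x) \cap U$. Since $\wt{\CZ}(x) \cap U$ is cut out by $I^2$, the property that $\obst(x)$ generates the ideal of $\CZ(x)$ inside $\wt{\CZ}(x)$ translates, after lifting, to the equation $I = (\tilde f) + I^2$ in $\CO(U)$. A stalkwise Nakayama's lemma at $p$---valid because $p \in \CZ(x)$ forces $I_p \subset \mathfrak{m}_p$---applied to the finitely-generated module $I_p/(\tilde f)_p$ yields $I_p = (\tilde f)_p$; a standard spread-out argument then upgrades this to $I = (\tilde f)$ on a Zariski-open neighborhood of $p$, giving local principality of the ideal of $\CZ(x)$.

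The remaining step is to verify $\tilde f \neq 0$ in $\CO(U)$, equivalently $\CZ(x) \cap U \neq U$. If $\tilde f$ were zero on a component of $U$, the nonzero quasi-homomorphism $x \in \BV$ would lift to a genuine homomorphism over a whole formal neighborhood of $p$. However, the generic fiber of $\CM^{\nspl}_{\BX}$ is a smooth variety on which the lifting condition for $x$ cuts out a proper closed subvariety---a nontriviality controlled by the nondegenerate hermitian form $h$ on $\BV$---so via the local model diagram \eqref{equ:LM-diagram} no formal neighborhood of a point on an irreducible component that lifts to the generic fiber can lie entirely inside $\CZ(x)$. By enlarging $U$ to include such a flat component (which is possible since every exceptional component of $\CM^{\nspl}_{\BX}$ meets a flat one, as visible from the explicit charts in \S\ref{sec:affine-charts}), we secure $\tilde f \neq 0$. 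The case of $\CY(x)$ is entirely parallel, with $\omega_\CZ$ and $L_X$ replaced by $\omega_\CY$ and $L_{X^\vee}$. The main obstacle is precisely this last nonvanishing step: because $\CM^{\nspl}_{\BX}$ is non-flat over $O_{\breve F}$ and carries exceptional components supported on the special fiber, one must rule out the possibility that $\CZ(x)$ swallows such a component, and this is what forces us to invoke the chart-level analysis of Section~\ref{sec:affine-charts}.
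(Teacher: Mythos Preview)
Your first step---lifting the obstruction section to a local generator $\tilde f$ and invoking Nakayama to get principality of $I$---matches the paper's argument exactly.

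The divergence is in the nonvanishing step. The paper does not pass through the local model diagram or the chart analysis of \S\ref{sec:affine-charts}; instead it invokes the rigid-analytic period morphism of Rapoport--Zink,
\[
\pi\colon \CM^{\nspl,\rig}_{\BX}\longrightarrow N^{\rig},
\]
where $N^{\rig}$ is the rigid flag variety of codimension-one subspaces of $(\Pi+\pi)V$. This map is \'etale, so an admissible open $U^{\rig}$ would have open image, whereas $\pi(\CZ(x)^{\rig})$ lands in a proper closed analytic subvariety; the paper then refers to \cite[Proposition~4.3]{Howard2019} for the remaining details.

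Your alternative route has a genuine gap at the ``enlarge $U$'' step. What you must show is that $I\neq 0$ in the \emph{local ring} at $p$, i.e., that \emph{no} open neighborhood $U$ of $p$ is contained in $\CZ(x)$. If such a $U$ exists, you cannot enlarge it and preserve the containment $U\subset\CZ(x)$; so you only get a contradiction when the given $U$ already meets a flat component. But this fails at generic points of $\Exc_2$: in the class~(ii) charts of \S\ref{subsec: affine charts of class (i)}, $\Exc_{2,\alpha\beta}=F_{\alpha\beta}\cong\BA^{n-1}_\kappa$ is an \emph{open and closed} piece of $U_{\alpha\beta}$, disjoint from $U'_{\alpha\beta}$, and one checks (even in the class~(i) charts, by localizing at a point with $\mu_{\alpha^\vee}=0$, $t_\alpha=0$, $s_\beta$ a unit) that $\pi=0$ in the local ring at such points. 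So an open $U$ entirely contained in $\Exc_2$ exists near such $p$, it has empty generic fiber, and your argument produces no contradiction there. Showing $\tilde f\neq 0$ in $\CO(U)$ for some larger $U$ does not help either, since $\CO(U)$ is not a domain and $\tilde f$ could vanish on the $\Exc_2$-component while being nonzero on the flat part. The period-map argument is designed precisely to avoid appealing to the component structure of the special fiber.
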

\begin{proof}
    Let $R$ be the local ring of $\CN^{\nspl}_{\BX}$ at a point $z\in \CZ(x)$, and let $I\supset I^2$ be the ideals of $R$ corresponding to $\CZ(x)\subset\wt{\CZ}(x)$. 
    The line bundle $\omega_{\CZ}$ is free on $S=\Spf(R)$ and the obstruction to deforming $x$ becomes an $R$-module generator
    \begin{equation*}
        \obst(x)\in I/I^2.
    \end{equation*}
    It follows from Nakayama's lemma that $I\subset R$ is a principal ideal, and it only remains to show that $I\neq 0$. Recall from Remark \ref{rmk:no extend from rigid} that we have the Grothendieck--Messing (or Rapoport--Zink) crystalline morphism 
    \begin{equation*}
        \pi:\CN^{\nspl,\rig}_{\BX}\longrightarrow N^{\rig},
    \end{equation*}
    It follows from \cite[Proposition 5.17]{RZ} that $\pi$ is \'etale.
    
    Suppose $I=0$, this implies that we may find an open subset $U\subseteq \CN^{\nspl}_{\BX}$ 
    such that $\CZ(x)|_U=U$, and $U\subseteq \CN^{\nspl}_{\BX}$ determines an admissible open subset
    \begin{equation*}
        U^{\rig} \subseteq \CN^{\nspl,\rig}_{\BX}.
    \end{equation*}
    But the image $\pi(Z(x)^{\rig})\subset N^{\rig}$ will factor through the rigid space associated to a closed subscheme of $N$, which is a contradiction. See \cite[Proposition 4.3]{Howard2019} for the detailed argument (one can alternatively establish properness by using the $p$-adic uniformization of the integral model of the unitary Shimura variety).

The same argument works here for $\CY$-cycles by replacing $X$ with $X^\vee$ and $L_X$ with $L_{X^\vee}$. In particular, we have that the closed formal subscheme $\CY(x)\subset\CN^{\nspl}_{\BX}$ is a Cartier divisor when $x\neq 0$.
\end{proof}

Next, we conclude the linear invariance properties of special cycles on $\CN_\BX^{\spl}$.

Let $S$ be a regular formal scheme. Recall from \cite[Appendix B]{WZhang21} that $K'_0(S)$ is the free abelian group generated by symbols $[F]$, where $F$ runs over all isomorphism classes of coherent $\mathcal{O}_S$-modules, modulo the relations $[F_1]+[F_3]=[F_2]$ for every short exact sequence
$$
0\lr F_1\lr F_2\lr F_3 \lr 0.
$$
This group is endowed with a ring structure, where multiplication is defined via the derived tensor product.

Let $S\rightarrow  \CN^{\nspl}_{\BX}$ be a regular formal scheme over $\CN^{\nspl}_{\BX}$, for instance, we can take $S=\CN^{\spl}_{\BX}\hookrightarrow \CN^{\nspl}_{\BX}$ to be the splitting RZ space, this recovers the setting in Proposition \ref{prop:introduction linear invariance}.

When $x\neq 0$, by Proposition \ref{prop:special-cartier}, two special cycles $\CZ(x)_S$ and $\CY(x)_S$ define classes $[\CZ(x)_S]$ resp. $[\CY(x)_S]$ in the Grothendieck group $K_0'(S)$.
We define $[\CZ(0)_S]$, resp. $[\CY(0)_S]$, as the classes in $K_0'(S)$ of the following complex of $\CO_S$-modules
\begin{equation*}
	(\cdots \rightarrow 0\lr\omega_{\CZ}^{-1}\overset{0}{\lr}\CO_{S}\lr 0),
	\quad\text{resp.}\quad
	(\cdots \lr 0\lr\omega_{\CY}^{-1}\overset{0}{\lr}\CO_{S}\lr 0),
\end{equation*}
cf. \cite{Howard2019}.

\begin{corollary}\label{cor:linear-invar}
	Let $x_1,\cdots,x_r\in \BV$ and $y_1,\cdots y_r\in\BV$ generate the same $O_F$-submodule. When $S$ is regular, 
\begin{equation*}
	[\CZ(x_1)_S]\cdot [\CZ(x_2)_S]\ldots \cdot [\CZ(x_r)_S]=[\CZ(y_1)_S]\cdot [\CZ(y_2)_S]\ldots \cdot [\CZ(y_r)_S]\in K'_0(S).
\end{equation*}
Similarly, we have
\begin{equation*}
	[\CY(x_1)_S]\cdot [\CY(x_2)_S]\ldots \cdot [\CY(x_r)_S]=[\CY(y_1)_S]\cdot [\CY(y_2)_S]\ldots \cdot [\CY(y_r)_S]\in K'_0(S).
\end{equation*}
\end{corollary}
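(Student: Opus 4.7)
The proof strategy extends that of Howard \cite[Theorem B]{Howard2019} to our ramified, semi-stable setting. The plan is to construct a global section of the modular forms line bundle that interpolates the obstruction to deforming a given quasi-homomorphism, show that this construction is $O_F$-linear, and then use Koszul-complex invariance to conclude.

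\textbf{Step 1 (Global obstruction section).} First I would show that the obstruction $\obst(x) \in H^0(\wt{\CZ}(x)_S, \omega_\CZ^{-1})$ from Definition \ref{def:special-divisor-obstrucntion} extends to a global section $\delta(x) \in H^0(S, \omega_\CZ^{-1})$ whose vanishing scheme is $\CZ(x)_S$. The construction uses the relative Grothendieck-Messing extension of the quasi-homomorphism $x: \BX_0 \to \BX$: transported via the framing $\rho$, this gives a map $D(X_0) \to D(X)$ of rational de Rham realizations globally on $S$, integral on $\wt{\CZ}(x)_S$. Composing with the canonical projection $D(X) \twoheadrightarrow \Lie(X)/L_X$ and dualizing via the pairing with $\Fil(X_0)$ lands the result in $\underline{\Hom}(\Fil(X_0), L_X) = \omega_\CZ^{-1}$. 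That the pointwise sections glue to a global $\delta(x)$ follows from the Cartier divisor property (Proposition \ref{prop:special-cartier}) together with the regularity of $S$, which forces the local equations of $\CZ(x)_S$ to patch into a global section of the invertible sheaf $\omega_\CZ^{-1}$.

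\textbf{Step 2 (Linearity).} Since each stage of the construction of $\delta(x)$ is $O_F$-linear in $x$, one has $\delta(\alpha x + \beta y) = \alpha\,\delta(x) + \beta\,\delta(y)$ in $H^0(S, \omega_\CZ^{-1})$ for all $\alpha, \beta \in O_F$ and $x, y \in \BV$. This is the key input.

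\textbf{Step 3 (Reduction to elementary operations).} Over the DVR $O_F$, any two $r$-tuples $(x_i)$ and $(y_i)$ generating the same submodule of $\BV$ can be connected by a sequence of elementary moves: permutations, unit scalings $x_i \mapsto u x_i$, shears $x_i \mapsto x_i + \alpha x_j$ with $\alpha \in O_F$ and $i \ne j$, and (when necessary) stabilization by zero generators, for which the definition of $[\CZ(0)_S]$ as the class of the complex $\omega_\CZ^{-1} \xrightarrow{0} \CO_S$ ensures bookkeeping consistency. Invariance under permutations follows from the commutativity of $\otimes^{\BL}$, and under unit scalings from the set-theoretic equality $\CZ(ux)_S = \CZ(x)_S$.

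\textbf{Step 4 (Shear case via Koszul complexes).} For the shear move, set $f = \delta(x_i)$ and $g = \delta(x_j)$ as local equations cutting out $\CZ(x_i)_S$ and $\CZ(x_j)_S$. By Step 2, $\delta(x_i + \alpha x_j) = f + \alpha g$. Since $S$ is regular and both cycles are Cartier divisors, the derived tensor products $[\CO_{\CZ(x_i)_S} \otimes^{\BL} \CO_{\CZ(x_j)_S}]$ and $[\CO_{\CZ(x_i + \alpha x_j)_S} \otimes^{\BL} \CO_{\CZ(x_j)_S}]$ are computed by the Koszul complexes $K(f, g)$ and $K(f + \alpha g, g)$ respectively. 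The elementary matrix $\bigl(\begin{smallmatrix} 1 & 0 \\ \alpha & 1 \end{smallmatrix}\bigr) \in GL_2(\CO_S)$ induces an isomorphism $K(f + \alpha g, g) \xrightarrow{\sim} K(f, g)$, whence their classes coincide in $K_0'(S)$. Tensoring with the remaining $\CZ(x_\ell)_S$ preserves the equality. The argument for $\CY$-cycles is identical after replacing $X, L_X, \omega_\CZ$ with $X^\vee, L_{X^\vee}, \omega_\CY$ throughout (and using the polarization $\lambda$ to connect them).

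\textbf{Main obstacle.} The principal technical hurdle will be Step 1: globalizing the obstruction section in a manner respecting the additional filtration data. In Howard's smooth setting \cite{Howard2019} the analog is comparatively direct, but in our semi-stable setting one must verify that the Grothendieck-Messing crystalline extension interacts correctly with both the Kr\"amer condition on $\Fil^0(X) \subset \Fil(X)$ and the characterization of $L_X$ as $\ker(\Pi - \pi: \Lie(X) \to (\Pi - \pi) D(X)/\Fil^0(X))$ from Proposition \ref{prop:duality}. Once the linear global $\delta$ is established, all remaining steps are formal consequences of Koszul algebra and commutative ring theory.
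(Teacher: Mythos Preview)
Your approach is essentially the same as the paper's, which simply cites \cite[\S 5]{Howard2019} after verifying the three needed inputs: $\CZ(x)_S$ is Cartier for $x \neq 0$, the obstruction gives a local section of $\omega_\CZ^{-1}$ cutting it out, and $S$ is regular. Your Steps 2--4 faithfully reconstruct Howard's Koszul argument.

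Two minor corrections. In Step 1, the construction is simpler than you describe: the induced map $\Fil(X_0) \to \Lie(X)$ already factors through $L_X$ (this is precisely the Proposition cited just before Definition \ref{def:special-divisor-obstrucntion}), directly yielding a section of $\omega_\CZ^{-1} = \underline{\Hom}(\Fil(X_0), L_X)$; there is no ``projection to $\Lie(X)/L_X$'' or dualizing step. Second, your ``Main obstacle'' is overstated: the compatibility of Grothendieck--Messing theory with the Kr\"amer filtration and the characterization of $L_X$ is exactly what Proposition \ref{prop:duality} and the surrounding discussion already establish, and the paper accordingly treats this corollary as purely formal homological algebra with no residual semi-stable subtlety. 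Note also that the argument only requires \emph{local} sections (as the paper's phrasing ``locally choose sections'' indicates), so your effort to globalize in Step 1 is unnecessary.
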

\begin{proof}
This is a purely homological algebra argument. By comparing with \cite[\S 5]{Howard2019}, we see that all we need to know are the following facts, which all hold in our situation:

\noindent(1) $\CZ(x)_S\subset S$ is a Cartier divisor when $x\neq 0$. Hence locally $\CZ(x)$ can be cut by a single equation.

\noindent(2) We can locally choose sections of a line bundle as obstructions to the first-order infinitesimal deformation, so that we can apply \cite[Lemma 5.2]{Howard2019}, which is a purely commutative algebra statement.

\noindent(3) $S$ is regular, this is used in \cite[(5.7)]{Howard2019}.
\end{proof}
\begin{remark}
The importance of including additional filtrations  
$\CF^0_i\subset\CF_i$ for both $i=k$ and $i=n-k$ in the naive splitting model becomes evident now.
This is because the proof of Proposition \ref{prop:special-cartier} relies on the line bundle of modular forms for both $\CZ$-cycles and $\CY$-cycles. Namely, $\omega_{\CZ}$ is derived from the $\CF_{n-k}^0$-filtration and $\omega_{\CY}$ is derived from the $\CF_k^0$-filtration, as outlined in Definition \ref{def:line-bundle-modular-form}.
\end{remark}

\bibliographystyle{alpha}
\bibliography{reference}

@inproceedings{Kramer2003,
  title={Local models for Ramified unitary groups},
  author={Kr{\"a}mer, Nicole},
  booktitle={Abhandlungen aus dem Mathematischen Seminar der Universit{\"a}t Hamburg},
  volume={73},
  pages={67--80},
  year={2003},
  organization={Springer}
}

@article{Pappas2000,
  title={{On the arithmetic moduli schemes of PEL Shimura varieties}},
  author={Pappas, George},
  journal={Journal of Algebraic Geometry},
  volume={9},
  number={3},
  pages={577},
  year={2000},
  publisher={Providence, RI: University Press, c1992-}
}

@article{PR2005,
author = {George Pappas and Michael Rapoport},
title = {{Local models in the ramified case, II: Splitting models}},
volume = {127},
journal = {Duke Mathematical Journal},
number = {2},
publisher = {Duke University Press},
pages = {193 -- 250},
year = {2005},
doi = {10.1215/S0012-7094-04-12721-6},
URL = {https://doi.org/10.1215/S0012-7094-04-12721-6}
}

@article{PR2009, 
author={Pappas, George and Rapoport, Michael},
title={Local models in the ramified case. {III} {U}nitary groups}, 
volume={8}, 
DOI={10.1017/S1474748009000139}, 
number={3}, 
journal={Journal of the Institute of Mathematics of Jussieu}, 
publisher={Cambridge University Press},  
year={2009}, 
pages={507–564}
}

@phdthesis{Yu2019,
  title={On Moduli Description of Local Models For Ramified Unitary Groups and Resolution of Singularity},
  author={Yu, Si},
  year={2019},
  school={Johns Hopkins University}
}

@article {LUO2024,
    AUTHOR = {Luo, Yu},
     TITLE = {On the moduli description of ramified unitary local models of
              signature {$(n-1,1)$}},
   JOURNAL = {Math. Ann.},
  FJOURNAL = {Mathematische Annalen},
    VOLUME = {392},
      YEAR = {2025},
    NUMBER = {4},
     PAGES = {4661--4738},
      ISSN = {0025-5831,1432-1807},
   MRCLASS = {14G35 (11G18)},
  MRNUMBER = {4958488},
MRREVIEWER = {Matthew\ Dawes},
       DOI = {10.1007/s00208-025-03194-7},
       URL = {https://doi-org.libproxy.mit.edu/10.1007/s00208-025-03194-7},
}

@article{Smithling2015, 
title={On the Moduli Description of Local Models for Ramified Unitary Groups}, 
volume={2015}, 
DOI={https://doi.org/10.1093/imrn/rnv095}, 
number={24}, 
journal={ International Mathematics Research Notices}, 
publisher={Oxford Academic}, 
author={Smithling, Brian}, 
year={2015}, 
pages={13493–13532}
}

@misc{stackproject,
    shorthand    = {Stacks},
    author       = {The {Stacks Project Authors}},
    title        = {\textit{Stacks Project}},
    howpublished = {\url{https://stacks.math.columbia.edu}},
  }

@article{Shi2022,
  title={{Special cycles on unitary Shimura curves at ramified primes}},
  author={Shi, Yousheng},
  journal={{M}anuscripta mathematica},
  pages={1--70},
  year={2022},
  publisher={Springer}
}

@article {bijakowski2024geometry,
    AUTHOR = {Bijakowski, St\'ephane and Hernandez, Valentin},
     TITLE = {On the geometry of the {P}appas-{R}apoport models in the
              ({AR}) case},
   JOURNAL = {Pacific J. Math.},
  FJOURNAL = {Pacific Journal of Mathematics},
    VOLUME = {334},
      YEAR = {2025},
    NUMBER = {1},
     PAGES = {107--142},
      ISSN = {0030-8730,1945-5844},
   MRCLASS = {14G35 (11G18 14D24)},
  MRNUMBER = {4857516},
MRREVIEWER = {Rolf\ Berndt},
       DOI = {10.2140/pjm.2025.334.107},
       URL = {https://doi-org.libproxy.mit.edu/10.2140/pjm.2025.334.107},
}

@article {BHKRY,
    AUTHOR = {Bruinier, Jan H. and Howard, Benjamin and Kudla, Stephen S.
              and Rapoport, Michael and Yang, Tonghai},
     TITLE = {Modularity of generating series of divisors on unitary
              {S}himura varieties},
   JOURNAL = {Ast\'{e}risque},
  FJOURNAL = {Ast\'{e}risque},
    VOLUME = {421},
      YEAR = {2020},
     PAGES = {7--125},
      ISSN = {0303-1179,2492-5926},
      ISBN = {978-2-85629-927-2},
   MRCLASS = {14G35 (11F27 11F55 11G18 14G40)},
  MRNUMBER = {4183376},
MRREVIEWER = {Haowu\ Wang},
       DOI = {10.24033/ast},
       URL = {https://doi.org/10.24033/ast},
}

@article {HLSY,
    AUTHOR = {He, Qiao and Li, Chao and Shi, Yousheng and Yang, Tonghai},
     TITLE = {A proof of the {K}udla-{R}apoport conjecture for {K}r\"{a}mer
              models},
   JOURNAL = {Invent. Math.},
  FJOURNAL = {Inventiones Mathematicae},
    VOLUME = {234},
      YEAR = {2023},
    NUMBER = {2},
     PAGES = {721--817},
      ISSN = {0020-9910,1432-1297},
   MRCLASS = {11G18 (11G15 14G35)},
  MRNUMBER = {4651010},
       DOI = {10.1007/s00222-023-01209-1},
       URL = {https://doi.org/10.1007/s00222-023-01209-1},
}

@article {Howard2019,
    AUTHOR = {Howard, Benjamin},
     TITLE = {Linear invariance of intersections on unitary
              {R}apoport-{Z}ink spaces},
   JOURNAL = {Forum Math.},
  FJOURNAL = {Forum Mathematicum},
    VOLUME = {31},
      YEAR = {2019},
    NUMBER = {5},
     PAGES = {1265--1281},
      ISSN = {0933-7741,1435-5337},
   MRCLASS = {11G18 (14G35)},
  MRNUMBER = {4000587},
MRREVIEWER = {Rolf\ Berndt},
       DOI = {10.1515/forum-2019-0023},
       URL = {https://doi.org/10.1515/forum-2019-0023},
}

@article {Bijakowski-Hernandez-2023,
    AUTHOR = {Bijakowski, St\'{e}phane and Hernandez, Valentin},
     TITLE = {On the geometry of the {P}appas-{R}apoport models for {PEL}
              {S}himura varieties},
   JOURNAL = {J. Inst. Math. Jussieu},
  FJOURNAL = {Journal of the Institute of Mathematics of Jussieu. JIMJ.
              Journal de l'Institut de Math\'{e}matiques de Jussieu},
    VOLUME = {22},
      YEAR = {2023},
    NUMBER = {5},
     PAGES = {2403--2445},
      ISSN = {1474-7480,1475-3030},
   MRCLASS = {11G18 (14G35 14L05)},
  MRNUMBER = {4624966},
       DOI = {10.1017/S1474748022000019},
       URL = {https://doi.org/10.1017/S1474748022000019},
}

@article {shen2023fzips,
    AUTHOR = {Shen, Xu and Zheng, Yuqiang},
     TITLE = {{$F$}-zips with additional structure on splitting models of
              {S}himura varieties},
   JOURNAL = {Forum Math. Sigma},
  FJOURNAL = {Forum of Mathematics. Sigma},
    VOLUME = {13},
      YEAR = {2025},
     PAGES = {Paper No. e142, 65},
      ISSN = {2050-5094},
   MRCLASS = {11G18 (14G35)},
  MRNUMBER = {4958720},
MRREVIEWER = {Rolf\ Berndt},
       DOI = {10.1017/fms.2025.10091},
       URL = {https://doi-org.libproxy.mit.edu/10.1017/fms.2025.10091},
}

@phdthesis{zhao-thesis,
  title={Affine {Grassmannians and splitting models for triality groups}},
  author={Zhihao Zhao},
  year={2021},
  school={Michigan State University}
}

@article {zachos2023semistable,
    AUTHOR = {Zachos, Ioannis and Zhao, Zhihao},
     TITLE = {Semi-stable and splitting models for unitary {S}himura
              varieties over ramified places. {I}.},
   JOURNAL = {Forum Math. Sigma},
  FJOURNAL = {Forum of Mathematics. Sigma},
    VOLUME = {13},
      YEAR = {2025},
     PAGES = {Paper No. e119, 35},
      ISSN = {2050-5094},
   MRCLASS = {14G35 (11G18)},
  MRNUMBER = {4941593},
       DOI = {10.1017/fms.2025.10079},
       URL = {https://doi-org.libproxy.mit.edu/10.1017/fms.2025.10079},
}

@article {Reduzzi-Xiao,
    AUTHOR = {Reduzzi, Davide A. and Xiao, Liang},
     TITLE = {Partial {H}asse invariants on splitting models of {H}ilbert
              modular varieties},
   JOURNAL = {Ann. Sci. \'{E}c. Norm. Sup\'{e}r. (4)},
  FJOURNAL = {Annales Scientifiques de l'\'{E}cole Normale Sup\'{e}rieure.
              Quatri\`eme S\'{e}rie},
    VOLUME = {50},
      YEAR = {2017},
    NUMBER = {3},
     PAGES = {579--607},
      ISSN = {0012-9593,1873-2151},
   MRCLASS = {11F41 (14G35)},
  MRNUMBER = {3665551},
MRREVIEWER = {Shuichiro\ Takeda},
       DOI = {10.24033/asens.2328},
       URL = {https://doi.org/10.24033/asens.2328},
}

@article {Sasaki,
    AUTHOR = {Sasaki, Shu},
     TITLE = {Integral models of {H}ilbert modular varieties in the ramified
              case, deformations of modular {G}alois representations, and
              weight one forms},
   JOURNAL = {Invent. Math.},
  FJOURNAL = {Inventiones Mathematicae},
    VOLUME = {215},
      YEAR = {2019},
    NUMBER = {1},
     PAGES = {171--264},
      ISSN = {0020-9910,1432-1297},
   MRCLASS = {11F80 (11F33 11F41 14G22 14G35)},
  MRNUMBER = {3904451},
MRREVIEWER = {Claus\ Mazanti\ Sorensen},
       DOI = {10.1007/s00222-018-0825-x},
       URL = {https://doi.org/10.1007/s00222-018-0825-x},
}

@article {Diamond-Kassaei,
    AUTHOR = {Diamond, Fred and Kassaei, Payman L.},
     TITLE = {The cone of minimal weights for {M}od {$p$} {H}ilbert modular
              forms},
   JOURNAL = {Int. Math. Res. Not. IMRN},
  FJOURNAL = {International Mathematics Research Notices. IMRN},
      YEAR = {2023},
    VOLUME = {14},
     PAGES = {12148--12171},
      ISSN = {1073-7928,1687-0247},
   MRCLASS = {11F41 (11F33 14G35)},
  MRNUMBER = {4615228},
       DOI = {10.1093/imrn/rnac121},
       URL = {https://doi.org/10.1093/imrn/rnac121},
}

@article{zachos2024semistable,
    author = {Zachos, Ioannis and Zhao, Zhihao},
    title = {Semi-stable and splitting Models for unitary {S}himura varieties Over Ramified Places. {II}.},
    journal = {International Mathematics Research Notices},
    volume = {2025},
    number = {11},
    pages = {rnaf145},
    year = {2025},
    issn = {1073-7928},
    doi = {10.1093/imrn/rnaf145},
    url = {https://doi.org/10.1093/imrn/rnaf145},
}

@article {LL2,
    AUTHOR = {Li, Chao and Liu, Yifeng},
     TITLE = {Chow groups and {$L$}-derivatives of automorphic motives for
              unitary groups, {II}.},
   JOURNAL = {Forum Math. Pi},
  FJOURNAL = {Forum of Mathematics. Pi},
    VOLUME = {10},
      YEAR = {2022},
     PAGES = {Paper No. e5, 71},
      ISSN = {2050-5086},
   MRCLASS = {11G18 (11G40 11G50 14C15 14G35)},
  MRNUMBER = {4390300},
MRREVIEWER = {Stefano\ Vigni},
       DOI = {10.1017/fmp.2022.2},
       URL = {https://doi-org.libproxy.mit.edu/10.1017/fmp.2022.2},
}

@book {RZ,
    AUTHOR = {Rapoport, M. and Zink, Th.},
     TITLE = {Period spaces for {$p$}-divisible groups},
    SERIES = {Annals of Mathematics Studies},
    VOLUME = {141},
 PUBLISHER = {Princeton University Press, Princeton, NJ},
      YEAR = {1996},
     PAGES = {xxii+324},
      ISBN = {0-691-02782-X; 0-691-02781-1},
   MRCLASS = {14G20 (11G18 14F30 14L05 14M15 20G05 20G25)},
  MRNUMBER = {1393439},
MRREVIEWER = {Robert\ E.\ Kottwitz},
       DOI = {10.1515/9781400882601},
       URL = {https://doi-org.libproxy.mit.edu/10.1515/9781400882601},
}

@article {RSZ18,
    AUTHOR = {Rapoport, Michael and Smithling, Brian and Zhang, Wei},
     TITLE = {Regular formal moduli spaces and arithmetic transfer
              conjectures},
   JOURNAL = {Math. Ann.},
  FJOURNAL = {Mathematische Annalen},
    VOLUME = {370},
      YEAR = {2018},
    NUMBER = {3-4},
     PAGES = {1079--1175},
      ISSN = {0025-5831,1432-1807},
   MRCLASS = {11G18 (14G17 22E55)},
  MRNUMBER = {3770164},
MRREVIEWER = {Rolf\ Berndt},
       DOI = {10.1007/s00208-017-1526-2},
       URL = {https://doi.org/10.1007/s00208-017-1526-2},
}

@article {Mihatsch_2022,
    AUTHOR = {Mihatsch, Andreas},
     TITLE = {Relative unitary {RZ}-spaces and the arithmetic fundamental
              lemma},
   JOURNAL = {J. Inst. Math. Jussieu},
  FJOURNAL = {Journal of the Institute of Mathematics of Jussieu. JIMJ.
              Journal de l'Institut de Math\'ematiques de Jussieu},
    VOLUME = {21},
      YEAR = {2022},
    NUMBER = {1},
     PAGES = {241--301},
      ISSN = {1474-7480,1475-3030},
   MRCLASS = {11G18 (14G35)},
  MRNUMBER = {4366338},
MRREVIEWER = {Wen-Wei\ Li},
       DOI = {10.1017/S1474748020000079},
       URL = {https://doi-org.libproxy.mit.edu/10.1017/S1474748020000079},
}

@article{Arzdorf,
author = {Kai Arzdorf},
title = {{On local models with special parahoric level structure}},
volume = {58},
journal = {Michigan Mathematical Journal},
number = {3},
publisher = {University of Michigan, Department of Mathematics},
pages = {683 -- 710},
year = {2009},
doi = {10.1307/mmj/1260475695},
URL = {https://doi.org/10.1307/mmj/1260475695}
}

@article {kudla2023padicuniformizationunitaryshimura,
    AUTHOR = {Kudla, Stephen and Rapoport, Michael and Zink, Thomas},
     TITLE = {On the {$p$}-adic uniformization of unitary {S}himura curves},
   JOURNAL = {M\'em. Soc. Math. Fr. (N.S.)},
  FJOURNAL = {M\'emoires de la Soci\'et\'e{} Math\'ematique de France.
              Nouvelle S\'erie},
    NUMBER = {183},
      YEAR = {2024},
     PAGES = {vi+212},
      ISSN = {0249-633X,2275-3230},
      ISBN = {978-2-37905-204-0},
   MRCLASS = {14G35},
  MRNUMBER = {4815435},
}

@article{LMZ,
      title={Unitary {S}himura varieties at ramified primes and arithmetic transfer}, 
      author={Yu Luo and Andreas Mihatsch and Zhiyu Zhang},
      year={2025},
      eprint={2504.17484},
      archivePrefix={arXiv},
      primaryClass={math.AG},
      url={https://arxiv.org/abs/2504.17484}, 
}

@article {Ahsendorf-Cheng-Zink,
    AUTHOR = {Ahsendorf, Tobias and Cheng, Chuangxun and Zink, Thomas},
     TITLE = {{$\mathcal{O}$}-displays and {$\pi$}-divisible formal {$\mathcal{O}$}-modules},
   JOURNAL = {J. Algebra},
  FJOURNAL = {Journal of Algebra},
    VOLUME = {457},
      YEAR = {2016},
     PAGES = {129--193},
      ISSN = {0021-8693,1090-266X},
   MRCLASS = {14L05 (14F30 14L15)},
  MRNUMBER = {3490080},
MRREVIEWER = {Rui\ Miguel\ Saramago},
       DOI = {10.1016/j.jalgebra.2016.03.002},
       URL = {https://doi.org/10.1016/j.jalgebra.2016.03.002},
}

@article{BZZ25,
      title={On the geometry of splitting models}, 
      author={S. Bijakowski and I. Zachos and Z. Zhao},
      year={2025},
      eprint={2501.05950},
      archivePrefix={arXiv},
      primaryClass={math.NT},
      url={https://arxiv.org/abs/2501.05950}, 
}

@article {Gora-thesis,
    AUTHOR = {Gora, Felix},
     TITLE = {Local models, {M}ustafin varieties and semi-stable
              resolutions},
   JOURNAL = {M\"unster J. Math.},
  FJOURNAL = {M\"unster Journal of Mathematics},
    VOLUME = {16},
      YEAR = {2023},
    NUMBER = {2},
     PAGES = {265--300},
      ISSN = {1867-5778,1867-5786},
   MRCLASS = {14M15 (14G35)},
  MRNUMBER = {4824824},
}

@article {Gortz,
    AUTHOR = {G\"ortz, Ulrich},
     TITLE = {On the flatness of models of certain {S}himura varieties of
              {PEL}-type},
   JOURNAL = {Math. Ann.},
  FJOURNAL = {Mathematische Annalen},
    VOLUME = {321},
      YEAR = {2001},
    NUMBER = {3},
     PAGES = {689--727},
      ISSN = {0025-5831,1432-1807},
   MRCLASS = {14G35 (11G18)},
  MRNUMBER = {1871975},
MRREVIEWER = {Conjeeveram\ S.\ Rajan},
       DOI = {10.1007/s002080100250},
       URL = {https://doi.org/10.1007/s002080100250},
}

@article{HLS-basic,
      title={The basic locus of ramified unitary {S}himura varieties of signature $(n-1,1)$ at maximal vertex level}, 
      author={Qiao He and Yu Luo and Yousheng Shi},
      year={2026},
      eprint={2502.06218},
      archivePrefix={arXiv},
      primaryClass={math.AG},
      url={https://arxiv.org/abs/2502.06218}, 
}

@book {Hartshorne,
    AUTHOR = {Hartshorne, Robin},
     TITLE = {Algebraic geometry},
    SERIES = {Graduate Texts in Mathematics},
    VOLUME = {No. 52},
 PUBLISHER = {Springer-Verlag, New York-Heidelberg},
      YEAR = {1977},
     PAGES = {xvi+496},
      ISBN = {0-387-90244-9},
   MRCLASS = {14-01},
  MRNUMBER = {463157},
MRREVIEWER = {Robert\ Speiser},
}

@article {WZhang21,
    AUTHOR = {Zhang, W.},
     TITLE = {Weil representation and arithmetic fundamental lemma},
   JOURNAL = {Ann. of Math. (2)},
  FJOURNAL = {Annals of Mathematics. Second Series},
    VOLUME = {193},
      YEAR = {2021},
    NUMBER = {3},
     PAGES = {863--978},
      ISSN = {0003-486X,1939-8980},
   MRCLASS = {11F27 (11F67 11G40 14C25 14G35)},
  MRNUMBER = {4250392},
MRREVIEWER = {Rolf\ Berndt},
       DOI = {10.4007/annals.2021.193.3.5},
       URL = {https://doi.org/10.4007/annals.2021.193.3.5},
}

@article{LRZ25,
      title={More regular formal moduli spaces and arithmetic transfer conjectures: the ramified quadratic case}, 
      author={Yu Luo and Michael Rapoport and Wei Zhang},
      year={2025},
      eprint={2507.01395},
      archivePrefix={arXiv},
      primaryClass={math.NT},
      url={https://arxiv.org/abs/2507.01395}, 
}

@article {Richarz-master,
    AUTHOR = {Richarz, Timo},
     TITLE = {Schubert varieties in twisted affine flag varieties and local
              models},
   JOURNAL = {J. Algebra},
  FJOURNAL = {Journal of Algebra},
    VOLUME = {375},
      YEAR = {2013},
     PAGES = {121--147},
      ISSN = {0021-8693,1090-266X},
   MRCLASS = {14M15},
  MRNUMBER = {2998951},
MRREVIEWER = {Leonardo\ Constantin\ Mihalcea},
       DOI = {10.1016/j.jalgebra.2012.11.013},
       URL = {https://doi.org/10.1016/j.jalgebra.2012.11.013},
}

@book {Fargues08,
    AUTHOR = {Fargues, Laurent and Genestier, Alain and Lafforgue, Vincent},
     TITLE = {L'isomorphisme entre les tours de {L}ubin-{T}ate et de
              {D}rinfeld},
    SERIES = {Progress in Mathematics},
    VOLUME = {262},
 PUBLISHER = {Birkh\"auser Verlag, Basel},
      YEAR = {2008},
     PAGES = {xxii+406},
      ISBN = {978-3-7643-8455-5},
   MRCLASS = {14L05 (11R39 14-06 14G22 14G35)},
  MRNUMBER = {2441311},
MRREVIEWER = {Eva\ Viehmann},
       DOI = {10.1007/978-3-7643-8456-2},
       URL = {https://doi-org.ezproxy.library.wisc.edu/10.1007/978-3-7643-8456-2},
}

@article{Zachos--Zhao_regular-basic,
      title={The basic locus of regular ramified unitary Rapoport-Zink spaces at vertex-stabilizer level}, 
      author={Ioannis Zachos and Zhihao Zhao},
      year={2025},
      eprint={2511.05576},
      archivePrefix={arXiv},
      primaryClass={math.NT},
      url={https://arxiv.org/abs/2511.05576}, 
}

@article{LSR,
      title={Automorphic vector bundles on {S}himura varieties}, 
      author={Yu Luo and Yousheng Shi and Michael Rapoport},
      year={2026},
      note={In prepration}, 
}

@article {HSY,
    AUTHOR = {He, Qiao and Shi, Yousheng and Yang, Tonghai},
     TITLE = {Kudla-{R}apoport conjecture for {K}r\"amer models},
   JOURNAL = {Compos. Math.},
  FJOURNAL = {Compositio Mathematica},
    VOLUME = {159},
      YEAR = {2023},
    NUMBER = {8},
     PAGES = {1673--1740},
      ISSN = {0010-437X,1570-5846},
   MRCLASS = {11G18 (14G35 14G40)},
  MRNUMBER = {4609752},
MRREVIEWER = {Jean\ Kieffer},
       DOI = {10.1112/s0010437x23007273},
       URL = {https://doi-org.libproxy.mit.edu/10.1112/s0010437x23007273},
}
\end{document}